\documentclass{amsart}
\usepackage{amsmath,amscd,xypic,amssymb,combelow,color,enumitem,graphicx,float}

\usepackage{tikz-cd}
\usepackage{mathtools}
\usepackage{setspace}

\usepackage{mathrsfs}
\usepackage[margin=1 in]{geometry}

%%%%%%%%%%%%%% Makes references/citations colored red/green and being clickable %%%%%%%%%%%%%%%%
\usepackage{xr-hyper}
\usepackage[
pdftex,
bookmarks=true,
colorlinks=true,
debug=true,
%naturalnames=true,
pdfnewwindow=true]{hyperref}
\hypersetup{bookmarksdepth = 3}
%%%%%%%%%%%%%%%%%%%%%%%%%%%%%%%%%%%%%%%%%%%%%%%%%%%%%%%%%%%%%%%%%%%%%%%%%%%%%%%%%%%%%%%%%%%%%%%%

%%%%%%%%%%%%%%%%%%%%%%%%%%%%%%%%%%%%%%%%%%%%%%%%%%%%%%%%%%%%%%%%%%%%%%%%%%%
% For the Table of contents had to change the previous commands
% \newtheorem{lemma}[theorem]{Lemma} -> \newtheorem{lemma}[theorem]{Lemma}
%%%%%%%%%%%%%%%%%%%%%%%%%%%%%%%%%%%%%%%%%%%%%%%%%%%%%%%%%%%%%%%%%%%%%%%%%%%
%\newtheorem{theorem}[theorem]{Theorem}
\newtheorem{theorem}{Theorem}[section]

\newtheorem*{theorem*}{Theorem}
\newtheorem{prop}[theorem]{Proposition}
\newtheorem{lemma}[theorem]{Lemma}
\newtheorem{cor}[theorem]{Corollary}

\newtheorem{definition}[theorem]{Definition}
\newtheorem{remark}[theorem]{Remark}

\theoremstyle{remark}

\errorcontextlines=0
\numberwithin{equation}{section}

\usepackage{amsthm}
\makeatletter
\def\th@plain{%
  \thm@notefont{}% same as heading font
  \itshape % body font
}
\def\th@definition{%
  \thm@notefont{}% same as heading font
  \normalfont % body font
}
\makeatother
\newcommand{\bb}[1]{\mathbb{#1}}

\newcommand{\cal}[1]{\mathcal{#1}}
\renewcommand{\frak}[1]{\mathfrak{#1}}

\newcommand{\qbinom}{\genfrac{[}{]}{0pt}{}}
\usepackage{nicematrix}
\usepackage{accents}

\usepackage{etoolbox}
\usepackage{tikz}
\usepackage{latexsym}
\usepackage{verbatim}
\usepackage{cases}
\usepackage{theoremref}

\DeclareMathOperator{\supp}{supp}

\DeclareMathOperator{\hgt}{ht}

\newcommand{\fg}{\frak{g}}

\newcommand{\fn}{\frak{n}}
\newcommand{\fh}{\frak{h}}

\newcommand{\BZ}{\mathbb{Z}}
\newcommand{\BC}{\mathbb{C}}
\newcommand{\gl}{\mathfrak{gl}}
\newcommand{\ssl}{\mathfrak{sl}}
\newcommand{\sso}{\mathfrak{so}}
\newcommand{\ssp}{\mathfrak{sp}}
\newcommand{\uu}{U_{r,s}(\fg)}

\newcommand\iso{\,\vphantom{j^{X^2}}\smash{\overset{\sim}{\vphantom{\rule{0pt}{0.20em}}\smash{\longrightarrow}}}\,}

\newcommand{\ol}{\overline}
\newcommand{\wtd}{\widetilde}

\newcommand{\bk}{\mathsf{k}}
\newcommand{\bh}{\mathsf{h}}

   %%%%%%%%%%%%%%%%%%%%%%%%%%%%%%%%%%%%%%%%%%%%%%%%%%%%%%%%%%%%%%%%%%%%%%%%%%%%%%%%%%%%%
   %%%%%%%%%%%%%%%%%%%%%%%%%%%%%%%%%% Start Document %%%%%%%%%%%%%%%%%%%%%%%%%%%%%%%%%%%
   %%%%%%%%%%%%%%%%%%%%%%%%%%%%%%%%%%%%%%%%%%%%%%%%%%%%%%%%%%%%%%%%%%%%%%%%%%%%%%%%%%%%%

\begin{document}

\title[Orthogonal Bases for Two-Parameter Quantum Groups]
      {\Large{\textbf{Orthogonal Bases for Two-Parameter Quantum Groups}}}

\author[Ian Martin and Alexander Tsymbaliuk]{Ian Martin and Alexander Tsymbaliuk}

\address{I.M.: Purdue University, Department of Mathematics, West Lafayette, IN, USA}
\email{mart2151@purdue.edu}

\address{A.T.: Purdue University, Department of Mathematics, West Lafayette, IN, USA}
\email{sashikts@gmail.com}

\begin{abstract}
In this note, we construct dual PBW bases of the positive and negative subalgebras of the two-parameter quantum 
groups $U_{r,s}(\mathfrak{g})$ in classical types, as used in~\cite{MT}. Following the ideas of Leclerc~\cite{L} and 
Clark-Hill-Wang~\cite{CHW}, we introduce the two-parameter shuffle algebra and relate it to the subalgebras above. 
We then use the combinatorics of dominant Lyndon words to establish the main results.
\end{abstract}

\maketitle
\tableofcontents

   %%%%%%%%%%%%%%%%%%%%%%%%%%%%%%%%%%%%%%%%%%%%%%%%%%%%%%%%%%%%%%%%%%%%%%%%%%%%%%%%%%%%%
   %%%%%%%%%%%%%%%%%%%%%%%%%%%%%%%%%% INTRODUCTION %%%%%%%%%%%%%%%%%%%%%%%%%%%%%%%%%%%%%
   %%%%%%%%%%%%%%%%%%%%%%%%%%%%%%%%%%%%%%%%%%%%%%%%%%%%%%%%%%%%%%%%%%%%%%%%%%%%%%%%%%%%%

\section{Introduction}\label{sec:intro}

   %%%%%%%%%%%%%%%%%%%%%%%%%%%%%%%%%%%%%%%%%%%%%%%%%%%%%%%%%%%%%%%%%%%%%%%%%%%%%%%%%%%%%

\subsection{Summary}\label{ssec:summary}
\

Let $\fg$ be a simple finite dimensional Lie algebra. Corresponding to any polarization $\Phi = \Phi^{+} \sqcup (-\Phi^{+})$
of the root system of $\fg$, there is a root space decomposition
\begin{equation}\label{eq:simpleLie}
  \fg=\fn^- \oplus \fh \oplus \fn^+ \qquad \mathrm{with} \qquad \fn^{\pm}=\bigoplus_{\gamma\in \Phi^+} \BC\cdot e_{\pm \gamma}.
\end{equation}
The elements $e_{\pm \gamma}$ are called \emph{root vectors}. This induces a decomposition
$U(\fg)=U(\fn^-) \otimes U(\fh) \otimes U(\fn^+)$, and the ordered products in $\{e_{\pm \gamma}\}_{\gamma\in \Phi^+}$
form a basis of $U(\fn^\pm)$ for any total order on $\Phi^+$. In fact, the root vectors can be normalized so that
\begin{equation}\label{eqn:Lie-root-vectors}
  [e_{\alpha},e_{\beta}] = e_{\alpha}e_{\beta}-e_{\beta}e_{\alpha} \in (\BZ \setminus \{0\}) \cdot e_{\alpha+\beta}
  \qquad \mathrm{for\ all} \quad \alpha, \beta \in \Phi^+ \ \ \mathrm{such\ that} \ \ \alpha+\beta \in \Phi^+.
\end{equation}

For each such $\fg$, Drinfeld and Jimbo simultaneously defined the quantum group $U_{q}(\fg)$, which is a quantization
of the universal enveloping algebra of $\fg$. As with $U(\fg)$, the quantum groups possess a triangular decomposition
$U_q(\fg) = U_q(\fn^-) \otimes U_q(\fh) \otimes U_q(\fn^+)$. Furthermore, $U_q(\fn^\pm)$ admit PBW-type bases
\begin{equation*}%\label{eqn:PBW-quantum-intro}
  U_q(\fn^\pm) \ =
  \bigoplus_{\gamma_1 \geq \dots \geq \gamma_k \in \Phi^+} \BC(q) \cdot e_{\pm \gamma_1} \dots e_{\pm \gamma_k}
\end{equation*}
formed by the ordered products of \emph{$q$-deformed root vectors} $e_{\pm \gamma} \in U_q(\fn^\pm)$. The latter are classically 
defined via Lusztig's braid group action, which requires one to choose a reduced decomposition of the longest element $w_0$ in 
the Weyl group of $\fg$, and the order $\geq$ on $\Phi^+$ used above is induced by this reduced decomposition.

However, there is also a purely combinatorial approach to the construction of PBW-type bases of $U_q(\fn^\pm)$ that goes
back to the works of Kharchenko, Leclerc, Rosso. To this end, recall Lalonde-Ram's bijection~\cite{LR}:
\begin{equation}\label{eqn:LR-bij-intro}
  \ell \colon \Phi^+ \iso \Big\{\text{standard Lyndon words in}\ I\Big\}.
\end{equation}
Here, the notion of standard Lyndon words intrinsically depends on a fixed total order of the indexing set $I$
of simple roots of $\fg$. Furthermore, this bijection $\ell$ gives rise to a total \emph{lexicographical} order on $\Phi^+$ via:
\begin{equation}\label{eq:lex_order_intro}
  \alpha < \beta \quad \Longleftrightarrow \quad \ell(\alpha) < \ell(\beta) \text{ lexicographically}.
\end{equation}
According to~\cite[Proposition 26]{L}, this order is \emph{convex} and thus corresponds to a reduced decomposition
of $w_0$. This allows one to define $e_{\pm \gamma}$ as iterated $q$-commutators, eliminating Lusztig's braid group action.
Explicitly, one defines the root vectors inductively via (cf.~\eqref{eqn:Lie-root-vectors})
\begin{equation}\label{eq:root_vectors_intro}
  e_\gamma=e_\alpha e_\beta - q^{(\alpha,\beta)}e_\beta e_\alpha \qquad \mathrm{and} \qquad
  e_{-\gamma}=e_{-\beta} e_{-\alpha} - q^{-(\alpha,\beta)} e_{-\alpha} e_{-\beta},
\end{equation}
where $\alpha,\beta$ are determined combinatorially by choosing the longest prefix in the decomposition (cf.~\eqref{eqn:LR-bij-intro})
\begin{equation}\label{eq:cost-factorization_intro}
  \ell(\gamma)=\ell(\alpha)\ell(\beta) \qquad \mathrm{with} \quad \alpha,\beta\in \Phi^+,\ \gamma=\alpha+\beta.
\end{equation}

%\medskip
Although the theory of multiparameter quantum groups goes back to the early 1990s (see e.g.~\cite{AST,R,T}), much of the
current interest in the subject stems from the papers~\cite{BW1, BW2, BW3}, which study the two-parameter quantum group
$U_{r,s}(\gl_n)$ and subsequently give an application to pointed finite dimensional Hopf algebras. In~\cite{BW2}, they developed
the theory of finite dimensional representations in a complete analogy with the one-parameter case, computed the two-parameter
$R$-matrix for the first fundamental $U_{r,s}(\gl_n)$-representation, and used it to establish the Schur-Weyl duality between
$U_{r,s}(\gl_n)$ and a two-parameter Hecke algebra.
These works of Benkart and Witherspoon stimulated an increased interest in the theory of two-parameter quantum groups.
In particular, the definitions of $\uu$ for other classical simple Lie algebras $\fg$ were first given in~\cite{BGH1,BGH2},
where basic results on the structure and representation theory of $\uu$ were also established. Subsequently, these algebras
have been treated case~by~case in multiple papers. For a more uniform treatment and complete references, we refer the reader to~\cite{HP}.

In the companion paper~\cite{MT}, we evaluated the finite and affine $R$-matrices for two-parameter quantum groups
of classical types, associated with the first fundamental representation and the corresponding evaluation modules.
We further presented a factorization of the finite $R$-matrices into ``local factors'', parametrized by $\Phi^+$.
The latter relied on the construction of dual PBW-type bases of the positive and negative subalgebras,
which was announced in~\cite[Theorem 5.12]{MT} without a proof. The main objective of the present note is to provide a proof of this result.
Due to the absence of Lusztig's braid group action on $\uu$ (noted first in~\cite{BGH1}), we use the aforementioned
combinatorial construction of orthogonal bases of the positive and the negative subalgebras $U_{r,s}^{\pm}(\fg) \subset \uu$ through
dominant\footnote{We use the terminology of~\cite{CHW}; they are also called \emph{good} in~\cite{L}, and
coincide with \emph{standard} from~\cite{LR}.} Lyndon words, which goes back to~\cite{L,Ro} in the one-parameter setup,
to~\cite{CHW} in the super case, and to~\cite{BKL} in the two-parameter $A$-type case.
To this end, we follow~\eqref{eq:root_vectors_intro} and define the root vectors $\{e_{\pm \gamma}\}_{\gamma\in \Phi^+}$ iteratively via
(cf.~\eqref{eq:q-root-vectors})
\begin{equation}\label{eq:root_vectors_2param_intro}
  e_\gamma=e_\alpha e_\beta - (\omega'_\beta,\omega_\alpha) e_\beta e_\alpha \qquad \mathrm{and} \qquad
  e_{-\gamma}=e_{-\beta} e_{-\alpha} - (\omega'_\alpha,\omega_\beta)^{-1} e_{-\alpha} e_{-\beta},
\end{equation}
where $\alpha,\beta$ are as in~\eqref{eq:cost-factorization_intro} and the pairing $(\omega'_\lambda,\omega_\mu)$
is as in~\eqref{eq:generators-parity-2}.

Our first main result is Theorem~\ref{thm:main-1}, a key part of which is the following:

\begin{theorem*} 
The products
\begin{equation}\label{eq:ordered_products}
  \left\{\overset{\longleftarrow}{\underset{\gamma \in \Phi^{+}}{\prod}} e_{\pm \gamma}^{m_{\gamma}} \,\Big|\, m_\gamma\in \BZ_{\geq 0}\right\}
     \qquad \mathrm{ordered\ w.r.t.\ lexicographical\ order}\  \eqref{eq:lex_order_intro}
\end{equation}
form bases for $U_{r,s}^{\pm}(\fg)$, which are orthogonal with respect to the Hopf pairing $(\cdot,\cdot)_{H}$ of Proposition~\ref{prop:pairing_2param}.
\end{theorem*}

Moreover, Theorem~\ref{thm:main-1}(2) reduces the evaluation of $(\cdot,\cdot)_{H}$ on the basis elements~\eqref{eq:ordered_products} to just that of 
$(e_{-\gamma},e_{\gamma})_{H}$ for each $\gamma \in \Phi^{+}$. An explicit evaluation of the latter pairing for a particular order on the indexing set $I$ 
is carried out in Theorem~\ref{thm:main-2}, which is our second main result. While the construction of such bases for type $A$ was already presented
(through a slightly different, but equivalent, perspective of Gr\"{o}bner bases) in~\cite{BKL}, the results on their pairing already seem to be new in $A$-type. 
As mentioned in the previous paragraph, these results are integral to our factorization of $R$ in~\cite[\S5]{MT}, since the \emph{reduced $R$-matrix} $\Theta$ 
can be written as an ordered product of deformed exponents of $\frac{e_{-\gamma}\otimes e_{\gamma}}{(e_{-\gamma},e_{\gamma})_H}$, cf.~\cite[Remark 5.14]{MT}. 
As such, together with the computations in~\cite[\S5.4]{MT}, this provides a conceptual proof of~\cite[Theorems 4.4--4.6]{MT}.

Let us briefly outline the strategy of our proof. First, we translate the problem solely into the construction of dual bases of $U^+_{r,s}(\fg)$, 
endowed with a twisted coproduct and a twisted product on its tensor powers, with respect to a different symmetric form $(\cdot,\cdot)$. We then 
introduce a two-parameter shuffle algebra $\cal{F}$ and embed $U^+_{r,s}(\fg)$ into $\cal{F}$. Interpreting the coproduct and the pairing on the 
shuffle side allows us to construct dual bases, which correspond to the ordered products~\eqref{eq:ordered_products}. The explicit computation 
of the nonzero pairing constants for a special order of simple roots is then accomplished by brute force. Finally, pulling back these results 
to the original setup yields the aforementioned Theorems~\ref{thm:main-1}--\ref{thm:main-2}.

%\medskip
We conclude the Summary by noting that two-parameter quantum groups naturally give rise to the quantum knot invariants via the Reshetikhin-Turaev 
type construction, see~\cite[\S4]{FMX} for more details. However, we expect that these invariants are intrinsically related to those attached to the usual 
one-parameter quantum groups, in analogy with~\cite[\S4]{C} who matched $\mathfrak{osp}(1|2n)$ and $\mathfrak{so}(2n+1)$ quantum knot invariants.

   %%%%%%%%%%%%%%%%%%%%%%%%%%%%%%%%%%%%%%%%%%%%%%%%%%%%%%%%%%%%%%%%%%%%%%%%%%%%%%%%%%%%%

\subsection{Outline}\label{ssec:outline}
\

\noindent
The structure of the present paper is as follows: 
\begin{itemize}[leftmargin=0.5cm]

\item[$\bullet$]
In Section~\ref{sec:notation}, we recall the two-parameter quantum groups $U_{r,s}(\fg)$ for simple finite dimensional 
Lie algebras~$\fg$, see Definition~\ref{def:general_2param}, as well as the Hopf pairing for those,  
see Proposition~\ref{prop:pairing_2param}. We also construct the analogues of the Cartan involution 
and the bar involution, see Proposition~\ref{prop:extra_structures}. 
Finally, we compute the graded dimensions and establish the non-degeneracy of the Hopf pairing, 
see Propositions~\ref{prop:graded-dimension} and~\ref{prop:nondegeneracy-Hopf}.

\item[$\bullet$]
In Section~\ref{sec:bilinear_forms}, we study several other pairings and ultimately relate them to the Hopf pairing 
of Proposition~\ref{prop:pairing_2param}, see Theorem~\ref{thm:pairing_comparison}. This is needed to replace 
the Hopf subalgebra $U^\geq_{r,s}(\fg)$ with $U^+_{r,s}(\fg)$ (eliminating Cartan elements) at the cost of changing 
the product structure on the tensor powers, and also modifying the coproduct and the pairing. The latter is essential 
for the combinatorial constructions in Sections~\ref{sec:shuffle}--\ref{sec:root_vectors}.

\item[$\bullet$]
In Section~\ref{sec:shuffle}, we introduce the two-parameter shuffle algebra $(\cal{F},*)$ and relate it to 
the positive subalgebra $U^+_{r,s}(\fg)$, see~\eqref{eq:shuffle_product_iterative}--\eqref{eq:e_ab} and 
Propositions~\ref{prop:shuffle_embedding},~\ref{prop:psi_equals_upsilon},~\ref{prop:image_description}. 
We also provide a shuffle interpretation of the bar involution and twisted coproduct, see 
Proposition~\ref{prop:tau_and_bar}, Corollary~\ref{cor:auto_matching}, and Proposition~\ref{prop:shuffle_coproduct}.

\item[$\bullet$]
In Section~\ref{sec:orthogonal_bases}, following~\cite[Sections 4-5]{CHW} (which in turn is largely based on~\cite{L}),  
we recall the notions of dominant and Lyndon words as well as introduce the bracketing of words. We use the latter to 
introduce the \emph{Lyndon basis} $\{R_w\}_{w\in \cal{W}^+}$ and its closely related versions $\{\wtd{R}_w\}_{w\in \cal{W}^+}$, 
$\{\bar{R}_w\}_{w\in \cal{W}^+}$, all parametrized by the set $\cal{W}^+$ of dominant words. Our main result is
Theorem~\ref{thm:dual_bases}, which establishes the orthogonality of the last two bases and expresses all nonzero
pairings through the pairings $\{(R_{\ell},\bar{R}_\ell)\}_{\ell\in \cal{L}^+}$, parametrized by the set $\cal{L}^+$ 
of dominant Lyndon words (which is in bijection~\eqref{eqn:LR-bij-intro} with the set $\Phi^+$ of positive roots). 

\item[$\bullet$]
In Section~\ref{sec:root_vectors}, we explicitly compute $R_\ell$ (which are shuffle incarnations of the quantum root vectors 
in $U^+_{r,s}(\fg)$) and the corresponding pairing $(R_{\ell},\bar{R}_{\ell})$ for each dominant Lyndon word $\ell\in \cal{L}^+$, 
for the special order $1 < \dots < n$ on the alphabet $I=\{1,\dots,n\}$. We treat each type separately, similarly to~\cite[\S6]{CHW}.

\item[$\bullet$]
In Section~\ref{sec:main}, we combine Theorem~\ref{thm:pairing_comparison}, Theorem~\ref{thm:dual_bases}, and the explicit 
calculations of Section~\ref{sec:root_vectors} to prove the main results of this paper: Theorems~\ref{thm:main-1} 
and~\ref{thm:main-2}. This establishes PBW-type bases of $U^+_{r,s}(\fg)$ and $U^-_{r,s}(\fg)$ dual with respect to the 
Hopf pairing of Proposition~\ref{prop:pairing_2param}, which was announced in our earlier work~\cite[Theorem 5.12]{MT} 
without a proof, and used there to factorize the corresponding finite $R$-matrices.

\item[$\bullet$]
In Appendix~\ref{sec:app_pairing_formulas}, we evaluate $(R_{\ell},\bar{R}_{\ell})$ for any order on the alphabet $I$ given that 
the first letter of $\ell$ occurs exactly once, see Theorem~\ref{thm:pairing_formula}, which is crucially based on an interesting 
combinatorial Lemma~\ref{lem:word_description}.

\end{itemize}

   %%%%%%%%%%%%%%%%%%%%%%%%%%%%%%%%%%%%%%%%%%%%%%%%%%%%%%%%%%%%%%%%%%%%%%%%%%%%%%%%%%%%%

\subsection{Acknowledgement}\label{ssec:acknowl}
\

This note represents a part of the year-long REU project at Purdue University; we are grateful to Purdue University 
for support. A.T.\ is deeply indebted to Andrei Negu\c{t} for numerous stimulating discussions over the years and 
sharing the beautiful combinatorial features of quantum groups, to Sarah Witherspoon for a correspondence on 
two-parameter quantum groups, and to Weiqiang Wang for a correspondence on~\cite{CHW}. 
A.T.\ is extremely grateful to IHES (Bures-sur-Yvette, France) for the hospitality and wonderful working
conditions in the summer 2025, where the final version of this paper was prepared. 
We are very grateful to the referees for their useful suggestions that improved the exposition.

The work of both authors was partially supported by an NSF Grant DMS-$2302661$.

   %%%%%%%%%%%%%%%%%%%%%%%%%%%%%%%%%%%%%%%%%%%%%%%%%%%%%%%%%%%%%%%%%%%%%%%%%%
   %%%%%%%%%%%%%%%%%%%%%%%%%%%%%%%%%%%%%%%%%%%%%%%%%%%%%%%%%%%%%%%%%%%%%%%%%%
   %%%%%%%%%%%%%%%%%%%%%%%%%%%%%%%%%%%%%%%%%%%%%%%%%%%%%%%%%%%%%%%%%%%%%%%%%%

\section{Notations and Definitions}\label{sec:notation}

In this Section, we recall the notion of two-parameter quantum groups $U_{r,s}(\fg)$ for simple finite~dimensional 
Lie algebras~$\fg$, the Hopf algebra structure and the Hopf pairing on those, and finally construct several important 
(anti)automorphisms. We refer the interested reader to~\cite[\S1.1,~\S2]{MT} for a full list of references.

   %%%%%%%%%%%%%%%%%%%%%%%%%%%%%%%%%%%%%%%%%%%%%%%%%%%%%%%%%%%%%%%%%%%%%%%%%%

\subsection{Two-parameter quantum groups}
\
   
Let $E$ be a Euclidean space with a positive-definite symmetric bilinear form $(\cdot ,\cdot)$. Let $\Phi \subset E$ be an irreducible 
reduced root system with an ordered set of simple roots $\Pi = \{\alpha_{1},\ldots ,\alpha_{n}\}$, and let $\fg$ be the corresponding 
complex simple Lie algebra. We set $\fn^{\pm} = \bigoplus_{\alpha \in \Phi^{+}}\fg_{\pm \alpha}$, 
where $\fg_{\alpha}$ denotes the root space of $\fg$ corresponding to $\alpha \in \Phi$, and $\Phi^+$ denotes the set of positive
roots of $\Phi$, see~\eqref{eq:simpleLie}. Let $C = (a_{ij})_{i,j = 1}^{n}$ be the Cartan matrix of $\fg$, with entries given explicitly 
by $a_{ij} = \frac{2(\alpha_{i},\alpha_{j})}{(\alpha_{i},\alpha_{i})}$, and set $d_{i} = \frac{1}{2}(\alpha_{i},\alpha_{i})$, 
where $(\cdot,\cdot)$ is normalized so that the short roots have square length $2$. The root and weight lattices  
of $\fg$ will be denoted by $Q$ and $P$,~respectively:
\begin{equation*}
  \bigoplus_{i=1}^n \BZ \alpha_i=Q\subset P=\bigoplus_{i=1}^n \BZ \varpi_i
  \qquad \mathrm{with}\quad (\alpha_i,\varpi_j)=d_i\delta_{ij}.
\end{equation*}

Having fixed above the order on the set of simple roots $\Pi$, we consider the (modified) Ringel bilinear form 
$\langle \cdot ,\cdot \rangle$ on $Q$, such that (unless $\{i,j\}=\{n-1,n\}$ in type $D_n$) we have:
\begin{equation*}%\label{eq:ringel-1}
  \langle \alpha_{i},\alpha_{j}\rangle =
  \begin{cases}
     d_{i}a_{ij} & \text{if}\ \ i < j \\
     d_{i} & \text{if}\ \ i = j \\
     0 & \text{if}\ \ i > j
  \end{cases} \,,
\end{equation*}
while in the remaining case of $D_n$-type system, we set (cf.~\eqref{eq:root_notation_D} and the paragraph preceding it):
\begin{equation*}%\label{eq:ringel-2}
  \langle \alpha_{n-1},\alpha_{n} \rangle =
    \langle \varepsilon_{n-1}-\varepsilon_n,\varepsilon_{n-1}+\varepsilon_n\rangle = -1, \qquad
  \langle \alpha_{n},\alpha_{n-1}\rangle =
    \langle \varepsilon_{n-1}+\varepsilon_n,\varepsilon_{n-1}-\varepsilon_n \rangle = 1.
\end{equation*}
We note that $(\mu,\nu) = \langle \mu,\nu \rangle + \langle \nu,\mu \rangle$ for any $\mu,\nu \in Q$.

We also need the following two-parameter analogues of $q$-integers, $q$-factorials, and $q$-binomial coefficients:
\begin{equation*}
  [m]_{r,s} = \frac{r^{m} - s^{m}}{r - s} = r^{m-1} + r^{m-2}s + \ldots + rs^{m-2} + s^{m-1}
  \qquad \mathrm{for\ all} \quad  m\in \BZ_{\geq 0},
\end{equation*}
\begin{equation*}
  [m]_{r,s}! = [m]_{r,s} [m-1]_{r,s} \cdots [1]_{r,s}  \qquad \mathrm{for} \quad m>0,
  \qquad [0]_{r,s}!=1,
\end{equation*}
and
\begin{equation*}
  \qbinom{m}{k}_{r,s} = \frac{[m]_{r,s}!}{[m - k]_{r,s}![k]_{r,s}!}
  \qquad \mathrm{for\ all} \quad 0\leq k\leq m.
\end{equation*}
Finally, we also define
\begin{equation}\label{eq:rs_gamma}
\begin{split}
  & r_{\gamma} = r^{(\gamma,\gamma)/2},\ \qquad s_{\gamma} = s^{(\gamma,\gamma)/2}
    \qquad \mathrm{for\ all} \quad \gamma\in \Phi,\\
  & r_{i} = r_{\alpha_i} = r^{d_{i}},\qquad s_{i} = s_{\alpha_i} = s^{d_{i}}
    \qquad \mathrm{for\ all} \quad 1\leq i\leq n.
\end{split}
\end{equation}

We now recall the definition of the \textbf{two-parameter quantum group} of $\fg$:

\begin{definition}\label{def:general_2param}
The two-parameter quantum group $U_{r,s}(\fg)$ of a simple Lie algebra $\fg$ is the associative $\BC(r,s)$-algebra 
generated by $\{e_{i},f_{i},\omega_{i}^{\pm 1},(\omega_{i}')^{\pm 1}\}_{i=1}^{n}$ with the following defining relations 
(for all $1\leq i,j\leq n$):
\begin{equation}\label{eq:R1}
   [\omega_i,\omega_j]=[\omega_i,\omega'_j]=[\omega'_i,\omega'_j]=0, \qquad
   \omega_{i}^{\pm 1}\omega_{i}^{\mp 1} = 1 = (\omega_{i}')^{\pm 1}(\omega_{i}')^{\mp 1},
\end{equation}
\begin{equation}\label{eq:R2}
   \omega_{i}e_{j} = r^{\langle \alpha_{j},\alpha_{i}\rangle}s^{-\langle \alpha_{i},\alpha_{j}\rangle}e_{j}\omega_{i}, \qquad
   \omega_{i}f_{j} = r^{-\langle \alpha_{j},\alpha_{i}\rangle}s^{\langle \alpha_{i},\alpha_{j}\rangle}f_{j}\omega_{i},
\end{equation}
\begin{equation}\label{eq:R3}
   \omega_{i}'e_{j} = r^{-\langle \alpha_{i},\alpha_{j}\rangle}s^{\langle \alpha_{j},\alpha_{i}\rangle}e_{j}\omega_{i}', \qquad
   \omega_{i}'f_{j} = r^{\langle \alpha_{i},\alpha_{j}\rangle}s^{-\langle \alpha_{j},\alpha_{i}\rangle}f_{j}\omega_{i}',
\end{equation}
\begin{equation}\label{eq:R4}
    e_{i}f_{j} - f_{j}e_{i} = \delta_{ij}\frac{\omega_i-\omega'_i}{r_{i} - s_{i}},
\end{equation}
and quantum $(r,s)$-Serre relations
\begin{equation}\label{eq:R5}
\begin{split}
  & \sum_{k = 0}^{1 - a_{ij}} (-1)^k \qbinom{1 - a_{ij}}{k}_{r_{i},s_{i}} (r_{i}s_{i})^{\frac{1}{2}k(k-1)}
    (rs)^{k\langle \alpha_{j},\alpha_{i}\rangle}e_{i}^{1 - a_{ij} - k}e_{j}e_{i}^{k} = 0
    \qquad \mathrm{for}\ i\ne j, \\
  & \sum_{k = 0}^{1 - a_{ij}} (-1)^k \qbinom{1 - a_{ij}}{k}_{r_{i},s_{i}} (r_{i}s_{i})^{\frac{1}{2}k(k-1)}
    (rs)^{k\langle \alpha_{j},\alpha_{i}\rangle}f_{i}^{k}f_{j}f_{i}^{1-a_{ij} - k} = 0 
    \qquad \mathrm{for}\  i\ne j.
\end{split}
\end{equation}
\end{definition}

We note that the algebra $U_{r,s}(\fg)$ admits a $Q$-grading, defined on the generators via:
\begin{equation*}
  \deg(e_{i})=\alpha_i,\quad \deg(f_i)=-\alpha_i, \quad
  \deg(\omega_{i})=0, \quad \deg(\omega_{i}')=0 \qquad \mathrm{for\ all} \quad 1\leq i\leq n.
\end{equation*}
For $\mu\in Q$, let $U_{r,s}(\fg)_{\mu}$ (or simply $(U_{r,s})_{\mu}$) denote the degree $\mu$ component of 
$U_{r,s}(\fg)$ under this $Q$-grading. We shall also need several subalgebras of $U_{r,s}(\fg)$:
\begin{itemize}

\item 
the ``positive'' subalgebra $U_{r,s}^{+}=U_{r,s}^{+}(\fg)$, generated by $\{e_{i}\}_{i=1}^{n}$,

\item 
the ``negative'' subalgebra $U_{r,s}^{-}= U_{r,s}^{-}(\fg)$, generated by $\{f_{i}\}_{i=1}^{n}$,

\item 
the ``Cartan'' subalgebra $U_{r,s}^{0}= U_{r,s}^{0}(\fg)$, generated by $\{\omega_{i}^{\pm 1},(\omega'_{i})^{\pm 1}\}_{i=1}^{n}$,

\item 
the ``non-negative subalgebra'' $U_{r,s}^{\ge}= U_{r,s}^{\ge}(\fg)$, generated by $\{e_{i},\omega_{i}^{\pm 1}\}_{i=1}^{n}$,

\item 
the ``non-positive subalgebra'' $U_{r,s}^{\le}= U_{r,s}^{\le}(\fg)$, generated by $\{f_{i},(\omega'_{i})^{\pm 1}\}_{i=1}^{n}$.

\end{itemize}
Evoking~\eqref{eq:R1}, for any $\mu=\sum_{i=1}^{n} c_{i}\alpha_{i}\in Q$, we define 
$\omega_\mu,\omega'_\mu\in U_{r,s}^{0}(\fg)$ via:
\begin{equation*}
  \omega_{\mu} = \omega_{1}^{c_{1}}\omega_{2}^{c_{2}} \cdots \omega_{n}^{c_{n}}, \qquad
  \omega'_\mu = (\omega_{1}')^{c_{1}}(\omega_{2}')^{c_{2}} \cdots (\omega_{n}')^{c_{n}}.
\end{equation*}

Finally, the algebra $U_{r,s}(\fg)$ has a Hopf algebra structure, where the coproduct $\Delta$, 
counit $\epsilon$, and antipode $S$ are defined on the generators by the following formulas:
\begin{align*}
  &\Delta(\omega_{i}^{\pm 1}) = \omega_{i}^{\pm 1} \otimes \omega_{i}^{\pm 1}
  & &\epsilon(\omega_{i}^{\pm 1}) = 1
  & &S(\omega_{i}^{\pm 1}) = \omega_{i}^{\mp 1} \\
  &\Delta((\omega_{i}')^{\pm 1}) = (\omega_{i}')^{\pm 1} \otimes (\omega_{i}')^{\pm 1}
  & & \epsilon((\omega_{i}')^{\pm 1}) = 1
  & & S((\omega_{i}')^{\pm 1}) = (\omega_{i}')^{\mp 1} \\
  &\Delta(e_{i}) = e_{i} \otimes 1 + \omega_{i} \otimes e_{i}
  & &\epsilon(e_{i}) = 0
  & &S(e_{i}) = -\omega_{i}^{-1}e_{i} \\
  &\Delta(f_{i}) = 1 \otimes f_{i} + f_{i} \otimes \omega_{i}'
  & &\epsilon(f_{i}) = 0
  & &S(f_{i}) = -f_{i}(\omega_{i}')^{-1}
\end{align*}
We note that 
\begin{equation}\label{eq:coprod-1}
  \Delta(x) \in x\otimes 1 +  
    \bigoplus_{0<\nu<\mu} U_{r,s}^{+}(\fg)_{\mu - \nu}\omega_{\nu} \otimes U_{r,s}^{+}(\fg)_{\nu} + \omega_{\mu} \otimes x,
\end{equation}
\begin{equation}\label{eq:coprod-2}
  \Delta(y) \in y \otimes \omega_{\mu}' + 
    \bigoplus_{0<\nu<\mu} U_{r,s}^{-}(\fg)_{-\nu} \otimes U_{r,s}^{-}(\fg)_{-(\mu - \nu)}\omega_{\nu}' + 1 \otimes y,
\end{equation}
for any $x\in U_{r,s}^{+}(\fg)_{\mu}$ and $y \in U_{r,s}^{-}(\fg)_{-\mu}$. 
Here, we use the standard order $\leq$ on the root lattice $Q$:
\begin{equation*}%\label{eq:dominant-order}
  \nu\leq \mu \, \Longleftrightarrow \,
  \mu-\nu \in Q^+ ,
\end{equation*}
where $Q^{+}=\bigoplus_{i=1}^n \BZ_{\geq 0} \alpha_i$ is the positive cone of the root lattice $Q$.

For any $\lambda=\sum_{i=1}^n c_i\alpha_i\in Q^+$, we define its height as $\hgt(\lambda)=\sum_{i=1}^n c_i$.

   %%%%%%%%%%%%%%%%%%%%%%%%%%%%%%%%%%%%%%%%%%%%%%%%%%%%%%%%%%%%%%%%%%%%%%%%%%

\subsection{Hopf pairing}
\

In this Subsection, we recall the Hopf pairing on $U_{r,s}(\fg)$, which allows one to realize $U_{r,s}(\fg)$ 
as a Drinfeld double of its Hopf subalgebras $U_{r,s}^{\le}(\fg)$, $U_{r,s}^{\ge}(\fg)$. This pairing is also 
crucial to the main results of this paper.

\begin{prop}\label{prop:pairing_2param}
There exists a unique non-degenerate bilinear pairing
\begin{equation}\label{eq:Hopf-parity}
  (\cdot,\cdot)_{H}\colon U_{r,s}^{\le}(\fg) \times U_{r,s}^{\ge}(\fg) \longrightarrow \BC(r,s)
\end{equation}
satisfying the following structural properties
\begin{equation*}%\label{eq:Hopf-properties}
  (yy',x)_{H} = (y \otimes y',\Delta(x))_{H}, \qquad (y, xx')_{H} = (\Delta(y),x' \otimes x)_{H}
  \qquad \forall\, x,x' \in U_{r,s}^{\ge}(\fg),\ y,y' \in U_{r,s}^{\le}(\fg),
\end{equation*}
where $(x'\otimes x'',y'\otimes y'')_H=(x',y')_H (x'',y'')_H$, as well as being given on the generators by:
\begin{equation*}%\label{eq:generators-parity-1}
  (f_{i},\omega_{j})_{H} = 0, \qquad (\omega_{i}', e_{i})_{H} = 0, \qquad
  (f_{i},e_{j})_{H} = \delta_{ij}\frac{1}{s_{i} - r_{i}} \qquad \mathrm{for\ all} \quad 1\leq i,j\leq n ,
\end{equation*}
\begin{equation}\label{eq:generators-parity-2}
  (\omega_{\lambda}',\omega_{\mu})_{H} = r^{\langle \lambda,\mu\rangle}s^{-\langle \mu,\lambda \rangle}
  \qquad \mathrm{for\ all} \quad \lambda,\mu\in Q.
\end{equation}
\end{prop}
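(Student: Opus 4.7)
The plan is to treat existence, uniqueness, and non-degeneracy as three separate steps, following the standard Drinfeld-double template adapted to the two-parameter setting.

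\textbf{Uniqueness.} I would argue by induction on the $Q$-grading. Any $x \in U^{\ge}_{r,s}(\fg)$ is a linear combination of monomials of the form $\omega_{\nu} e_{i_1} \cdots e_{i_k}$, and any $y \in U^{\le}_{r,s}(\fg)$ a combination of monomials $f_{j_1} \cdots f_{j_\ell} \omega'_{\lambda}$. Iterated application of the Hopf pairing axioms $(yy',x)_H = (y\otimes y',\Delta(x))_H$ and $(y,xx')_H = (\Delta(y), x'\otimes x)_H$, combined with the coproduct formulas \eqref{eq:coprod-1}--\eqref{eq:coprod-2} for $\Delta(e_i)$ and $\Delta(f_i)$, reduces any pairing $(y,x)_H$ to a polynomial expression in the prescribed generator values $(f_i,e_j)_H,\ (f_i,\omega_j)_H,\ (\omega'_i,e_j)_H,\ (\omega'_\lambda,\omega_\mu)_H$. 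Since all of these are specified in the statement, the pairing is uniquely determined.

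\textbf{Existence.} I would first define the pairing on the free bialgebras $\widetilde{U}^{\ge}$ and $\widetilde{U}^{\le}$, generated subject only to the Cartan invertibility and commutation relations \eqref{eq:R1}, by extending the generator values through the Hopf pairing axioms. Well-definedness at this free level reduces to associativity and coassociativity checks. It then remains to verify that the pairing descends to the actual quotients $U^{\ge}_{r,s}(\fg)$, $U^{\le}_{r,s}(\fg)$; equivalently, that each relation \eqref{eq:R2}--\eqref{eq:R5} lies in the appropriate radical. The weight relations \eqref{eq:R2}--\eqref{eq:R3} follow from a direct computation using the bimultiplicativity of $(\omega'_\lambda,\omega_\mu)_H$ encoded in \eqref{eq:generators-parity-2}. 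The cross relation \eqref{eq:R4} plays no role here since it mixes generators from opposite halves. The Serre relations \eqref{eq:R5} are the substantive case and are treated last.

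\textbf{Non-degeneracy.} I would defer this part to the later Proposition~\ref{prop:nondegeneracy-Hopf}, which rests on the graded dimension count in Proposition~\ref{prop:graded-dimension}. The strategy there is to decompose $U^{\le}_{r,s}(\fg) = \bigoplus_{\mu,\lambda} U^{-}_{r,s}(\fg)_{-\mu}\,\omega'_\lambda$ and similarly for $U^{\ge}_{r,s}(\fg)$, observe that the Cartan factor $(\omega'_\lambda,\omega_{\lambda'})_H$ contributes an invertible scalar, and show that the induced pairing on each finite-dimensional bigraded piece admits approximately dual monomial bases making the Gram matrix triangular with nonzero diagonal.

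\textbf{Main obstacle.} The hardest step is verifying that the quantum $(r,s)$-Serre relations \eqref{eq:R5} lie in the radical of the pairing defined on the free algebras. After iterated expansion of $\Delta$ through the relevant factors, the vanishing of $\big(y,\sum_{k}(-1)^k \qbinom{1-a_{ij}}{k}_{r_i,s_i}(r_is_i)^{\frac{1}{2}k(k-1)}(rs)^{k\langle \alpha_j,\alpha_i\rangle}e_i^{1-a_{ij}-k}e_je_i^{k}\big)_H$ against an arbitrary $y\in U^{\le}_{r,s}(\fg)$ reduces to the two-parameter Gauss-type identity for $\qbinom{N}{k}_{r_i,s_i}$, generalizing the classical $q$-binomial cancellation used in the one-parameter case~\cite{L}. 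The asymmetry $r \neq s$ forces the Ringel form $\langle \cdot,\cdot\rangle$ rather than the symmetric form $(\cdot,\cdot)$ to enter the bookkeeping, and every coefficient produced by the coproduct expansion must be tracked carefully so that the scalar $(rs)^{k\langle \alpha_j,\alpha_i \rangle}$ appearing in \eqref{eq:R5} matches the one coming from commuting the $\omega_i$ factors past $e_j$ via \eqref{eq:R2}.
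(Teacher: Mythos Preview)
Your treatment of existence and uniqueness is standard and reasonable, and in fact goes beyond the paper, which simply states Proposition~\ref{prop:pairing_2param} and defers to the literature (the remark following it only singles out non-degeneracy as needing a separate argument). Your identification of the Serre relations as the main obstacle is correct; the paper addresses an analogous computation in Theorem~\ref{thm:serre}, but for the auxiliary form $\{\cdot,\cdot\}$ on the free algebra $\cal{F}$, using the skew-derivations $\partial_i'$ and Proposition~\ref{prop:zero_criteria+} rather than a direct binomial identity.

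Where your proposal diverges from the paper is in the non-degeneracy step. You correctly defer to Proposition~\ref{prop:nondegeneracy-Hopf}, but your description of its proof is not what the paper does. The paper's argument is \emph{not} a Gram-matrix triangularity computation, nor does it logically rest on the dimension count of Proposition~\ref{prop:graded-dimension}. Instead it proceeds by induction on $\hgt(\mu)$: if $y\in (U_{r,s}^-)_{-\mu}$ lies in the left radical, then Proposition~\ref{prop:-p_adjoint} forces $p_i(y),p_i'(y)$ into the radical at height $\hgt(\mu)-1$, hence $p_i(y)=p_i'(y)=0$ by induction; Lemma~\ref{lem:commutator_formulas} then gives $e_iy-ye_i=0$ for all $i$; and finally Proposition~\ref{prop:commutator_zero}, which is a representation-theoretic statement resting on Proposition~\ref{prop:big_representation} (an element of $U_{r,s}^-$ acting nontrivially on a large enough irreducible module cannot produce a second highest weight vector), yields $y=0$. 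Proposition~\ref{prop:graded-dimension} is a parallel consequence of Proposition~\ref{prop:big_representation}, not an input to the non-degeneracy proof. Your proposed Gram-matrix route could in principle be made to work, but in the paper's logical order the PBW bases are built \emph{after} non-degeneracy is established (it is used in Proposition~\ref{prop:shuffle_embedding} via Corollary~\ref{cor:nondegeneracy-CHW}), so invoking them here would be circular.
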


The above pairing is clearly homogeneous of degree zero with respect to the above $Q$-grading:
\begin{equation}\label{eq:pairing-orthogonal}
  (y,x)_{H}=0 \quad \mathrm{for} \quad
  x\in U_{r,s}^{\ge}(\fg)_{\mu},\ y\in U_{r,s}^{\le}(\fg)_{-\nu}
  \quad \mathrm{with} \quad \mu\ne \nu \in Q^+.
\end{equation}

\begin{remark} 
We shall provide a careful proof of the non-degeneracy of $(\cdot,\cdot)_H$ in Proposition~\ref{prop:nondegeneracy-Hopf}.   
\end{remark}

Using~\eqref{eq:coprod-1}, we may define linear maps 
$p_{i},p_{i}'\colon (U_{r,s}^{+})_{\mu} \to (U_{r,s}^{+})_{\mu - \alpha_{i}}$ for any $\mu\in Q^+$ via 
\begin{align}\label{eq:p-maps}
  \Delta(x) &= x \otimes 1 + \sum_{i = 1}^{n}p_{i}(x)\omega_{i} \otimes e_{i} + \ldots + 
    \sum_{i = 1}^{n}e_{i}\omega_{\mu - \alpha_{i}} \otimes p_{i}'(x) + \omega_{\mu} \otimes x,
\end{align}
which satisfy (and are uniquely determined by) $p_{i}(1) = p_{i}'(1) = 0$, $p_{i}(e_{j}) = p_{i}'(e_{j}) = \delta_{ij}$, 
and the following analogues of the Leibniz rule:
\begin{equation}\label{eq:+derivation}
\begin{split}
  p_{i}(xx') &= xp_{i}(x') + (\omega_{\deg(x')}',\omega_{i})_H \cdot p_{i}(x)x',  \\
  p_{i}'(xx') &= p_{i}'(x)x' + (\omega_{i}',\omega_{\deg(x)})_H \cdot xp_{i}'(x'),
\end{split}
\end{equation}
for all homogeneous $x,x' \in U_{r,s}^{+}$. Combining~\eqref{eq:pairing-orthogonal} with 
Proposition~\ref{prop:pairing_2param}, we obtain the following result:

\begin{lemma}\label{prop:+p_adjoint} 
For any $x \in U_{r,s}^{+}$ and $y \in U_{r,s}^{-}$, we have 
\begin{equation*}
  (f_{i}y,x)_H = \frac{1}{s_{i} - r_{i}}(y,p_{i}'(x))_H \qquad \text{and} \qquad 
  (yf_{i},x)_H = \frac{1}{s_{i} - r_{i}}(y,p_{i}(x))_H.
\end{equation*}
\end{lemma}

Likewise, using~\eqref{eq:coprod-2}, we define linear maps 
$p_{i},p_{i}'\colon (U_{r,s}^{-})_{-\mu} \to (U_{r,s}^{-})_{-(\mu - \alpha_{i})}$ for any $\mu\in Q^+$ via 
\begin{align*}
  \Delta(y) &= y \otimes \omega_{\mu}' + \sum_{i = 1}^{n} p_{i}(y) \otimes f_{i}\omega_{\mu - \alpha_{i}}' + \ldots + 
    \sum_{i = 1}^{n} f_{i} \otimes p_{i}'(y)\omega_{i}' + 1 \otimes y,
\end{align*}
which satisfy (and are uniquely determined by) $p_{i}(1) = p_{i}'(1) = 0$, $p_{i}(f_{j}) = p_{i}'(f_{j}) = \delta_{ij}$, 
and the following analogues of the Leibniz rule:
\begin{equation}\label{eq:-derivation}
\begin{split}
  p_{i}(yy') &= p_{i}(y)y' + (\omega_{-\deg(y)}',\omega_{i})_H\cdot yp_{i}(y'), \\
  p_{i}'(yy') &= yp_{i}'(y') + (\omega_{i}',\omega_{-\deg(y')})_H\cdot p_{i}'(y)y',
\end{split}
\end{equation}
for all homogeneous $y,y' \in U_{r,s}^{-}$. As above, they are related to the Hopf pairing of Proposition~\ref{prop:pairing_2param} via:

\begin{lemma}\label{prop:-p_adjoint} 
For any $y \in U_{r,s}^{-}$ and $x \in U_{r,s}^{+}$, we have
\begin{equation*}
  (y,e_{i}x)_H = \frac{1}{s_{i} - r_{i}}(p_{i}(y),x)_H \qquad \text{and} \qquad 
  (y,xe_{i})_H = \frac{1}{s_{i} - r_{i}}(p_{i}'(y),x)_H.
\end{equation*}
\end{lemma}

Since we will frequently use the restriction of $(\cdot ,\cdot)_{H}$ to the Cartan subalgebra $U_{r,s}^{0}$ 
throughout the paper, we shall denote it simply by $(\cdot,\cdot)$ for brevity:
\begin{equation}\label{eq:parity-convention}
  (y,x) = (y,x)_H \qquad \mathrm{for\ any} \quad y,x\in  U_{r,s}^{0}. 
\end{equation}
Let us present explicit formulas for the latter in each of the classical types. To this end, we use the following 
standard embeddings of the classical-type root systems in Euclidean space:
\begin{itemize}

\item
\emph{$A_{n}$-type} (corresponding to $\fg\simeq \ssl_{n+1}$).

Let $\{\varepsilon_{i}\}_{i=1}^{n+1}$ be an orthonormal basis of $\bb{R}^{n+1}$. Then, we have
\begin{equation*}
\begin{split}
  & \Phi_{A_{n}} = \big\{\varepsilon_{i} - \varepsilon_{j} \,\big|\, 1\leq i\ne j\leq n+1 \big\} \subset \bb{R}^{n+1}, \\
  & \Pi_{A_{n}} = \big\{ \alpha_{i} = \varepsilon_{i} - \varepsilon_{i + 1} \big\}_{i=1}^{n}.
\end{split}
\end{equation*}
We shall use the following notation for the set of positive roots $\Phi^+$ in $\Phi_{A_{n}}$:
\begin{equation}\label{eq:root_notation_A}
  \gamma_{ij} = \alpha_{i} + \dots + \alpha_{j} \qquad \text{for} \quad 1 \le i \le j \le n.
\end{equation}

\item
\emph{$B_{n}$-type} (corresponding to $\fg\simeq \sso_{2n+1}$).

Let $\{\varepsilon_{i}\}_{i=1}^{n}$ be an orthogonal basis of $\bb{R}^{n}$ with $(\varepsilon_{i},\varepsilon_{i})=2$ 
for all $i$. Then, we have
\begin{equation*}
\begin{split}
  & \Phi_{B_{n}} = 
    \big\{ \pm \varepsilon_{i} \pm \varepsilon_{j} \,\big|\, 1 \le i < j \le n \big\} \cup
    \big\{ \pm \varepsilon_{i} \,\big|\, 1\leq i\leq n \big\} \subset \bb{R}^{n}, \\
  & \Pi_{B_{n}} = \big\{ \alpha_{i} = \varepsilon_{i} - \varepsilon_{i + 1} \big\}_{i=1}^{n-1} \cup 
    \big\{ \alpha_{n} = \varepsilon_{n} \big\}.
\end{split}
\end{equation*}
We shall use the following notation for the set of positive roots $\Phi^+$ in $\Phi_{B_{n}}$:
\begin{equation}\label{eq:root_notation_B}
\begin{split}
  & \gamma_{ij} = \alpha_{i} + \dots + \alpha_{j} \qquad \text{for} \quad 1 \le i \le j \le n, \\
  & \beta_{ij} = \alpha_{i} + \dots + \alpha_{j - 1} + 2\alpha_{j} + \dots + 2\alpha_{n} 
    \qquad \text{for} \quad 1 \le i < j \le n.
\end{split}
\end{equation}

\item
\emph{$C_{n}$-type} (corresponding to $\fg\simeq \ssp_{2n}$).

Let $\{\varepsilon_{i}\}_{i=1}^{n}$ be an orthonormal basis of $\bb{R}^{n}$. Then, we have
\begin{equation*}
\begin{split}
  & \Phi_{C_{n}} = \big\{ \pm \varepsilon_{i} \pm \varepsilon_{j} \,\big|\, 1 \le i < j \le n \big\} \cup 
    \big\{ \pm 2\varepsilon_{i} \,\big|\, 1\leq i\leq n \big\} \subset \bb{R}^{n}, \\
  & \Pi_{C_{n}} = \big\{ \alpha_{i} = \varepsilon_{i} - \varepsilon_{i + 1} \big\}_{i=1}^{n-1}\cup 
    \big\{ \alpha_{n} = 2\varepsilon_{n} \big\}.
\end{split}
\end{equation*}
We shall use the following notation for the set of positive roots $\Phi^+$ in $\Phi_{C_{n}}$:
\begin{equation}\label{eq:root_notation_C}
\begin{split}
  & \gamma_{ij} = \alpha_{i} + \dots + \alpha_{j} \qquad \text{for} \quad 1 \le i \le j \le n, \\
  & \beta_{ij} = \alpha_{i} + \dots + \alpha_{j - 1} + 2\alpha_{j} + \dots + 2\alpha_{n-1} + \alpha_{n} 
    \qquad \text{for} \quad 1 \le i \le j < n.
\end{split}
\end{equation}

\item
\emph{$D_{n}$-type} (corresponding to $\fg\simeq \sso_{2n}$).

Let $\{\varepsilon_{i}\}_{i=1}^{n}$ be an orthonormal basis of $\bb{R}^{n}$. Then, we have
\begin{equation*}%\label{eq:D-system}
\begin{split}
  & \Phi_{D_{n}} = \big\{\pm \varepsilon_{i} \pm \varepsilon_{j} \,\big|\, 1 \le i < j \le n \big\} \subset \bb{R}^{n}, \\
  & \Pi_{D_{n}} = \big\{ \alpha_{i} = \varepsilon_{i} - \varepsilon_{i + 1} \big\}_{i=1}^{n-1} \cup 
    \big\{ \alpha_{n} = \varepsilon_{n-1} + \varepsilon_{n} \big\}.
\end{split}
\end{equation*}
We shall use the following notation for the set of positive roots $\Phi^+$ in $\Phi_{D_{n}}$:
\begin{equation}\label{eq:root_notation_D}
\begin{split}
  & \gamma_{ij} = \alpha_{i} + \dots + \alpha_{j} \qquad \text{for} \quad 1 \le i \le j < n, \\
  & \beta_{in} = \alpha_{i} + \dots + \alpha_{n-2} + \alpha_{n} \qquad \text{for} \quad 1 \le i < n, \\
  & \beta_{i,n-1} = \alpha_{i} + \dots + \alpha_{n} \qquad \text{for} \quad 1 \le i < n-1, \\
  & \beta_{ij} = \alpha_{i} + \dots + \alpha_{j-1} + 2\alpha_{j} + \dots + 2\alpha_{n-2} + \alpha_{n-1} + \alpha_{n} 
    \qquad \text{for} \quad 1 \le i < j < n-1.
\end{split}
\end{equation}

\end{itemize}
Then we have the following explicit formulas for the pairing of Cartan elements, where 
$\lambda = \sum_{i = 1}^{n} c_{i}\alpha_{i} \in Q$:
\begin{itemize}

\item
\emph{$A_{n}$-type}
\begin{align*}%\label{eq:A-pairing}
\begin{split}
  (\omega_{\lambda}',\omega_{i}) &= r^{( \varepsilon_{i},\lambda )}s^{( \varepsilon_{i +1},\lambda)}, \\
  (\omega_{i}',\omega_{\lambda}) &= r^{-(\varepsilon_{i + 1},\lambda )}s^{-( \varepsilon_{i},\lambda)}.
\end{split}
\end{align*}

\item
\emph{$B_{n}$-type}
\begin{align*}%\label{eq:B-pairing}
\begin{split}
  (\omega_{\lambda}',\omega_{i}) &=
  \begin{cases}
    r^{( \varepsilon_{i},\lambda )}s^{( \varepsilon_{i + 1},\lambda )} & \mathrm{if}\ \ 1\leq i<n  \\
    r^{( \varepsilon_{n},\lambda ) }(rs)^{-c_{n}} & \mathrm{if}\ \ i = n
  \end{cases} \,, \\
  (\omega_{i}',\omega_{\lambda}) &=
  \begin{cases}
    r^{-( \varepsilon_{i + 1},\lambda)}s^{-( \varepsilon_{i},\lambda)} & \mathrm{if}\ \ 1\leq i<n \\
    s^{-( \varepsilon_{n},\lambda ) }(rs)^{c_{n}} & \mathrm{if}\ \ i = n
  \end{cases} \,.
\end{split}
\end{align*}

\item
\emph{$C_{n}$-type}
\begin{align*}%\label{eq:C-pairing}
\begin{split}
  (\omega_{\lambda}',\omega_{i}) &=
  \begin{cases}
    r^{( \varepsilon_{i},\lambda) }s^{( \varepsilon_{i + 1},\lambda )} &  \mathrm{if}\ \ 1\leq i<n \\
    r^{2( \varepsilon_{n},\lambda ) }(rs)^{-2c_{n}} & \mathrm{if}\ \ i=n
  \end{cases} \,, \\
  (\omega_{i}',\omega_{\lambda}) &=
  \begin{cases}
    r^{-( \varepsilon_{i + 1},\lambda )}s^{-( \varepsilon_{i},\lambda)} & \mathrm{if}\ \ 1\leq i<n \\
    s^{-2( \varepsilon_{n},\lambda )}(rs)^{2c_{n}} & \mathrm{if}\ \ i = n
  \end{cases} \,.
\end{split}
\end{align*}

\item
\emph{$D_{n}$-type}
\begin{align*}%\label{eq:D-pairing}
\begin{split}
  (\omega_{\lambda}',\omega_{i}) &=
  \begin{cases}
    r^{( \varepsilon_{i},\lambda )}s^{( \varepsilon_{i + 1},\lambda )} & \mathrm{if}\ \ 1\leq i<n \\
    r^{( \varepsilon_{n-1},\lambda)}s^{-(\varepsilon_{n},\lambda)}(rs)^{-2c_{n - 1}} & \mathrm{if}\ \ i = n
  \end{cases} \,, \\
  (\omega_{i}',\omega_{\lambda}) &=
  \begin{cases}
    r^{-( \varepsilon_{i + 1},\lambda )}s^{-(\varepsilon_{i},\lambda )} & \mathrm{if}\ \ 1\leq i<n \\
    r^{( \varepsilon_{n},\lambda )}s^{-(\varepsilon_{n-1},\lambda )}(rs)^{2c_{n-1}} & \mathrm{if}\ \ i = n
\end{cases} \,.
\end{split}
\end{align*}

\end{itemize}

   %%%%%%%%%%%%%%%%%%%%%%%%%%%%%%%%%%%%%%%%%%%%%%%%%%%%%%%%%%%%%%%%%%%%%%%%%%

\subsection{(Anti)automorphisms}
\

Finally, we need to introduce several additional structures on $U_{r,s}(\fg)$ that will be used later.

\begin{prop}\label{prop:extra_structures}
(1) There is a unique $\BC(r,s)$-algebra anti-automorphism $\varphi \colon U_{r,s}(\fg) \to U_{r,s}(\fg)$ 
(called the \textbf{Cartan involution}) satisfying 
\begin{equation*}
  \varphi(e_{i}) = f_{i}, \qquad  \varphi(f_{i}) = e_{i},  \qquad 
  \varphi(\omega_{i}) = \omega_{i}, \qquad  \varphi(\omega_{i}') = \omega_{i}'  \qquad \text{for all} \quad 1 \le i \le n.
\end{equation*}

\noindent
(2) There is a unique $\BC$-algebra anti-automorphism $\tau\colon U_{r,s}(\fg) \to U_{r,s}(\fg)$ satisfying 
$\tau(r) = s^{-1}$, $\tau(s) = r^{-1}$, and
\begin{equation*}
    \tau(e_{i}) = e_{i}, \qquad  \tau(f_{i}) = f_{i}, \qquad  \tau(\omega_{i}) = (r_{i}s_{i})^{-1}\omega_{i}', \qquad 
    \tau(\omega_{i}') = (r_{i}s_{i})^{-1}\omega_{i} \qquad  \text{for all} \quad 1 \le i \le n.
\end{equation*}

\noindent
(3) There is a unique $\BC$-algebra automorphism $x \mapsto \bar{x}$ of $U_{r,s}(\fg)$ (called the \textbf{bar involution}) 
satisfying $\bar{r} = s$, $\bar{s} = r$, and 
\begin{equation*}
  \bar{e}_{i} = e_{i}, \qquad  \bar{f}_{i} = f_{i}, \qquad  \bar{\omega}_{i} = \omega_{i}', \qquad 
  \bar{\omega_{i}'} = \omega_{i} \qquad  \text{for all} \quad 1 \le i \le n.
\end{equation*}
\end{prop}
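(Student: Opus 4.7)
The plan is to prove each of (1), (2), (3) by verifying that the prescribed assignment on the generators respects every defining relation in~\eqref{eq:R1}--\eqref{eq:R5}; uniqueness is then immediate from the fact that $\{e_i,f_i,\omega_i^{\pm 1},(\omega'_i)^{\pm 1}\}$ generates $U_{r,s}(\fg)$ as an algebra, so the map on generators extends to at most one (anti)automorphism of the algebra.

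For (1), $\varphi$ is an anti-automorphism fixing the Cartan generators and interchanging $e_i\leftrightarrow f_i$. The relations~\eqref{eq:R1} are preserved trivially. Applying $\varphi$ to~\eqref{eq:R2} reverses the order of the product and reproduces~\eqref{eq:R3}, and conversely. Relation~\eqref{eq:R4} is manifestly $\varphi$-invariant after swapping the order of multiplication on the left-hand side. For the Serre relations in~\eqref{eq:R5}, applying $\varphi$ to the first (for the $e_i$'s) and reversing the order of each monomial $e_i^{1-a_{ij}-k} e_j e_i^k$ produces $f_i^k f_j f_i^{1-a_{ij}-k}$, which is the second Serre relation.

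For (2) and (3), the same strategy applies, but one must also account for the substitutions $(r,s)\mapsto (s^{-1},r^{-1})$ (resp.\ $(r,s)\mapsto (s,r)$) and $\omega_i\mapsto (r_is_i)^{-1}\omega'_i$ (resp.\ $\omega_i\mapsto \omega'_i$). Relations~\eqref{eq:R1} are again trivial, and the mixed relations~\eqref{eq:R2} and~\eqref{eq:R3} interchange under both $\tau$ and the bar involution via a direct inspection of the transformed exponents of $r$ and $s$. The commutator relation~\eqref{eq:R4} is preserved thanks to the identity $\tau(r_i - s_i) = -(r_i - s_i)/(r_is_i)$ combined with $\tau(\omega_i - \omega'_i) = (r_is_i)^{-1}(\omega'_i - \omega_i)$, and the simpler analogues $\overline{r_i - s_i} = -(r_i - s_i)$ and $\overline{\omega_i - \omega'_i} = -(\omega_i - \omega'_i)$ for the bar involution.

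The main obstacle lies in the Serre relations~\eqref{eq:R5} under $\tau$ and the bar involution, where one must check that the vanishing is preserved after these substitutions. For $\tau$, the image of the first Serre relation becomes a sum involving $e_i^k e_j e_i^{1-a_{ij}-k}$ with coefficients obtained by applying $(r,s)\mapsto (s^{-1},r^{-1})$; after the reindexing $k\mapsto 1-a_{ij}-k$ and using the symmetry $\qbinom{1-a_{ij}}{k}_{r_i^{-1},s_i^{-1}} = (r_is_i)^{-k(1-a_{ij}-k)}\qbinom{1-a_{ij}}{k}_{r_i,s_i}$ together with the identity $(\alpha_i,\alpha_j) = \langle\alpha_i,\alpha_j\rangle + \langle\alpha_j,\alpha_i\rangle$, the transformed sum reduces (up to an overall nonzero scalar) to the original first Serre relation, hence vanishes; the $f$-relation is handled identically. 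An entirely analogous computation---now without reversing the order of monomials but with $(r,s)\mapsto(s,r)$---settles the bar involution.
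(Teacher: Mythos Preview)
Your proposal is correct and follows essentially the same approach as the paper: verify that the defining relations \eqref{eq:R1}--\eqref{eq:R5} are preserved, with the Serre relations under $\tau$ being the only nontrivial check, handled by the reindexing $k\mapsto 1-a_{ij}-k$ together with the binomial identity $\tau\bigl(\qbinom{1-a_{ij}}{k}_{r_i,s_i}\bigr)=(r_is_i)^{-k(1-a_{ij}-k)}\qbinom{1-a_{ij}}{k}_{r_i,s_i}$, exactly as in the paper. One small slip: under $\varphi$ the two halves of \eqref{eq:R2} swap with each other (and likewise for \eqref{eq:R3}), rather than \eqref{eq:R2} swapping with \eqref{eq:R3}; this does not affect the argument.
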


\begin{proof} 
For each of these, we need to check that defining relations~\eqref{eq:R1}--\eqref{eq:R5} are preserved. 
For parts (1) and (3), this is straightforward, and we leave details to the reader. For part (2), it is 
easy to check that~\eqref{eq:R1}--\eqref{eq:R3} and~\eqref{eq:R4} with $i\ne j$ are preserved under $\tau$. 
For~\eqref{eq:R4} with $i=j$, we have: 
\[
  \frac{\tau(\omega_{i}) - \tau(\omega_{i}')}{\tau(r_{i}) - \tau(s_{i})} = 
  \frac{(r_{i}s_{i})^{-1}(\omega_{i}' - \omega_{i})}{s_{i}^{-1} - r_{i}^{-1}} = 
  \frac{\omega_{i}' - \omega_{i}}{r_{i} - s_{i}} = 
  f_{i}e_{i} - e_{i}f_{i} = \tau(f_{i})\tau(e_{i}) - \tau(e_{i})\tau(f_{i}).
\]
For the quantum Serre relations~\eqref{eq:R5}, we shall only carry out the verification for the $e_{i}$'s, 
since the calculations for the $f_{i}$'s are analogous. First, we note that 
\[
  \tau([m]_{r,s}) = \frac{s^{-m} - r^{-m}}{s^{-1} - r^{-1}} = (rs)^{1 - m}[m]_{r,s},
\]
and therefore 
\begin{align*}
  \tau\left( \qbinom{m}{k}_{r,s} \right) 
  &= \frac{\tau([m]_{r,s}!)}{\tau([k]_{r,s}!)\tau([m - k]_{r,s}!)} 
   = \frac{(rs)^{-\frac{1}{2}m(m-1)}}{(rs)^{-\frac{1}{2}k(k-1)}(rs)^{-\frac{1}{2}(m-k)(m-k-1)}} 
     \frac{[m]_{r,s}!}{[k]_{r,s}![m-k]_{r,s}!} \\
  &= (rs)^{k(k-m)} \qbinom{m}{k}_{r,s}.
\end{align*}
Hence, we have:
\begin{align*}
  & \sum_{k = 0}^{1 - a_{ij}} (-1)^{k}\tau\left( \qbinom{1 - a_{ij}}{k}_{r_{i},s_{i}} (r_{i}s_{i})^{\frac{1}{2}k(k-1)} 
     (rs)^{k\langle \alpha_{j},\alpha_{i}\rangle} \right) \tau(e_{i})^{k}\tau(e_{j})\tau(e_{i})^{1 - a_{ij} - k} = \\
  & \sum_{k = 0}^{1 - a_{ij}} (-1)^{k} (r_{i}s_{i})^{k(k - 1 + a_{ij})} \qbinom{1 - a_{ij}}{k}_{r_{i},s_{i}} 
     (r_{i}s_{i})^{-\frac{1}{2}k(k-1)} (rs)^{-k\langle \alpha_{j},\alpha_{i}\rangle} e_{i}^{k}e_{j}e_{i}^{1- a_{ij} - k} = \\
  & \sum_{k = 0}^{1 - a_{ij}} (-1)^{1 - a_{ij} - k} (r_{i}s_{i})^{-(1 - a_{ij} - k)k} \qbinom{1 - a_{ij}}{k}_{r_{i},s_{i}} 
     (r_{i}s_{i})^{-\frac{1}{2}(1 - a_{ij} -k)(-a_{ij} - k)} (rs)^{-(1 - a_{ij} - k)\langle \alpha_{j},\alpha_{i}\rangle} 
     e_{i}^{1-a_{ij} - k}e_{j}e_{i}^{k}.
\end{align*}
Since 
\[
  -(1 - a_{ij} - k)k - \tfrac{1}{2}(1 - a_{ij} - k)(-a_{ij} - k) = 
%  k(k - 1) + ka_{ij} - \tfrac{1}{2}k(k-1) - ka_{ij} - \tfrac{1}{2}a_{ij}(a_{ij} - 1) = 
  \tfrac{1}{2}k(k-1) - \tfrac{1}{2}a_{ij}(a_{ij} - 1),
\]
we thus find that applying $\tau$ to the first relation in~\eqref{eq:R5}, we get:
\begin{align*}
  & 0= \sum_{k = 0}^{1 - a_{ij}} (-1)^{k} \tau\left( \qbinom{1 - a_{ij}}{k}_{r_{i},s_{i}} (r_{i}s_{i})^{\frac{1}{2}k(k-1)}
       (rs)^{k\langle \alpha_{j},\alpha_{i}\rangle} \right) \tau(e_{i})^{k}\tau(e_{j})\tau(e_{i})^{1 - a_{ij} - k} = \\
  & \ \ (-1)^{1 - a_{ij}} (r_{i}s_{i})^{-\frac{1}{2}a_{ij}(a_{ij} - 1)} (rs)^{-(1 - a_{ij})\langle \alpha_{j},\alpha_{i}\rangle}
    \sum_{k = 0}^{1 - a_{ij}} (-1)^{k} \qbinom{1 - a_{ij}}{k}_{r_{i},s_{i}} (r_{i}s_{i})^{\frac{1}{2}k(k-1)}
       (rs)^{k\langle \alpha_{j},\alpha_{i}\rangle} e_{i}^{1 - a_{ij} - k}e_{j}e_{i}^{k}.
\end{align*}
The above shows that the first relation in~\eqref{eq:R5} is indeed preserved by $\tau$. This completes the proof.
\end{proof}

   %%%%%%%%%%%%%%%%%%%%%%%%%%%%%%%%%%%%%%%%%%%%%%%%%%%%%%%%%%%%%%%%%%%%%%%%%%

\subsection{Non-degeneracy of pairing and weight space dimensions}
\

Let $\lambda = \sum_{i = 1}^{n}l_{i}\varpi_{i} \in P \cap Q$ with all $l_i\geq 0$ be a dominant weight 
(we only consider such weights that lie in the root lattice just to avoid extending the base field $\BC(r,s)$).
Recall that a vector $v$ in a $U_{r,s}(\fg)$-module $V$ is said to have \textbf{weight} $\lambda$ if 
\[
  \omega_{i}\cdot v = (\omega_{\lambda}',\omega_{i})v \qquad \text{and} \qquad  
  \omega_{i}'\cdot v = (\omega_{i}',\omega_{\lambda})^{-1}v \quad \text{for all}\quad 1 \le i \le n.
\]
We denote the subspace of all vectors of weight $\lambda$ in $V$ by $V_{\lambda}$.

Let $M(\lambda) = U_{r,s}(\fg) \otimes_{U_{r,s}^{\ge}(\fg)} \BC(r,s)$ be the $U_{r,s}(\fg)$-\textbf{Verma module} 
with highest weight $\lambda$, where the action of $U_{r,s}^{\ge}(\fg)$ on $\BC(r,s)$ is defined by 
\begin{equation*}
  e_{i} \cdot 1 = 0 \quad \text{and}\quad \omega_{i}\cdot 1 = (\omega_{\lambda}',\omega_{i}) \quad \mathrm{for\ all} \quad 1\leq i\leq n.
\end{equation*}  
Let $L(\lambda)$ be the unique irreducible quotient of $M(\lambda)$. 
If $v_{\lambda} \in M(\lambda)$ is a nonzero highest weight vector, set
\begin{equation*}
  \wtd{L}(\lambda) = M(\lambda) \Big/ \sum_{i = 1}^{n}U_{r,s}^{-}f_{i}^{l_{i} +1}v_{\lambda}.
\end{equation*}
For classical $\fg$, the $U_{r,s}(\fg)$-module $\wtd{L}(\lambda)$ is known to be finite~dimensional of highest weight 
$\lambda$ (see~\cite[proof of Lemma 2.12]{BW2} for $A$-type and~\cite[Proposition 2.16]{BGH2} for $BCD$-types). Since 
$\wtd{L}(\lambda)$ surjects onto $L(\lambda)$, the module $L(\lambda)$ is also finite~dimensional. The argument 
of~\cite[Proposition~3.8]{MT} can be carried out for both $\wtd{L}(\lambda)$ and $L(\lambda)$, as they are both 
finite~dimensional highest weight $U_{r,s}(\fg)$-modules of weight $\lambda$. This shows that $\wtd{L}(\lambda)$ and 
$L(\lambda)$ have the same dimension, and since $L(\lambda)$ is a quotient of $\wtd{L}(\lambda)$, we conclude 
that $L(\lambda) \simeq \wtd{L}(\lambda)$. This observation allows us to prove the following result:

\begin{prop}\label{prop:big_representation} 
Let $\lambda = \sum_{i = 1}^{n} l_{i}\varpi_{i} \in P \cap Q$ be a dominant weight (i.e.\ $l_i\geq 0$), and let $v_{\lambda} \in L(\lambda)$ 
be a nonzero vector of weight $\lambda$. For any $\mu = \sum_{i = 1}^{n} m_{i}\alpha_{i} \in Q^{+}$ satisfying 
$m_{i} \le l_{i}$ for all $1\leq i\leq n$, the map $(U_{r,s}^{-})_{-\mu} \to L(\lambda)_{\lambda-\mu}$ defined by 
$y \mapsto yv_{\lambda}$ is bijective. 
\end{prop}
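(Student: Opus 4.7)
The plan is to leverage the identification $L(\lambda) \simeq \wtd{L}(\lambda) = M(\lambda)/\sum_{i=1}^n U_{r,s}^- f_i^{l_i+1} v_\lambda$ established in the paragraph preceding the statement, and reduce everything to a weight/degree bookkeeping on $U_{r,s}^-$.

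First I would handle surjectivity, which is immediate: since $v_\lambda$ generates $L(\lambda)$ as a $U_{r,s}(\fg)$-module and $e_i v_\lambda = 0$ while Cartan elements act by scalars, the triangular decomposition forces $L(\lambda) = U_{r,s}^- \cdot v_\lambda$. Restricting to the $Q$-grading, the weight space $L(\lambda)_{\lambda-\mu}$ is spanned by $\{y v_\lambda : y \in (U_{r,s}^-)_{-\mu}\}$, giving surjectivity with no condition on $\mu$.

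For injectivity, I would use that $M(\lambda)$ is a free rank-one module over $U_{r,s}^-$ via the triangular decomposition, so the map $y \mapsto y v_\lambda$ identifies $(U_{r,s}^-)_{-\mu}$ with $M(\lambda)_{\lambda-\mu}$ bijectively. Under the quotient $M(\lambda) \twoheadrightarrow L(\lambda)$, the kernel of the composed map $(U_{r,s}^-)_{-\mu} \to L(\lambda)_{\lambda-\mu}$ is precisely the degree $-\mu$ component of $\sum_{i=1}^n U_{r,s}^- \cdot f_i^{l_i+1}$ inside $U_{r,s}^-$. A general element of the latter has the form $\sum_i z_i f_i^{l_i+1}$ with $z_i \in U_{r,s}^-$, and its degree-$(-\mu)$ piece comes from $z_i \in (U_{r,s}^-)_{-(\mu - (l_i+1)\alpha_i)}$. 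Since $U_{r,s}^-$ is $Q^+$-graded (nonzero graded pieces are indexed by $\nu \in Q^+$), the hypothesis $m_i \le l_i$ forces $\mu - (l_i+1)\alpha_i$ to have a strictly negative $\alpha_i$-coefficient, hence $(U_{r,s}^-)_{-(\mu - (l_i+1)\alpha_i)} = 0$ for every $i$. Therefore the kernel is trivial.

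There is no real obstacle here once one has the two inputs that are already in place in the paper: the triangular decomposition (used to see $M(\lambda) \simeq U_{r,s}^-$ as $U_{r,s}^-$-modules and to obtain surjectivity), and the isomorphism $L(\lambda) \simeq \wtd{L}(\lambda)$ (which is what reduces the kernel question to a grading argument on the Serre-type relations $f_i^{l_i+1}$). The only step requiring a moment of care is checking that the degree calculation $(U_{r,s}^-)_{-(\mu-(l_i+1)\alpha_i)} = 0$ indeed follows from $m_i \le l_i$, which is a one-line observation about the $Q^+$-grading.
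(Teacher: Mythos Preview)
Your proposal is correct and follows essentially the same approach as the paper: both identify $L(\lambda)\simeq\wtd{L}(\lambda)\simeq U_{r,s}^-/J^-$ with $J^-=\sum_i U_{r,s}^- f_i^{l_i+1}$, and then observe that $J^-\cap (U_{r,s}^-)_{-\mu}=0$ by the grading argument $m_i\le l_i\Rightarrow \mu-(l_i+1)\alpha_i\notin Q^+$. The paper packages surjectivity and injectivity together via the single isomorphism $U_{r,s}^-/J^-\iso L(\lambda)$, whereas you treat them separately, but the content is the same.
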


\begin{proof} 
Let $J^{-} \subset U_{r,s}^{-}$ be the left ideal generated by the elements $\{f_{i}^{l_{i} + 1}\}_{i=1}^n$. Then 
the map $U_{r,s}^{-} \to M(\lambda)$ defined by $y \mapsto yv_{\lambda}$ induces a $U_{r,s}^{-}$-module isomorphism 
$U_{r,s}^{-}/J^{-} \iso \wtd{L}(\lambda) \simeq L(\lambda)$. But since $m_{i} \le l_{i}$ for all $i$, we have 
$J^{-} \cap (U_{r,s}^{-})_{-\mu} = 0$, so that the restriction of $U_{r,s}^{-} \twoheadrightarrow U_{r,s}^{-}/J^{-} \iso L(\lambda)$ 
to $(U_{r,s}^{-})_{-\mu}$ gives rise to the claimed isomorphism $(U_{r,s}^{-})_{-\mu} \iso L(\lambda)_{\lambda-\mu}$.
\end{proof}

As our first application of the proposition above, let us prove the following result on the dimensions of 
the weight spaces $(U_{r,s}^{\pm})_{\mu}$, which will be needed later for Theorem~\ref{thm:LR-bijection}:

\begin{prop}\label{prop:graded-dimension} 
For all $\mu \in Q^{+}$, we have 
\[
  \dim_{\BC(r,s)}(U_{r,s}^{+})_{\mu} = \dim_{\BC(r,s)}(U_{r,s}^{-})_{-\mu} = \dim_{\BC}U(\fn^{\pm})_{\pm\mu}.
\] 
\end{prop}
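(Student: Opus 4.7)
The plan is to reduce to the negative part and then bracket $\dim(U_{r,s}^-)_{-\mu}$ between matching upper and lower bounds. The reduction is immediate: the Cartan anti-automorphism $\varphi$ from Proposition~\ref{prop:extra_structures}(1) is $\BC(r,s)$-linear, swaps $e_i \leftrightarrow f_i$, and flips the $Q$-grading, so it restricts to a bijection $(U_{r,s}^+)_\mu \iso (U_{r,s}^-)_{-\mu}$. Hence the two quantum dimensions agree, and the remaining task is to show they equal the Kostant partition function $\dim_{\BC} U(\fn^-)_{-\mu}$.

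For the upper bound $\dim(U_{r,s}^-)_{-\mu} \leq \dim U(\fn^-)_{-\mu}$, I would argue by specialization. The quantum Serre relations~\eqref{eq:R5} have polynomial (not rational) coefficients in $r, s$, so one can define the integral form $U^-_{r,s,A}$ over $A = \BC[r^{\pm 1}, s^{\pm 1}]$ by the same generators and relations; each graded piece is a finitely generated $A$-module. Setting $r = q$, $s = q^{-1}$ at a generic $q\in\BC^\times$ collapses each Serre relation to its one-parameter counterpart, yielding a surjection $(U^-_{r,s,A})_{-\mu} \otimes_A \BC \twoheadrightarrow (U_q^-(\fg))_{-\mu}$ whose target has $\BC$-dimension $\dim U(\fn^-)_{-\mu}$ by the classical PBW theorem for $U_q(\fg)$. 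Upper semi-continuity of the dimension of fibres then forces the generic-fibre dimension $\dim_{\BC(r,s)}(U_{r,s}^-)_{-\mu}$ to be at most $\dim U(\fn^-)_{-\mu}$. (Alternatively, one may build a spanning set of the right size directly using iterated $(r,s)$-commutators of the $f_i$, as is done explicitly in Section~\ref{sec:root_vectors}.)

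For the matching lower bound, write $\mu = \sum_i m_i \alpha_i$ and choose a dominant weight $\lambda = \sum_i l_i \varpi_i \in P \cap Q$ with each $l_i$ large---at the very least $l_i \geq m_i$, and $\lambda$ sufficiently deep in the dominant chamber. Proposition~\ref{prop:big_representation} identifies $(U_{r,s}^-)_{-\mu} \iso L(\lambda)_{\lambda-\mu}$, so the claim reduces to computing $\dim L(\lambda)_{\lambda-\mu}$. The finite-dimensional simple $U_{r,s}(\fg)$-module $L(\lambda)$ shares weight-space multiplicities with its classical $\fg$-counterpart $L^{cl}(\lambda)$; this is built into the representation-theoretic results of~\cite{BW2} (type $A$) and~\cite{BGH2} (types BCD) cited in this subsection, and can alternatively be derived via the same specialization to $U_q(\fg)$. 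The classical Weyl character formula, applied to $L^{cl}(\lambda)$ with $\lambda$ dominant enough that every nontrivial translate $w(\lambda+\rho)-\rho$ lies strictly below $\lambda-\mu$, gives $\dim L^{cl}(\lambda)_{\lambda-\mu} = \dim U(\fn^-)_{-\mu}$. Combining the chain $\dim(U_{r,s}^-)_{-\mu} = \dim L(\lambda)_{\lambda-\mu} = \dim L^{cl}(\lambda)_{\lambda-\mu} = \dim U(\fn^-)_{-\mu}$ closes the sandwich.

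The main substantive input is the character identity $\mathrm{ch}\, L(\lambda) = \mathrm{ch}\, L^{cl}(\lambda)$ used in the lower bound: this is the one ingredient that is not formally forced by the definitions recalled in Section~\ref{sec:notation} and must be imported from the cited representation-theoretic literature on two-parameter quantum groups. All other steps are direct consequences of Propositions~\ref{prop:extra_structures} and~\ref{prop:big_representation} combined with a standard upper semi-continuity argument.
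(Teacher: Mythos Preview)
Your argument is correct, and its ``lower bound'' paragraph is essentially the paper's proof: apply Proposition~\ref{prop:big_representation} to identify $(U_{r,s}^-)_{-\mu}\iso L(\lambda)_{\lambda-\mu}$, then invoke the equality of weight multiplicities between $L(\lambda)$ and its classical counterpart (the paper cites this as \cite[Proposition~3.8]{MT}). Since that paragraph already produces a chain of \emph{equalities}, your separate upper-bound argument via specialization is redundant---you never actually use it to close any sandwich.

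One small technical remark on that redundant paragraph: the natural map induced by specializing $r\mapsto q$, $s\mapsto q^{-1}$ goes in the opposite direction to what you wrote. Comparing the defining ideals shows that the image of $I_A\otimes_A\BC$ in the free algebra contains (in fact equals) the one-parameter Serre ideal, which yields a surjection $(U_q^-)_{-\mu}\twoheadrightarrow (U^-_{r,s,A})_{-\mu}\otimes_A\BC$, not the reverse. Because it is actually an isomorphism your semicontinuity conclusion survives, but as written the surjection in your direction would only give $\dim(\text{special fibre})\ge \dim U(\fn^-)_{-\mu}$, which combined with generic $\le$ special does not bound the generic fibre above by $\dim U(\fn^-)_{-\mu}$.
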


\begin{proof} 
Let $\mu = \sum_{i = 1}^{n} m_{i}\alpha_{i} \in Q^{+}$, and let $\lambda = \sum_{i = 1}^{n}l_{i}\varpi_{i} \in P$ be 
a dominant weight with $l_{i} \ge m_{i}$. Because $P$ is contained in the $\bb{Q}$-span of $\alpha_{1},\ldots ,\alpha_{n}$, 
we may replace $\lambda$ by a suitable positive integer multiple to ensure that $\lambda \in P \cap Q$. Then 
$\dim_{\BC(r,s)}(U_{r,s}^{-})_{-\mu} = \dim_{\BC(r,s)}L(\lambda)_{\lambda - \mu}$, due to Proposition~\ref{prop:big_representation}. 
On the other hand, by~\cite[Proposition 3.8]{MT} and the choice of $\lambda$, we have 
$\dim_{\BC(r,s)}L(\lambda)_{\lambda - \mu} = \dim_{\BC}U(\fn^{-})_{-\mu}$, which proves the claim for $U_{r,s}^{-}$. 
The result for $U_{r,s}^{+}$ follows by applying the anti-automorphism $\varphi$ of Proposition~\ref{prop:extra_structures}(1).
\end{proof}

We shall now prove the non-degeneracy of the Hopf pairing $(\cdot,\cdot)_{H}$ introduced in Proposition~\ref{prop:pairing_2param}. 
For this, we require the following result, which is another application of Proposition~\ref{prop:big_representation}:

\begin{prop}\label{prop:commutator_zero} 
Let $\mu \in Q^{+}\setminus \{0\}$. If $y \in (U_{r,s}^{-})_{-\mu}$ satisfies $e_{i}y - ye_{i} = 0$ for all $1\leq i\leq n$, 
then $y = 0$. Similarly, if $x \in (U_{r,s}^{+})_{\mu}$ satisfies $f_{i}x - xf_{i} = 0$ for all $1\leq i\leq n$, then $x = 0$.
\end{prop}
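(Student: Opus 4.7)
The plan is to leverage Proposition~\ref{prop:big_representation} by realizing $y$ as an operator on a sufficiently large irreducible highest weight module, where the hypothesis $[e_i, y]=0$ forces $y \cdot v_\lambda$ to be a highest weight vector at the wrong weight, and hence zero.

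More precisely, write $\mu = \sum_{i=1}^n m_i \alpha_i \in Q^+\setminus\{0\}$ and pick a dominant weight $\lambda = \sum_{i=1}^n l_i \varpi_i \in P \cap Q$ with $l_i \geq m_i$ for all $i$ (scaling by a positive integer if necessary to land in the root lattice, as in the proof of Proposition~\ref{prop:graded-dimension}). Let $v_\lambda \in L(\lambda)$ be a nonzero highest weight vector, so that $e_i \cdot v_\lambda = 0$ for all $i$. By Proposition~\ref{prop:big_representation}, the map $(U_{r,s}^-)_{-\mu} \to L(\lambda)_{\lambda-\mu}$, $y \mapsto y\cdot v_\lambda$, is a bijection.

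Now assume $e_i y - y e_i = 0$ for all $i$. Applying both sides to $v_\lambda$ gives
\[
  e_i \cdot (y\cdot v_\lambda) = y \cdot (e_i \cdot v_\lambda) = 0 \qquad \text{for all } 1 \le i \le n.
\]
Thus $y\cdot v_\lambda$ is annihilated by every $e_i$. In the irreducible module $L(\lambda)$, any vector killed by all $e_i$ generates a proper submodule unless it is a scalar multiple of $v_\lambda$; since $L(\lambda)$ is simple, the space of such vectors is one-dimensional and concentrated in weight $\lambda$. But $y\cdot v_\lambda$ lies in the weight space $L(\lambda)_{\lambda-\mu}$ with $\mu \ne 0$, so $y\cdot v_\lambda = 0$. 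Bijectivity then gives $y = 0$.

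The second assertion about $x \in (U_{r,s}^+)_\mu$ follows by applying the Cartan anti-automorphism $\varphi$ from Proposition~\ref{prop:extra_structures}(1). It sends $(U_{r,s}^+)_\mu$ onto $(U_{r,s}^-)_{-\mu}$ and reverses products, so
\[
  \varphi(f_i x - x f_i) = \varphi(x) e_i - e_i \varphi(x),
\]
and the hypothesis on $x$ becomes the hypothesis already handled for $\varphi(x)$. Thus $\varphi(x) = 0$, and therefore $x = 0$. The only subtle point in the whole argument is ensuring that the bijection in Proposition~\ref{prop:big_representation} is available; this is why $\lambda$ must be chosen dominant enough (coordinate-wise larger than $\mu$) and lying in $P \cap Q$, which is no real obstacle since the conditions on $\lambda$ are open-ended.
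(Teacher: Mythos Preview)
Your proof is correct and follows essentially the same approach as the paper: choose $\lambda$ large enough so that Proposition~\ref{prop:big_representation} applies, observe that $y\cdot v_\lambda$ is killed by all $e_i$ and hence would be a highest weight vector of weight $\lambda-\mu<\lambda$ in the irreducible module $L(\lambda)$, forcing $y\cdot v_\lambda=0$ and thus $y=0$; then deduce the $U_{r,s}^+$ statement via $\varphi$. The paper phrases it as a direct contradiction (assuming $y\ne 0$) rather than via bijectivity, but the argument is the same.
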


\begin{proof} 
Given $\mu$, choose $\lambda$ as in Proposition~\ref{prop:big_representation}. Then, if $y \neq 0$, we have $yv_{\lambda} \neq 0$, 
where $v_{\lambda} \in L(\lambda)$ is a nonzero vector of weight $\lambda$. The assumption that $e_{i}y - ye_{i} = 0$ for all $i$ 
then implies that $e_{i}yv_{\lambda} = ye_{i}v_{\lambda} = 0$ for all $i$, and hence $yv_{\lambda}\in L(\lambda)$ 
is a highest weight vector of weight $\lambda - \mu < \lambda$. This contradicts the fact that $L(\lambda)$ is irreducible, so 
we must have $y = 0$. The result for $U_{r,s}^{+}$ is obtained by applying $\varphi$ of Proposition~\ref{prop:extra_structures}(1).
\end{proof}

We also need the following lemma:

\begin{lemma}\label{lem:commutator_formulas} 
(1) For all homogeneous $y \in U_{r,s}^{-}$ and all $1\leq i\leq n$, we have 
\begin{equation}\label{eq:commutator+}
  e_{i}y - ye_{i} = \frac{1}{r_{i} - s_{i}} \big( \omega_{i}p_{i}(y) - p_{i}'(y)\omega_{i}' \big).
\end{equation}

\noindent
(2) For all homogeneous $x \in U_{r,s}^{+}$ and all $1\leq i\leq n$, we have 
\[
  xf_{i} - f_{i}x = \frac{1}{r_{i} - s_{i}} \big( p_{i}(x)\omega_{i} - \omega_{i}'p_{i}'(x) \big).
\]
\end{lemma}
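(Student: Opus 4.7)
The plan is to prove both parts by induction on the height of the $Q$-grading weight, exploiting that $U^-_{r,s}(\fg)$ (resp.\ $U^+_{r,s}(\fg)$) is generated by $\{f_j\}_{j=1}^n$ (resp.\ $\{e_j\}_{j=1}^n$). Both sides of~\eqref{eq:commutator+} are linear in $y$, so it suffices to check the identity on monomials in the generators.

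For part (1), the base case $y=1$ is trivial since $p_i(1)=p_i'(1)=0$, while for $y=f_j$ the defining relation~\eqref{eq:R4} gives $e_i f_j - f_j e_i = \delta_{ij}\tfrac{\omega_i-\omega_i'}{r_i-s_i}$, matching $p_i(f_j)=p_i'(f_j)=\delta_{ij}$. For the inductive step, suppose the identity holds for homogeneous $y\in (U^-_{r,s})_{-\mu}$ and $y'\in (U^-_{r,s})_{-\mu'}$. Starting from
\[
  e_i(yy')-(yy')e_i \;=\; (e_i y - y e_i)y' + y(e_i y' - y' e_i),
\]
apply the inductive hypothesis to each factor, and then shuffle $\omega_i$ past $y$ and $\omega_i'$ past $y'$ using~\eqref{eq:R2}--\eqref{eq:R3}, which by the bilinearity of~\eqref{eq:generators-parity-2} take the uniform form $y\omega_i = (\omega_\mu',\omega_i)_H\,\omega_i y$ and $\omega_i' y' = (\omega_i',\omega_{\mu'})_H\, y'\omega_i'$. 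On the other side, the Leibniz rule~\eqref{eq:-derivation} expands $\omega_i p_i(yy') - p_i'(yy')\omega_i'$ into the very same four-term sum, establishing~\eqref{eq:commutator+} for the product $yy'$.

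Part (2) is entirely analogous: induct on the height of $\deg(x)$, taking~\eqref{eq:R4} as the base case and combining the Leibniz rule~\eqref{eq:+derivation} with~\eqref{eq:R2}--\eqref{eq:R3} in the inductive step. Alternatively, one may deduce it from part (1) by applying the Cartan anti-automorphism $\varphi$ of Proposition~\ref{prop:extra_structures}(1), after verifying that $\varphi$ intertwines the maps $p_i, p_i'$ on $U^-_{r,s}$ with their counterparts on $U^+_{r,s}$. The main (modest) obstacle in either route is the careful bookkeeping of the scalars $(\omega_\mu',\omega_i)_H$ and $(\omega_i',\omega_\mu)_H$, to confirm that the four terms produced by expanding the commutator match, term-for-term, the four terms produced by the Leibniz rule.
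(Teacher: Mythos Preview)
Your proposal is correct and follows essentially the same approach as the paper: induction on products using the base case~\eqref{eq:R4}, the commutation rules~\eqref{eq:R2}--\eqref{eq:R3} to move $\omega_i,\omega_i'$ past monomials, and the Leibniz rule~\eqref{eq:-derivation} to match the resulting four terms. For part~(2) the paper likewise observes that one may argue analogously or apply $\varphi$, noting $\varphi\circ p_i=p_i\circ\varphi$.
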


\begin{proof} 
For part (1), the equality is clear when $y = 1$, and for $y = f_{j}$ it follows from~\eqref{eq:R4}. Since both sides 
of~\eqref{eq:commutator+} are linear in $y$, it is enough to show that if~\eqref{eq:commutator+} holds for $y'$ and $y''$, 
then it also holds for $y = y'y''$. Using the identities $y'\omega_{i} = (\omega_{-\deg(y')}',\omega_{i})\omega_{i}y'$ 
and $\omega_{i}'y'' = (\omega_{i}',\omega_{-\deg(y'')})y''\omega_{i}'$, we obtain:
\begin{align*}
  e_{i}(y'y'') - (y'y'')e_{i} 
  &= (e_{i}y' - y'e_{i})y'' + y'(e_{i}y'' - y''e_{i}) \\
  &= \frac{1}{r_{i} - s_{i}} 
      \left( (\omega_{i}p_{i}(y') - p_{i}'(y')\omega_{i}')y'' + y'(\omega_{i}p_{i}(y'') - p_{i}'(y'')\omega_{i}') \right) \\
  &= \frac{1}{r_{i} - s_{i}} \left( \omega_{i}(p_{i}(y')y'' + (\omega_{-\deg(y')}',\omega_{i})y'p_{i}(y'')) - 
      (y'p_{i}'(y'') + (\omega_{i}',\omega_{-\deg(y'')})p_{i}'(y')y'')\omega'_i \right) \\
  &= \frac{1}{r_{i} - s_{i}}(\omega_{i}p_{i}(y'y'') - p_{i}'(y'y'')\omega_{i}'),
\end{align*}
where the last equality follows from~\eqref{eq:-derivation}.

As for part (2), one can either use a similar argument, or rather note that it follows by applying $\varphi$ of Proposition~\ref{prop:extra_structures}(1) 
to~\eqref{eq:commutator+} since $\varphi \circ p_{i} = p_{i} \circ \varphi$ and $\varphi \circ p'_{i} = p'_{i} \circ \varphi$ (the latter can be established 
by comparing~\eqref{eq:+derivation} and~\eqref{eq:-derivation}).
\end{proof}

\begin{prop}\label{prop:nondegeneracy-Hopf} 
The restriction of the Hopf pairing $(\cdot,\cdot)_{H}$ of Proposition~\ref{prop:pairing_2param} to 
$(U_{r,s}^{-})_{-\mu} \times (U_{r,s}^{+})_{\mu}$ is non-degenerate for all $\mu \in Q^{+}$.
\end{prop}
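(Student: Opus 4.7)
The plan is to proceed by induction on the height $\mathrm{ht}(\mu) = \sum_i m_i$ where $\mu = \sum_i m_i \alpha_i \in Q^+$. Since by Proposition~\ref{prop:graded-dimension} both weight spaces $(U_{r,s}^{-})_{-\mu}$ and $(U_{r,s}^{+})_{\mu}$ are finite-dimensional of equal dimension, it suffices to show that the left radical of the restricted pairing is zero, i.e.\ that any $y \in (U_{r,s}^{-})_{-\mu}$ satisfying $(y,x)_H = 0$ for all $x \in (U_{r,s}^{+})_{\mu}$ must equal $0$. The base case $\mu = 0$ is immediate, since $(1,1)_H = 1$.

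For the inductive step, fix $\mu \in Q^+$ with $\mathrm{ht}(\mu) > 0$, and assume the claim holds for all $\nu \in Q^+$ with $\mathrm{ht}(\nu) < \mathrm{ht}(\mu)$. Let $y \in (U_{r,s}^{-})_{-\mu}$ be in the left radical. For any index $i$ and any $x' \in (U_{r,s}^{+})_{\mu - \alpha_i}$, Proposition~\ref{prop:-p_adjoint} yields
\begin{equation*}
  (p_{i}(y), x')_H = (s_{i} - r_{i})\,(y, e_{i} x')_H = 0 \qquad \text{and} \qquad (p_{i}'(y), x')_H = (s_{i} - r_{i})\,(y, x' e_{i})_H = 0,
\end{equation*}
since $e_{i} x', x' e_{i} \in (U_{r,s}^{+})_{\mu}$. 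Thus $p_{i}(y)$ and $p_{i}'(y)$ lie in the left radical at the lower weight $-(\mu - \alpha_i)$, so by the inductive hypothesis $p_{i}(y) = p_{i}'(y) = 0$ for all $i$.

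Substituting this vanishing into the commutator formula~\eqref{eq:commutator+} of Lemma~\ref{lem:commutator_formulas}(1) gives $e_{i} y - y e_{i} = 0$ for every $1 \le i \le n$. Proposition~\ref{prop:commutator_zero} then forces $y = 0$, completing the induction. This establishes that the left radical of $(\cdot,\cdot)_H$ on $(U_{r,s}^{-})_{-\mu} \times (U_{r,s}^{+})_{\mu}$ is trivial; combined with the equality of dimensions from Proposition~\ref{prop:graded-dimension}, this yields non-degeneracy. (An entirely symmetric argument, using Proposition~\ref{prop:+p_adjoint}, Lemma~\ref{lem:commutator_formulas}(2), and the second half of Proposition~\ref{prop:commutator_zero}, would instead kill the right radical if one preferred not to invoke the dimension count.) No real obstacle is anticipated: all the essential tools—the derivation property, the commutator identities, and the representation-theoretic vanishing criterion—have already been assembled in the preceding lemmas.
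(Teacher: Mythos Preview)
Your proof is correct and follows essentially the same route as the paper's own argument: induction on the weight, using Proposition~\ref{prop:-p_adjoint} to push $p_i(y)$ and $p_i'(y)$ into the radical at a smaller weight, then Lemma~\ref{lem:commutator_formulas}(1) and Proposition~\ref{prop:commutator_zero} to conclude $y=0$. The only cosmetic difference is that the paper inducts on the partial order on $Q^+$ and treats both radicals by symmetric arguments, whereas you induct on height and invoke the dimension equality from Proposition~\ref{prop:graded-dimension} to handle the right radical---both are fine.
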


\begin{proof} 
The claim is obvious for $\mu = 0$. Now, suppose that the claim holds for all $\nu \in Q^{+}$ with $0 \le \nu < \mu$ 
and let $y \in (U_{r,s}^{-})_{-\mu}$ be an element such that $(y,x)_{H} = 0$ for all $x \in (U_{r,s}^{+})_{\mu}$. 
Then for all $1\leq i\leq n$ and $x \in (U_{r,s}^{+})_{\mu - \alpha_{i}}$, we have $0 = (y,e_{i}x)_{H} = (y,xe_{i})_{H}$, 
which implies $(p_{i}(y),x)_{H} = (p_{i}'(y),x)_{H} = 0$, due to Lemma~\ref{prop:-p_adjoint}. Since 
$p_{i}(y),p_{i}'(y) \in (U_{r,s}^{-})_{-(\mu - \alpha_{i})}$, we must have $p_{i}(y) = p_{i}'(y) = 0$ for all $1\leq i\leq n$ 
by the induction hypothesis. Then $e_{i}y - ye_{i} = 0$ for all $1\leq i\leq n$ by Lemma~\ref{lem:commutator_formulas}(1), 
and therefore Proposition~\ref{prop:commutator_zero} implies that $y = 0$, as claimed. 
If $x \in (U_{r,s}^{+})_{\mu}$ satisfies $(y,x)_{H} = 0$ for all $y \in (U_{r,s}^{-})_{-\mu}$, then 
applying similar arguments one shows that $x = 0$. This completes the proof.
\end{proof}

   %%%%%%%%%%%%%%%%%%%%%%%%%%%%%%%%%%%%%%%%%%%%%%%%%%%%%%%%%%%%%%%%%%%%%%%%%%
   %%%%%%%%%%%%%%%%%%%%%%%%%%%%%%%%%%%%%%%%%%%%%%%%%%%%%%%%%%%%%%%%%%%%%%%%%%
   %%%%%%%%%%%%%%%%%%%%%%%%%%%%%%%%%%%%%%%%%%%%%%%%%%%%%%%%%%%%%%%%%%%%%%%%%%

\section{Bilinear Forms}\label{sec:bilinear_forms}

In this Section, we discuss several other pairings and their relation to the Hopf pairing $(\cdot,\cdot)_H$ 
of~\eqref{eq:Hopf-parity}. This allows us to translate the problem solely into the construction of dual bases 
of $U^+_{r,s}$, endowed with a twisted coproduct and a twisted product on its tensor powers, with respect to 
a different symmetric form $(\cdot,\cdot)$. This technical part is essential to the rest of the paper, as it 
eliminates Cartan elements from consideration.

   %%%%%%%%%%%%%%%%%%%%%%%%%%%%%%%%%%%%%%%%%%%%%%%%%%%%%%%%%%%%%%%%%%%%%%%%%%

\subsection{Twisted product and compatible pairing}
\

Let $\cal{F}$ be the free associative $\BC(r,s)$-algebra generated by the finite alphabet $I = \{1,2,\ldots ,n\}$. 
Let $\cal{W}$ be the set of words in $I$, i.e.\ the monoid generated by $I$. We shall often use the notation 
$[i_{1} \dots i_{d}] = i_{1}i_{2} \dots i_{d}$ for the elements in $\cal{W}$, where $i_{1},\ldots ,i_{d} \in I$. 
The algebra $\cal{F}$ has a natural grading by the positive cone $Q^{+}$ of the root lattice $Q$, defined by $|i| = \alpha_{i}$; 
we write $|x|$ for the degree of a homogeneous $x$. 
For any $a,b \in \BC(r,s)$, we define the twisted product $\odot_{a,b}$ on $\cal{F}^{\otimes n}$ via
\begin{equation}\label{eq:twisted_product}
  (x_{1} \otimes \dots \otimes x_{n}) \odot_{a,b} (x_{1}' \otimes x_{2}' \otimes \dots \otimes x_{n}') = 
  a^{-\sum_{1 \le i < j \le n}\langle |x_{j}|,|x_{i}'|\rangle} 
  b^{\sum_{1 \le i < j \le n}\langle |x_{i}'|,|x_{j}|\rangle}
  x_{1}x_{1}' \otimes \dots \otimes x_{n}x_{n}'
\end{equation}
for all homogeneous $x_{i},x_{i}' \in \cal{F}$. In particular, evoking~\eqref{eq:generators-parity-2}, 
for all homogeneous $x,x',y,y' \in \cal{F}$, we have: 
\begin{align*}
  (x \otimes y) \odot_{r,s} (x' \otimes y') &= (\omega_{|y|}',\omega_{|x'|})^{-1}xx' \otimes yy', \\
  (x \otimes y) \odot_{s^{-1},r^{-1}}(x' \otimes y') &= (\omega_{|x'|}',\omega_{|y|})^{-1}xx' \otimes yy',
\end{align*}  
cf.~\eqref{eq:parity-convention}. For a fixed choice of $a,b$ as above, we define the algebra homomorphism 
\begin{equation}\label{eq:twisted_coproduct}
  \Delta_{a,b}\colon \cal{F} \longrightarrow (\cal{F} \otimes \cal{F},\odot_{a,b}) \qquad \mathrm{via} \qquad 
  \Delta(i) = i \otimes \emptyset + \emptyset \otimes i.
\end{equation}
For any element $x \in \cal{F}$, we shall use the notation
\[
  \Delta_{a,b}(x) = \sum_{(x)} x_{1;a,b} \otimes x_{2;a,b}.
\]
If $x$ is homogeneous, then we have $|x_{1;a,b}| + |x_{2;a,b}| = |x|$, by the definition of $\Delta_{a,b}$. 
If the values of $a$ and $b$ are clear from context, we will omit the subscripts $a,b$, and write simply  
$\Delta_{a,b}(x) = \sum_{(x)} x_{1} \otimes x_{2}$ instead. The following result shows 
that~\eqref{eq:twisted_coproduct} is in fact coassociative:

\begin{prop}\label{prop:coassociative} 
For any $a,b \in \BC(r,s)$, we have 
  $(\Delta_{a,b} \otimes 1) \circ \Delta_{a,b} = (1 \otimes \Delta_{a,b}) \circ \Delta_{a,b}$.
\end{prop}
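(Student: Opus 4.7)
The plan is to exploit the fact that $\cal{F}$ is free on $I$ and that $\Delta_{a,b}$ is defined as an algebra homomorphism. If I can arrange for both $(\Delta_{a,b}\otimes 1)\circ \Delta_{a,b}$ and $(1\otimes \Delta_{a,b})\circ \Delta_{a,b}$ to be algebra homomorphisms from $\cal{F}$ into a \emph{common} algebra $(\cal{F}^{\otimes 3}, \odot_{a,b}^{(3)})$, then universality of $\cal{F}$ reduces the claim to checking agreement on the generators $i \in I$. On generators this is immediate, since both compositions send $i$ to $i\otimes \emptyset\otimes \emptyset + \emptyset\otimes i\otimes \emptyset + \emptyset\otimes \emptyset\otimes i$.

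The setup step is to extend~\eqref{eq:twisted_product} to a triple twisted product on $\cal{F}^{\otimes 3}$ by
\[
  (x_1\otimes x_2\otimes x_3) \odot_{a,b}^{(3)} (y_1\otimes y_2\otimes y_3)
  = a^{-\sum_{1\le i<j\le 3}\langle |x_j|,|y_i|\rangle} b^{\sum_{1\le i<j\le 3}\langle |y_i|,|x_j|\rangle}\, x_1y_1\otimes x_2y_2\otimes x_3y_3
\]
on homogeneous tensors.  This is manifestly associative (the exponents add), so $\odot_{a,b}^{(3)}$ defines a $\BC(r,s)$-algebra structure on $\cal{F}^{\otimes 3}$.

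The core technical step, which I expect to be the only step requiring actual computation, is to verify that both $\Delta_{a,b}\otimes 1$ and $1\otimes \Delta_{a,b}$ are algebra homomorphisms from $(\cal{F}^{\otimes 2},\odot_{a,b})$ to $(\cal{F}^{\otimes 3},\odot_{a,b}^{(3)})$. For $\Delta_{a,b}\otimes 1$ and homogeneous $x_1\otimes x_2,\, y_1\otimes y_2\in \cal{F}^{\otimes 2}$, after expanding $\Delta_{a,b}(x_1)\odot_{a,b}\Delta_{a,b}(y_1)$ via the summation $\Delta_{a,b}(x_1)=\sum x_1^{(1)}\otimes x_1^{(2)}$ and similarly for $y_1$, the only identity needed is the bilinearity relation $\langle |x_2|,|y_1|\rangle = \langle |x_2|,|y_1^{(1)}|\rangle + \langle |x_2|,|y_1^{(2)}|\rangle$ (and its symmetric counterpart), which makes the $a$- and $b$-exponents on both sides agree. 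The verification for $1\otimes \Delta_{a,b}$ is entirely parallel, using $\langle |x_1|+|x_2|,|y_1|\rangle = \langle |x_1|,|y_1|\rangle + \langle |x_2|,|y_1|\rangle$ at the appropriate step.

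Once this is established, both $(\Delta_{a,b}\otimes 1)\circ \Delta_{a,b}$ and $(1\otimes \Delta_{a,b})\circ \Delta_{a,b}$ are compositions of algebra homomorphisms, hence algebra homomorphisms from $\cal{F}$ to $(\cal{F}^{\otimes 3},\odot_{a,b}^{(3)})$. Since they coincide on the free generating set $I$, they coincide on all of $\cal{F}$, proving the claim. The expected obstacle is purely bookkeeping of the twisting exponents in the algebra-homomorphism check; no structural input beyond the bilinearity of $\langle\cdot,\cdot\rangle$ and the construction~\eqref{eq:twisted_coproduct} of $\Delta_{a,b}$ is needed.
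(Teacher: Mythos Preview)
Your proposal is correct and follows essentially the same approach as the paper: both argue that $(\Delta_{a,b}\otimes 1)\circ\Delta_{a,b}$ and $(1\otimes\Delta_{a,b})\circ\Delta_{a,b}$ are algebra homomorphisms $\cal{F}\to(\cal{F}^{\otimes 3},\odot_{a,b})$ agreeing on the free generators $I$, reducing the proof to checking that $\Delta_{a,b}\otimes 1$ and $1\otimes\Delta_{a,b}$ are algebra homomorphisms $(\cal{F}^{\otimes 2},\odot_{a,b})\to(\cal{F}^{\otimes 3},\odot_{a,b})$ via the bilinearity of $\langle\cdot,\cdot\rangle$ and the degree additivity $|x_1'|+|x_2'|=|x'|$. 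The only cosmetic difference is that the paper already has $\odot_{a,b}$ defined on $\cal{F}^{\otimes n}$ for all $n$, so your explicit definition of $\odot_{a,b}^{(3)}$ is redundant in context.
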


\begin{proof} 
This is clearly true on the generators, so it suffices to show that $\Delta_{a,b} \otimes 1$ 
and $1 \otimes \Delta_{a,b}$ are both algebra homomorphisms 
$(\cal{F}^{\otimes 2},\odot_{a,b}) \to (\cal{F}^{\otimes 3},\odot_{a,b})$.

To this end, let $x,x',y,y' \in \cal{F}$ be any homogeneous elements. Then, we have:
\begin{align*}
  (\Delta_{a,b} \otimes 1)((x \otimes y) \odot_{a,b} (x' \otimes y')) 
  &= a^{-\langle |y|, |x'| \rangle}b^{\langle |x'|,|y|\rangle}(\Delta_{a,b}(x) \odot_{a,b} \Delta_{a,b}(x')) \otimes yy' \\
  &= a^{-\langle |y|, |x'|\rangle}b^{\langle |x'|, |y|\rangle }
     \sum_{(x)(x')} a^{-\langle |x_{2}|, |x_{1}'|\rangle} b^{\langle|x_{1}'|, |x_{2}|\rangle}
       x_{1}x_{1}' \otimes x_{2}x_{2}' \otimes yy'
\end{align*}
as well as 
\begin{align*}
  & (\Delta_{a,b}(x) \otimes y) \odot_{a,b}(\Delta_{a,b}(x') \otimes y') = 
    \sum_{(x)(x')} (x_{1} \otimes x_{2} \otimes y) \odot_{a,b}(x_{1}' \otimes x_{2}' \otimes y') \\
  &\qquad= \sum_{(x)(x')} a^{-\langle |x_{2}|, |x_{1}'|\rangle - \langle |y|, |x_{1}'| + |x_{2}'|\rangle} 
     b^{\langle |x_{1}'|, |x_{2}|\rangle + \langle |x_{1}'| + |x_{2}'|, |y|\rangle} 
     x_{1}x_{1}' \otimes x_{2}x_{2}' \otimes yy' \\
  &\qquad= a^{-\langle |y|, |x'|\rangle}b^{\langle |x'|, |y|\rangle } 
     \sum_{(x)(x')} a^{-\langle |x_{2}|, |x_{1}'|\rangle} b^{\langle|x_{1}'|, |x_{2}|\rangle} 
     x_{1}x_{1}' \otimes x_{2}x_{2}' \otimes yy',
\end{align*}
where we used $|x_{1}'| + |x_{2}'| = |x'|$ in the last equality. Comparison of the above two formulas 
shows that $\Delta_{a,b}\otimes 1$ is an algebra homomorphism. The proof that $1 \otimes \Delta_{a,b}$ 
is an algebra homomorphism is similar.
\end{proof}

Consequently, we can define a product, also denoted by $\odot_{a,b}$, on $\cal{F}^{*}$ via 
\begin{equation*}
  (f \odot_{a,b} g)(x) = (f \otimes g)(\Delta_{a,b}(x)). 
\end{equation*}
The coassociativity property of Proposition~\ref{prop:coassociative} guarantees that this product is associative. 
The identity element of $\cal{F}^{*}$ is the map $\epsilon\colon \cal{F} \to \BC(r,s)$ given by 
$\epsilon(\emptyset) = 1$ and $\epsilon(i_1 \ldots i_d) = 0$ for any $i_1,\ldots,i_d\in I$ and $d>0$. 
We can now use this to prove the following theorem:

\begin{theorem}\label{thm:pairing} 
There is a unique bilinear form $\{\cdot,\cdot\}\colon \cal{F} \times \cal{F} \to \BC(r,s)$ satisfying 
\begin{itemize}

\item[(1)] $\{1,1\} = 1$, 

\item[(2)] $\{i,j\} = \delta_{ij}$ for all $i,j \in I$,

\item[(3)] $\{xy,z\} = \{x \otimes y, \Delta_{s^{-1},r^{-1}}(z)\}$ for all $x,y,z \in \cal{F}$, 

\item[(4)] $\{x,yz\} = \{\Delta_{r,s}(x),y \otimes z\}$ for all $x,y,z \in \cal{F}$,
\end{itemize}
where $\{x'\otimes x'',y'\otimes y''\}=\{x',y'\} \{x'',y''\}$ for any $x',x'',y',y''\in \cal{F}$.
\end{theorem}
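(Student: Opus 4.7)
The plan is to establish uniqueness by a short recursion, then construct the pairing via iterated application of (3) and verify the four axioms, with (4) being the main obstacle.

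For \emph{uniqueness}, applying (3) with $x = y = 1$ and $z = i$ gives $\{1, i\} = 2\{1, i\}$, forcing $\{1, i\} = 0$; iterating yields $\{1, w\} = 0$ for every nonempty word $w$, and symmetrically (4) gives $\{w, 1\} = 0$. Denoting by $\Delta_{s^{-1},r^{-1}}^{(p-1)} \colon \cal{F} \to \cal{F}^{\otimes p}$ the $(p-1)$-fold iterated coproduct (well-defined by Proposition~\ref{prop:coassociative}), iteration of (3) on the first argument yields
\[
  \{i_{1} \cdots i_{p},\, w\} \,=\, \big\{i_{1} \otimes \cdots \otimes i_{p},\ \Delta_{s^{-1},r^{-1}}^{(p-1)}(w)\big\},
\]
which is a sum of products of single-letter pairings, completely determined by (2). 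Hence, the pairing is unique.

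For \emph{existence}, I would take the displayed formula as the definition of $\{w_{1}, w_{2}\}$ on basis words $w_{1} = i_{1} \cdots i_{p}$, extending bilinearly. Conditions (1) and (2) are immediate, and (3) for arbitrary $x, y \in \cal{F}$ follows by induction on the length of $x$, using coassociativity to split off one layer of the iterated coproduct. A useful observation along the way is $Q$-homogeneity: $\{u, v\} = 0$ unless $|u| = |v|$, since only matching single-letter tensor components contribute.

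The main obstacle is verifying (4). I would induct on the length of $x$. The base cases $x \in \{1\} \cup I$ follow by $Q$-homogeneity, as both sides vanish unless $yz$ has length at most one. For the inductive step, write $x = x'x''$ with $x', x''$ shorter, and expand the left-hand side $\{x'x'',\, yz\}$ via (3) together with the multiplicativity of $\Delta_{s^{-1},r^{-1}}$ against $\odot_{s^{-1},r^{-1}}$, producing a twist factor $s^{\langle |y_{2}|, |z_{1}|\rangle} r^{-\langle |z_{1}|, |y_{2}|\rangle}$; expand the right-hand side via the analogous multiplicativity of $\Delta_{r,s}$ against $\odot_{r,s}$ and the inductive hypothesis applied separately to $x'$ and $x''$, producing a twist factor $r^{-\langle |x'^{(2)}|, |x''^{(1)}|\rangle} s^{\langle |x''^{(1)}|, |x'^{(2)}|\rangle}$. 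By $Q$-homogeneity, nonzero contributions satisfy $|x'^{(2)}| = |z_{1}|$ and $|x''^{(1)}| = |y_{2}|$, under which the two twist factors coincide; the remaining paired factors agree by the induction hypothesis, completing the verification of (4).
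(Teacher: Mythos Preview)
Your proposal is correct and follows essentially the same approach as the paper: existence via the iterated $\Delta_{s^{-1},r^{-1}}$-coproduct (the paper packages this as an algebra homomorphism $\psi\colon \cal{F}\to(\cal{F}^{*},\odot_{s^{-1},r^{-1}})$, $i\mapsto i^{*}$, which makes (3) automatic), and verification of (4) by induction on the length of $x$, splitting $x=x'x''$ and matching the two twist factors via $Q$-homogeneity exactly as you describe. Your explicit treatment of uniqueness is a small addition over the paper, which leaves it implicit.
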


\begin{proof} 
For each $i \in I$, define a linear map $i^{*}\colon \cal{F} \to \BC(r,s)$ given by $i^{*}(i) = 1$ 
and $i^{*}(x) = 0$ if $x$ is any word other than $i$. We now define an algebra homomorphism 
$\psi\colon \cal{F} \to (\cal{F}^{*},\odot_{s^{-1},r^{-1}})$ via $\psi(i) = i^{*}$ . 
Then we may define a bilinear form on $\cal{F}$ by 
\[
  \{x,y\} = \psi(x)(y) \qquad \text{for all}\quad x,y \in \cal{F}.
\] 
First, it is easy to show that if $|x| = \mu$, then $\psi(x) \in \cal{F}^{*}_{\mu}$, i.e.\ $\psi(x)(y) = 0$ unless 
$y \in \cal{F}_{\mu}$. This translates to $\{x,y\} = 0$ unless $|x| = |y|$. Moreover, this bilinear form clearly 
satisfies (1) and (2), while property (3) follows from the fact that $\psi$ is an algebra homomorphism. 
Indeed, for any $x,y,z \in \cal{F}$, we have
\[
  \{xy,z\} = \psi(xy)(z) = (\psi(x) \odot_{s^{-1},r^{-1}}\psi(y))(z) = 
  (\psi(x) \otimes \psi(y))(\Delta_{s^{-1},r^{-1}}(z)) = \{x \otimes y,\Delta_{s^{-1},r^{-1}}(z)\}.
\]
It is sufficient to prove (4) for any word $x \in \cal{W}$, and we shall do so by induction on $\hgt(|x|)$. 
First, we note that it is clearly true whenever $|x|$ has height $0$ or $1$, for any $y,z \in \cal{F}$. 
If $\hgt(|x|) > 1$, we can write $x = x'x''$ for $\hgt(|x'|),\hgt(|x''|) < \hgt(|x|)$. 
Then, by the induction assumption, we obtain:
\begin{align*}
  &\{x,yz\} \\
  &= \{x' \otimes x'',\Delta_{s^{-1},r^{-1}}(yz)\} \\
  &= \sum_{(y)(z)} (\omega_{|z_{1;s^{-1},r^{-1}}|}',\omega_{|y_{2;s^{-1},r^{-1}}|})^{-1} 
     \{x',y_{1;s^{-1},r^{-1}}z_{1;s^{-1},r^{-1}}\} \{x'',y_{2;s^{-1},r^{-1}} z_{2;s^{-1},r^{-1}}\} \\
  &= \sum_{(y)(z)} (\omega_{|z_{1;s^{-1},r^{-1}}|}',\omega_{|y_{2;s^{-1},r^{-1}}|})^{-1}
     \{\Delta_{r,s}(x'),y_{1;s^{-1},r^{-1}} \otimes z_{1;s^{-1},r^{-1}}\} 
     \{\Delta_{r,s}(x''),y_{2;s^{-1},r^{-1}} \otimes z_{2;s^{-1},r^{-1}}\} \\
  &= \sum_{\substack{(y)(z)\\(x')(x'')}} (\omega_{|z_{1;s^{-1},r^{-1}}|}',\omega_{|y_{2;s^{-1},r^{-1}}|})^{-1}
     \{x'_{1;r,s},y_{1;s^{-1},r^{-1}}\} \{x'_{2;r,s},z_{1;s^{-1},r^{-1}}\} 
     \{x''_{1;r,s},y_{2;s^{-1},r^{-1}}\} \{x''_{2;r,s},z_{2;s^{-1},r^{-1}}\}
\end{align*}
while
\begin{align*}
  &\{\Delta_{r,s}(x),y \otimes z\} = \{\Delta_{r,s}(x'x''),y \otimes z\} \\
  &\quad= \sum_{(x')(x'')} (\omega_{|x'_{2;r,s}|}',\omega_{|x''_{1;r,s}|})^{-1}
     \{x'_{1;r,s}x''_{1;r,s},y\} \{x'_{2,r,s}x''_{2;r,s},z\} \\
  &\quad= \sum_{\substack{(y)(z)\\(x')(x'')}} (\omega_{|x'_{2;r,s}|}',\omega_{|x''_{1;r,s}|})^{-1}
     \{x'_{1;r,s},y_{1;s^{-1},r^{-1}}\} \{x'_{2;r,s},z_{1;s^{-1},r^{-1}}\} 
     \{x''_{1;r,s},y_{2;s^{-1},r^{-1}}\} \{x''_{2;r,s},z_{2;s^{-1},r^{-1}}\}.
\end{align*}
But since $\{u,v\} = 0$ unless $|u| = |v|$, it follows that 
  $(\omega_{|x'_{2;r,s}|}',\omega_{|x''_{1;r,s}|})^{-1} = (\omega_{|z_{1;s^{-1},r^{-1}}|}',\omega_{|y_{2;s^{-1},r^{-1}}|})^{-1}$ 
for all nonzero terms in the last sum above. Thus, we have $\{x,yz\} = \{\Delta_{r,s}(x),y \otimes z\}$, as desired.

The uniqueness of $\{\cdot,\cdot\}$ satisfying (1)--(4) is clear.
\end{proof}

Let us now introduce two linear maps $\partial_i,\partial_{i}'$ that we will use frequently throughout the remainder 
of this Section. For any $x\in \cal{F}$, we have 
\begin{equation}\label{eq:coproduct_structure}
  \Delta_{r,s}(x) = 
  x \otimes 1 + \sum_{i = 1}^{n} \partial_{i}(x) \otimes i + \ldots + \sum_{i = 1}^{n} i \otimes \partial_{i}'(x) + 1 \otimes x.
\end{equation}
The resulting linear maps $\partial_i,\partial_{i}'\colon \cal{F} \to \cal{F}$ satisfy (and are uniquely determined by) 
$\partial_{i}(1) = \partial_{i}'(1) = 0$, $\partial_{i}(j) = \partial_{i}'(j) = \delta_{ij}$,  and the following 
analogues of the Leibniz rule: 
\begin{equation}\label{eq:Leibniz_partial}
\begin{split}
  & \partial_{i}(xx') = x\partial_{i}(x') + (\omega_{i}',\omega_{|x'|})^{-1}\partial_{i}(x)x', \\
  & \partial_{i}'(xx') = \partial_{i}'(x)x' + (\omega_{|x|}',\omega_{i})^{-1}x\partial_{i}'(x'),
\end{split}
\end{equation}
for all homogeneous elements $x,x' \in \cal{F}$.

\begin{lemma}\label{lem:derivative_adjoint}
The bilinear form of Theorem \ref{thm:pairing} has the following properties:
\begin{equation*}
  \{x,iy\} = \{\partial_{i}'(x),y\} \qquad \mathrm{and} \qquad \{x,yi\} = \{\partial_{i}(x),y\}
  \qquad \mathrm{for\ any} \ x,y \in \cal{F},\   i \in I.
\end{equation*}
\end{lemma}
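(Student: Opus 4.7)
The plan is to derive both identities directly from property~(4) of Theorem~\ref{thm:pairing} together with the explicit shape of the coproduct recorded in~\eqref{eq:coproduct_structure}, exploiting the homogeneity of the form $\{\cdot,\cdot\}$ established in the proof of Theorem~\ref{thm:pairing} (namely that $\{u,v\}=0$ unless $|u|=|v|$). By bilinearity, I may assume $x$ and $y$ are homogeneous; moreover both sides of each claimed identity vanish unless $|x|=|y|+\alpha_i$, since $\partial_i$ and $\partial_i'$ lower degree by $\alpha_i$. Under this assumption, the reduction to a single bidegree component of $\Delta_{r,s}(x)$ is the whole point.

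For the first identity, property~(4) gives
\[
  \{x,iy\} \;=\; \{\Delta_{r,s}(x),\, i\otimes y\},
\]
and since $\{a\otimes b,\,i\otimes y\}=\{a,i\}\{b,y\}$ vanishes unless $|a|=\alpha_i$ and $|b|=|y|$, only the bidegree $(\alpha_i,|y|)$ summand of $\Delta_{r,s}(x)$ can contribute. Inspecting~\eqref{eq:coproduct_structure}, which is the defining equation for $\partial_i'$, this summand is exactly $i\otimes\partial_i'(x)$. Multiplicativity of the pairing together with $\{i,i\}=1$ then yields $\{x,iy\}=\{\partial_i'(x),y\}$. The second identity is symmetric: applying property~(4) to $\{x,yi\}=\{\Delta_{r,s}(x),y\otimes i\}$, the only surviving bidegree is $(|y|,\alpha_i)$, and the corresponding summand of $\Delta_{r,s}(x)$ is $\partial_i(x)\otimes i$ by~\eqref{eq:coproduct_structure}, giving $\{x,yi\}=\{\partial_i(x),y\}$.

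There is no serious obstacle here; the lemma is essentially a dual reformulation of~\eqref{eq:coproduct_structure} through the bilinear form. The only subtlety worth flagging is that the explicit expansion in~\eqref{eq:coproduct_structure} may list the same summand twice in low-degree edge cases (e.g.\ when $|x|=\alpha_j$), but this never affects the argument, since we are picking off a single bidegree component and comparing against the definitions of $\partial_i,\partial_i'$ which are uniquely characterized by that formula.
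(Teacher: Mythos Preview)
Your proof is correct and follows exactly the approach indicated in the paper: combine property~(4) of Theorem~\ref{thm:pairing} with the homogeneity of $\{\cdot,\cdot\}$ and the definition~\eqref{eq:coproduct_structure} of $\partial_i,\partial_i'$ to isolate the relevant bidegree component, then use $\{i,i\}=1$. The paper's own proof is the one-line remark that this follows immediately from these ingredients, and you have simply written out that immediate argument in full.
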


\begin{proof}
This follows immediately by combining~\eqref{eq:coproduct_structure} with Theorem~\ref{thm:pairing}(2,4).    
\end{proof}

Likewise, we introduce linear maps $\tilde{\partial}_{i},\tilde{\partial}_{i}'$ on $\cal{F}$ via:
\begin{equation}\label{eq:coproduct_structure_2}
  \Delta_{s^{-1},r^{-1}}(x) = x \otimes 1 + \sum_{i = 1}^{n} \tilde{\partial}_{i}(x) \otimes i + \ldots + 
  \sum_{i = 1}^{n}i \otimes \tilde{\partial}_{i}'(x) + 1 \otimes x.
\end{equation}
Similarly to above, the resulting linear maps satisfy (and are uniquely determined by) 
  $\tilde{\partial}_{i}(1) = \tilde{\partial}_{i}'(1) = 0$, 
  $\tilde{\partial}_{i}(j) = \tilde{\partial}_{i}'(j) = \delta_{ij}$,   
and the following analogues of the Leibniz rule: 
\begin{align*}
  & \tilde{\partial}_{i}(xx') = x\tilde{\partial}_{i}(x') + (\omega_{|x'|}',\omega_{i})^{-1}\tilde{\partial}_{i}(x)x', \\
  & \tilde{\partial}_{i}'(xx') = \tilde{\partial}_{i}'(x)x' + (\omega_{i}',\omega_{|x|})^{-1}x\tilde{\partial}_{i}'(x'),
\end{align*}
for all homogeneous elements $x,x' \in \cal{F}$. The following is proved similarly to Lemma~\ref{lem:derivative_adjoint}:

\begin{lemma}\label{lem:derivative_adjoint_2}
The bilinear form of Theorem \ref{thm:pairing} has the following properties:
\begin{equation*}
  \{ix,y\} = \{x,\tilde{\partial}_{i}'(y)\} \qquad \mathrm{and} \qquad \{xi,y\} = \{x,\tilde{\partial}_{i}(y)\}
  \qquad \mathrm{for\ any} \ x,y \in \cal{F},\   i\in I.
\end{equation*}
\end{lemma}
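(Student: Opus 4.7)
The strategy mirrors that of Lemma~\ref{lem:derivative_adjoint}, with the roles of properties (3) and (4) of Theorem~\ref{thm:pairing} interchanged. The maps $\tilde\partial_i,\tilde\partial_i'$ were set up precisely so as to extract the relevant components of $\Delta_{s^{-1},r^{-1}}$, which is the coproduct featured in property~(3), so I expect the proof to be a short bookkeeping argument rather than requiring any new ideas.

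For the first identity, I would factor $ix = i \cdot x$ and apply Theorem~\ref{thm:pairing}(3) to obtain
\[
  \{ix,y\} \;=\; \{i \otimes x,\, \Delta_{s^{-1},r^{-1}}(y)\}.
\]
I would then substitute the expansion~\eqref{eq:coproduct_structure_2} and analyze which summands can survive after pairing against $i \otimes x$. The factor $\{i,\xi\}$ in the first tensor slot vanishes unless $\xi$ has weight $\alpha_i$, using the homogeneity statement $\{u,v\}=0$ whenever $|u|\neq|v|$ that was proved inside Theorem~\ref{thm:pairing}. Together with Theorem~\ref{thm:pairing}(2), namely $\{i,j\}=\delta_{ij}$, this forces every contribution to come from the block $\sum_j j\otimes\tilde\partial_j'(y)$ and leaves only the diagonal term $j=i$, producing $\{x,\tilde\partial_i'(y)\}$ as claimed.

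The second identity is handled in the mirror-image way: factoring $xi = x \cdot i$ and applying Theorem~\ref{thm:pairing}(3) yields $\{xi,y\} = \{x\otimes i,\,\Delta_{s^{-1},r^{-1}}(y)\}$. The same weight and Kronecker-delta considerations, now applied to the second tensor slot, pick out a single surviving term from $\sum_j \tilde\partial_j(y)\otimes j$, namely $\tilde\partial_i(y)\otimes i$, giving $\{x,\tilde\partial_i(y)\}$.

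I do not anticipate a serious obstacle here. The substantive work---defining $\tilde\partial_i,\tilde\partial_i'$ compatibly with $\Delta_{s^{-1},r^{-1}}$ and proving that $\{\cdot,\cdot\}$ interacts with the two twisted coproducts via Theorem~\ref{thm:pairing}(3,4)---has already been done. The only minor subtlety I would flag is that the edge terms $y\otimes 1$ and $1\otimes y$ in~\eqref{eq:coproduct_structure_2} could in principle overlap with the interior sums when $|y|$ is very small, but the weight identity $\alpha_i+|x|=|y|$ forces a unique surviving contribution in every case, so no genuine ambiguity arises.
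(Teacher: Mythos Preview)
Your proposal is correct and follows essentially the same approach as the paper, which simply notes that the lemma is ``analogous to Lemma~\ref{lem:derivative_adjoint}'' without spelling out details. The argument you outline---applying Theorem~\ref{thm:pairing}(3), expanding via~\eqref{eq:coproduct_structure_2}, and isolating the surviving term using homogeneity together with $\{i,j\}=\delta_{ij}$---is exactly the intended analogue of the proof of Lemma~\ref{lem:derivative_adjoint}.
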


   %%%%%%%%%%%%%%%%%%%%%%%%%%%%%%%%%%%%%%%%%%%%%%%%%%%%%%%%%%%%%%%%%%%%%%%%%%

\subsection{Reduction modulo radicals}
\

Let $\cal{I}$ be the left radical of the form $\{\cdot,\cdot\}$ introduced in Theorem~\ref{thm:pairing}:
\begin{equation}\label{eq:left-radical}
  \cal{I}=\big\{x \in \cal{F} \,\big|\, \{x,y\} = 0 \ \mathrm{for\ any}\ y \in \cal{F}\}.    
\end{equation}
We define $\bar{\cal{F}} = \cal{F}/\cal{I}$, and denote the image of $i$ in $\cal{F}/\cal{I}$ by $e_{i}$ 
(we shall see later in~\eqref{eq:F-vs-U} that $\bar{\cal{F}} \simeq U_{r,s}^{+}$, justifying this notation). 
It is easy to see that $\cal{I}$ is a homogeneous ideal, so that $\bar{\cal{F}}$ inherits the $Q^+$-grading from $\cal{F}$. 
Moreover, $\cal{I}$ is actually a two-sided ideal of $\cal{F}$, according to Theorem~\ref{thm:pairing}(3), 
so that $\bar{\cal{F}}$ is an algebra. Let us now define the divided powers
\begin{equation*}%\label{eq:divided_power}
  e_{i}^{(k)} = \frac{e_{i}^{k}}{[k]_{r_{i},s_{i}}!} \qquad \mathrm{for\ all} \quad k \in \BZ_{\geq 0},\ i \in I.
\end{equation*}
Our next goal is to prove the following theorem:

\begin{theorem}\label{thm:serre} 
For any $i \neq j$, the following relation holds in the algebra $\bar{\cal{F}}$:
\[
  \sum_{k = 0}^{1 - a_{ij}} (-1)^{k} (r_{i}s_{i})^{\frac{1}{2}k(k-1)} (rs)^{k\langle \alpha_{j},\alpha_{i}\rangle}
  e_{i}^{(1 - a_{ij} - k)}e_{j}e_{i}^{(k)} = 0.
\]
\end{theorem}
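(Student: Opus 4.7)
Denote by $S_{ij}$ the left-hand side of the claimed identity, viewed as an element of $\cal{F}$. Because the form $\{\cdot,\cdot\}$ respects the $Q^{+}$-grading and $S_{ij}$ is homogeneous of weight $\mu=(1-a_{ij})\alpha_{i}+\alpha_{j}$, proving $S_{ij}\in \cal{I}$ reduces to verifying $\{S_{ij},w\}=0$ for every word $w\in\cal{W}$ of weight $\mu$. Such words are precisely the $[i^{a}ji^{b}]$ with $a+b=1-a_{ij}$, so the whole theorem collapses to a finite collection of explicit scalar identities indexed by $(a,b)$.

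The plan is to compute each pairing $\{e_{i}^{(p)}e_{j}e_{i}^{(q)},[i^{a}ji^{b}]\}$ in closed form and then show that the alternating sum over $k$ vanishes. First I would establish, by induction on $k$ using the $(r_{i},s_{i})$-analogue of Pascal's rule, a formula
\[
  \Delta_{r,s}(e_{i}^{k})=\sum_{m=0}^{k}\qbinom{k}{m}_{r_{i},s_{i}}\eta_{k,m}\cdot e_{i}^{m}\otimes e_{i}^{k-m},
\]
where $\eta_{k,m}$ is the monomial in $r,s$ produced by repeated application of the twisted product $\odot_{r,s}$ from~\eqref{eq:twisted_product}. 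Iterating Theorem~\ref{thm:pairing}(4) then unfolds the pairing $\{e_{i}^{(p)}e_{j}e_{i}^{(q)},[i]^{\otimes a}\otimes[j]\otimes[i]^{\otimes b}\}$ slot-by-slot: by Theorem~\ref{thm:pairing}(2) each single-letter slot contracts to an indicator, and the net output is a product of $(r_{i},s_{i})$-binomials with an explicit monomial prefactor in $r,s$ depending on $a,b,p,q,k$ and on the Cartan pairings $(\omega'_{\cdot},\omega_{\cdot})$ accumulated along the way.

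Substituting the resulting closed form back into $S_{ij}$ reduces the problem to an identity of the shape
\[
  \sum_{k=0}^{N}(-1)^{k}(r_{i}s_{i})^{k(k-1)/2}\qbinom{N}{k}_{r_{i},s_{i}}z^{k}=0
\]
for $N=1-a_{ij}$ and a specific $z=z(a,b)$; by the standard two-parameter $q$-binomial theorem the left side equals $\prod_{m=0}^{N-1}\bigl(1-z\,r_{i}^{N-1-m}s_{i}^{m}\bigr)$, which vanishes precisely when $z$ meets one of the critical values $r_{i}^{-(N-1-m)}s_{i}^{-m}$. I expect the main obstacle to lie in exactly this bookkeeping step: tracking how the Cartan factors accumulated from $\odot_{r,s}$ combine with the explicit prefactor $(rs)^{k\langle\alpha_{j},\alpha_{i}\rangle}$ in the statement, and exploiting the Ringel relation $\langle\alpha_{i},\alpha_{j}\rangle+\langle\alpha_{j},\alpha_{i}\rangle=(\alpha_{i},\alpha_{j})$, to verify that for every admissible pair $(a,b)$ the resulting $z(a,b)$ indeed lands on the critical locus. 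This mirrors the one-parameter verification of Leclerc and Rosso and the super-case computation of Clark--Hill--Wang, but demands careful attention to the $r\leftrightarrow s$ asymmetry intrinsic to the Ringel form.
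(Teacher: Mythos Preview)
Your approach is sound and would succeed, but the paper takes a shorter route. Rather than pairing $S_{ij}$ directly against each of the $2-a_{ij}$ monomials $[i^{a}ji^{b}]$, the paper invokes Proposition~\ref{prop:zero_criteria+}: to prove $S_{ij}=0$ in $\bar{\cal{F}}$ it suffices to verify $\partial'_{p}(S_{ij})=0$ for every $p\in I$, and only $p=i$ and $p=j$ are nontrivial. The case $p=i$ is then a two-term telescoping argument (the $k$-th summand of $\partial'_i(S_{ij})$ cancels against the $(k{+}1)$-st), while the case $p=j$ collapses immediately to the single identity
\[
  \sum_{t=0}^{1-a_{ij}}(-1)^{t}(r_{i}s_{i})^{\frac{1}{2}t(t-1)}r_{i}^{ta_{ij}}\qbinom{1-a_{ij}}{t}_{r_{i},s_{i}}=0,
\]
which is the specialization of the one-parameter Gauss identity at $q=(r_is_i^{-1})^{1/2}$.

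Your plan instead unfolds the full iterated coproduct and must verify, for \emph{each} pair $(a,b)$, that the accumulated twist monomial $z(a,b)$ lands on a root of the factored $q$-binomial product; this works but requires tracking Cartan factors through all $2-a_{ij}$ tensor slots simultaneously. Since $\{x,[i_1\cdots i_d]\}=\partial'_{i_d}\cdots\partial'_{i_1}(x)$ by iterating Lemma~\ref{lem:derivative_adjoint}, the paper's method is really just the observation that vanishing after stripping one letter already forces vanishing after stripping all of them --- it compresses your $2-a_{ij}$ separate identities into two short computations and avoids the bookkeeping you correctly anticipate as the main difficulty.
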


Before proceeding to the proof of this result, we observe the following easy consequence of Lemma~\ref{lem:derivative_adjoint}:

\begin{lemma}
The ideal $\cal{I}$ of~\eqref{eq:left-radical} is stable under the maps $\partial_i,\partial_{i}'$ 
of~\eqref{eq:coproduct_structure} for all $i \in I$.    
\end{lemma}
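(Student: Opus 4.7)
The plan is to invoke Lemma~\ref{lem:derivative_adjoint} directly. Suppose $x \in \cal{I}$, so that $\{x,z\} = 0$ for every $z \in \cal{F}$. To show $\partial_i(x) \in \cal{I}$, I need to verify $\{\partial_i(x), y\} = 0$ for all $y \in \cal{F}$. By Lemma~\ref{lem:derivative_adjoint}, this pairing equals $\{x, yi\}$, which vanishes because $yi \in \cal{F}$ and $x \in \cal{I}$. The argument for $\partial_i'(x)$ is identical: $\{\partial_i'(x), y\} = \{x, iy\} = 0$ by the other half of Lemma~\ref{lem:derivative_adjoint}.

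That is essentially the entire proof — there is no real obstacle, the lemma is set up precisely to adjoin $\partial_i, \partial_i'$ to the pairing from the right-multiplication side. The only thing to be slightly careful about is that the lemma is stated on one side of the pairing (with $\partial_i, \partial_i'$ acting on the first argument), whereas $\cal{I}$ is defined as the \emph{left} radical, so the matching is exactly right and no symmetric version of the lemma is needed.
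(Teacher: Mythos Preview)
Your proof is correct and matches the paper's approach exactly: the paper states this lemma as an immediate consequence of Lemma~\ref{lem:derivative_adjoint}, which is precisely what you do.
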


Therefore, we obtain the same-named linear maps $\partial_i,\partial_{i}'\colon \bar{\cal{F}}\to \bar{\cal{F}}$ 
on the quotient algebra $\bar{\cal{F}}$. Moreover, we have the following result:

\begin{prop}\label{prop:zero_criteria+} 
(1) If $x\in \bar{\cal{F}}$ satisfies $\partial_{i}(x) = 0$ for all $i \in I$, then $x = 0$.

\medskip
\noindent
(2) If $x \in \bar{\cal{F}}$ satisfies $\partial_{i}'(x) = 0$ for all $i \in I$, then $x = 0$.
\end{prop}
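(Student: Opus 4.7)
The plan is to reduce both parts of the proposition directly to the definition of the radical $\cal{I}$, using the adjunction formulas of Lemma~\ref{lem:derivative_adjoint} together with the degree-homogeneity property $\{u,v\}=0$ whenever $|u|\ne |v|$ that was established in the proof of Theorem~\ref{thm:pairing}. First, since the statement is trivially false on the constant part $\bar{\cal{F}}_0\simeq \BC(r,s)$ (the unit satisfies $\partial_i(1)=0$ for all $i$), I read the proposition as applying to the positive-degree components, and it suffices to work with a single graded piece: let $\mu\in Q^+\setminus\{0\}$ and take $x\in \bar{\cal{F}}_\mu$.

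Next, I would lift $x$ to a homogeneous element $\tilde{x}\in \cal{F}_\mu$. The hypothesis $\partial_i(x)=0$ in $\bar{\cal{F}}$ means, for every $i\in I$, that $\partial_i(\tilde{x})\in \cal{I}$ (recall the maps $\partial_i$ descend from $\cal{F}$ to $\bar{\cal{F}}$ thanks to the stability of $\cal{I}$ under $\partial_i$). By definition of $\cal{I}$ in~\eqref{eq:left-radical}, this reads $\{\partial_i(\tilde{x}),y\}=0$ for every $y\in \cal{F}$ and every $i\in I$.

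Now I would show $\tilde{x}\in \cal{I}$. By the homogeneity of the pairing it is enough to check $\{\tilde{x},z\}=0$ for all $z\in \cal{F}_\mu$; since $\mu\ne 0$, every word appearing in such a $z$ has positive length and can be written as $w'i$ for some word $w'$ and some letter $i\in I$. Applying Lemma~\ref{lem:derivative_adjoint},
\[
  \{\tilde{x},w'i\}=\{\partial_i(\tilde{x}),w'\}=0,
\]
so by linearity $\{\tilde{x},z\}=0$ for all $z\in \cal{F}_\mu$, and hence for all $z\in \cal{F}$. Therefore $\tilde{x}\in \cal{I}$, which gives $x=0$ in $\bar{\cal{F}}$. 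Part (2) is proved by the same argument, now decomposing each positive-length word as $iw'$ (first letter) and invoking the companion identity $\{x,iy\}=\{\partial_i'(x),y\}$ from Lemma~\ref{lem:derivative_adjoint}.

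There is no real obstacle here: the nontrivial content has already been packaged into Lemma~\ref{lem:derivative_adjoint} and into the degree-homogeneity of $\{\cdot,\cdot\}$. The only point requiring care is the tacit restriction to strictly positive degree, which is implicit in the statement and automatic once one works graded component by graded component.
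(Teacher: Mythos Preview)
Your proof is correct and takes essentially the same approach as the paper: both lift to $\cal{F}$, invoke the adjunction identities of Lemma~\ref{lem:derivative_adjoint}, and use degree-homogeneity to reduce to words of positive length. The only cosmetic difference is that the paper argues by contrapositive (if $x\neq 0$, pick a word $[i_1\dots i_k]$ with $\{\hat{x},[i_1\dots i_k]\}\neq 0$ and conclude $\partial_{i_k}(x)\neq 0$), whereas you argue directly; you are also more explicit about the degree-zero caveat, which the paper leaves tacit.
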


\begin{proof} 
It is enough to prove (1) and (2) for a homogeneous $x \in \bar{\cal{F}}$. If $x\ne 0$ and $\hat{x}\in \cal{F}$ is its 
lift of the same degree, then by the very definition of $\bar{\cal{F}}$, there must be some $\hat{y} \in \cal{F}$ such 
that $\{\hat{x},\hat{y}\} \neq 0$, and moreover we must have $|\hat{y}| = |\hat{x}|$. Furthermore, since $\cal{F}_{\mu}$ 
is spanned by all $[i_{1}\ldots i_{k}]$ for which $\alpha_{i_{1}} + \dots + \alpha_{i_{k}} = \mu$, it follows that 
in fact there is some sequence $(i_{1},\ldots ,i_{k})$ such that $\{\hat{x},[i_{1} \dots i_{k}]\} \neq 0$. 
But then we have
\[
  \{\partial_{i_{1}}'(\hat{x}),[i_{2} \dots i_{k}]\} = \{\hat{x},[i_{1}i_{2} \dots i_{k}]\} \neq 0
\]
and 
\[
  \{\partial_{i_{k}}(\hat{x}),[i_{1} \dots i_{k-1}]\} = \{\hat{x},[i_{1} \dots i_{k-1}i_{k}]\} \neq 0,
\]
so it follows that $\partial_{i_{1}}'(x) \neq 0$ and $\partial_{i_{k}}(x) \neq 0$ in $\bar{\cal{F}}$. 
This implies (2) and (1) via contradiction. 
\end{proof}

We shall now use this proposition to prove Theorem~\ref{thm:serre}.

\begin{proof}[Proof of Theorem~\ref{thm:serre}] 
For $i \neq j$, set 
\begin{equation}\label{eq:Sij}
  S_{ij} = \sum_{k = 0}^{1 - a_{ij}} (-1)^{k} (r_{i}s_{i})^{\frac{1}{2}k(k-1)} 
  (rs)^{k\langle \alpha_{j},\alpha_{i}\rangle} e_{i}^{(1 - a_{ij} - k)}e_{j}e_{i}^{(k)}.
\end{equation}
According to Proposition~\ref{prop:zero_criteria+}, it suffices to verify $\partial_{p}'(S_{ij}) = 0$ for all $p\in I$. 
This vanishing is clear for $p \neq i,j$, so it remains to verify $\partial_{i}'(S_{ij}) = \partial_{j}'(S_{ij}) = 0$.

First, a simple inductive argument shows that
\[
  \partial_{i}'(e_{i}^{(k)}) = r_{i}^{1- k}e_{i}^{(k - 1)},
\]
so that
\[
  \partial_{i}'(e_{i}^{(k)}e_{j}e_{i}^{(l)}) = 
  r_{i}^{1 - k} e_{i}^{(k - 1)}e_{j}e_{i}^{(l)} + 
  (\omega_{j}',\omega_{i})^{-1} (r_{i}^{-1}s_{i})^{k} r_{i}^{1 - l} e_{i}^{(k)}e_{j}e_{i}^{(l - 1)}
\]
for all $k,l \ge 0$, with the convention that $e_{i}^{(m)} = 0$ for $m < 0$. Thus, we obtain: 
\begin{align*}
  \partial_{i}'(S_{ij}) 
  &= \sum_{k = 0}^{1 - a_{ij}} (-1)^{k} (r_{i}s_{i})^{\frac{1}{2}k(k - 1)} r_{i}^{k + a_{ij}} 
     (rs)^{k\langle \alpha_{j},\alpha_{i}\rangle} e_{i}^{(-a_{ij} - k)}e_{j}e_{i}^{(k)} \\
  &\quad + \sum_{k = 0}^{1 - a_{ij}} (-1)^{k}(r_{i}s_{i})^{\frac{1}{2}k(k-1)} 
     (rs)^{k\langle \alpha_{j},\alpha_{i}\rangle + \langle \alpha_{i},\alpha_{j}\rangle} s_{i}^{1 - a_{ij} - k} 
     e_{i}^{(1 - a_{ij} - k)}e_{j}e_{i}^{(k - 1)}.
\end{align*}
Combining the $k$-th term of the first sum with the $(k + 1)$-st term of the second sum yields
\begin{align*}
  & \partial_{i}'(S_{ij}) = \\
  & \sum_{k = 0}^{-a_{ij}} (-1)^{k} \left((r_{i}s_{i})^{\frac{1}{2}k(k - 1)} r_{i}^{k + a_{ij}} 
    (rs)^{k\langle \alpha_{j},\alpha_{i}\rangle} 
    - (r_{i}s_{i})^{\frac{1}{2}(k + 1)k} (rs)^{(k + 1)\langle \alpha_{j},\alpha_{i}\rangle + \langle \alpha_{i},\alpha_{j}\rangle}
       s_{i}^{-a_{ij} - k} \right) e_{i}^{(-a_{ij} - k)}e_{j}e_{i}^{(k)},
\end{align*}
and since 
\begin{align*}
  (r_{i}s_{i})^{\frac{1}{2}(k + 1)k} (rs)^{(k + 1)\langle \alpha_{j},\alpha_{i}\rangle + \langle \alpha_{i},\alpha_{j}\rangle} 
  s_{i}^{-a_{ij} - k} 
  &= (r_{i}s_{i})^{\frac{1}{2}k(k + 1)} (rs)^{k\langle \alpha_{j},\alpha_{i}\rangle} (r_{i}s_{i})^{a_{ij}}s_{i}^{-a_{ij} - k} \\
  &= (r_{i}s_{i})^{\frac{1}{2}k(k-1)} r_{i}^{k + a_{ij}} (rs)^{k\langle \alpha_{j},\alpha_{i}\rangle},
\end{align*}
we finally obtain $\partial_{i}'(S_{ij}) = 0$, as desired.

For $\partial_{j}'$, we first note that the one-parameter identity (cf.~\cite[\S0.2(4)]{J})
\[
  \sum_{t = 0}^{a} (-1)^{t}q^{t(1 - a)}\qbinom{a}{t}_{q} = 0
\]
implies, by setting $q = (r_{i}s_{i}^{-1})^{1/2}$, the identity
\begin{equation}\label{eq:rs-identity}
  \sum_{t = 0}^{a} (-1)^{t} (r_{i}s_{i})^{\frac{1}{2}t(t-1)} r_{i}^{t(1 - a)} \qbinom{a}{t}_{r_{i},s_{i}} =\ 0.
\end{equation}
According to~\eqref{eq:Leibniz_partial}, we have  
\[
  \partial_{j}'(e_{i}^{(k)}e_{j}e_{i}^{(l)}) = 
  \partial_{j}'(e_{i}^{(k)}e_{j})e_{i}^{(l)} = 
  (\omega_{i}',\omega_{j})^{-k} e_{i}^{(k)}e_{i}^{(l)}
\]
for any $k,l \ge 0$. We thus obtain: 
\begin{align*}
  \partial_{j}'(S_{ij}) 
  &= \sum_{k = 0}^{1 - a_{ij}} (-1)^{k} (r_{i}s_{i})^{\frac{1}{2}k(k - 1)} (rs)^{k\langle \alpha_{j},\alpha_{i}\rangle}
     (\omega_{i}',\omega_{j})^{-(1 - a_{ij} - k)} e_{i}^{(1 - a_{ij} - k)}e_{i}^{(k)} \\
  &= \sum_{k = 0}^{1 - a_{ij}} (-1)^{k} (r_{i}s_{i})^{\frac{1}{2}k(k - 1)}
      r^{k\langle \alpha_{j},\alpha_{i}\rangle - (1 - a_{ij} - k)\langle \alpha_{i},\alpha_{j}\rangle}
      s^{(1 - a_{ij}) \langle \alpha_{j},\alpha_{i}\rangle}
      \qbinom{1 - a_{ij}}{k}_{r_{i},s_{i}} e_{i}^{(1 - a_{ij})} \\
  &= r^{(a_{ij}-1) \langle \alpha_{i},\alpha_{j} \rangle} s^{(1 - a_{ij}) \langle \alpha_{j},\alpha_{i} \rangle} 
     \left( \sum_{k = 0}^{1 - a_{ij}} (-1)^{k} (r_{i}s_{i})^{\frac{1}{2}k(k - 1)} r_{i}^{ka_{ij}} 
           \qbinom{1 - a_{ij}}{k}_{r_{i},s_{i}} \right) e_{i}^{(1 - a_{ij})}=0,
\end{align*}
where we used~\eqref{eq:rs-identity} in the last equality. This completes our proof of the theorem.
\end{proof}

Likewise, let $\cal{I}'$ be the right radical of the bilinear form $\{\cdot,\cdot\}$ from Theorem~\ref{thm:pairing}:
\begin{equation}\label{eq:right_radical}
  \cal{I}' = \big\{ y \in \cal{F} \,\big|\, \{x,y\} = 0\ \text{for any}\ x \in \cal{F} \big\}.
\end{equation}
We set $\bar{\cal{F}}' = \cal{F}/\cal{I}'$, and denote the images of $i$ in $\bar{\cal{F}}'$ by $f_{i}$ for all $i \in I$ 
(this notation is justified by~\eqref{eq:F-vs-U}, where we show that $\bar{\cal{F}}' \simeq U_{r,s}^{-}$). 
We note that $\cal{I}'$ is actually a two-sided ideal of $\cal{F}$, due to Theorem~\ref{thm:pairing}(4), 
so that $\bar{\cal{F}}'$ is an algebra.

As above, we have the following easy consequence of Lemma~\ref{lem:derivative_adjoint_2}:

\begin{lemma} 
The ideal $\cal{I}'$ of~\eqref{eq:right_radical} is stable under the maps $\tilde{\partial}_{i},\tilde{\partial}_{i}'$ 
of~\eqref{eq:coproduct_structure_2} for all $i \in I$.
\end{lemma}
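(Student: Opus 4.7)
The plan is to imitate verbatim the proof of the analogous stability lemma for the left radical $\cal{I}$, with Lemma~\ref{lem:derivative_adjoint_2} replacing Lemma~\ref{lem:derivative_adjoint}. Concretely, let $y \in \cal{I}'$; I need to check that both $\tilde{\partial}_{i}(y)$ and $\tilde{\partial}_{i}'(y)$ lie in $\cal{I}'$, i.e.\ that $\{x, \tilde{\partial}_{i}(y)\} = 0$ and $\{x, \tilde{\partial}_{i}'(y)\} = 0$ for every $x \in \cal{F}$.

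The adjointness formulas of Lemma~\ref{lem:derivative_adjoint_2} convert these vanishing statements into
\[
  \{x, \tilde{\partial}_{i}(y)\} = \{xi, y\} \qquad \text{and} \qquad \{x, \tilde{\partial}_{i}'(y)\} = \{ix, y\}.
\]
Since $y$ belongs to the right radical of $\{\cdot,\cdot\}$, and both $xi$ and $ix$ are elements of $\cal{F}$, the right-hand sides are zero by the very definition~\eqref{eq:right_radical} of $\cal{I}'$. This gives $\tilde{\partial}_{i}(y), \tilde{\partial}_{i}'(y) \in \cal{I}'$, as required.

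There is no real obstacle here: all of the substantive work has already been absorbed into the proof of Lemma~\ref{lem:derivative_adjoint_2}, which itself followed from coassociativity of $\Delta_{s^{-1},r^{-1}}$ and the defining properties of $\{\cdot,\cdot\}$ in Theorem~\ref{thm:pairing}. The statement is really a one-line corollary of that adjointness together with the definition of the right radical, exactly parallel to how the stability of $\cal{I}$ under $\partial_i, \partial_i'$ was obtained from Lemma~\ref{lem:derivative_adjoint}.
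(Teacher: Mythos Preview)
Your proof is correct and takes essentially the same approach as the paper, which simply records the lemma as an easy consequence of Lemma~\ref{lem:derivative_adjoint_2} without spelling out the details.
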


We thus obtain the same-named linear maps $\tilde{\partial}_{i},\tilde{\partial}_{i}'\colon \bar{\cal{F}}' \to \bar{\cal{F}}'$, 
satisfying the analogue of Proposition~\ref{prop:zero_criteria+}:

\begin{prop}\label{prop:zero_criteria-} 
(1) If $x \in \bar{\cal{F}}'$ satisfies $\tilde{\partial}_{i}(x) = 0$ for all $i \in I$, then $x = 0$.

\medskip
\noindent
(2) If $x \in \bar{\cal{F}}'$ satisfies $\tilde{\partial}_{i}'(x) = 0$ for all $i \in I$, then $x = 0$.
\end{prop}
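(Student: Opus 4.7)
The plan is to follow the proof of Proposition~\ref{prop:zero_criteria+} essentially verbatim, with the roles of the two arguments of the pairing $\{\cdot,\cdot\}$ swapped and with Lemma~\ref{lem:derivative_adjoint_2} used in place of Lemma~\ref{lem:derivative_adjoint}. The stability of $\cal{I}'$ under $\tilde{\partial}_i,\tilde{\partial}_i'$, needed to make these maps descend to $\bar{\cal{F}}'$, is already supplied by the lemma stated just before the proposition.

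Concretely, I would first reduce to the case where $y \in \bar{\cal{F}}'$ is homogeneous of some degree $\mu \in Q^+$, the case $\mu = 0$ being trivial. Assuming $y \neq 0$, any lift $\hat{y} \in \cal{F}_\mu$ lies outside $\cal{I}'$, so by definition of the right radical there exists $\hat{x} \in \cal{F}$ with $\{\hat{x},\hat{y}\} \neq 0$. Since $\{\cdot,\cdot\}$ vanishes whenever the two arguments have different $Q$-degrees (as noted inside the proof of Theorem~\ref{thm:pairing}), I may take $\hat{x} \in \cal{F}_\mu$; expanding $\hat{x}$ in the basis of words of degree $\mu$ then produces at least one word $[i_1\cdots i_k]$ satisfying $\{[i_1\cdots i_k],\hat{y}\} \neq 0$.

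For part (1), factoring off the last letter and applying the second identity of Lemma~\ref{lem:derivative_adjoint_2} yields
\[
  \{[i_1\cdots i_{k-1}],\tilde{\partial}_{i_k}(\hat{y})\} = \{[i_1\cdots i_k],\hat{y}\} \neq 0,
\]
so that $\tilde{\partial}_{i_k}(\hat{y}) \notin \cal{I}'$ and hence $\tilde{\partial}_{i_k}(y) \neq 0$ in $\bar{\cal{F}}'$, contradicting the hypothesis. Part (2) is obtained symmetrically by factoring off the first letter and invoking the first identity of Lemma~\ref{lem:derivative_adjoint_2}, giving $\{[i_2\cdots i_k],\tilde{\partial}_{i_1}'(\hat{y})\} \neq 0$ and thus $\tilde{\partial}_{i_1}'(y) \neq 0$. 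I do not foresee any genuine obstacle here: the entire argument is a formal mirror of Proposition~\ref{prop:zero_criteria+}, and the only subtlety worth flagging is the degree-homogeneity of $\{\cdot,\cdot\}$, used implicitly to reduce the problem to a single word of matching degree.
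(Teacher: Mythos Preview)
Your proposal is correct and is exactly the argument the paper intends: the paper does not spell out a proof for this proposition, merely stating it as ``the analogue of Proposition~\ref{prop:zero_criteria+}'', and your mirror argument using Lemma~\ref{lem:derivative_adjoint_2} in place of Lemma~\ref{lem:derivative_adjoint} is precisely that analogue.
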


Set $f_{i}^{(k)} = f_{i}^{k}/[k]_{r_{i},s_{i}}!$ for $k \in \BZ_{\geq 0}, i \in I$.
Using the result above, we obtain a counterpart of Theorem~\ref{thm:serre}:

\begin{theorem}\label{thm:-serre} 
For any $i \neq j$, the following relation holds in the algebra  $\bar{\cal{F}}'$:
\[
  \sum_{k = 0}^{1 - a_{ij}} (-1)^{k} (r_{i}s_{i})^{\frac{1}{2}k(k - 1)} (rs)^{k\langle \alpha_{j},\alpha_{i}\rangle}
  f_{i}^{(k)}f_{j}f_{i}^{(1 - a_{ij} - k)} = 0.
\]
\end{theorem}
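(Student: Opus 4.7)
The plan is to run the argument of Theorem~\ref{thm:serre} \emph{mutatis mutandis}, with Proposition~\ref{prop:zero_criteria-}(1) in place of Proposition~\ref{prop:zero_criteria+}(2), and with the derivations $\tilde{\partial}_i$ of \eqref{eq:coproduct_structure_2} in place of the $\partial_i'$ used before. Let
\[
  T_{ij} = \sum_{k=0}^{1-a_{ij}} (-1)^k (r_i s_i)^{\frac{1}{2}k(k-1)} (rs)^{k\langle \alpha_j,\alpha_i\rangle} f_i^{(k)} f_j f_i^{(1-a_{ij}-k)}
\]
denote the element whose vanishing in $\bar{\cal{F}}'$ is claimed. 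By Proposition~\ref{prop:zero_criteria-}(1), it suffices to show that $\tilde{\partial}_p(T_{ij})=0$ for every $p\in I$, and the vanishing is obvious for $p\notin\{i,j\}$.

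For $p=i$, I would first establish by induction on $k$, using the Leibniz rule for $\tilde{\partial}_i$ together with $(\omega_{\alpha_i}',\omega_{\alpha_i})^{-1}=r_i^{-1}s_i$, that $\tilde{\partial}_i(f_i^{(k)}) = r_i^{1-k} f_i^{(k-1)}$ — formally identical to the formula $\partial_i'(e_i^{(k)})=r_i^{1-k}e_i^{(k-1)}$ used in the proof of Theorem~\ref{thm:serre}. A second application of the Leibniz rule then gives an explicit expression for $\tilde{\partial}_i(f_i^{(k)} f_j f_i^{(l)})$ as a $\BC(r,s)$-linear combination of $f_i^{(k-1)} f_j f_i^{(l)}$ and $f_i^{(k)} f_j f_i^{(l-1)}$. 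Substituting into $T_{ij}$ and reindexing so that both resulting sums are indexed by monomials of the shape $f_i^{(a)} f_j f_i^{(b)}$ with fixed $(a,b)$, the two pieces cancel via the same manipulation employed in the $e$-side proof, whose key numerical input is the identity $-(1-a_{ij}-k)k - \tfrac{1}{2}(1-a_{ij}-k)(-a_{ij}-k) = \tfrac{1}{2}k(k-1)-\tfrac{1}{2}a_{ij}(a_{ij}-1)$.

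For $p=j$, the Leibniz rule yields $\tilde{\partial}_j(f_i^{(k)} f_j f_i^{(l)}) = c_{k,l}\, f_i^{(k)} f_i^{(l)}$ for an explicit scalar $c_{k,l}$ involving $(\omega_{l\alpha_i}',\omega_j)^{-1}$. Collecting factors and using $f_i^{(k)} f_i^{(l)} = \qbinom{k+l}{k}_{r_i,s_i} f_i^{(k+l)}$ collapses $\tilde{\partial}_j(T_{ij})$ into a multiple of $f_i^{(1-a_{ij})}$ whose coefficient is of the form $\sum_{k=0}^{1-a_{ij}} (-1)^k (r_is_i)^{\frac{1}{2}k(k-1)} r_i^{ka_{ij}} \qbinom{1-a_{ij}}{k}_{r_i,s_i}$, up to an overall nonzero scalar. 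This vanishes by the identity~\eqref{eq:rs-identity}, completing the argument.

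The main obstacle is the careful bookkeeping of the prefactors coming from $\Delta_{s^{-1},r^{-1}}$ (governing $\tilde{\partial}_i,\tilde{\partial}_j$) rather than from $\Delta_{r,s}$: many of the individual scalar coefficients differ from those that appeared in the $e$-side computation. However, the key feature is that after tracking these factors, the combinatorial identities that make the cancellation work are \emph{exactly} the same as in Theorem~\ref{thm:serre}, so no new ingredient is needed.
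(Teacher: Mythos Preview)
Your proposal is correct and is exactly the approach the paper has in mind: the paper gives no separate proof of Theorem~\ref{thm:-serre} beyond the sentence ``Using this proposition, we also obtain the following counterpart of Theorem~\ref{thm:serre}'', i.e.\ rerun the argument with Proposition~\ref{prop:zero_criteria-} and the derivations $\tilde{\partial}_i$ in place of Proposition~\ref{prop:zero_criteria+} and $\partial_i'$. Your identification of the key scalar identities (the telescoping for $p=i$ and~\eqref{eq:rs-identity} for $p=j$) is accurate, and the swap of left/right exponents in $T_{ij}$ versus $S_{ij}$ exactly compensates for the swapped Leibniz rule of $\tilde{\partial}_i$ versus $\partial_i'$, so the coefficients line up verbatim.
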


We now introduce a $\BC(r,s)$-algebra anti-isomorphism $\varphi\colon \bar{\cal{F}} \to \bar{\cal{F}}'$, which will 
ultimately be matched with the corresponding map on $U_{r,s}^{\pm}$ introduced in Proposition~\ref{prop:extra_structures}(1).  
To do so, we first consider the $\BC(r,s)$-algebra anti-involution $\varphi\colon \cal{F} \to \cal{F}$ defined by 
$\varphi(i) = i$ for all $i \in I$. We then have:

\begin{lemma} 
For all $x \in \cal{F}$, we have $\partial_{i}(\varphi(x)) = \varphi(\tilde{\partial}_{i}'(x))$ and 
$\varphi(\partial_{i}(x)) = \tilde{\partial}_{i}'(\varphi(x))$.
\end{lemma}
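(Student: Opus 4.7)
The two identities are equivalent. Since $\varphi$ is an involution, applying the first identity to $\varphi(x)$ and then applying $\varphi$ to both sides yields the second. So the plan is to prove only the first: $\partial_{i}(\varphi(x)) = \varphi(\tilde{\partial}_{i}'(x))$ for every $x \in \cal{F}$ and every $i \in I$.

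I would proceed by induction on the height $\hgt(|x|)$ of the degree. The base cases $x = 1$ and $x = j$ (for $j \in I$) are immediate from $\partial_i(1) = \tilde{\partial}_i'(1) = 0$, $\partial_i(j) = \tilde{\partial}_i'(j) = \delta_{ij}$, and the fact that $\varphi$ fixes the generators. For the inductive step, any homogeneous element of positive height can be written as a product $xx'$ with $\hgt(|x|), \hgt(|x'|) > 0$ and strictly smaller than $\hgt(|xx'|)$. Using the anti-multiplicativity $\varphi(xx') = \varphi(x')\varphi(x)$ together with the Leibniz rule for $\partial_i$, and noting that $\varphi$ preserves the $Q^{+}$-grading so $|\varphi(x)| = |x|$, one computes
\begin{align*}
  \partial_i(\varphi(xx'))
  &= \partial_i(\varphi(x')\varphi(x)) \\
  &= \varphi(x')\,\partial_i(\varphi(x)) + (\omega_i',\omega_{|x|})^{-1}\,\partial_i(\varphi(x'))\,\varphi(x).
\end{align*}
Applying the inductive hypothesis to each of $x$ and $x'$, and then comparing with the Leibniz rule for $\tilde{\partial}_i'$ applied to $\varphi(\tilde{\partial}_i'(xx'))$, one sees both expressions equal
\begin{equation*}
  \varphi(x')\,\varphi(\tilde{\partial}_i'(x)) + (\omega_i',\omega_{|x|})^{-1}\,\varphi(\tilde{\partial}_i'(x'))\,\varphi(x).
\end{equation*}

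The main obstacle, minor as it is, is purely bookkeeping: one must verify that the scalar coefficients arising from the two Leibniz rules match. The Leibniz rule for $\partial_i$ places the factor $(\omega_i',\omega_{|x'|})^{-1}$ on the term with $\partial_i(x)x'$, while the rule for $\tilde{\partial}_i'$ places $(\omega_i',\omega_{|x|})^{-1}$ on the term with $x\tilde{\partial}_i'(x')$. The anti-multiplicativity of $\varphi$ swaps the order of the two factors, and since $\varphi$ preserves degree, the ``wrong-looking'' factor after the swap becomes the correct one. With this verification in place, both sides agree, completing the induction and hence the proof of the first identity; the second then follows by substituting $\varphi(x)$ in place of $x$ and applying $\varphi$ to both sides.
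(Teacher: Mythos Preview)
Your proof is correct and follows essentially the same approach as the paper: induction on $\hgt(|x|)$, reducing the second identity to the first via the involutivity of $\varphi$, and verifying the inductive step by matching the Leibniz rules for $\partial_i$ and $\tilde{\partial}_i'$ after the order-reversal from $\varphi$. The only cosmetic point is that ``any homogeneous element of positive height can be written as a product $xx'$'' tacitly assumes $x$ is a word (and of height $\geq 2$); since both sides are linear, this reduction is harmless and is exactly what the paper does as well.
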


\begin{proof} 
For the first equality, it is enough to consider $x \in \cal{W}$, for which we proceed by induction on $\hgt(|x|)$. 
If $\hgt(|x|) \leq 1$, the claim is obvious. If $\hgt(|x|) = m > 1$, then we may write $x = x'x''$ for some 
$x',x'' \in \cal{W}$ with $\hgt(|x'|), \hgt(|x''|) < m$. Then by the induction assumption, we have:
\begin{equation*}
\begin{split}
  \partial_{i}(\varphi(x)) 
  &= \partial_{i}(\varphi(x'')\varphi(x')) 
   = \varphi(x'')\partial_{i}(\varphi(x')) + 
     (\omega_{i}',\omega_{|x'|})^{-1}\partial_{i}(\varphi(x''))\varphi(x') \\
  &= \varphi(x'')\varphi(\tilde{\partial}_{i}'(x')) 
     + (\omega_{i}',\omega_{|x'|})^{-1}\varphi(\tilde{\partial}_{i}(x''))\varphi(x') 
   = \varphi\left (\tilde{\partial}_{i}'(x')x'' 
     + (\omega_{i}',\omega_{|x'|})^{-1}x'\tilde{\partial}_{i}'(x'')\right ) \\
  &= \varphi(\tilde{\partial}_{i}'(x'x'')) = \varphi(\tilde{\partial}_{i}'(x)).
\end{split}
\end{equation*}
This proves that $\partial_{i}\circ \varphi = \varphi \circ \tilde{\partial}_{i}'$ for any $i\in I$. 
Since $\varphi$ is an anti-involution, we also get $\varphi \circ \partial_{i} = \tilde{\partial}_{i}'\circ \varphi$.
\end{proof}

We can now easily derive the desired result:

\begin{prop}\label{prop:phi-reduced-F} 
There is a unique $\BC(r,s)$-algebra anti-isomorphism $\varphi\colon \bar{\cal{F}} \to \bar{\cal{F}}'$ such that 
\begin{equation*}
  \varphi(e_{i}) = f_{i} \qquad \mathrm{for\ all} \quad  i \in I.
\end{equation*}
\end{prop}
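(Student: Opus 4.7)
The plan is to work with the $\BC(r,s)$-algebra anti-involution $\varphi$ of $\cal{F}$ already at hand (the one fixing each letter $i \in I$) and show that it restricts to a bijection $\cal{I} \leftrightarrow \cal{I}'$ between the left and right radicals of $\{\cdot,\cdot\}$. Since $\varphi^{2} = \id$ on $\cal{F}$, this reduces to proving the two inclusions $\varphi(\cal{I}) \subseteq \cal{I}'$ and $\varphi(\cal{I}') \subseteq \cal{I}$; once these are established, $\varphi$ descends to a well-defined $\BC(r,s)$-algebra anti-isomorphism $\bar{\cal{F}} \to \bar{\cal{F}}'$ satisfying $\varphi(e_{i}) = f_{i}$. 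Uniqueness is automatic, as the $e_{i}$ generate $\bar{\cal{F}}$ as an algebra and an anti-homomorphism is determined by its values on generators.

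Both inclusions follow from a single symmetry identity for the form, namely
\[
  \{x,y\}\ =\ \{\varphi(y),\varphi(x)\} \qquad \text{for all}\quad x,y \in \cal{F}.
\]
Indeed, if $x \in \cal{I}$, then $\{\varphi(y),\varphi(x)\} = \{x,y\} = 0$ for every $y \in \cal{F}$; since $\varphi(y)$ ranges over all of $\cal{F}$ as $y$ does, this says $\varphi(x) \in \cal{I}'$, and the reverse inclusion $\varphi(\cal{I}') \subseteq \cal{I}$ is entirely analogous.

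To prove the symmetry, I would reduce both sides of the equivalent identity $\{z,\varphi(x)\} = \{x,\varphi(z)\}$ to a common expression for a word $z = z_{1} \cdots z_{k}$, and then extend by bilinearity. Iterating Lemma~\ref{lem:derivative_adjoint_2} to peel letters off $z$ from the left gives
\[
  \{z,\varphi(x)\}\ =\ \varepsilon\bigl(\tilde{\partial}_{z_{k}}' \cdots \tilde{\partial}_{z_{1}}'(\varphi(x))\bigr),
\]
and the commutation $\tilde{\partial}_{i}' \circ \varphi = \varphi \circ \partial_{i}$ from the preceding lemma, together with the fact that $\varphi$ fixes scalars so $\varepsilon \circ \varphi = \varepsilon$, rewrites the right-hand side as $\varepsilon(\partial_{z_{k}} \cdots \partial_{z_{1}}(x))$. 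On the other hand, iterating Lemma~\ref{lem:derivative_adjoint} to peel letters off $\varphi(z) = z_{k}z_{k-1} \cdots z_{1}$ from the right gives
\[
  \{x,\varphi(z)\}\ =\ \varepsilon\bigl(\partial_{z_{k}} \cdots \partial_{z_{1}}(x)\bigr),
\]
which matches.

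I do not anticipate any serious obstacle: the preceding lemma on the interaction of $\varphi$ with $\partial_{i}$ and $\tilde{\partial}_{i}'$ is precisely what is needed to identify the two iterated-derivative expressions. The main point to get right will be the bookkeeping of the order in which the derivatives are composed, since the left-peeling recursion reverses the order of the letters of $z$ relative to the right-peeling recursion applied to $\varphi(z)$, and it is exactly this reversal that makes the two expressions coincide.
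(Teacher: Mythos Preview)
Your argument is correct and takes a somewhat different, arguably cleaner, route than the paper. The paper argues the two inclusions $\varphi(\cal{I}) \subseteq \cal{I}'$ and $\varphi(\cal{I}') \subseteq \cal{I}$ by contrapositive, invoking Propositions~\ref{prop:zero_criteria+} and~\ref{prop:zero_criteria-} (the vanishing-derivative criteria on the quotients $\bar{\cal{F}}$ and $\bar{\cal{F}}'$) together with the intertwining lemma $\tilde{\partial}_{i}' \circ \varphi = \varphi \circ \partial_{i}$. You instead establish the stronger global identity $\{x,y\} = \{\varphi(y),\varphi(x)\}$ directly on $\cal{F}$, by fully iterating Lemmas~\ref{lem:derivative_adjoint} and~\ref{lem:derivative_adjoint_2} and matching the two iterated-derivative expressions via the same intertwining lemma; the radical swap then drops out in one line. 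Your approach never leaves $\cal{F}$, avoids any appeal to the quotient-level zero criteria, and yields as a byproduct the symmetry of $\{\cdot,\cdot\}$ under $\varphi$ (equivalently, once $(\cdot,\cdot)$ is defined in~\eqref{eq:pairing_positive}, that $(\cdot,\cdot)$ is symmetric). The bookkeeping you flag is exactly right: peeling $z = z_{1}\cdots z_{k}$ from the left with $\tilde{\partial}'$ composes the derivatives as $\tilde{\partial}_{z_{k}}'\cdots\tilde{\partial}_{z_{1}}'$, and peeling $\varphi(z) = z_{k}\cdots z_{1}$ from the right with $\partial$ composes them as $\partial_{z_{k}}\cdots\partial_{z_{1}}$, so the orders match after applying the intertwining relation.
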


\begin{proof} 
To prove this, it suffices to show that $\varphi(\cal{I}) \subseteq \cal{I}'$ and $\varphi(\cal{I}') \subseteq \cal{I}$. 
To this end, suppose that $x \in \cal{F}$ satisfies $\varphi(x) \notin \cal{I}'$. By Proposition~\ref{prop:zero_criteria-}, 
we then have $0 \neq \tilde{\partial}_{i}'(\varphi(x)) = \varphi(\partial_{i}(x))$ for some $i \in I$. The latter implies 
$\partial_{i}(x) \neq 0$, and so $x \notin \cal{I}$. This proves the first 
inclusion $\varphi(\cal{I}) \subseteq \cal{I}'$. Similarly, if $y \in \cal{F}$ satisfies $\varphi(y) \notin \cal{I}$, 
then by Proposition~\ref{prop:zero_criteria+}, there is some $j \in I$ such that 
$0 \neq \partial_{j}(\varphi(y)) = \varphi(\tilde{\partial}_{j}'(y))$. This means that $\tilde{\partial}_{j}'(y) \neq 0$ 
and so $y \notin \cal{I}'$, establishing the second inclusion  
$\varphi(\cal{I}') \subseteq \cal{I}$.
\end{proof}

We note that $\cal{I}$ is a Hopf ideal with respect to $\Delta_{r,s}$, i.e.\ 
$\Delta_{r,s}(\cal{I})\subseteq \cal{F} \otimes \cal{I} + \cal{I} \otimes \cal{F}$, due to Theorem~\ref{thm:pairing}(4). 
This implies that $\Delta_{r,s}$ descends from $\cal{F}$ to the same-named algebra homomorphism 
$\Delta_{r,s}\colon \bar{\cal{F}}\to \bar{\cal{F}} \otimes \bar{\cal{F}}$. Likewise, we have 
$\Delta_{s^{-1},r^{-1}}(\cal{I}')\subseteq \cal{F} \otimes \cal{I}' + \cal{I}' \otimes \cal{F}$ by 
Theorem~\ref{thm:pairing}(3), so that $\Delta_{s^{-1},r^{-1}}$ descends from $\cal{F}$ to the same-named 
algebra homomorphism $\Delta_{s^{-1},r^{-1}}\colon \bar{\cal{F}}' \to \bar{\cal{F}}' \otimes \bar{\cal{F}}'$.  
Finally, we define the linear map 
\begin{equation*}
  T\colon \bar{\cal{F}}' \otimes \bar{\cal{F}}' \longrightarrow \bar{\cal{F}}' \otimes \bar{\cal{F}}' 
  \qquad {via} \qquad T(x \otimes y) = y \otimes x. 
\end{equation*}
Then, we have the following result:

\begin{prop}\label{prop:Delta-varphi}
For all $z \in \bar{\cal{F}}$, we have
\[
  (T\circ (\varphi \otimes \varphi) \circ \Delta_{r,s})(z) = (\Delta_{s^{-1},r^{-1}} \circ \varphi)(z).
\]
\end{prop}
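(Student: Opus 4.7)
The strategy is standard: reduce the identity to a check on generators by showing both sides define algebra anti-homomorphisms from $\bar{\cal{F}}$ (with its usual product) into $(\bar{\cal{F}}' \otimes \bar{\cal{F}}', \odot_{s^{-1},r^{-1}})$. Since $\bar{\cal{F}}$ is generated as an algebra by $\{e_i\}_{i \in I}$, any two such anti-homomorphisms coincide iff they agree on the $e_i$.

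\textbf{Step 1: agreement on generators.} For each $i \in I$, we have $\Delta_{r,s}(e_i) = e_i \otimes 1 + 1 \otimes e_i$, so the LHS sends $e_i$ to $T(f_i \otimes 1 + 1 \otimes f_i) = 1 \otimes f_i + f_i \otimes 1$. The RHS sends $e_i$ to $\Delta_{s^{-1},r^{-1}}(f_i) = f_i \otimes 1 + 1 \otimes f_i$. These match.

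\textbf{Step 2: the RHS is an anti-homomorphism.} By Proposition~\ref{prop:phi-reduced-F}, $\varphi \colon \bar{\cal{F}} \to \bar{\cal{F}}'$ is an algebra anti-isomorphism. Since $\cal{I}'$ is a coideal for $\Delta_{s^{-1},r^{-1}}$ (noted after Proposition~\ref{prop:phi-reduced-F}), $\Delta_{s^{-1},r^{-1}}$ descends to an algebra homomorphism $\bar{\cal{F}}' \to (\bar{\cal{F}}'\otimes \bar{\cal{F}}',\odot_{s^{-1},r^{-1}})$. Hence $\Delta_{s^{-1},r^{-1}}\circ \varphi$ is an anti-homomorphism.

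\textbf{Step 3: the LHS is an anti-homomorphism.} The map $\Delta_{r,s}\colon \bar{\cal{F}} \to (\bar{\cal{F}}\otimes \bar{\cal{F}},\odot_{r,s})$ is an algebra homomorphism by construction. The remaining (and main) point is that
\[
T \circ (\varphi \otimes \varphi) \colon (\bar{\cal{F}}\otimes \bar{\cal{F}},\odot_{r,s}) \longrightarrow (\bar{\cal{F}}'\otimes \bar{\cal{F}}',\odot_{s^{-1},r^{-1}})
\]
is an algebra anti-homomorphism. This is a direct computation on a pair of homogeneous simple tensors $x\otimes y,\ x'\otimes y'$: applying $T\circ(\varphi\otimes\varphi)$ to
$(x\otimes y)\odot_{r,s}(x'\otimes y') = (\omega'_{|y|},\omega_{|x'|})^{-1}\, xx' \otimes yy'$ produces the scalar $(\omega'_{|y|},\omega_{|x'|})^{-1}$ times $\varphi(y')\varphi(y)\otimes \varphi(x')\varphi(x)$, using that $\varphi$ reverses products. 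On the other hand,
\[
\bigl(T(\varphi\otimes\varphi)(x'\otimes y')\bigr)\odot_{s^{-1},r^{-1}} \bigl(T(\varphi\otimes\varphi)(x\otimes y)\bigr) = (\varphi(y')\otimes \varphi(x'))\odot_{s^{-1},r^{-1}}(\varphi(y)\otimes \varphi(x))
\]
equals $(\omega'_{|\varphi(x')|},\omega_{|\varphi(y)|})^{-1}\, \varphi(y')\varphi(y)\otimes \varphi(x')\varphi(x)$. Since $\varphi$ preserves degree, the two twist scalars coincide, giving the anti-homomorphism property. Composing with $\Delta_{r,s}$ (a homomorphism) yields an anti-homomorphism.

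\textbf{Step 4: conclusion.} Both sides of the asserted identity are algebra anti-homomorphisms from $\bar{\cal{F}}$ to $(\bar{\cal{F}}'\otimes \bar{\cal{F}}', \odot_{s^{-1},r^{-1}})$ that agree on the generators $e_i$, hence they coincide on all of $\bar{\cal{F}}$. The only nontrivial step is the bookkeeping in Step 3 verifying that the twist factor $(\omega'_{|y|},\omega_{|x'|})^{-1}$ of $\odot_{r,s}$ is converted, after applying $\varphi\otimes\varphi$ followed by the flip $T$, into exactly the twist factor $(\omega'_{|\varphi(x')|},\omega_{|\varphi(y)|})^{-1}$ required by $\odot_{s^{-1},r^{-1}}$; this is where the swap of arguments $(r,s) \leftrightarrow (s^{-1},r^{-1})$ and the flip $T$ play complementary roles.
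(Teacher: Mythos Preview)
Your approach is correct and is essentially a cleaner, more conceptual packaging of the paper's argument: the paper proceeds by induction on $\hgt(|z|)$, writing $z = z'e_i$ and expanding both sides explicitly, which is exactly the step-by-step verification of the anti-multiplicativity you establish abstractly in Step~3.

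One computational slip to fix in Step~3: applying the formula $(a\otimes b)\odot_{s^{-1},r^{-1}}(a'\otimes b') = (\omega'_{|a'|},\omega_{|b|})^{-1}\, aa'\otimes bb'$ with $a=\varphi(y')$, $b=\varphi(x')$, $a'=\varphi(y)$, $b'=\varphi(x)$ gives the scalar $(\omega'_{|\varphi(y)|},\omega_{|\varphi(x')|})^{-1}$, not $(\omega'_{|\varphi(x')|},\omega_{|\varphi(y)|})^{-1}$ as you wrote. This matters, since $(\omega'_\mu,\omega_\nu)\neq(\omega'_\nu,\omega_\mu)$ in general. With the correct scalar and $|\varphi(y)|=|y|$, $|\varphi(x')|=|x'|$, you get precisely $(\omega'_{|y|},\omega_{|x'|})^{-1}$, which matches the $\odot_{r,s}$ side, and the argument goes through.
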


\begin{proof}  
As usual, it is enough to prove the claim for elements of the form $z = e_{i_{1}}e_{i_{2}} \dots e_{i_{m}}$. 
We proceed by induction on $\hgt(|z|)$. The claim is obvious for $\hgt(|z|) \leq 1$, so let us suppose that 
$\hgt(|z|) = m > 1$ and the claim holds for all $z'$ with $\hgt(|z'|) < m$. By assumption, there is some $i$ 
such that $z = z'e_{i}$, where $\hgt(|z'|) < \hgt(|z|)$. Thus, by the induction assumption, we have: 
\begin{align*}
  \Delta_{s^{-1},r^{-1}}(\varphi(z)) 
  &= \Delta_{s^{-1},r^{-1}}(f_{i}\varphi(z')) = (f_{i} \otimes 1 + 1 \otimes f_{i}) \odot_{s^{-1},r^{-1}} 
     \sum_{(z')} \varphi(z_{2;r,s}') \otimes \varphi(z_{1;r,s}') \\
  &= \sum_{(z')} f_{i}\varphi(z_{2;r,s}') \otimes \varphi(z_{1;r,s}') + 
     \sum_{(z')} (\omega_{|z_{2;r,s}'|}',\omega_{i})^{-1} \varphi(z_{2;r,s}') \otimes f_{i}\varphi(z_{1;r,s}').
\end{align*}
On the other hand, we also have: 
\begin{align*}
  T(\varphi \otimes \varphi)\Delta_{r,s}(z'e_{i}) 
  &= T(\varphi \otimes \varphi) 
     \left( \left( \sum_{(z')} z_{1;r,s}' \otimes z_{2;r,s}' \right) \odot_{r,s} (e_{i} \otimes 1 + 1 \otimes e_{i}) \right) \\
  &= T(\varphi \otimes \varphi) 
     \left( \sum_{(z')} (\omega_{|z_{2;r,s}'|}',\omega_{i})^{-1} z_{1;r,s}'e_{i} \otimes z_{2;r,s}' + 
     \sum_{(z')} z_{1;r,s}' \otimes z_{2;r,s}'e_{i} \right) \\ 
  &= \sum_{(z')} (\omega_{|z_{2;r,s}'|}',\omega_{i})^{-1} \varphi(z_{2;r,s}') \otimes f_{i}\varphi(z_{1;r,s}') + 
     \sum_{(z')} f_{i}\varphi(z_{2;r,s}') \otimes \varphi(z_{1;r,s}').
\end{align*}
This completes the proof, since the right-hand sides of the above two equations coincide.
\end{proof}

   %%%%%%%%%%%%%%%%%%%%%%%%%%%%%%%%%%%%%%%%%%%%%%%%%%%%%%%%%%%%%%%%%%%%%%%%%%

\subsection{Symmetric version of pairing}
\

The pairing $\{\cdot,\cdot\} \colon \cal{F} \times \cal{F} \to \BC(r,s)$ of Theorem~\ref{thm:pairing} induces 
the same-named non-degenerate pairing $\{\cdot,\cdot\}$ on $\bar{\cal{F}} \times \bar{\cal{F}}'$, by the 
definition of~(\ref{eq:left-radical},~\ref{eq:right_radical}). The latter can be turned into the pairing 
$(\cdot,\cdot)\colon \bar{\cal{F}} \times \bar{\cal{F}} \to \BC(r,s)$ through the use of the anti-isomorphism 
$\varphi$ from Proposition~\ref{prop:phi-reduced-F}. More specifically, we set 
\begin{equation}\label{eq:pairing_positive}
  (x,x') = \{x,\varphi(x')\} \qquad \mathrm{for\ any} \quad x,x' \in \bar{\cal{F}}. 
\end{equation}

\begin{lemma}\label{lem:symm_pairing_properties}
The resulting bilinear form $(\cdot,\cdot)$ satisfies the following five properties:
\begin{itemize}
\item[(1)] $(1,1) = 1$,

\item[(2)] $(e_{i},e_{j}) = \delta_{ij}$,

\item[(3)] $(x,y) = 0$ if $|x| \neq |y|$, 

\item[(4)] $(x,yz) = (\Delta_{r,s}(x),z \otimes y)$,

\item[(5)] $(xy,z) = (y \otimes x, \Delta_{r,s}(z))$,
\end{itemize}
where $(x'\otimes x'',y'\otimes y'')=(x',y') (x'',y'')$ for any $x', x'',y',y''\in \bar{\cal{F}}$.
\end{lemma}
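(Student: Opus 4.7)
The plan is to derive all five properties directly from the definition $(x,x')=\{x,\varphi(x')\}$, invoking three inputs: Theorem~\ref{thm:pairing} for the structural identities of $\{\cdot,\cdot\}$; Proposition~\ref{prop:phi-reduced-F}, which tells us that $\varphi$ descends to a $Q^+$-grading-preserving algebra anti-isomorphism $\bar{\cal F}\to\bar{\cal F}'$ fixing the generators; and Proposition~\ref{prop:Delta-varphi}, which relates the two coproducts $\Delta_{r,s}$ and $\Delta_{s^{-1},r^{-1}}$ via $\varphi$.

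Properties (1) and (2) are immediate: $(1,1)=\{1,\varphi(1)\}=\{1,1\}=1$ and $(e_i,e_j)=\{e_i,\varphi(e_j)\}=\{i,j\}=\delta_{ij}$ by Theorem~\ref{thm:pairing}(1,2). For (3), the proof of Theorem~\ref{thm:pairing} established that $\{u,v\}=0$ unless $|u|=|v|$, and since $\varphi$ preserves the $Q^+$-grading, $|x|\ne|y|$ forces $|x|\ne|\varphi(y)|$ and thus $(x,y)=0$.

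For (4), using $\varphi(yz)=\varphi(z)\varphi(y)$ and Theorem~\ref{thm:pairing}(4), one computes
\[
(x,yz)=\{x,\varphi(z)\varphi(y)\}=\{\Delta_{r,s}(x),\varphi(z)\otimes\varphi(y)\}=\sum_{(x)}(x_{1;r,s},z)(x_{2;r,s},y),
\]
which is precisely $(\Delta_{r,s}(x),z\otimes y)$ under the convention on tensor pairings stated in the lemma; note that the order of the two factors on the right-hand side is transposed relative to what appeared in Theorem~\ref{thm:pairing}(4), and this transposition is forced by the anti-multiplicativity of $\varphi$.

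The main substantive step is (5), and it is the one that genuinely uses Proposition~\ref{prop:Delta-varphi}. Starting from $(xy,z)=\{xy,\varphi(z)\}$, Theorem~\ref{thm:pairing}(3) yields $\{xy,\varphi(z)\}=\{x\otimes y,\Delta_{s^{-1},r^{-1}}(\varphi(z))\}$, so one must express $\Delta_{s^{-1},r^{-1}}\circ\varphi$ in terms of $\Delta_{r,s}$. This is exactly the content of Proposition~\ref{prop:Delta-varphi}: $\Delta_{s^{-1},r^{-1}}\circ\varphi=T\circ(\varphi\otimes\varphi)\circ\Delta_{r,s}$, so writing $\Delta_{r,s}(z)=\sum_{(z)}z_{1;r,s}\otimes z_{2;r,s}$ we get $\Delta_{s^{-1},r^{-1}}(\varphi(z))=\sum_{(z)}\varphi(z_{2;r,s})\otimes\varphi(z_{1;r,s})$. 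Substituting produces
\[
(xy,z)=\sum_{(z)}\{x,\varphi(z_{2;r,s})\}\{y,\varphi(z_{1;r,s})\}=\sum_{(z)}(y,z_{1;r,s})(x,z_{2;r,s})=(y\otimes x,\Delta_{r,s}(z)),
\]
as claimed. The only subtle ingredient throughout is the transposition $T$, which accounts for the swapping $z\otimes y$ in (4) and $y\otimes x$ in (5); once Proposition~\ref{prop:Delta-varphi} is in hand, the verification is a short bookkeeping exercise.
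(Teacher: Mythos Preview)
Your proof is correct and follows essentially the same approach as the paper: properties (1)--(3) are immediate, property (4) uses the anti-multiplicativity of $\varphi$ together with Theorem~\ref{thm:pairing}(4), and property (5) is obtained from Theorem~\ref{thm:pairing}(3) combined with Proposition~\ref{prop:Delta-varphi}. The paper's write-up is slightly terser (it handles the transposition $T$ by directly swapping the tensor factors in the pairing rather than expanding in Sweedler notation), but the argument is identical.
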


\begin{proof}
Properties (1)--(3) are immediate from the definition and the corresponding properties of $\{\cdot,\cdot\}$. 
Property (4) follows from 
  $(x,yz) =\{x,\varphi(yz)\} = \{x,\varphi(z)\varphi(y)\} = 
   \{\Delta_{r,s}(x),\varphi(z) \otimes \varphi(y)\} = (\Delta_{r,s}(x),z \otimes y)$. 
Property (5) follows along the same line by evoking Proposition~\ref{prop:Delta-varphi}:
\[
  (xy,z) = \{x \otimes y, \Delta_{s^{-1},r^{-1}}(\varphi(z))\} = 
  \{x \otimes y, T(\varphi \otimes \varphi)(\Delta_{r,s}(z))\} = 
  \{y \otimes x, \varphi \otimes \varphi(\Delta_{r,s}(z))\} = (y \otimes x,\Delta_{r,s}(z)).
\]
This completes the proof of the lemma.
\end{proof}

We also note the following important property of $(\cdot, \cdot)$:

\begin{lemma}
The pairing $(\cdot, \cdot)$ of~\eqref{eq:pairing_positive} is symmetric. 
\end{lemma}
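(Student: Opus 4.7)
The plan is to prove symmetry by induction on $\hgt(|x|)$, exploiting the fact that properties (4) and (5) of Lemma~\ref{lem:symm_pairing_properties} use the same coproduct $\Delta_{r,s}$. By property (3) it suffices to treat the homogeneous case $|x|=|x'|$, so I will induct on this common height; by bilinearity it is enough to check symmetry on products of generators.

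For the base case $\hgt(|x|)\le 1$, either $x = x' = 1$ (giving $(1,1) = 1$) or $x = e_i$, $x' = e_j$, in which case $(e_i,e_j) = \delta_{ij} = (e_j,e_i)$ by property (2). For the inductive step, assume $\hgt(|x|)\ge 2$; then we may write $x = yz$ with both $\hgt(|y|), \hgt(|z|)\ge 1$ and strictly smaller than $\hgt(|x|)$. Applying property (5) and property (4) respectively gives
\begin{equation*}
(x,x') = (yz,x') = (z\otimes y,\Delta_{r,s}(x')), \qquad (x',x) = (x',yz) = (\Delta_{r,s}(x'),z\otimes y).
\end{equation*}
Writing $\Delta_{r,s}(x') = \sum_{(x')} x'_1 \otimes x'_2$, symmetry reduces to the identity
\begin{equation*}
\sum_{(x')}(z,x'_1)(y,x'_2) = \sum_{(x')}(x'_1,z)(x'_2,y).
\end{equation*}

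For each nonzero term in either sum, property (3) forces $|x'_1|=|z|$ and $|x'_2|=|y|$, both strictly less than $|x|$. The inductive hypothesis therefore yields $(z,x'_1)=(x'_1,z)$ and $(y,x'_2)=(x'_2,y)$ term by term, so the two sums agree and $(x,x')=(x',x)$. This completes the induction.

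I do not anticipate a real obstacle here: the argument is essentially a one-line manipulation once properties (4) and (5) are in hand, and the only thing to be careful about is that the coproduct identities come with a reversal of tensor factors (the $z\otimes y$ appears on both sides), which is exactly what makes the induction hypothesis applicable factor-by-factor. The substantive content of symmetry has already been absorbed into Proposition~\ref{prop:Delta-varphi} and the resulting compatibility of $\varphi$ with the coproduct, which is what made property (5) available in the first place.
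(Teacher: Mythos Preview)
Your proof is correct. The paper takes a slightly different, more conceptual route: it observes that properties (1)--(5) of Lemma~\ref{lem:symm_pairing_properties} uniquely determine the bilinear form, then notes that the opposite pairing $(x,y)^\circ := (y,x)$ satisfies the same five properties (with (4) and (5) swapping roles), hence must coincide with $(\cdot,\cdot)$. Your direct induction essentially unpacks this uniqueness claim explicitly; the paper leaves uniqueness unproven, but it would be established by exactly the inductive descent you carry out. Both arguments rest on the same structural fact---that (4) and (5) are mirror images under exchange of arguments---which, as you note, is ultimately a consequence of Proposition~\ref{prop:Delta-varphi}.
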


\begin{proof}
The pairing $(\cdot,\cdot)$ not only satisfies the above properties (1)--(5), but is also uniquely determined by them. 
However, the pairing $(\cdot,\cdot)^\circ$ defined via $(x,y)^\circ = (y,x)$ clearly satisfies the same properties. 
Therefore, $(\cdot,\cdot)^\circ = (\cdot,\cdot)$, which shows that $(\cdot,\cdot)$ is indeed symmetric.  
\end{proof}

Finally, as an immediate consequence of Lemma~\ref{lem:derivative_adjoint}, we obtain:

\begin{lemma}\label{lem:symm-der-adjoint}
For any $x,x'\in \bar{\cal{F}}$ and $i\in I$, we have the following two equalities:
\begin{equation*}
  (x,e_{i}x') = (\partial_{i}(x),x') \qquad \mathrm{and} \qquad (x,x'e_{i}) = (\partial_{i}'(x),x'). 
\end{equation*}
\end{lemma}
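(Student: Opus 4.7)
The plan is to simply unwind the definition \eqref{eq:pairing_positive} of $(\cdot,\cdot)$ in terms of $\{\cdot,\cdot\}$ and the anti-isomorphism $\varphi$ of Proposition~\ref{prop:phi-reduced-F}, and then invoke Lemma~\ref{lem:derivative_adjoint}. Concretely, for the first identity I would compute
\[
  (x, e_i x') \;=\; \{x, \varphi(e_i x')\} \;=\; \{x, \varphi(x')\, f_i\},
\]
where the second equality uses that $\varphi$ is an anti-homomorphism with $\varphi(e_i)=f_i$. Then Lemma~\ref{lem:derivative_adjoint}, which reads $\{x, y\,i\}=\{\partial_i(x),y\}$ on $\cal{F}\times\cal{F}$, descends to the quotient pairing $\bar{\cal{F}}\times\bar{\cal{F}}'$ (since $\cal{I}$ is $\partial_i$-stable and $\cal{I}'$ contains the kernel on the right), yielding
\[
  \{x, \varphi(x')\, f_i\} \;=\; \{\partial_i(x), \varphi(x')\} \;=\; (\partial_i(x), x').
\]
The second identity is completely analogous: apply the anti-homomorphism property to get $(x,x'e_i)=\{x, f_i\,\varphi(x')\}$, and then use the companion formula $\{x,iy\}=\{\partial_i'(x),y\}$ from Lemma~\ref{lem:derivative_adjoint} to conclude $\{x, f_i\, \varphi(x')\} = \{\partial_i'(x), \varphi(x')\} = (\partial_i'(x), x')$.

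There is essentially no obstacle here beyond bookkeeping. The one subtle point worth making explicit is that Lemma~\ref{lem:derivative_adjoint} is stated on $\cal{F}$, whereas both $x$ and $\varphi(x')$ live in the quotients $\bar{\cal{F}}$, $\bar{\cal{F}}'$; one therefore lifts $x,x'$ to $\cal{F}$, applies Lemma~\ref{lem:derivative_adjoint}, and notes that the result is independent of the choice of lifts because $\{\cdot,\cdot\}$ vanishes on $\cal{I}\times\cal{F}$ and $\cal{F}\times\cal{I}'$ by the definitions \eqref{eq:left-radical} and \eqref{eq:right_radical}, and because $\partial_i$ preserves $\cal{I}$ by the lemma stated just after Theorem~\ref{thm:pairing}. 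Once this is observed, the whole argument is two lines per identity.
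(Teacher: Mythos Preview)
Your proof is correct and matches the paper's approach exactly: the paper simply states this lemma as an immediate consequence of Lemma~\ref{lem:derivative_adjoint}, which is precisely the unwinding via $(x,x')=\{x,\varphi(x')\}$ and the anti-homomorphism property of $\varphi$ that you spell out. Your remark about passing to the quotients is a sound elaboration of a point the paper leaves implicit.
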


   %%%%%%%%%%%%%%%%%%%%%%%%%%%%%%%%%%%%%%%%%%%%%%%%%%%%%%%%%%%%%%%%%%%%%%%%%%

\subsection{Relation to the Hopf pairing}
\

Recall the Hopf pairing $(\cdot,\cdot)_{H}\colon U_{r,s}^{\le} \times U_{r,s}^{\ge} \to \BC(r,s)$ from~\eqref{eq:Hopf-parity}. 
We conclude this Section with an explicit relation between $(\cdot,\cdot)_{H}$ and $(\cdot,\cdot)$ of~\eqref{eq:pairing_positive}, 
which will allow us to show that $U_{r,s}^{+} \simeq \bar{\cal{F}}$, $U_{r,s}^{-} \simeq \bar{\cal{F}}'$, see~\eqref{eq:F-vs-U}. 
This relationship is also instrumental to the proof of our main Theorems~\ref{thm:main-1} and~\ref{thm:main-2}.

Before proceeding, we note first that by Theorems~\ref{thm:serre} and~\ref{thm:-serre}, there are natural $\BC(r,s)$-algebra 
homomorphisms $\psi^{+}\colon U_{r,s}^{+} \to \bar{\cal{F}}$ and $\psi^{-}\colon U_{r,s}^{-} \to \bar{\cal{F}}'$, determined 
by $\psi^+(e_i)=e_i$ and $\psi^-(f_i)=f_i$ for all $i\in I$. We also recall the bar involution $x \mapsto \bar{x}$ on 
$U_{r,s}^{\pm}$ from Proposition~\ref{prop:extra_structures}(3). We can now relate $(\cdot,\cdot)_{H}$ to $(\cdot,\cdot)$:

\begin{theorem}\label{thm:pairing_comparison} 
For all $y \in (U_{r,s}^{-})_{-\mu}$ and $x \in (U_{r,s}^{+})_{\mu}$, 
where $\mu = \sum_{i = 1}^{n}c_{i}\alpha_{i} \in Q^{+}$, we have\footnote{Our convention~\eqref{eq:parity-convention} 
is not in contradiction with~\eqref{eq:pairing_positive}, as the latter is not defined on the Cartan subalgebra.}
\[
  \ol{(\bar{y},\bar{x})}_{H} = 
  \left( \prod_{i = 1}^{n} \frac{1}{(r_{i} - s_{i})^{c_{i}}} \right) \big( \psi^{+}(\varphi(y)),\psi^{+}(x) \big).
\] 
\end{theorem}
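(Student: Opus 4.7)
The plan is to induct on the height of $\mu \in Q^+$. The base case $\mu = 0$ is immediate, since both sides reduce to products of scalars and the empty product of $(r_i - s_i)^{-1}$ equals $1$.

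For the inductive step, bilinearity together with the fact that $U^-_{r,s}(\fg)_{-\mu}$ is spanned by elements of the form $f_i y'$ with $y' \in U^-_{r,s}(\fg)_{-(\mu - \alpha_i)}$ allows us to assume $y = f_i y'$. Proposition~\ref{prop:+p_adjoint} then gives
\[
  (\bar y, \bar x)_H \;=\; \tfrac{1}{s_i - r_i}\bigl(\bar{y'},\, p_i'(\bar x)\bigr)_H,
\]
after which we apply the bar involution. To match the form needed for the inductive hypothesis, I would introduce the auxiliary linear map $\tilde p_i'\colon U^+_{r,s}(\fg) \to U^+_{r,s}(\fg)$ defined by $\tilde p_i'(x) = \ol{p_i'(\bar x)}$, so that $p_i'(\bar x) = \ol{\tilde p_i'(x)}$.

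The technical heart is to verify that $\tilde p_i'$ satisfies exactly the Leibniz rule~\eqref{eq:Leibniz_partial} defining $\partial_i'$ on $\bar{\cal{F}}$. Indeed, barring the first formula of~\eqref{eq:+derivation} and using $\ol{r^a s^b} = s^a r^b$ converts the prefactor $(\omega_i',\omega_{|x|})_H$ into $(\omega_{|x|}',\omega_i)_H^{-1}$, which is precisely the scalar in~\eqref{eq:Leibniz_partial}. Since $\tilde p_i'$ and $\partial_i'$ further agree on the generators $e_j$, an induction on monomial length yields the intertwining identity
\[
  \psi^+ \circ \tilde p_i' \;=\; \partial_i' \circ \psi^+.
\]

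Applying the inductive hypothesis to the pair $(y', \tilde p_i'(x))$ in weight $\mu - \alpha_i$ then contributes the constant $\prod_j (r_j - s_j)^{-(c_j - \delta_{ij})} = (r_i - s_i)\cdot\prod_j(r_j-s_j)^{-c_j}$, and the factor $(r_i - s_i)$ cancels with $\ol{1/(s_i - r_i)} = 1/(r_i - s_i)$ coming from barring the prefactor above, leaving
\[
  \ol{(\bar y, \bar x)}_H \;=\; \Bigl(\prod_{j=1}^n (r_j - s_j)^{-c_j}\Bigr) \cdot \bigl(\psi^+(\varphi(y')),\, \partial_i'(\psi^+(x))\bigr).
\]
To close, symmetry of $(\cdot,\cdot)$ combined with Lemma~\ref{lem:symm-der-adjoint} rewrites the right-hand factor as $(\psi^+(x),\, \psi^+(\varphi(y')) \cdot e_i)$, and since $\varphi$ is an anti-automorphism with $\varphi(f_i) = e_i$, we have $\psi^+(\varphi(y')) \cdot e_i = \psi^+(\varphi(f_i y')) = \psi^+(\varphi(y))$; a final application of symmetry completes the induction. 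The main obstacle is the bar-involution bookkeeping around $\tilde p_i'$: the coincidence $\ol{(\omega_i',\omega_{|x|})_H} = (\omega_{|x|}',\omega_i)_H^{-1}$ is exactly what forces $\tilde p_i'$ to match $\partial_i'$ (and not $\tilde\partial_i'$ or some other variant) on the shuffle side, and is the structural reason bars appear symmetrically on both arguments in the theorem.
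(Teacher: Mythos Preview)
Your proof is correct and follows essentially the same inductive strategy as the paper, with one cosmetic difference: the paper factors $y = y'f_i$ on the right and works with the unprimed operators $p_i$, $\bar p_i$, $\partial_i$, whereas you factor $y = f_i y'$ on the left and work with the primed operators $p_i'$, $\tilde p_i'$, $\partial_i'$; the two arguments are mirror images of each other. One small slip: when you say ``barring the first formula of~\eqref{eq:+derivation}'' you actually mean the second formula (the one for $p_i'$, with prefactor $(\omega_i',\omega_{\deg(x)})_H$), since that is the operator you are conjugating by the bar involution.
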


\begin{proof}
Evoking the linear maps $p_{i}\colon U_{r,s}^{+} \to U_{r,s}^{+}$ of~\eqref{eq:p-maps}, let us define 
a $\BC(r,s)$-linear map
\begin{equation*}
  \bar{p}_{i}\colon U_{r,s}^{+} \to U_{r,s}^{+} \qquad \mathrm{via} \qquad \bar{p}_{i}(x) = \ol{p_{i}(\bar{x})}.
\end{equation*}
Then, $\bar{p}_{i}(1) = 0$, $\bar{p}_{i}(e_{j}) = \delta_{ij}$, and we claim that they satisfy the following 
analogue of the Leibniz rule: 
\begin{equation}\label{eq:bar-p-Leibniz}
  \bar{p}_{i}(xx') = x\bar{p}_{i}(x') + (\omega_{i}',\omega_{\deg(x')})^{-1}\bar{p}_{i}(x)x' 
\end{equation}
for all homogeneous $x,x' \in U_{r,s}^{+}$. Indeed, this follows from~\eqref{eq:+derivation} and 
$\ol{(\omega'_\mu,\omega_\nu)}=(\omega'_\nu,\omega_\mu)^{-1}$: 
\begin{equation*}
  \bar{p}_{i}(xx') = \ol{p_i(\bar{x}\bar{x}')} = \ol{\bar{x} p_i(\bar{x}') + (\omega'_{\deg(x')},\omega_i) p_i(\bar{x}) \bar{x}'} = 
  x\bar{p}_i(x') + (\omega'_i, \omega_{\deg(x')})^{-1} \bar{p}_i(x) x'.
\end{equation*}

Let us now record the relation between these $\bar{p}_i$ and the linear maps $\partial_i$ of~\eqref{eq:coproduct_structure}:
\begin{equation}\label{eq:p_vs_partial}
  \partial_{i}(\psi^{+}(x)) = \psi^{+}(\bar{p}_{i}(x)) \qquad \text{for all}\quad x \in U_{r,s}^{+}.
\end{equation}
This equality is clear when $x \in \BC(r,s)$ or $x = e_{j}$ with $j\in I$. Thus, it remains to show that 
if~\eqref{eq:p_vs_partial} holds for $x'$ and $x''$, then it also holds for $x'x''$. To this end, we have: 
\begin{equation*}
\begin{split}
  \partial_{i}(\psi^{+}(x'x'')) 
  &= \partial_{i}(\psi^{+}(x')\psi^{+}(x'')) 
   \overset{\eqref{eq:Leibniz_partial}}{=} 
   \psi^{+}(x')\partial_{i}(\psi^{+}(x'')) + (\omega_{i}',\omega_{\deg(x'')})^{-1}\partial_{i}(\psi^{+}(x'))\psi^{+}(x'') \\
  &= \psi^{+}(x')\psi^{+}(\bar{p}_{i}(x'')) + (\omega_{i}',\omega_{\deg(x'')})^{-1}\psi^{+}(\bar{p}_{i}(x'))\psi^{+}(x'') \\
  &= \psi^{+}(x' \bar{p}_{i}(x'')  + (\omega_{i}',\omega_{\deg(x'')})^{-1} \bar{p}_{i}(x') x'')
   \overset{\eqref{eq:bar-p-Leibniz}}{=} \psi^{+}(\bar{p}_{i}(x'x'')).
\end{split}
\end{equation*}

To prove the theorem, we proceed by induction on $\hgt(\mu)$, with the base case $\hgt(\mu) = 0$ being obvious. 
Assuming $\hgt(\mu)> 0$, it is enough to consider $y = y'f_{i}$ for some $i$. Then by 
Lemma~\ref{prop:+p_adjoint} and Lemma~\ref{lem:symm-der-adjoint}, we have:
\begin{align*}
  \ol{(\bar{y},\bar{x})}_{H} 
  &= \ol{(\bar{y}'f_{i},\bar{x})}_{H} 
   = \frac{1}{r_{i} - s_{i}} \ol{(\bar{y}',p_{i}(\bar{x}))}_{H} = \frac{1}{r_{i} - s_{i}}\ol{(\bar{y}',\ol{\bar{p}_{i}(x)})}_{H} \\
  &= \frac{1}{r_{i} - s_{i}} 
     \left( \frac{1}{(r_{i} - s_{i})^{c_{i} - 1}} \cdot \prod_{j \neq i}\frac{1}{(r_{j} - s_{j})^{c_{j}}} \right) 
     \big( \psi^{+}(\varphi(y')),\psi^{+}(\bar{p}_{i}(x)) \big) \\
  &\overset{\eqref{eq:p_vs_partial}}{=} \left (\prod_{i = 1}^{n} \frac{1}{(r_{i} - s_{i})^{c_{i}}} \right)
     \big( \psi^{+}(\varphi(y')),\partial_{i}(\psi^{+}(x)) \big) 
   = \left( \prod_{i = 1}^{n} \frac{1}{(r_{i} - s_{i})^{c_{i}}} \right)  \big( e_{i}\psi^{+}(\varphi(y')),\psi^{+}(x) \big) \\
  &= \left( \prod_{i = 1}^{n} \frac{1}{(r_{i} - s_{i})^{c_{i}}} \right) \big( \psi^{+}(\varphi(y'f_{i})),\psi^{+}(x) \big)
   = \left( \prod_{i = 1}^{n} \frac{1}{(r_{i} - s_{i})^{c_{i}}} \right) \big( \psi^{+}(\varphi(y)),\psi^{+}(x) \big),
\end{align*}
where we used the induction hypothesis in the second line. This completes the proof of the theorem.
\end{proof}

\begin{cor}
The above algebra homomorphisms $\psi^\pm$ are actually algebra isomorphisms:
\begin{equation}\label{eq:F-vs-U}
  \psi^+\colon U^+_{r,s} \iso \bar{\cal{F}}   \qquad \mathrm{and} \qquad 
  \psi^-\colon U^-_{r,s} \iso \bar{\cal{F}}'.
\end{equation}
\end{cor}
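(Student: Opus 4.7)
The plan is to prove surjectivity and injectivity separately for $\psi^+$, and then obtain the claim for $\psi^-$ by the compatibility of $\psi^\pm$ with the Cartan anti-involution $\varphi$.

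Surjectivity of $\psi^+$ is essentially tautological: by construction, $\bar{\cal{F}}=\cal{F}/\cal{I}$ is generated as an algebra by the images of $I$, which are precisely the elements labelled $e_i$ in $\bar{\cal{F}}$, and these all lie in $\im(\psi^+)$. The same remark applied to $\bar{\cal{F}}'=\cal{F}/\cal{I}'$ yields surjectivity of $\psi^-$.

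For injectivity of $\psi^+$, the key input will be Theorem~\ref{thm:pairing_comparison} combined with the non-degeneracy of the Hopf pairing established in Proposition~\ref{prop:nondegeneracy-Hopf}. Fix a homogeneous $x\in (U_{r,s}^+)_\mu$ with $\psi^+(x)=0$; I will show $x=0$. The induced symmetric bilinear form $(\cdot,\cdot)$ on $\bar{\cal{F}}$, defined in~\eqref{eq:pairing_positive}, is non-degenerate since it is pulled back along the anti-isomorphism $\varphi\colon\bar{\cal{F}}\iso\bar{\cal{F}}'$ from the form $\{\cdot,\cdot\}$ on $\bar{\cal{F}}\times\bar{\cal{F}}'$, which is non-degenerate by the very construction of the quotients by the left and right radicals in~\eqref{eq:left-radical} and~\eqref{eq:right_radical}. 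Hence $\psi^+(x)=0$ implies $\big(\psi^+(\varphi(y)),\psi^+(x)\big)=0$ for every $y\in (U_{r,s}^-)_{-\mu}$. Substituting into the identity of Theorem~\ref{thm:pairing_comparison}, we conclude $\ol{(\bar y,\bar x)}_H=0$ for all such $y$, and since the bar involution of Proposition~\ref{prop:extra_structures}(3) is a $\BC$-linear bijection of $U_{r,s}^\pm$ preserving the $Q$-grading, this forces $(y',\bar x)_H=0$ for every $y'\in (U_{r,s}^-)_{-\mu}$. Proposition~\ref{prop:nondegeneracy-Hopf} then gives $\bar x=0$, and thus $x=0$.

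For injectivity of $\psi^-$, one runs the mirror argument: a homogeneous $y\in(U_{r,s}^-)_{-\mu}$ with $\psi^-(y)=0$ yields, via $\varphi(y)\in U_{r,s}^+$ and Proposition~\ref{prop:phi-reduced-F}, the vanishing $\psi^+(\varphi(y))=\varphi(\psi^-(y))=0$ in $\bar{\cal{F}}$; injectivity of $\psi^+$ together with injectivity of $\varphi$ on $U_{r,s}^\pm$ (Proposition~\ref{prop:extra_structures}(1)) then gives $y=0$. The only potentially delicate point in this plan is verifying that $\varphi$ on $\bar{\cal{F}}$ intertwines $\psi^+$ and $\psi^-$, but this is immediate from the formulas $\psi^+(e_i)=e_i$, $\psi^-(f_i)=f_i$, $\varphi(e_i)=f_i$ on generators, so no real obstacle remains.
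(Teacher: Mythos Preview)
Your proof is correct and follows essentially the same route as the paper's: injectivity of $\psi^+$ via Theorem~\ref{thm:pairing_comparison} combined with the non-degeneracy of $(\cdot,\cdot)_H$ from Proposition~\ref{prop:nondegeneracy-Hopf}, and injectivity of $\psi^-$ by intertwining with $\varphi$. The paper omits the surjectivity remark (it is indeed tautological) and does not pause to note the non-degeneracy of $(\cdot,\cdot)$ on $\bar{\cal{F}}$---that aside is true but unnecessary here, since $(\psi^+(\varphi(y)),0)=0$ is automatic regardless.
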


\begin{proof} 
Suppose that $\psi^{+}(x) = 0 \in \bar{\cal{F}}$ for some $x \in (U_{r,s}^{+})_{\mu}$. Then in particular, 
we have $(\psi^{+}(\varphi(y)),\psi^{+}(x)) = 0$ for all $y \in (U_{r,s}^{-})_{-\mu}$, and therefore 
Theorem~\ref{thm:pairing_comparison} implies that $(\bar{y},\bar{x})_{H} = 0$ for all $y \in (U_{r,s}^{-})_{-\mu}$. 
However, since $(\cdot,\cdot)_{H}$ is non-degenerate (see Proposition~\ref{prop:nondegeneracy-Hopf}) and 
$y \mapsto \bar{y}$ is an algebra automorphism, we thus get $x=0$.

Now suppose that $\psi^{-}(y) = 0\in \bar{\cal{F}}'$ for some $y \in (U_{r,s}^{-})_{-\mu}$. As 
$\varphi^{-1} \circ \psi^{-} = \psi^{+} \circ \varphi^{-1}$, we have $\psi^{+}(\varphi^{-1}(y)) = 0$, so that 
$\varphi^{-1}(y) = 0$ by above. Thus $y = 0$ as claimed, since $\varphi\colon U_{r,s}^{+} \to U_{r,s}^{-}$ 
is an anti-isomorphism.
\end{proof}

Thus, the algebra homomorphism $\Delta_{r,s}\colon \bar{\cal{F}}\to \bar{\cal{F}} \otimes \bar{\cal{F}}$ induces the same-named 
homomorphism $\Delta_{r,s}\colon U^+_{r,s}\to U^+_{r,s} \otimes U^+_{r,s}$. Combining Proposition~\ref{prop:nondegeneracy-Hopf} 
with Theorem~\ref{thm:pairing_comparison} and using~\eqref{eq:F-vs-U}, we also get:

\begin{cor}\label{cor:nondegeneracy-CHW}
The pairing $(\cdot,\cdot)\colon U^+_{r,s}\times U^+_{r,s}\to \BC(r,s)$ is non-degenerate.    
\end{cor}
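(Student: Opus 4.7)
The plan is to deduce non-degeneracy of $(\cdot,\cdot)$ directly from Theorem~\ref{thm:pairing_comparison}, which expresses it in terms of the Hopf pairing, together with the non-degeneracy of the latter (Proposition~\ref{prop:nondegeneracy-Hopf}). Using the isomorphism $\psi^+\colon U^+_{r,s}\iso \bar{\cal{F}}$ of~\eqref{eq:F-vs-U} to identify the two algebras, the pairing $(\cdot,\cdot)$ becomes a symmetric, $Q$-graded bilinear form on $U^+_{r,s}\times U^+_{r,s}$ (by Lemma~\ref{lem:symm_pairing_properties}(3) and the symmetry established in Section~\ref{sec:bilinear_forms}). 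So by symmetry, it suffices to show that for every $\mu = \sum_{i=1}^n c_i\alpha_i \in Q^+$, the restriction of $(\cdot,\cdot)$ to $(U^+_{r,s})_\mu \times (U^+_{r,s})_\mu$ has trivial left radical.

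Suppose then that $x \in (U^+_{r,s})_\mu$ pairs trivially with every element of $(U^+_{r,s})_\mu$. The Cartan anti-involution $\varphi$ of Proposition~\ref{prop:extra_structures}(1) restricts to a bijection $\varphi\colon (U^-_{r,s})_{-\mu}\iso (U^+_{r,s})_\mu$, so every such element has the form $\varphi(y)$ for a unique $y\in (U^-_{r,s})_{-\mu}$. Substituting into Theorem~\ref{thm:pairing_comparison} yields
\[
  0 = \big(\psi^+(\varphi(y)),\psi^+(x)\big) = \Big(\prod_{i=1}^n (r_i - s_i)^{c_i}\Big)\,\ol{(\bar{y},\bar{x})}_H
\]
for every $y \in (U^-_{r,s})_{-\mu}$, whence $(\bar{y},\bar{x})_H = 0$ identically on that weight space. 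The bar involution of Proposition~\ref{prop:extra_structures}(3) preserves the $Q$-grading on $U^-_{r,s}$ (since it fixes the $f_i$) and squares to the identity, so $\bar{y}$ exhausts $(U^-_{r,s})_{-\mu}$ as $y$ does. Proposition~\ref{prop:nondegeneracy-Hopf} then forces $\bar{x} = 0$, and hence $x = 0$.

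There is no genuine obstacle here: the substance has been absorbed into Theorem~\ref{thm:pairing_comparison} and the non-degeneracy of the Hopf pairing, and the corollary reduces to chasing these ingredients through the identifications provided by $\psi^+$, $\varphi$, and the bar involution. The only mild subtlety is checking that the bar involution does not mix weight spaces inside $U^-_{r,s}$, which is immediate from its action on the generators.
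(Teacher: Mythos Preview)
Your proof is correct and takes essentially the same approach as the paper, which simply states that the corollary follows by combining Proposition~\ref{prop:nondegeneracy-Hopf} with Theorem~\ref{thm:pairing_comparison} and the identification~\eqref{eq:F-vs-U}. You have spelled out precisely the details behind that one-line remark, including the role of the bijections $\varphi$ and $x\mapsto\bar{x}$ on weight spaces.
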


   %%%%%%%%%%%%%%%%%%%%%%%%%%%%%%%%%%%%%%%%%%%%%%%%%%%%%%%%%%%%%%%%%%%%%%%%%%
   %%%%%%%%%%%%%%%%%%%%%%%%%%%%%%%%%%%%%%%%%%%%%%%%%%%%%%%%%%%%%%%%%%%%%%%%%%
   %%%%%%%%%%%%%%%%%%%%%%%%%%%%%%%%%%%%%%%%%%%%%%%%%%%%%%%%%%%%%%%%%%%%%%%%%%

\section{Shuffle Algebras}\label{sec:shuffle}

In this Section, we introduce the two-parameter shuffle algebra $(\cal{F},*)$, relate it to the positive subalgebra 
$U^+_{r,s}$, and provide a shuffle interpretation of some of the structures on the latter. Our exposition closely 
follows that of the one-parameter setup from~\cite[Section 2]{L} and~\cite[Section 3]{CHW}.

   %%%%%%%%%%%%%%%%%%%%%%%%%%%%%%%%%%%%%%%%%%%%%%%%%%%%%%%%%%%%%%%%%%%%%%%%%%

\subsection{Two-parameter shuffle algebra}
\

Recall that $\cal{F}$ is the free associative $\BC(r,s)$-algebra generated by the finite alphabet 
$I = \{1,2, \ldots, n\}$, and $\cal{W}$ is the set of words in $I$. Recall also the notation 
$[i_{1} \dots i_{d}] = i_{1}i_{2} \dots i_{d}$ for the elements in $\cal{W}$, where $i_{1},\ldots ,i_{d} \in I$. 
As before, $\cal{F}$ has a natural $Q^{+}$-grading induced by declaring the degree of $[i]$ equal to $\alpha_{i}$. 
For a homogeneous element $x \in \cal{F}$, we write $|x|$ for the degree of $x$. For any $a,b \in \BC(r,s)$, we 
now define the quantum shuffle product $*_{a,b}\colon \cal{F}\times \cal{F}\to \cal{F}$ inductively via 
\begin{equation}\label{eq:shuffle_product_iterative}
  (xi) *_{a,b} (yj) = 
  (x *_{a,b} (yj))i + a^{-\langle |xi|,\alpha_{j}\rangle}b^{\langle\alpha_{j},|xi|\rangle} ((xi) *_{a,b} y)j ,\qquad 
  \emptyset *_{a,b} x = x *_{a,b} \emptyset = x,
\end{equation}
for all $i,j \in I$ and all homogeneous $x,y \in \cal{F}$. By iterating this definition, we find that 
\begin{equation}\label{eq:shuffle_product_general}
  [i_{1}\ldots i_{m}] *_{a,b} [i_{m+1}\ldots i_{m + d}] = 
  \sum_{\sigma} e_{a,b}(\sigma) [i_{\sigma^{-1}(1)}\ldots i_{\sigma^{-1}(m + d)}],
\end{equation}
where 
\begin{equation}\label{eq:e_ab}
  e_{a,b}(\sigma) = \prod_{\substack{k \le m < l \\ \sigma(k) < \sigma(l)}}   
  a^{-\langle \alpha_{i_{k}},\alpha_{i_{l}}\rangle} b^{\langle \alpha_{i_{l}},\alpha_{i_{k}}\rangle} 
\end{equation}
and the sum runs over all $(m,d)$-shuffles of $\{1,2,\ldots ,m + d\}$, i.e.\ the permutations $\sigma \in S_{m + d}$ 
such that $\sigma(1) < \sigma(2) < \dots <\sigma(m)$ and $\sigma(m + 1) < \dots < \sigma(m + d)$. There are four choices 
of $a,b$ that are of interest to us; in these cases, the inductive formula~\eqref{eq:shuffle_product_iterative} takes 
the following form:
\begin{align*}
  & (xi) *_{r,s} (yj) = (x *_{r,s} (yj))i + (\omega_{|xi|}',\omega_{j})^{-1} ((xi) *_{r,s} y)j, \\
  & (xi) *_{s,r} (yj) = (x *_{s,r} (yj))i + (\omega_{j}',\omega_{|xi|}) ((xi) *_{s,r} y)j, \\
  & (xi) *_{s^{-1},r^{-1}} (yj) = (x *_{s^{-1},r^{-1}} (yj))i + (\omega_{j}',\omega_{|xi|})^{-1} ((xi) *_{s^{-1},r^{-1}} y)j, \\
  & (xi) *_{r^{-1},s^{-1}} (yj) = (x *_{r^{-1},s^{-1}} (yj))i + (\omega_{|xi|}',\omega_{j}) ((xi) *_{r^{-1},s^{-1}} y)j,
\end{align*}
cf.~\eqref{eq:parity-convention}, and the corresponding expressions for $e_{a,b}(\sigma)$ of~\eqref{eq:e_ab} are: 
\begin{align*}
     e_{r,s}(\sigma) &= 
      \prod_{\substack{k \le m < l \\ \sigma(k) < \sigma(l)}} (\omega_{i_{k}}',\omega_{i_{l}})^{-1}, 
   & e_{s,r}(\sigma) &= 
      \prod_{\substack{k \le m < l \\ \sigma(k) < \sigma(l)}} (\omega_{i_{l}}',\omega_{i_{k}}), \\
     e_{s^{-1},r^{-1}}(\sigma) &= 
      \prod_{\substack{k \le m < l \\ \sigma(k) < \sigma(l)}} (\omega_{i_{l}}',\omega_{i_{k}})^{-1}, 
   & e_{r^{-1},s^{-1}}(\sigma) &= 
      \prod_{\substack{k \le m < l \\ \sigma(k) < \sigma(l)}} (\omega_{i_{k}}',\omega_{i_{l}}).
\end{align*}
Since the product structure $*_{r,s}$ will be used most frequently, we shall often omit the subscript for this operation, 
and just use the notation $*$ instead.

We have the following basic result, the proof of which is a direct computation using~\eqref{eq:shuffle_product_iterative}:

\begin{prop} 
The bilinear map $*_{a,b}\colon \cal{F} \times \cal{F} \to \cal{F}$ is associative.
\end{prop}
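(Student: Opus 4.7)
The plan is to prove associativity by iterating the explicit formula \eqref{eq:shuffle_product_general} to derive a closed form for triple shuffles, and then observing that both $(x*_{a,b}y)*_{a,b}z$ and $x*_{a,b}(y*_{a,b}z)$ coincide with this symmetric expression. By bilinearity it suffices to treat the case where $x,y,z$ are words; write $x = [i_1\dots i_m]$, $y = [i_{m+1}\dots i_{m+n}]$, $z = [i_{m+n+1}\dots i_{m+n+p}]$. I claim both iterated products equal
\[
  \sum_\tau c_{a,b}(\tau)\,[i_{\tau^{-1}(1)}\dots i_{\tau^{-1}(m+n+p)}],
\]
where $\tau$ ranges over the $(m,n,p)$-multi-shuffles, i.e.\ permutations of $\{1,\dots,m+n+p\}$ preserving the internal orders of the three blocks $B_1=\{1,\dots,m\}$, $B_2=\{m+1,\dots,m+n\}$, $B_3=\{m+n+1,\dots,m+n+p\}$, and
\[
  c_{a,b}(\tau) = \prod_{(k,l)\in P(\tau)} a^{-\langle\alpha_{i_k},\alpha_{i_l}\rangle} b^{\langle\alpha_{i_l},\alpha_{i_k}\rangle},
\]
with $P(\tau)=\{(k,l) : k\in B_u,\ l\in B_v,\ u<v,\ \tau(k)<\tau(l)\}$.

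To establish this for $(x*_{a,b}y)*_{a,b}z$, I will first apply \eqref{eq:shuffle_product_general} to the inner product, obtaining a sum indexed by $(m,n)$-shuffles $\sigma_1$, and then apply it again to each resulting word shuffled with $z$, obtaining an additional sum over $(m+n,p)$-shuffles $\sigma_2$. The composition
\[
  \tau(k) =
  \begin{cases}
    \sigma_2(\sigma_1(k)), & k\le m+n,\\
    \sigma_2(k), & k>m+n,
  \end{cases}
\]
is a multi-shuffle, and the assignment $(\sigma_1,\sigma_2)\mapsto\tau$ is a bijection onto the set of $(m,n,p)$-multi-shuffles. The factor $e_{a,b}(\sigma_1)$ accounts precisely for the pairs $(k,l)\in B_1\times B_2$ in $P(\tau)$, while $e_{a,b}(\sigma_2)$ accounts for all pairs with $l\in B_3$ (i.e.\ those in $(B_1\cup B_2)\times B_3$ that lie in $P(\tau)$), since the $(m+n,p)$-shuffle preserves the ordering $\sigma_1(k)<\sigma_1(l)\Leftrightarrow \tau(k)<\tau(l)$ on $B_1\cup B_2$. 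Multiplying yields $c_{a,b}(\tau)$. The analogous bracketing $x*_{a,b}(y*_{a,b}z)$ groups the factors according to $B_2\times B_3$ first, then $B_1\times(B_2\cup B_3)$, again reproducing $c_{a,b}(\tau)$.

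The main obstacle, though essentially bookkeeping, is to keep track of three different index sets simultaneously: the original positions $\{1,\dots,m+n+p\}$, the positions in the intermediate shuffle $u_{\sigma_1}$, and the positions in the final word. The key identity that makes this painless is that whenever $(m+n,p)$-shuffle $\sigma_2$ is order-preserving on its two blocks, pulling back the inequality $\sigma_2(s)<\sigma_2(t)$ along $\sigma_1$ yields exactly the correct inequality on $\tau$. Once these identifications are made carefully, no further argument is needed, and the proof reduces to comparing two identical products of factors indexed by $P(\tau)$.
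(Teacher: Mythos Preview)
Your argument is correct. The paper itself does not spell out a proof of this proposition; it only asserts that it is ``a direct computation using~\eqref{eq:shuffle_product_iterative}'', i.e.\ the inductive definition of $*_{a,b}$. You instead work from the closed formula~\eqref{eq:shuffle_product_general}, reducing the question to the (standard) bijection between pairs $(\sigma_1,\sigma_2)$ of an $(m,n)$-shuffle and an $(m{+}n,p)$-shuffle and $(m,n,p)$-multi-shuffles, together with the observation that the weight $c_{a,b}(\tau)$ splits multiplicatively across the two partitions of the cross-block pair set $P(\tau)$. This is a clean and arguably more transparent route than an induction on word length via~\eqref{eq:shuffle_product_iterative}, since it makes the symmetric ``triple-shuffle'' object visible from the outset; the inductive route would establish the same identity one letter at a time. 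The only places where you are somewhat brief are the bijectivity of $(\sigma_1,\sigma_2)\mapsto\tau$ and the mirror computation for $x*_{a,b}(y*_{a,b}z)$, but both are routine and your outline indicates exactly how they go.
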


For the later use, we note that $*_{r,s}$ and $*_{s,r}$ are related via the following result:

\begin{prop}\label{prop:rs_to_sr}
For all homogeneous $x,y \in \cal{F}$, we have
\[
  x *_{r,s} y = (\omega_{|x|}',\omega_{|y|})^{-1}y *_{s,r} x.
\]
\end{prop}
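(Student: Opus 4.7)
The plan is to proceed by induction on the total length $m + d$ of the words underlying $x$ and $y$, after reducing to the case of single words via bilinearity of both shuffle products.

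The base case takes $x = \emptyset$ or $y = \emptyset$. In either case both sides are manifestly equal, since $\emptyset *_{a,b} z = z *_{a,b} \emptyset = z$ and the twist factor is $(\omega'_0, \omega_\mu) = (\omega'_\mu, \omega_0) = 1$ by~\eqref{eq:generators-parity-2}.

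For the inductive step, write $x = x'i$ and $y = y'j$ with $i, j \in I$ and homogeneous $x', y' \in \cal{F}$ of strictly smaller degree. Applying the iterative formula~\eqref{eq:shuffle_product_iterative} to both sides yields
\[
  x *_{r,s} y \ =\ (x' *_{r,s} y)\,i + (\omega'_{|x|},\omega_j)^{-1} (x *_{r,s} y')\,j,
\]
\[
  y *_{s,r} x \ =\ (y' *_{s,r} x)\,j + (\omega'_i,\omega_{|y|})\,(y *_{s,r} x')\,i.
\]
By the inductive hypothesis applied to the pairs $(x',y)$ and $(x,y')$, we have $x' *_{r,s} y = (\omega'_{|x'|},\omega_{|y|})^{-1}(y *_{s,r} x')$ and $x *_{r,s} y' = (\omega'_{|x|},\omega_{|y'|})^{-1}(y' *_{s,r} x)$. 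Substituting these and comparing the $i$-ending and $j$-ending summands of $(\omega'_{|x|},\omega_{|y|})^{-1}\,y *_{s,r} x$ reduces the claim to the two multiplicativity identities
\[
  (\omega'_{|x|},\omega_{|y|}) = (\omega'_{|x'|},\omega_{|y|})(\omega'_i,\omega_{|y|}),
  \qquad
  (\omega'_{|x|},\omega_{|y|}) = (\omega'_{|x|},\omega_{|y'|})(\omega'_{|x|},\omega_j).
\]
Both follow at once from the explicit formula $(\omega'_\lambda,\omega_\mu) = r^{\langle\lambda,\mu\rangle} s^{-\langle\mu,\lambda\rangle}$ of~\eqref{eq:generators-parity-2} together with the bilinearity of the Ringel form $\langle\cdot,\cdot\rangle$ on $Q$, using $|x| = |x'| + \alpha_i$ and $|y| = |y'| + \alpha_j$.

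There is no genuine obstacle here; the argument is pure bookkeeping of subscripts and signs on the Cartan pairing. An equivalent noninductive route would use the explicit formula~\eqref{eq:shuffle_product_general}: under the evident bijection between $(m,d)$-shuffles $\sigma$ for $x *_{r,s} y$ and $(d,m)$-shuffles $\tau$ for $y *_{s,r} x$ that produces the same output word, the pairs $(p,q)$ with $p \le m < q$ contributing to $e_{r,s}(\sigma)$ (those with $\sigma(p)<\sigma(q)$) and to $e_{s,r}(\tau)$ (those with $\sigma(p)>\sigma(q)$) form complementary subsets, so their joint contribution equals $\prod_{p\le m<q}(\omega'_{i_p},\omega_{i_q}) = (\omega'_{|x|},\omega_{|y|})$ by bilinearity, giving the same relation between $e_{r,s}(\sigma)$ and $e_{s,r}(\tau)$.
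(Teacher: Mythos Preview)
Your proof is correct and follows essentially the same approach as the paper: induction on the total height of $|x|+|y|$, writing $x=x'i$ and $y=y'j$, expanding both shuffle products via the iterative formula~\eqref{eq:shuffle_product_iterative}, applying the inductive hypothesis, and reducing to the multiplicativity of the Cartan pairing in each argument. The paper carries out the same chain of equalities explicitly rather than isolating the two multiplicativity identities, but the substance is identical; your closing remark about the noninductive route via~\eqref{eq:shuffle_product_general} is a valid alternative not used in the paper.
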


\begin{proof} 
We proceed by induction on $m = \hgt(|x|) + \hgt(|y|)$. If $m = 0$ or $1$, then the result is obvious, because in these 
cases one of $x$ and $y$ must be the empty word. Thus, we may assume that $\hgt(|x|) \ge 1$ and $\hgt(|y|) \ge 1$, and 
that the result holds for all homogeneous elements $\tilde{x},\tilde{y}$ with $\hgt(|\tilde{x}|) + \hgt(|\tilde{y}|)  < m$. 
We may further assume that 
$x,y\in \cal{W}$. Then we may write $x = x'i$ and $y = y'j$ for some $i,j \in I$, and by induction we have:
\begin{align*}
  x *_{r,s} y 
  &= (x'i) *_{r,s} (y'j) = (x' *_{r,s} (y'j))i + (\omega_{|x|}',\omega_{j})^{-1} ((x'i) *_{r,s} y')j \\
  &= (\omega_{|x'|}',\omega_{|y|})^{-1} ((y'j) *_{s,r} x')i + 
     (\omega_{|x|}',\omega_{j})^{-1} (\omega_{|x|}',\omega_{|y'|})^{-1} (y' *_{s,r} (x'i))j \\
  &= (\omega_{|x|}',\omega_{|y|})^{-1} \left( (\omega_{i}',\omega_{|y|}) ((y'j) *_{s,r} x')i + (y' *_{s,r} (x'i))j \right) \\
  &= (\omega_{|x|}',\omega_{|y|})^{-1} (y'j) *_{s,r} (x'i) = (\omega_{|x|}',\omega_{|y|})^{-1} y *_{s,r} x.
\end{align*}
This completes the proof.
\end{proof}

Let $\pi_{+}\colon \cal{F} \to U_{r,s}^{+}$ and $\pi_{-}\colon \cal{F} \to U_{r,s}^{-}$ be the canonical algebra homomorphisms  
determined by $\pi_+(i)=e_i$ and $\pi_-(i)=f_i$ for $i\in I$. We note that by the definition of $\Delta_{r,s}$ on 
$\bar{\cal{F}}\simeq U_{r,s}^{+}$, cf.~\eqref{eq:F-vs-U}, we have 
  $$\Delta_{r,s}\pi_{+} = (\pi_{+} \otimes \pi_{+})\Delta_{r,s}.$$
For $w = [i_{1} \dots i_{d}]$ and any $P = \{k_{1} < \dots < k_{m}\} \subseteq \{1,2,\ldots ,d\}$, define 
$w_{P} = [i_{k_{1}} \dots i_{k_{m}}]$. We then have 
\[
  \Delta_{r,s}(w) = \sum_{P \subseteq \{1,2,\ldots ,d\}} z(P),
\]
where $z(P) = z_{1} \odot_{r,s} \dots \odot_{r,s} z_{d}$ with $z_{k} = i_{k} \otimes \emptyset$ when $k \in P$ and 
$z_{k} = \emptyset \otimes i_{k}$ when $k \in P^{c} = \{1,2,\ldots ,d\} \setminus P$. If $\sigma_{P}$ denotes the 
$(d - m,m)$-shuffle determined by $\sigma_{P}(d - m + i) = k_{i}$, then we have 
\[
  z(P) = e_{r,s}(\sigma_{P}) w_{P} \otimes w_{P^{c}}, 
\]
where $e_{r,s}(\sigma_{P})$ is the coefficient in the $\sigma_{P}$-th summand of $w_{P^{c}} * w_{P}$, cf.~\eqref{eq:shuffle_product_general}. 
This follows immediately from formula~\eqref{eq:twisted_product} for $\odot_{r,s}$ and the definition~\eqref{eq:e_ab} 
of $e_{r,s}(\sigma)$. Therefore, we have 
\[
  \Delta_{r,s}(w) = \sum_{P \subseteq \{1,2,\ldots ,d\}}e_{r,s}(\sigma_{P})w_{P} \otimes w_{P^{c}}.
\]
Let $\cal{F}^{*}$ be the graded dual of $\cal{F}$, and for each word $w \in \cal{W}$ we define $w^* \in \cal{F}^{*}$ by
\begin{equation}\label{eq:dual_vector}
  w^{*}(v) = \delta_{w,v}\quad \text{for all}\quad  v \in \cal{W}.
\end{equation}
Consider the product on $\cal{F}^{*}$ defined by:
\begin{equation*}
  (fg)(x) = (g \otimes f)(\Delta_{r,s}(x)).
\end{equation*}

\begin{lemma} 
The linear map $\phi\colon \cal{F}^{*} \to (\cal{F},*)$ defined by $w^{*} \mapsto w$ is a $\BC(r,s)$-algebra isomorphism.
\end{lemma}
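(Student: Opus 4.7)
The plan is to verify that $\phi$ is a bijection on the nose (it sends a basis to a basis) and then to show it is multiplicative by a direct index computation comparing the explicit shuffle formula~\eqref{eq:shuffle_product_general} for $*_{r,s}$ with the formula for $\Delta_{r,s}$ on a word already established in the paragraph preceding the statement.

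First, I would note that $\cal{F}^{*}$ is the graded dual, so $\{w^{*}\}_{w\in \cal{W}}$ is by definition a basis of $\cal{F}^{*}$, and since $\cal{W}$ is a basis of $\cal{F}$, the $\BC(r,s)$-linear map $\phi$ is a bijection. The empty word gives the unit on both sides: on the $\cal{F}^{*}$ side, the functional $\emptyset^{*}$ is the unit for the product $fg = (g \otimes f)\circ \Delta_{r,s}$ since $\Delta_{r,s}(w) \in w\otimes \emptyset + \emptyset \otimes w + (\text{interior terms})$, and on the $(\cal{F},*)$ side, $\emptyset$ is the unit by~\eqref{eq:shuffle_product_iterative}.

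The main computation is to check $\phi(u^{*} v^{*}) = u *_{r,s} v$ for arbitrary words $u,v \in \cal{W}$. Using the formula $\Delta_{r,s}(w) = \sum_{P\subseteq \{1,\ldots,d\}} e_{r,s}(\sigma_{P})\, w_{P}\otimes w_{P^{c}}$ recorded just above the statement, I would compute
\[
  (u^{*}v^{*})(w) = (v^{*}\otimes u^{*})\Delta_{r,s}(w) = \sum_{\substack{P:\,w_{P}=v\\ w_{P^{c}}=u}} e_{r,s}(\sigma_{P}),
\]
which is zero unless $|w| = |u|+|v|$. Setting $m=|u|$, $d=|v|$, the subsets $P \subseteq \{1,\ldots,m+d\}$ of size $d$ are in natural bijection with $(m,d)$-shuffles $\sigma \in S_{m+d}$ via $P = \{\sigma(m+1),\ldots,\sigma(m+d)\}$, and under this bijection $\sigma_{P}$ as defined above coincides with $\sigma$, so $e_{r,s}(\sigma_{P})$ matches the weight $e_{r,s}(\sigma)$ of~\eqref{eq:e_ab}. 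Under this identification, the constraint $w_{P}=v,\ w_{P^{c}}=u$ says exactly that $w$ is the shuffled word $[i_{\sigma^{-1}(1)}\cdots i_{\sigma^{-1}(m+d)}]$ with $[i_{1}\cdots i_{m}]=u$ and $[i_{m+1}\cdots i_{m+d}]=v$. Summing over all $w$ and invoking~\eqref{eq:shuffle_product_general} then yields
\[
  \phi(u^{*}v^{*}) = \sum_{w} (u^{*}v^{*})(w)\cdot w = \sum_{\sigma} e_{r,s}(\sigma) \, [i_{\sigma^{-1}(1)}\cdots i_{\sigma^{-1}(m+d)}] = u *_{r,s} v,
\]
so $\phi$ is multiplicative.

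I do not anticipate a serious obstacle here: the only subtlety is to match conventions carefully, in particular the order reversal in the definition $(fg)(x)=(g\otimes f)\Delta_{r,s}(x)$ and the indexing convention $\sigma_{P}(d-m+i)=k_{i}$ from the paragraph preceding the statement, which together ensure that the quantum weights $e_{r,s}(\sigma_{P})$ and $e_{r,s}(\sigma)$ agree without an extra twisting factor. With this bookkeeping in hand, the proof is a one-line comparison of the two formulas.
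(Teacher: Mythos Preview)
Your proposal is correct and follows essentially the same approach as the paper: both compute $(u^{*}v^{*})(w)$ via the explicit formula $\Delta_{r,s}(w)=\sum_{P}e_{r,s}(\sigma_{P})\,w_{P}\otimes w_{P^{c}}$, identify subsets $P$ with $(m,d)$-shuffles $\sigma$ so that the coefficients match those in~\eqref{eq:shuffle_product_general}, and conclude $\phi(u^{*}v^{*})=u*v$. One small notational slip: you write $m=|u|$, $d=|v|$, but in this paper $|\cdot|$ denotes the $Q^{+}$-degree, not the word length; you should write $m=\hgt(|u|)$ (or just ``the length of $u$'') instead.
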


\begin{proof} 
The map $\phi$ is clearly a vector space isomorphism, so we only need to show that $\phi(fg) = \phi(f) * \phi(g)$. 
For this, let $u = [i_{1} \dots i_{d}] \in \cal{W}$, $v = [i_{d + 1} \dots i_{d + m}] \in \cal{W}$, and let $w$ be 
any word of weight $|u| + |v|$. Then 
\[
  (u^{*}v^{*})(w) = 
  (v^{*} \otimes u^{*}) \left( \sum_{P \subseteq \{1,\ldots ,d + m\}} e_{r,s}(\sigma_{P})w_{P} \otimes w_{P^{c}} \right).
\]
If $\lambda_{u,v}^{w} = \sum e_{r,s}(\sigma_{P})$ with the sum over all $P \subseteq \{1,2,\ldots ,d + m\}$ 
satisfying $w_{P} = v, w_{P^{c}} = u$, then we get:
\begin{equation}\label{eq:product_fu}
  u^{*}v^{*} = \sum \lambda_{u,v}^{w}w^{*}.
\end{equation}
On the other hand, we have 
\[
  u * v = \sum_{\sigma}e_{r,s}(\sigma)[i_{\sigma^{-1}(1)}\ldots i_{\sigma^{-1}(d + m)}],
\]
so that the coefficient of any word $w\in \cal{W}$ of weight $|u| + |v|$ in the expansion of $u * v$ is precisely 
equal to $\sum e_{r,s}(\sigma)$, where the sum ranges over all $(d,m)$-shuffles $\sigma$ such that, if 
$P = \{\sigma(d + 1),\ldots ,\sigma(d +m)\}$, then $w_{P} = v$ and $w_{P^{c}} = u$. 
We thus get $u * v = \sum \lambda_{u,v}^{w}w$, which together with~\eqref{eq:product_fu} completes the proof.
\end{proof}

\begin{prop}\label{prop:shuffle_embedding} 
There is a unique $\BC(r,s)$-algebra homomorphism $\Psi\colon U_{r,s}^{+} \to (\cal{F},*)$ such that $\Psi(e_{i}) = i$. 
Moreover, $\Psi$ is injective. 
\end{prop}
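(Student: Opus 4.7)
The plan is to build $\Psi$ not by checking the Serre relations directly in $(\cal{F},*)$ but rather by dualizing the pairing $(\cdot,\cdot)$ on $U^+_{r,s}\cong \bar{\cal{F}}$ and using the algebra isomorphism $\phi\colon (\cal{F}^*,\cdot)\iso (\cal{F},*)$ already established in the excerpt. Uniqueness is automatic because $U^+_{r,s}$ is generated as an algebra by $\{e_i\}_{i\in I}$. For existence and injectivity, I would proceed as follows.

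First, define a linear map $\Psi^*\colon U^+_{r,s} \to \cal{F}^*$ by declaring, for homogeneous $x\in (U^+_{r,s})_\mu$ and $v\in \cal{W}$,
\[
   \Psi^*(x)(v) \;=\; \bigl(x,\,\psi^+\pi_+(v)\bigr),
\]
extended linearly. This is well-defined because the right-hand side vanishes unless $|v|=\mu$, so $\Psi^*(x)\in \cal{F}^*$ really does restrict to a finite sum on each graded piece. Then set $\Psi := \phi\circ \Psi^*\colon U^+_{r,s}\to (\cal{F},*)$. By the formula $\Psi(x)=\sum_{w}\Psi^*(x)(w)\,w$ and the computation $(e_i,e_j)=\delta_{ij}$ from Lemma \ref{lem:symm_pairing_properties}(2), one immediately gets $\Psi(e_i)=i$.

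The main step is multiplicativity, and the key observation is that property (5) of Lemma \ref{lem:symm_pairing_properties}, namely $(xy,z)=(y\otimes x,\Delta_{r,s}(z))$, is precisely aligned with the product on $\cal{F}^*$ defined by $(fg)(z)=(g\otimes f)(\Delta_{r,s}(z))$ — note the swap of factors on both sides. Using that $\Delta_{r,s}$ descends from $\cal{F}$ to $\bar{\cal{F}}\cong U^+_{r,s}$ and commutes with $\psi^+\pi_+$ (as recalled in the excerpt just before Lemma 4.3), I would compute, for any $v\in \cal{W}$,
\begin{align*}
   \Psi^*(xy)(v)
   &= \bigl(xy,\psi^+\pi_+(v)\bigr)
    = \bigl(y\otimes x,(\psi^+\pi_+\otimes \psi^+\pi_+)\Delta_{r,s}(v)\bigr) \\
   &= \sum_{(v)} \Psi^*(y)(v_{1;r,s})\,\Psi^*(x)(v_{2;r,s})
    = \bigl(\Psi^*(x)\Psi^*(y)\bigr)(v),
\end{align*}
so $\Psi^*$ is an algebra homomorphism into $(\cal{F}^*,\cdot)$, and hence $\Psi=\phi\circ \Psi^*$ is an algebra homomorphism into $(\cal{F},*)$. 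The mild bookkeeping obstacle here is making sure the swap in property (5) lines up with the swap in the definition of the product on $\cal{F}^*$; this is what forces the particular choice of coproduct $\Delta_{r,s}$ (rather than $\Delta_{s^{-1},r^{-1}}$) on the shuffle side.

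For injectivity, suppose $\Psi(x)=0$ for some $x\in U^+_{r,s}$. Then $\Psi^*(x)(w)=0$ for every $w\in \cal{W}$, which, since $\{\psi^+\pi_+(w)\}_{w\in \cal{W}}$ spans $\bar{\cal{F}}\cong U^+_{r,s}$, forces $(x,z)=0$ for every $z\in U^+_{r,s}$. The non-degeneracy of $(\cdot,\cdot)$ established in Corollary \ref{cor:nondegeneracy-CHW} then yields $x=0$, completing the proof.
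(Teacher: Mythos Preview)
Your proof is correct and follows essentially the same route as the paper's: the paper also builds $\Psi$ as the composite $\phi\circ\pi_+^*\circ\psi$, where $\psi\colon U_{r,s}^+\to (U_{r,s}^+)^*$ is the map $\psi(x)(y)=(x,y)$ (shown to be an algebra isomorphism via the identical use of property~(5)), and $\pi_+^*$ is the dual of the quotient $\cal{F}\twoheadrightarrow U_{r,s}^+$; your $\Psi^*$ is exactly $\pi_+^*\circ\psi$ written out on words. The only cosmetic difference is that the paper packages injectivity as ``composite of injections'' rather than arguing directly from non-degeneracy, but the content is the same.
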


\begin{proof} 
The quotient map $\pi_{+}\colon \cal{F} \to U_{r,s}^{+}$ induces an embedding of graded duals 
$\pi_{+}^{*}\colon (U_{r,s}^{+})^{*} \to \cal{F}^{*}$, where multiplication is defined by 
$(fg)(x) = (g \otimes f)(\Delta_{r,s}(x))$ in both cases. As the pairing $(\cdot,\cdot)$ on $\bar{\cal{F}}\simeq U_{r,s}^{+}$  
is non-degenerate by Corollary~\ref{cor:nondegeneracy-CHW}, we have a vector space isomorphism 
$\psi\colon U_{r,s}^{+} \to (U_{r,s}^{+})^{*}$ given by $\psi(x)(y) = (x,y)$ for all $x,y \in U_{r,s}^{+}$. 
Evoking that $(fg)(x) = (g \otimes f)(\Delta_{r,s}(x))$ for $f,g \in (U_{r,s}^{+})^{*}$, we thus obtain:
\begin{equation*}
  \psi(xx')(y) =  (xx',y) = (x' \otimes x, \Delta_{r,s}(y)) = (\psi(x') \otimes \psi(x))(\Delta_{r,s}(y)) = (\psi(x)\psi(x'))(y).
\end{equation*}  
This shows that the map $\psi$ is actually an algebra isomorphism. Now, define $\Psi = \phi \circ \pi_{+}^{*} \circ \psi$. 
Then $\Psi$ is an algebra embedding, and since 
  $((\pi_{+}^{*} \circ \psi)(e_{i}))(j_1 \ldots j_d) = \psi(e_{i})(e_{j_1}\cdots e_{j_d}) = (e_{i},e_{j_1}\cdots e_{j_d}) = 
   \delta_{[i], [j_1\ldots j_d]}$, 
it also follows that $\Psi(e_{i}) = i$.
\end{proof}

We shall now give an alternative description of the above map $\Psi$, making use of the operators $\partial_{i}'$ 
introduced in~\eqref{eq:coproduct_structure}. For each word $w = [i_{1}\ldots i_{d}] \in \cal{W}$, we define 
\[
  \partial_{w}' = \partial_{i_{1}}'\partial_{i_{2}}'\ldots \partial_{i_{d}}'.
\]
We then define a $\BC(r,s)$-linear map $\Upsilon\colon U_{r,s}^{+} \to \cal{F}$ by 
\begin{equation}\label{eq:Upsilon}
  \Upsilon(u) = \sum_{w \in \cal{W}_{\mu}}\partial_{w}'(u)w \qquad \text{for}\quad u \in (U_{r,s}^{+})_{\mu}.
\end{equation}
We will show in Proposition~\ref{prop:psi_equals_upsilon} below that this map coincides with the map $\Psi$ of 
Proposition~\ref{prop:shuffle_embedding}. To do so, we need to introduce analogues of the operators $\partial_{i}'$ 
for $(\cal{F},*)$, which is the content of the following lemma:

\begin{lemma}\label{lem:varepsilon_i}
For each $i = 1,2,\ldots ,n$, define the $\BC(r,s)$-linear map $\epsilon_{i}'\colon \cal{F} \to \cal{F}$ by 
\[
  \epsilon_{i}'([i_{1} \dots i_{d}]) = \delta_{i,i_{d}} [i_{1} \dots i_{d-1}] ,\qquad 
  \epsilon_{i}'(\emptyset) = 0.
\]
Then $\epsilon_{i}'(j) = \delta_{ij}$ and we have
\[
  \epsilon_{i}'(x * y) = \epsilon_{i}'(x) * y + (\omega_{|x|}',\omega_{i})^{-1} (x * \epsilon_{i}'(y))
\]
for all homogeneous $x,y \in \cal{F}$.
\end{lemma}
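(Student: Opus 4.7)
The identity $\epsilon_i'(j)=\delta_{ij}$ is immediate from the definition. For the Leibniz-type rule, the plan is to proceed by induction on $\hgt(|x|)+\hgt(|y|)$, using the iterative definition \eqref{eq:shuffle_product_iterative} of $*$. Since both sides of the claimed identity are bilinear, it is enough to treat $x,y\in \cal{W}$.

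The base cases are $x=\emptyset$ or $y=\emptyset$: in each case both sides reduce trivially, using $\epsilon_i'(\emptyset)=0$ and $\emptyset*z=z*\emptyset=z$. For the inductive step, write $x=x''k$ and $y=y''l$ with $k,l\in I$. Then \eqref{eq:shuffle_product_iterative} gives
\[
  x*y \;=\; (x''*y)\,k \;+\; (\omega'_{|x|},\omega_l)^{-1}\,(x*y'')\,l.
\]
The first key observation is purely combinatorial: if $z\in \cal{F}$ is any linear combination of words all ending in the same letter $m\in I$, then $\epsilon_i'(z)=\delta_{i,m}\cdot z'$, where $z'$ is obtained from $z$ by deleting the final letter of each word. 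Applying this to each of the two summands above yields
\[
  \epsilon_i'(x*y) \;=\; \delta_{i,k}\,(x''*y) \;+\; (\omega'_{|x|},\omega_l)^{-1}\,\delta_{i,l}\,(x*y'').
\]

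It remains to compare this with the right-hand side of the claimed formula. By definition $\epsilon_i'(x)=\delta_{i,k}\,x''$ and $\epsilon_i'(y)=\delta_{i,l}\,y''$, so
\[
  \epsilon_i'(x)*y + (\omega'_{|x|},\omega_i)^{-1}(x*\epsilon_i'(y)) \;=\; \delta_{i,k}(x''*y) + (\omega'_{|x|},\omega_i)^{-1}\,\delta_{i,l}\,(x*y'').
\]
The two expressions agree because the second term vanishes unless $i=l$, in which case $\omega_l=\omega_i$ and the two scalars $(\omega'_{|x|},\omega_l)^{-1}$ and $(\omega'_{|x|},\omega_i)^{-1}$ coincide. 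This closes the induction.

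There is no serious obstacle here: the only subtlety is the observation about $\epsilon_i'$ acting on sums of words with a common final letter, together with the harmless scalar identification in the case $i=l$. All other steps are bookkeeping from the recursive definition \eqref{eq:shuffle_product_iterative}.
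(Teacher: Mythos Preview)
Your proof is correct and essentially identical to the paper's: both split off the last letter of $x$ and $y$, apply the recursive formula \eqref{eq:shuffle_product_iterative} once, and use that $\epsilon_i'$ simply deletes a fixed final letter. One small remark: you frame the argument as an induction on $\hgt(|x|)+\hgt(|y|)$, but you never actually invoke the induction hypothesis---the computation in your ``inductive step'' is direct, just as in the paper, so the induction scaffolding can be dropped.
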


\begin{proof} 
The equality $\epsilon_{i}'(j) = \delta_{ij}$ is obvious. For the latter identity, it suffices to assume that 
$x,y\in \cal{W}$. If one of $x,y$ has length zero, then the formula is obvious. 
Otherwise, we may write $x = x'j, y = y'k$. Then: 
\begin{align*}
  \epsilon_{i}'(x * y) 
  &= \epsilon_{i}'((x' * (y'k))j + (\omega_{|x|}',\omega_{k})^{-1} ((x'j) * y')k) \\
  &= \delta_{ij}(x' * (y'k)) + \delta_{ik}(\omega_{|x|}',\omega_{k})^{-1} ((x'j) * y') 
   = \epsilon_{i}'(x) * y + (\omega_{|x|}',\omega_{i})^{-1}(x * \epsilon_{i}'(y)),
\end{align*}
as desired.
\end{proof}

For a word $w = [i_{1} \dots i_{d}]$, we also define $\epsilon_{w}'\colon \cal{F}\to \cal{F}$ via
\begin{equation}\label{eq:varepsilon_word}
  \epsilon_{w}' = \epsilon_{i_{1}}'\epsilon_{i_{2}}' \dots \epsilon_{i_{d}}'.
\end{equation}
Then for any word $v \in \cal{W}$, we have $\epsilon_{w}'(v) = \delta_{w,v}$.

\begin{prop}\label{prop:psi_equals_upsilon} 
The map $\Upsilon\colon U_{r,s}^{+} \to (\cal{F},*)$ of~\eqref{eq:Upsilon} is an injective algebra homomorphism 
satisfying $\Upsilon(e_{i}) = i$ for all $i \in I$, and hence it coincides with the map $\Psi$ of 
Proposition~\ref{prop:shuffle_embedding}.
\end{prop}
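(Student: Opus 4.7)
My plan is to show that $\Upsilon$ is an algebra homomorphism by exploiting an \emph{intertwining} between the operators $\partial_i'$ on $U^+_{r,s}\simeq\bar{\cal F}$ and the operators $\epsilon_i'$ on $(\cal F,*)$ from Lemma~\ref{lem:varepsilon_i}. Since $\Psi$ from Proposition~\ref{prop:shuffle_embedding} is the unique algebra homomorphism sending $e_i\mapsto i$, the identification $\Upsilon=\Psi$ will follow automatically once $\Upsilon$ is shown to be an algebra map satisfying $\Upsilon(e_i)=i$, with injectivity of $\Upsilon$ being an independent byproduct.

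First, $\Upsilon(e_i)=i$ is immediate: the only word in $\cal W_{\alpha_i}$ is $[i]$, and $\partial_i'(e_i)=1$ from the base formula $\partial_i'(j)=\delta_{ij}$ (viewed through the identification $\bar{\cal F}\simeq U^+_{r,s}$). The key lemma that I would establish next is
\begin{equation*}
  \epsilon_i'\circ\Upsilon\;=\;\Upsilon\circ\partial_i' \qquad \text{for all } i\in I.
\end{equation*}
This is a one-line calculation: for $u\in (U^+_{r,s})_\mu$,
\begin{equation*}
  \epsilon_i'(\Upsilon(u))
   =\sum_{w\in\cal W_\mu}\partial_w'(u)\,\epsilon_i'(w)
   =\sum_{v\in\cal W_{\mu-\alpha_i}}\partial_{vi}'(u)\,v
   =\sum_{v\in\cal W_{\mu-\alpha_i}}\partial_v'(\partial_i'(u))\,v
   =\Upsilon(\partial_i'(u)),
\end{equation*}
where the key identity $\partial_{vi}'=\partial_v'\circ\partial_i'$ is just the definition of $\partial_w'$ via concatenation.

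Next, I would prove by induction on $\hgt(|u|+|u'|)$ that $\Upsilon(uu')=\Upsilon(u)*\Upsilon(u')$ for all homogeneous $u,u'\in U^+_{r,s}$. The base case is trivial (one of the factors is a scalar). For the inductive step, apply $\epsilon_i'$ to the left-hand side, use the intertwining identity above to transfer it to $\Upsilon(\partial_i'(uu'))$, then expand $\partial_i'(uu')$ via the Leibniz rule \eqref{eq:+derivation}. Since $\partial_i'(u)u'$ and $u\partial_i'(u')$ have strictly smaller height, the inductive hypothesis lets me pull $\Upsilon$ through these products, producing $\epsilon_i'(\Upsilon(u))*\Upsilon(u')+(\omega_{|u|}',\omega_i)^{-1}\Upsilon(u)*\epsilon_i'(\Upsilon(u'))$, which by the Leibniz rule of Lemma~\ref{lem:varepsilon_i} equals $\epsilon_i'(\Upsilon(u)*\Upsilon(u'))$. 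Thus $D:=\Upsilon(uu')-\Upsilon(u)*\Upsilon(u')$ satisfies $\epsilon_i'(D)=0$ for every $i$. Iterating, $\epsilon_w'(D)=0$ for every word $w$, and since $\epsilon_w'(v)=\delta_{w,v}$, the element $D\in\cal F$ must vanish.

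Finally, for injectivity: if $\Upsilon(u)=0$ then every coefficient $\partial_w'(u)$ is zero, so in particular $\partial_i'(u)=0$ for all $i$, and Proposition~\ref{prop:zero_criteria+} forces $u=0$. Since $\Upsilon$ and $\Psi$ are both algebra homomorphisms agreeing on the generators $e_i$ and $U^+_{r,s}$ is generated by these, $\Upsilon=\Psi$. The main obstacle here is purely bookkeeping: making sure the Leibniz scalars $(\omega_{|u|}',\omega_i)^{-1}$ in \eqref{eq:+derivation} on the $U^+_{r,s}$ side match exactly those in Lemma~\ref{lem:varepsilon_i} on the shuffle side, which they do by construction since $|{\Upsilon(u)}|=|u|$.
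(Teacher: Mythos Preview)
Your proof is correct and takes a genuinely different route from the paper. The paper argues in the opposite direction: starting from the already-constructed algebra homomorphism $\Psi$, it observes that $\Psi\circ\partial_i'=\epsilon_i'\circ\Psi$ (since $\Psi$ is multiplicative and both operators satisfy the same Leibniz rule~\eqref{eq:Leibniz_partial} and agree on generators), and then reads off the coefficient of $w$ in $\Psi(u)$ as $\epsilon_w'(\Psi(u))=\Psi(\partial_w'(u))=\partial_w'(u)$, which is exactly the formula~\eqref{eq:Upsilon} defining $\Upsilon$. Your approach instead proves the intertwining $\epsilon_i'\circ\Upsilon=\Upsilon\circ\partial_i'$ directly from the definition of $\Upsilon$ and then bootstraps multiplicativity of $\Upsilon$ by induction on height. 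The paper's route is a three-line computation once the intertwining is noted; yours is more self-contained in that it does not lean on the prior construction of $\Psi$ until the final identification.

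Two minor corrections. First, the Leibniz rule you invoke for $\partial_i'$ should be~\eqref{eq:Leibniz_partial}, not~\eqref{eq:+derivation}; the latter is for the operators $p_i'$ and carries the scalar $(\omega_i',\omega_{\deg(x)})$ rather than $(\omega_{|x|}',\omega_i)^{-1}$. The formula you actually write is the correct one, so this is only a mislabelled reference. Second, your injectivity argument has a small gap: from $\Upsilon(u)=0$ you conclude that every scalar $\partial_w'(u)$ with $|w|=\mu$ vanishes, but $\partial_i'(u)\in (U_{r,s}^+)_{\mu-\alpha_i}$ is not one of these scalars when $\hgt(\mu)>1$. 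The fix is immediate: your intertwining gives $\Upsilon(\partial_i'(u))=\epsilon_i'(\Upsilon(u))=0$, so by induction on $\hgt(\mu)$ you get $\partial_i'(u)=0$ for all $i$, and then Proposition~\ref{prop:zero_criteria+} applies. Alternatively, once you have shown $\Upsilon=\Psi$, injectivity is inherited from Proposition~\ref{prop:shuffle_embedding} and the separate argument is unnecessary.
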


\begin{proof} 
First, we note that $\Psi \circ \partial_{i}' = \epsilon_{i}' \circ \Psi$, by~\eqref{eq:Leibniz_partial} and 
Lemma~\ref{lem:varepsilon_i}. Therefore, if $u \in (U_{r,s}^{+})_{\mu}$, $w \in \cal{W}_{\mu}$, and $\gamma_{w}(u)$ 
is the coefficient of $w$ in $\Psi(u)$, then
\[
  \gamma_{w}(u) = \epsilon_{w}'(\Psi(u)) = \Psi(\partial_{w}'(u)) = \partial_{w}'(u)\Psi(1) = \partial_{w}'(u).
\]
This shows that $\Psi = \Upsilon$ and completes the proof.
\end{proof}

Let $\cal{U}$ denote the image of $\Psi$, that is $\cal{U} = \Psi(U_{r,s}^{+})$, which is the subalgebra of $(\cal{F},*_{r,s})$ generated by $I$.

\begin{prop}\label{prop:image_description} 
The element $x = \sum_{w \in \cal{W}}\gamma(w)w \in \cal{F}$ lies in $\cal{U}$ if and only if
\begin{equation}\label{eq:image_description}
  \sum_{k = 0}^{1 - a_{ij}} (-1)^{k} \qbinom{1 - a_{ij}}{k}_{r_{i},s_{i}} (r_{i}s_{i})^{\frac{1}{2}k(k - 1)}
    (rs)^{k\langle \alpha_{j},\alpha_{i}\rangle} \gamma(z[i]^{k}[j][i]^{1 - a_{ij} - k}t) = 0 
\end{equation}
for all $i \neq j$ and $z,t \in \cal{W}$. 
\end{prop}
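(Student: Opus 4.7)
The plan is to rephrase $x \in \cal{U}$ as a factorization condition on the coefficient functional of $x$, and then identify the relevant kernel with the quantum Serre ideal.

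Extend $\gamma$ linearly to $\gamma\colon \cal{F} \to \BC(r,s)$. By Proposition~\ref{prop:psi_equals_upsilon}, $\cal{U} = \Upsilon(U^+_{r,s})$, so $x \in \cal{U}$ iff there exists $u \in U^+_{r,s}$ with $\gamma(w) = \partial'_w(u)$ for every $w \in \cal{W}$. Iterating Lemma~\ref{lem:symm-der-adjoint} yields $\partial'_w(u) = (u, \pi_+(w))$, where $\pi_+ \colon \cal{F} \twoheadrightarrow \bar{\cal{F}} \simeq U^+_{r,s}$ is the canonical surjection (via~\eqref{eq:F-vs-U}). Combined with the non-degeneracy of $(\cdot,\cdot)$ on $U^+_{r,s}$ (Corollary~\ref{cor:nondegeneracy-CHW}), the existence of such $u$ is equivalent to $\gamma$ factoring through $\pi_+$, i.e.\ to $\gamma$ vanishing on $\ker(\pi_+) \subset \cal{F}$.

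Next I would identify $\ker(\pi_+)$ with the two-sided ideal $\cal{J} \subset (\cal{F},\cdot)$ (under concatenation) generated by the quantum Serre relators $S_{ij}$ of Definition~\ref{def:general_2param} for all ordered pairs $i \ne j$. The inclusion $\cal{J} \subseteq \ker(\pi_+)$ is immediate from Theorem~\ref{thm:serre} together with the identification $U^+_{r,s} \simeq \bar{\cal{F}}$. For the reverse inclusion I would compare dimensions: working over $\BC[r^{\pm},s^{\pm}]$ and specializing to $r = s = 1$ turns the quantum Serre relators into the classical ones, which combined with semicontinuity of graded fiber dimensions yields $\dim_{\BC(r,s)}(\cal{F}/\cal{J})_\mu \le \dim_{\BC} U(\fn^+)_\mu$; on the other hand, Proposition~\ref{prop:graded-dimension} gives $\dim_{\BC(r,s)}(U^+_{r,s})_\mu = \dim_{\BC} U(\fn^+)_\mu$. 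Since $\cal{F}/\cal{J} \twoheadrightarrow U^+_{r,s}$ is a graded surjection, it must then be an isomorphism on each weight space, so $\ker(\pi_+) = \cal{J}$.

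Finally, the left-hand side of~\eqref{eq:image_description} equals $\gamma(z T_{ij} t)$, where $T_{ij} \in \cal{F}$ denotes the displayed formal combination of words. Thus~\eqref{eq:image_description} (for all $z,t \in \cal{W}$ and $i \ne j$) amounts to $\gamma$ annihilating the two-sided ideal generated by $\{T_{ij}\}$. To close the argument I would verify that $T_{ij}$ is a nonzero scalar multiple of $S_{ij}$: performing the substitution $k \mapsto 1 - a_{ij} - k$ in $S_{ij}$ and simplifying via $\qbinom{1-a_{ij}}{k}_{r_i,s_i} = \qbinom{1-a_{ij}}{1-a_{ij}-k}_{r_i,s_i}$, the rewriting $(r_is_i)^{(1-a_{ij}-k)(-a_{ij}-k)/2} = (r_is_i)^{a_{ij}(a_{ij}-1)/2}(r_is_i)^{k(k-1)/2}(r_is_i)^{k a_{ij}}$, and the Ringel-form identity $d_i a_{ij} = (\alpha_i,\alpha_j) = \langle \alpha_i,\alpha_j\rangle + \langle \alpha_j,\alpha_i\rangle$, one reconciles the $(rs)$-exponent and concludes that $T_{ij}$ and $S_{ij}$ generate the same two-sided ideal. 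The main obstacle is the second step---establishing $\ker(\pi_+) = \cal{J}$ by the dimension count---which provides the structural input needed to translate the $\gamma$-vanishing condition into membership in $\cal{U}$.
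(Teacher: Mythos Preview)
Your steps 1 and 2 are essentially the paper's approach: both arguments reduce the question to the fact that $\ker(\pi_+)$ is the two-sided ideal generated by the Serre relators $S_{ij}$, and then identify condition~\eqref{eq:image_description} with the vanishing of $\gamma$ on that ideal. (The paper invokes~\eqref{eq:F-vs-U} for $\ker(\pi_+)=\mathcal{J}$, while you propose a specialization/dimension argument; either way this is the structural input.)

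Step 3, however, contains a genuine error. Carrying out your substitution $k \mapsto 1-a_{ij}-k$ in $S_{ij}$ and simplifying exactly as you indicate, the $(rs)$-factor you obtain has exponent $k\langle\alpha_i,\alpha_j\rangle$, not $k\langle\alpha_j,\alpha_i\rangle$: your identity $(r_is_i)^{ka_{ij}}=(rs)^{kd_ia_{ij}}=(rs)^{k(\langle\alpha_i,\alpha_j\rangle+\langle\alpha_j,\alpha_i\rangle)}$, combined with $(rs)^{(1-a_{ij}-k)\langle\alpha_j,\alpha_i\rangle}$, leaves $(rs)^{k\langle\alpha_i,\alpha_j\rangle}$ times a $k$-independent factor. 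Since the Ringel form is \emph{not} symmetric, this does not match $T_{ij}$. Concretely, in type $A_2$ with $(i,j)=(1,2)$ one has $\langle\alpha_2,\alpha_1\rangle=0$, so
\[
  S_{12}=e_1^2e_2-(r+s)e_1e_2e_1+rs\,e_2e_1^2,\qquad
  T_{12}=rs\,e_1^2e_2-(r+s)e_1e_2e_1+e_2e_1^2,
\]
and modulo $S_{12}=0$ one computes $T_{12}=(rs-1)\big((r+s)e_1e_2e_1-(rs+1)e_2e_1^2\big)\neq 0$. Thus $T_{12}$ is not in the Serre ideal at all, let alone a scalar multiple of $S_{12}$, and the two families $\{S_{ij}\}$, $\{T_{ij}\}$ do not generate the same ideal.

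What this actually reveals is that the exponents on $[i]$ in~\eqref{eq:image_description} are transposed: the paper's own proof, which passes through $\gamma_w(u)=(e_{i_1}\cdots e_{i_d},u)$ and then invokes Theorem~\ref{thm:serre}, establishes the condition with the word $[i]^{1-a_{ij}-k}[j][i]^{k}$ (matching $S_{ij}$) rather than $[i]^{k}[j][i]^{1-a_{ij}-k}$. With that correction your step~3 becomes tautological and your argument goes through; as printed, neither your reconciliation nor any other can succeed, since the statement is false for generic $r,s$.
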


\begin{proof} 
Let $K$ be the $\BC(r,s)$-subspace of $\cal{F}$ spanned by the set of elements $\sum_{w \in \cal{W}}\gamma(w)w$ 
satisfying~\eqref{eq:image_description}. For any $u \in (U_{r,s}^{+})_{\mu}$, consider 
\[
  x = \Psi(u) = \sum_{|w| = \mu} \gamma_w(u)w.
\]
Then, for any word $w = [i_{1}\ldots i_{d}]$ with $|w| = \mu$, Proposition~\ref{prop:psi_equals_upsilon} and 
Lemma~\ref{lem:symm-der-adjoint} imply that
\[
  \gamma_w(u) = \partial_{i_{1}}'\ldots \partial_{i_{d}}'(u) = (e_{i_{1}}\ldots e_{i_{d}},u).
\]
Therefore $x \in K$ according to Theorem~\ref{thm:serre}, so that $\cal{U} \subseteq K$.

To prove the other inclusion, consider the linear map $L\colon \cal{F} \to \cal{F}^{*}$ defined by $w \mapsto w^{*}$ for 
$w \in \cal{W}$, where $w^{*}$ was defined in~\eqref{eq:dual_vector}. Then, $f \in K$ if and only if we have $L(x)(f) = 0$ 
for all $x \in \ker(\pi_{+})$, since $\ker(\pi_{+})$ is generated by $\{S_{ij}\}_{i,j\in I}$ of~\eqref{eq:Sij} due 
to~\eqref{eq:F-vs-U}. Thus, it follows that $\dim(K_{\mu}) = \dim(U_{r,s}^{+})_{\mu}$ for any $\mu \in Q^{+}$. But since 
$\Psi\colon U_{r,s}^+ \to \cal{U}$ is an isomorphism, we also have $\dim(\cal{U}_{\mu}) = \dim(U_{r,s}^{+})_{\mu}$ 
for all $\mu \in Q^{+}$.

Therefore, we must actually have the equality $K = \cal{U}$.
\end{proof}

   %%%%%%%%%%%%%%%%%%%%%%%%%%%%%%%%%%%%%%%%%%%%%%%%%%%%%%%%%%%%%%%%%%%%%%%%%%

\subsection{Additional structures}
\

\begin{prop}\label{prop:tau_and_bar}
(1) Let $\tau\colon \cal{F} \to \cal{F}$ be the $\BC$-linear map defined by $\tau(r)=s^{-1}, \tau(s)=r^{-1}$, and 
\[
  \tau([i_{1} \dots i_{d}]) = [i_{d} \dots i_{1}].
\]
Then, $\tau(x *_{r,s} y) = \tau(y) *_{r,s} \tau(x)$ for all $x,y \in \cal{F}$.

\medskip
\noindent
(2) Let $x \mapsto \bar{x}$ be the $\BC$-linear map $\cal{F} \to \cal{F}$ defined by $\bar{r} = s$, $\bar{s} = r$, and 
\[
  \ol{[i_{1}\ldots i_{d}]} = \left( \prod_{k < l} (\omega_{i_{l}}',\omega_{i_{k}})^{-1} \right) [i_{d}\ldots i_{1}].
\]
Then, $\ol{x *_{r,s} y} = \bar{x} *_{r,s} \bar{y}$ for all $x,y \in \cal{F}$.
\end{prop}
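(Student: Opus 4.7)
The plan is to prove both parts by induction on $\hgt(|x|) + \hgt(|y|)$, starting from the iterative definition~\eqref{eq:shuffle_product_iterative} of the shuffle product; the base case, where one of $x,y$ is empty, is trivial. The key preliminary step is to derive a ``first-letter'' counterpart of~\eqref{eq:shuffle_product_iterative}: for any letters $i,j \in I$ and words $x',y'$,
\[
  (ix') *_{r,s} (jy') \ =\ (\omega'_i, \omega_{|jy'|})^{-1} \cdot i \big( x' *_{r,s} (jy') \big) \, + \, j \big( (ix') *_{r,s} y' \big).
\]
This follows from~\eqref{eq:shuffle_product_general} by splitting the sum over shuffles $\sigma$ according to whether $\sigma^{-1}(1)=1$ or $\sigma^{-1}(1)=m+1$. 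In the first case one collects the factor $\prod_{l>m}(\omega'_i, \omega_{i_l})^{-1} = (\omega'_i, \omega_{|jy'|})^{-1}$ from the pairs $(1,l)$, which always satisfy $\sigma(1)=1<\sigma(l)$; in the second case no extra scalar arises, since the condition $\sigma(k)<\sigma(m+1)=1$ for $k\le m$ is never met.

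For part (1), write $x=x'i$ and $y=y'j$, apply~\eqref{eq:shuffle_product_iterative} to $x *_{r,s} y$, and then apply $\tau$, using the identity $\tau((\omega'_\mu, \omega_\nu)) = (\omega'_\nu, \omega_\mu)$ (a direct consequence of~\eqref{eq:generators-parity-2} together with $\tau(r)=s^{-1}$, $\tau(s)=r^{-1}$). On the other side, expand $\tau(y)*_{r,s}\tau(x) = (j\tau(y'))*_{r,s}(i\tau(x'))$ via the first-letter formula above. Applying the inductive hypothesis to the inner shuffles $\tau(x' *_{r,s} y)$ and $\tau(x *_{r,s} y')$ then yields a term-by-term match.

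Part (2) proceeds by an analogous induction, once one observes that $\overline{zk} = (\omega'_k, \omega_{|z|})^{-1}\,k\bar z$ for any homogeneous $z \in \cal{F}$ and letter $k$, which is immediate from the defining formula for the bar. Applying the bar to~\eqref{eq:shuffle_product_iterative} on one side, and unfolding $\bar x *_{r,s} \bar y$ via the first-letter formula on the other, the two sides are matched using the identity $\overline{(\omega'_\mu, \omega_\nu)} = (\omega'_\nu, \omega_\mu)^{-1}$ (from $\bar r=s$, $\bar s=r$) together with the multiplicativity of $(\omega'_\lambda, \omega_\mu)$ in each argument. The main obstacle in both parts is simply the careful bookkeeping of the scalar prefactors $(\omega'_\mu, \omega_\nu)^{\pm 1}$ through the induction, but no conceptual difficulty arises; the two opposite behaviors ($\tau$ being an anti-homomorphism with no extra scalar, versus bar being a homomorphism at the cost of a non-trivial scalar hidden in its definition) are precisely reconciled by the single identities $\tau((\omega'_\mu, \omega_\nu)) = (\omega'_\nu, \omega_\mu)$ and $\overline{(\omega'_\mu, \omega_\nu)} = (\omega'_\nu, \omega_\mu)^{-1}$.
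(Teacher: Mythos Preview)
Your proposal is correct, and the bookkeeping works out exactly as you indicate. The approach, however, differs from the paper's in a couple of ways worth noting.

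For part~(1), the paper does \emph{not} use induction at all: it applies $\tau$ directly to the closed formula~\eqref{eq:shuffle_product_general}, observes that $\tau\big((\omega'_{i_k},\omega_{i_l})^{-1}\big)=(\omega'_{i_l},\omega_{i_k})^{-1}$ termwise, and then matches the resulting sum with the expansion of $\tau(y)*_{r,s}\tau(x)$ via the explicit bijection $\sigma\mapsto w_0\sigma$ (respectively $\tilde\sigma\mapsto\tilde\sigma w_0$) on shuffles, where $w_0\in S_{m+d}$ is the longest element. This is a one-step combinatorial argument rather than an induction.

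For part~(2), both you and the paper run the same induction on $\hgt(|x|)+\hgt(|y|)$ using the first-letter identity you wrote down. The difference lies in how that identity is obtained: the paper \emph{derives} it as a consequence of part~(1), by writing $(ix')*_{r,s}(jy')=\tau\big((\tau(y')j)*_{r,s}(\tau(x')i)\big)$ and unfolding the last-letter formula inside, whereas you obtain it directly from~\eqref{eq:shuffle_product_general} by splitting on $\sigma^{-1}(1)$. Your derivation has the advantage of making parts~(1) and~(2) logically independent and the whole argument more uniform; the paper's derivation has the advantage of making the first-letter formula a formal corollary of what was already proved, at the cost of chaining the two parts together.
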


\begin{proof}
To prove part (1), let $x = [i_{1} \dots i_{m}]$ and $y = [i_{m + 1} \dots i_{m + d}]$. Then 
\begin{equation}\label{eq:tau-rhs-1}
\begin{split}
% \tau(x *_{r,s} y) 
  \tau([i_{1} \dots i_{m}] *_{r,s} [i_{m + 1} \dots i_{m + d}])
  &= \tau \left (\sum_{\sigma}e_{r,s}(\sigma)[i_{\sigma^{-1}(1)}\ldots i_{\sigma^{-1}(m + d)}] \right ) \\ 
  &= \sum_{\sigma} \prod_{\substack{k \le m < l \\ \sigma(k) < \sigma(l)}} \tau\Big((\omega'_{i_k},\omega_{i_l})^{-1}\Big) 
     \, [i_{\sigma^{-1}(m + d)}\ldots i_{\sigma^{-1}(1)}] \\
  &= \sum_{\sigma'} \prod_{\substack{k \le m < l \\ \sigma'(k) > \sigma'(l)}} 
     (\omega_{i_{l}}',\omega_{i_{k}})^{-1} [i_{(\sigma')^{-1}(1)}\ldots i_{(\sigma')^{-1}(m + d)}],
\end{split}
\end{equation}
where the first two sums are taken over all $(m,d)$-shuffles $\sigma\in S_{m+d}$, the last sum is taken over all 
$\sigma' \in S_{m + d}$ such that $\sigma'(m + d) < \sigma'(m + d-1) < \dots < \sigma'(m + 1)$ and 
$\sigma'(m) < \sigma'(m-1) < \dots < \sigma'(1)$, and the final equality is obtained by matching these 
$\{\sigma\}$ and $\{\sigma'\}$ via $\sigma \mapsto \sigma'=w_{0}\sigma$ with the longest element $w_{0}\in S_{m+d}$.

On the other hand, we have:
\begin{equation}\label{eq:tau-rhs-2}
\begin{split}
  [i_{m + d}\ldots i_{m + 1}] *_{r,s}[i_{m} \ldots i_{1}]  
  &= \sum_{\tilde{\sigma}} \prod_{\substack{\tilde{k} \le d < \tilde{l} \\ \tilde{\sigma}(\tilde{k}) < \tilde{\sigma}(\tilde{l})}} 
     (\omega_{i_{m+d+1-\tilde{k}}}',\omega_{i_{m+d+1-\tilde{l}}})^{-1} 
     [i_{(\tilde{\sigma}w_0)^{-1}(1)} \ldots i_{(\tilde{\sigma}w_0)^{-1}(m+d)}] \\
  &= \sum_{\tilde{\sigma}} \prod_{\substack{k \le m < l \\ \tilde{\sigma}w_0(l) < \tilde{\sigma}w_0(k)}} 
     (\omega_{i_{l}}',\omega_{i_{k}})^{-1} [i_{(\tilde{\sigma}w_0)^{-1}(1)} \ldots i_{(\tilde{\sigma}w_0)^{-1}(m+d)}],
\end{split}
\end{equation}
where both sums are taken over all $(d,m)$-shuffles $\tilde{\sigma}\in S_{m+d}$, and the indexes $k,l$ in the latter sum 
are related to $\tilde{k},\tilde{l}$ in the former via $k=m+d+1-\tilde{l}=w_0(\tilde{l})$, $l=m+d+1-\tilde{k}=w_0(\tilde{k})$. 
The right-hand sides of~(\ref{eq:tau-rhs-1}) and~(\ref{eq:tau-rhs-2}) match up under a natural bijection between the corresponding 
$\{\tilde{\sigma}\}$ and $\{\sigma'\}$ given by $\sigma'=\tilde{\sigma}w_0$. This completes the proof of part (1).

For part (2), we first note that for any homogeneous $x',y' \in \cal{F}$ and $i,j \in I$, we have by part~(1):
\begin{equation}\label{eq:shuffle_opp}
\begin{split}
  (ix') *_{r,s} (jy') 
  &= \tau((\tau(y')j) *_{r,s} (\tau(x')i)) \\
  &= \tau\left( (\tau(y') *_{r,s}(\tau(x')i))j + (\omega_{|y'j|}',\omega_{i})^{-1}((\tau(y')j) *_{r,s} \tau(x'))i \right) \\
  &= j((ix') *_{r,s} y') + (\omega_{i}',\omega_{|jy'|})^{-1}i(x' *_{r,s} (jy')).
\end{split}
\end{equation}
For $x,y \in \cal{W}$, we proceed by induction on $m = \hgt(|x|) + \hgt(|y|)$. Note first that if either $\hgt(|x|) = 0$ or 
$\hgt(|y|) = 0$, then the claim is trivial. In particular, the assertion holds for $m = 0$ and $m = 1$. For the step of induction, 
suppose that $\hgt(|x|),\hgt(|y|) \ge 1$, $m = \hgt(|x|) + \hgt(|y|)$, and the claim holds for all $x',y' \in \cal{W}$ with 
$\hgt(|x'|) + \hgt(|y'|) < m$. By assumption, there are $i,j \in I$ and $x',y' \in \cal{W}$ such that $x = ix'$ and $y = jy'$, 
and we have $\hgt(|x'|) + \hgt(|y'|) = m - 2$. Thus, combining~\eqref{eq:shuffle_opp} with the equality 
$\ol{(\omega'_\mu,\omega_\nu)^{-1}}=(\omega'_\nu,\omega_\mu)$ and the induction hypothesis, we obtain:
\begin{align*}
  \ol{(ix') *_{r,s} (jy')} 
  &= \ol{j((ix') *_{r,s} y')} + \ol{(\omega_{i}',\omega_{|jy'|})^{-1}i(x' *_{r,s} (jy'))} \\
  &= (\omega_{|ix'|}'\omega_{|y'|}',\omega_{j})^{-1} \left (\ol{(ix') *_{r,s} y'}\right )j + 
     (\omega_{|jy'|}',\omega_{i})(\omega_{|x'|}'\omega_{|jy'|}',\omega_{i})^{-1} \left(\ol{x' *_{r,s} (jy')} \right)i \\
  &= (\omega_{|ix'|}'\omega_{|y'|}',\omega_{j})^{-1} (\ol{ix'} *_{r,s} \bar{y}')j + 
     (\omega_{|x'|}',\omega_{i})^{-1} (\bar{x}' *_{r,s} \ol{jy'})i \\
  &= (\omega_{|ix'|}'\omega_{|y'|}',\omega_{j})^{-1} (\omega_{|x'|}',\omega_{i})^{-1}((\bar{x}'i) *_{r,s} \bar{y}')j + 
     (\omega_{|x'|}',\omega_{i})^{-1}(\omega_{|y'|}',\omega_{j})^{-1} (\bar{x}'*_{r,s}(\bar{y}'j))i \\
  &= (\omega_{|x'|}',\omega_{i})^{-1}(\omega_{|y'|}',\omega_{j})^{-1} \left((\bar{x}'i) *_{r,s} (\bar{y}'j) \right) 
   = \ol{ix'} *_{r,s} \ol{jy'},
\end{align*}
which completes the proof.
\end{proof}

Comparing the above result with Proposition~\ref{prop:extra_structures}, we obtain:

\begin{cor}\label{cor:auto_matching} 
(1) For all $u \in U_{r,s}^{+}$, we have $\tau\Psi(u) = \Psi\tau(u)$, where $\tau\colon U_{r,s}^{+} \to U_{r,s}^{+}$ 
is the $\BC$-algebra anti-automorphism defined in Proposition~\ref{prop:extra_structures}(2).

\medskip
\noindent
(2) For all $u \in U_{r,s}^{+}$, we have $\ol{\Psi(u)} = \Psi(\bar{u})$, where $\bar{~}\colon U_{r,s}^{+} \to U_{r,s}^{+}$ 
is the $\BC$-algebra automorphism defined in Proposition~\ref{prop:extra_structures}(3).
\end{cor}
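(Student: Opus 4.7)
The plan is to prove both parts of the corollary by the standard principle that two algebra maps (or anti-algebra maps) agreeing on generators must agree everywhere. Since $U_{r,s}^{+}$ is generated as a $\BC(r,s)$-algebra by $\{e_{i}\}_{i \in I}$, and $\Psi(e_{i}) = i$, it suffices to show that each side of the claimed identities defines a $\BC$-linear (anti-)homomorphism with the same twisting behavior on the scalars $r, s$ and that both sides take $e_{i} \mapsto i$.

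For part (1), I would first observe that $\Psi \colon (U_{r,s}^{+}, \cdot) \to (\cal{F}, *_{r,s})$ is a $\BC(r,s)$-algebra homomorphism by Proposition~\ref{prop:shuffle_embedding}, while $\tau$ on $U_{r,s}^{+}$ is a $\BC$-linear anti-automorphism sending $r \mapsto s^{-1}$, $s \mapsto r^{-1}$ by Proposition~\ref{prop:extra_structures}(2), and $\tau$ on $\cal{F}$ is a $\BC$-linear anti-homomorphism with the same twist by Proposition~\ref{prop:tau_and_bar}(1). Composing, one immediately gets, for all $u, v \in U_{r,s}^{+}$ and $\alpha \in \BC(r,s)$,
\[
  \tau\Psi(\alpha uv) = \tau(\alpha)\,\tau\Psi(v) *_{r,s} \tau\Psi(u),
  \qquad
  \Psi\tau(\alpha uv) = \tau(\alpha)\,\Psi\tau(v) *_{r,s} \Psi\tau(u).
\]
Thus $\tau\Psi$ and $\Psi\tau$ are $\BC$-linear anti-homomorphisms $U_{r,s}^{+} \to (\cal{F}, *_{r,s})$ with the same scalar twist, and they visibly agree on the generators since $\tau\Psi(e_{i}) = \tau(i) = i = \Psi(e_{i}) = \Psi(\tau(e_{i}))$. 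An induction on the height $\hgt(\mu)$ of elements in $(U_{r,s}^{+})_{\mu}$ then gives $\tau\Psi = \Psi\tau$.

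For part (2), the strategy is identical except that bar is a genuine automorphism, not an anti-automorphism. By Proposition~\ref{prop:extra_structures}(3), the bar involution on $U_{r,s}^{+}$ is a $\BC$-algebra automorphism with $\bar{r} = s$, $\bar{s} = r$, and by Proposition~\ref{prop:tau_and_bar}(2) the bar map on $\cal{F}$ satisfies $\ol{x *_{r,s} y} = \bar{x} *_{r,s} \bar{y}$ with the same scalar twist. Composing with the algebra homomorphism $\Psi$, both $u \mapsto \ol{\Psi(u)}$ and $u \mapsto \Psi(\bar u)$ are $\BC$-linear algebra homomorphisms $U_{r,s}^{+} \to (\cal{F}, *_{r,s})$ with the swap $r \leftrightarrow s$ on scalars. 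On the generator $e_{i}$, the definition of bar on $\cal{F}$ gives $\bar{i} = i$ (the product of two-variable pairings in Proposition~\ref{prop:tau_and_bar}(2) is empty for a one-letter word), so both sides send $e_{i}$ to $i$. The same induction on height finishes the argument.

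There is essentially no obstacle here: the corollary is a direct consequence of having already verified, in Propositions~\ref{prop:tau_and_bar} and~\ref{prop:extra_structures}, that $\tau$ and bar have the correct (anti-)algebra behavior on both sides of the shuffle embedding. The only care needed is bookkeeping the $\BC$-linearity versus $\BC(r,s)$-linearity — $\Psi$ is $\BC(r,s)$-linear while $\tau$ and bar are only $\BC$-linear — but this is already reflected uniformly in how both sides act on scalars, so no further work is required.
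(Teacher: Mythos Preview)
Your proposal is correct and matches the paper's approach exactly: the paper does not even spell out a proof, stating only ``Comparing the above result with Proposition~\ref{prop:extra_structures}, we obtain'' before the corollary, which is precisely your argument that two (anti-)algebra maps with the same scalar twist agreeing on generators must coincide.
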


Finally, we equip $\cal{F}$ with a coproduct:

\begin{prop}\label{prop:shuffle_coproduct} 
Let $\Delta\colon \cal{F} \to \cal{F} \otimes \cal{F}$ be the linear map defined by 
\[
  \Delta([i_{1} \dots i_{d}]) = \sum_{0 \le k \le d} [i_{k+1}\ldots i_{d}] \otimes [i_{1}\ldots i_{k}].
\]
Then, $\Delta(x * y) = \Delta(x) * \Delta(y)$, where we define the shuffle product $*$ on $\cal{F} \otimes \cal{F}$ via  
\begin{equation}\label{eq:twisted_shuffle_square}
  (w \otimes x) * (y \otimes z) = (\omega_{|x|}',\omega_{|y|})^{-1} (w * y) \otimes (x * z).
\end{equation}
Furthermore, we have $\Delta \Psi = (\Psi \otimes \Psi)\Delta_{r,s}$.
\end{prop}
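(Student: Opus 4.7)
The plan is to prove the two assertions separately. For the first, $\Delta(x * y) = \Delta(x) * \Delta(y)$, I would argue combinatorially by expanding both sides. Writing $x = [i_1 \dots i_m]$ and $y = [i_{m+1} \dots i_{m+d}]$, the shuffle product is a weighted sum over $(m,d)$-shuffles $\sigma$ with weight $e_{r,s}(\sigma)$ by~\eqref{eq:shuffle_product_general}, while $\Delta$ deconcatenates each shuffled word into all suffix-prefix pairs. The combinatorial heart of the argument is a bijection: a pair $(\sigma, k)$ consisting of a shuffle and a cut-position corresponds uniquely to data $(x', x'', y', y'', \sigma', \sigma'')$ with $x = x'x''$, $y = y'y''$, $\sigma' \in \Sh(|x'|, |y'|)$, and $\sigma'' \in \Sh(|x''|, |y''|)$, where $\sigma'$ produces the length-$k$ prefix of the shuffle from $x'y'$ and $\sigma''$ produces the suffix of length $(m+d)-k$ from $x''y''$.

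Partitioning the inversion pairs $(k,l)$ with $k \le m < l$ and $\sigma(k) < \sigma(l)$ according to the sub-word each of $k, l$ lies in then yields the factorization
\[
  e_{r,s}(\sigma) = e_{r,s}(\sigma') \cdot e_{r,s}(\sigma'') \cdot (\omega_{|x'|}', \omega_{|y''|})^{-1}.
\]
The third factor collects those pairs with $k$ in $x'$ and $l$ in $y''$, for which $\sigma(k) < \sigma(l)$ holds automatically since the prefix precedes the suffix in the output; their product collapses to $(\omega_{|x'|}', \omega_{|y''|})^{-1}$ by bilinearity of $\langle \cdot, \cdot \rangle$ in the degrees. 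Mixed pairs with $k$ in $x''$ and $l$ in $y'$ never invert under $\sigma$. Summing this factorization over all splittings and sub-shuffles, and comparing with $\Delta(x) * \Delta(y)$ computed via~\eqref{eq:twisted_shuffle_square}, yields the equality.

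For $\Delta\Psi = (\Psi \otimes \Psi)\Delta_{r,s}$, I would view both sides as linear maps $U_{r,s}^+ \to \cal{F} \otimes \cal{F}$ and check that they are algebra homomorphisms (into $(\cal{F} \otimes \cal{F}, *)$ equipped with the product~\eqref{eq:twisted_shuffle_square}) agreeing on the generators $\{e_i\}$. On $e_i$, both maps send $e_i$ to $[i] \otimes \emptyset + \emptyset \otimes [i]$. The left side is a composition of algebra homomorphisms: $\Psi$ by Proposition~\ref{prop:shuffle_embedding}, and $\Delta$ by the first part just established. For the right side, $\Delta_{r,s}$ is an algebra homomorphism into $(U_{r,s}^+ \otimes U_{r,s}^+, \odot_{r,s})$ by definition~\eqref{eq:twisted_coproduct}, and a direct calculation shows that $\Psi \otimes \Psi$ intertwines $\odot_{r,s}$ with the twisted shuffle product of~\eqref{eq:twisted_shuffle_square}, since both products produce the same scalar $(\omega_{|y|}', \omega_{|x'|})^{-1}$ on inputs $(x \otimes y,\, x' \otimes y')$. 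The main obstacle is the combinatorial book-keeping in the first part: one must verify that every crossing of the combined shuffle $\sigma$ is correctly distributed among crossings of $\sigma'$, crossings of $\sigma''$, and the forced-crossing block producing the single twist scalar, and that the mixed pairs in the other direction contribute nothing.
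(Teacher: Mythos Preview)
Your proposal is correct. For the second assertion $\Delta\Psi = (\Psi\otimes\Psi)\Delta_{r,s}$, your argument is essentially the same as the paper's: both check on generators after noting that each side is an algebra homomorphism into $(\cal{F}\otimes\cal{F},*)$; you simply spell out more carefully why $\Psi\otimes\Psi$ intertwines $\odot_{r,s}$ with the product~\eqref{eq:twisted_shuffle_square}, which the paper leaves implicit.

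For the first assertion, your route genuinely differs from the paper's. The paper proves $\Delta(x*y)=\Delta(x)*\Delta(y)$ by induction on $\hgt(|x|)+\hgt(|y|)$: writing $x=x'i$, $y=y'j$, it uses the recursive definition~\eqref{eq:shuffle_product_iterative} together with the identity $\Delta(zi)=\Delta(z)\cdot(i\otimes 1)+1\otimes zi$ (where $\cdot$ is the term-wise product), and then carries out a somewhat lengthy algebraic manipulation to match the two sides. Your approach is instead a direct combinatorial bijection on the closed form~\eqref{eq:shuffle_product_general}: a pair (shuffle $\sigma$, cut position) corresponds to a splitting $x=x'x''$, $y=y'y''$ together with sub-shuffles $\sigma',\sigma''$, and the four-way partition of the pairs $(k,l)$ with $k\le m<l$ and $\sigma(k)<\sigma(l)$ yields exactly $e_{r,s}(\sigma)=e_{r,s}(\sigma')\,e_{r,s}(\sigma'')\,(\omega_{|x'|}',\omega_{|y''|})^{-1}$. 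Your argument is conceptually cleaner and explains \emph{why} the twist factor in~\eqref{eq:twisted_shuffle_square} must be precisely $(\omega_{|x'|}',\omega_{|y''|})^{-1}$, whereas the paper's induction verifies the identity mechanically. The paper's method has the minor advantage of requiring no bookkeeping on $(m,d)$-shuffles, but both are standard and equally valid.
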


\begin{proof} 
It is enough to prove the claim when $x,y \in \cal{W}$. In this case, we write 
$\Delta(x) = \sum_{(x)}x_{1} \otimes x_{2}$ and $\Delta(y) = \sum_{(y)}y_{1} \otimes y_{2}$.
We note that for any $i \in I$, we have: 
\[
  \Delta(xi) = \Delta(x)\cdot (i \otimes 1) + 1 \otimes xi = \sum_{(x)}x_{1}i \otimes x_{2} + 1 \otimes xi,
\]
where $\cdot$ denotes the term-wise multiplication on $\cal{F} \otimes \cal{F}$:
\begin{equation}\label{eq:dot-multiplication}
  (w \otimes x)\cdot (y \otimes z) = wy \otimes xz.
\end{equation}

We shall now proceed by induction on $\hgt(|x|) + \hgt(|y|) = m$. The cases $m = 0$, $m = 1$, as well as when one of $x,y$ 
has length zero are trivial. Suppose now that $x,y \in \cal{W}$ satisfy $\hgt(|x|),\hgt(|y|) \ge 1$ and $m = \hgt(|x|) + \hgt(|y|)$, 
and the claim holds for all $x',y' \in \cal{W}$ with $\hgt(|x'|) + \hgt(|y'|) < m$. By this assumption, we can write 
$x = x'i$ and $y = y'j$ for some $i,j \in I$ and $x',y' \in \cal{W}$. Then, we have:
\begin{align*}
  & \Delta((x'i) * (y'j)) 
   =\Delta((x' * (y'j))i) + \Delta((\omega_{|x'i|}',\omega_{j})^{-1} ((x'i) * y')j) \\
  &= \Delta(x' * (y'j))\cdot (i \otimes 1) + 1 \otimes (x' * (y'j))i 
     + (\omega_{|x'i|}',\omega_{j})^{-1} 
       \left( \Delta((x'i) * y')\cdot (j \otimes 1) + 1 \otimes ((x'i) * y')j \right), 
\end{align*}
so the induction hypothesis implies that
\begin{align*}
  \Delta((x'i) * (y'j)) 
  &= (\Delta(x') * \Delta(y'j))\cdot (i \otimes 1) + (\omega_{|x'i|}',\omega_{j})^{-1}(\Delta(x'i) * \Delta(y'))\cdot (j \otimes 1) 
     + 1 \otimes ((x'i) * (y'j)) \\
  &= \left( \left( \sum_{(x')} x_{1}' \otimes x_{2}' \right) * 
            \left( \sum_{(y')} (y_{1}'j) \otimes y_{2}' + 1 \otimes (y'j) \right) \right)
      \cdot (i \otimes 1) \\
  & \quad + (\omega_{|x'i|}',\omega_{j})^{-1}
    \left( \left( \sum_{(x')} (x_{1}'i) \otimes x_{2}' + 1 \otimes (x'i) \right) * 
           \left( \sum_{(y')} y_{1}' \otimes y_{2}' \right) \right) 
    \cdot (j \otimes 1) \\ 
  & \quad + 1 \otimes ((x'i) * (y'j)) \\
  &= \sum_{(x')(y')} (\omega_{|x_{2}'|}',\omega_{|y_{1}'j|})^{-1} (x_{1}' * (y_{1}'j))i \otimes (x_{2}' * y_{2}') 
   + \sum_{(x')} (x_{1}'i) \otimes (x_{2}' * (y'j)) \\
  &\quad + (\omega_{|x'i|}',\omega_{j})^{-1} \sum_{(x')(y')} (\omega_{|x_{2}'|}',\omega_{|y_{1}'|})^{-1}
    ((x_{1}'i) * y_{1}')j \otimes (x_{2}' * y_{2}') \\
  &\quad + (\omega_{|x'i|}',\omega_{j})^{-1} \sum_{(y')} (\omega_{|x'i|}',\omega_{|y_{1}'|})^{-1} (y_{1}'j) \otimes ((x'i) * y_{2}') 
    + 1 \otimes ((x'i) * (y'j)) \\
  &= \sum_{(x')(y')} (\omega_{|x_{2}'|}',\omega_{|y_{1}'j|})^{-1} ((x_{1}'i) * (y_{1}'j)) \otimes (x_{2}' * y_{2}') \\
  &\quad + \left( \sum_{(x')} (x_{1}'i) \otimes x_{2}' \right) * (1 \otimes (y'j)) 
         + (1 \otimes (x'i)) * \left( \sum_{(y')} (y_{1}'j) \otimes y_{2}' \right) 
         + 1 \otimes ((x'i) * (y'j)) \\
  &= \left( \sum_{(x')} (x_{1}'i) \otimes x_{2}' + 1 \otimes (x'i) \right) * 
     \left( \sum_{(y')} (y_{1}'j) \otimes y_{2}' + 1 \otimes (y'j) \right)  
   = \Delta(x'i) * \Delta(y'j).
\end{align*}
This completes the proof of $\Delta(x * y) = \Delta(x) * \Delta(y)$.

As per the equality $\Delta \Psi = (\Psi \otimes \Psi)\Delta_{r,s}\colon U^+_{r,s}\to \cal{F}\otimes \cal{F}$, 
it suffices to verify its validity on the generators, where it immediately follows from 
$\Delta(\Psi(e_i)) = [i]\otimes \emptyset + \emptyset \otimes [i]  = (\Psi \otimes \Psi)(\Delta_{r,s}(e_i))$ for each $i\in I$. 
\end{proof}

Identifying $\cal{U}$ with $U^+_{r,s}$, we obtain a non-degenerate pairing $(\cdot,\cdot)\colon \cal{U}\times \cal{U} \to \BC(r,s)$, 
cf.~Corollary~\ref{cor:nondegeneracy-CHW}.

   %%%%%%%%%%%%%%%%%%%%%%%%%%%%%%%%%%%%%%%%%%%%%%%%%%%%%%%%%%%%%%%%%%%%%%%%%%
   %%%%%%%%%%%%%%%%%%%%%%%%%%%%%%%%%%%%%%%%%%%%%%%%%%%%%%%%%%%%%%%%%%%%%%%%%%
   %%%%%%%%%%%%%%%%%%%%%%%%%%%%%%%%%%%%%%%%%%%%%%%%%%%%%%%%%%%%%%%%%%%%%%%%%%

\section{Orthogonal Bases}\label{sec:orthogonal_bases}

This Section closely follows~\cite[Sections 4--5]{CHW}, which in turn is largely based on~\cite{L}. 
So we shall only highlight the key changes in the present setup.

   %%%%%%%%%%%%%%%%%%%%%%%%%%%%%%%%%%%%%%%%%%%%%%%%%%%%%%%%%%%%%%%%%%%%%%%%%%

\subsection{Dominant and Lyndon words}
\

From now on, we fix an ordering $\leq$ on the alphabet $I$, which induces a lexicographical order on the monoid~$\cal{W}$.  
For a nonzero $x\in \cal{F}$, its \textbf{leading term} $\max(x)$, is a word $w\in \cal{W}$ such that 
$x=\sum_{u\leq w} t_u\cdot u$ with $t_u\in \BC(r,s)$ and $t_w\ne 0$. Following the terminology of~\cite[\S4.1]{CHW}, 
we call a word $w\in \cal{W}$ \textbf{dominant} if it appears as the leading term of some element from $\cal{U} = \Psi(U_{r,s}^{+})$. 
We use $\cal{W}^{+}$ to denote the subset of $\cal{W}$ consisting of all dominant words. Then we have the following 
basic result, proved exactly as in~\cite[Proposition~12]{L}, cf.~\cite[Proposition~4.1]{CHW}:

\begin{prop}\label{prop:homogeneous_basis}
(1) There is a unique basis of homogeneous vectors $\{m_{w} \,|\, w \in \cal{W}^{+}\}$ in $\cal{U}$ 
such that for all $w_{1},w_{2} \in \cal{W}^{+}$ with $|w_{1}| = |w_{2}|$, we have (cf.~\eqref{eq:varepsilon_word}):
\[
  \epsilon_{w_{1}}'(m_{w_{2}}) = \delta_{w_{1},w_{2}}.
\]

\noindent
(2) The set $\big\{e_{w} = e_{i_{1}}\ldots e_{i_{d}} \,\big|\, w = [i_{1} \dots i_{d}] \in \cal{W}^{+}\big\}$ 
is a basis of $U_{r,s}^{+}$.
\end{prop}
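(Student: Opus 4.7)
The plan is two-fold: construct the basis $\{m_g\}$ in part (1) via a leading-term induction within each graded piece $\cal{U}_\mu$, then deduce (2) from (1) via the bilinear form $(\cdot,\cdot)$ of~\eqref{eq:pairing_positive}.

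For part (1), I would fix a weight $\mu\in Q^+$ and induct on the (finite) lex-ordered set $\cal{W}^+_\mu$. Given $g\in \cal{W}^+_\mu$, pick any $u\in \cal{U}_\mu$ with $\max(u)=g$ and rescale so its leading coefficient equals $1$, giving $u = g + \sum_{v<g,\,|v|=\mu} c_v v$. For each \emph{dominant} $v$ appearing in this sum, subtract $c_v m_v$, which is already defined by the inductive hypothesis (and which has leading term $v$ with coefficient $1$ and no other dominant terms in its support). The output $m_g = g + \sum c'_v v$, with the remaining sum running only over non-dominant $v<g$ of weight $\mu$, lies in $\cal{U}_\mu$ and is manifestly homogeneous. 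Uniqueness is immediate: if $m_g$ and $m_g'$ both satisfy the defining property, then $m_g - m_g'\in \cal{U}_\mu$ is supported only on non-dominant words, so its leading term (if nonzero) would be non-dominant, contradicting the very definition of $\cal{W}^+$.

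The identity $\epsilon_g'(m_h)=\delta_{gh}$ then follows by a direct computation. Words of weight $\mu$ all share the same length $d$ (the simple roots $\alpha_i$ being linearly independent in $Q$), and from Lemma~\ref{lem:varepsilon_i} together with~\eqref{eq:varepsilon_word} one has $\epsilon_g'(v)=\delta_{gv}$ for any length-$d$ word $v$. Hence $\epsilon_g'(m_h)$ simply extracts the coefficient of $g$ in $m_h$, which is $\delta_{gh}$ for $g\in \cal{W}^+_\mu$ by construction of $m_h$.

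For part (2), recall that $\Psi\colon U_{r,s}^+\iso \cal{U}$ is an algebra isomorphism (Proposition~\ref{prop:shuffle_embedding} and~\eqref{eq:F-vs-U}), so part (1) yields $\dim_{\BC(r,s)}(U_{r,s}^+)_\mu=|\cal{W}^+_\mu|$, which matches the cardinality of the candidate set $\{e_w\colon w\in\cal{W}^+_\mu\}$; it therefore suffices to establish linear independence. Setting $M_g := \Psi^{-1}(m_g)$, I would iterate Lemma~\ref{lem:symm-der-adjoint} along $w=[i_1\ldots i_d]$, peeling off one letter at a time and invoking $(1,1)=1$ at the end, to obtain
\[
  (M_g, e_w) \,=\, \partial_{i_1}'\cdots \partial_{i_d}'(M_g) \,=\, \partial_{w}'(M_g).
\]
By Proposition~\ref{prop:psi_equals_upsilon}, this is precisely the coefficient of $w$ in $\Psi(M_g)=m_g$, which equals $\delta_{gw}$ for $g,w\in \cal{W}^+_\mu$ by the triangular form of $m_g$ from part (1). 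Hence the matrix $[(M_g,e_w)]_{g,w\in \cal{W}^+_\mu}$ is the identity, and testing any relation $\sum c_w e_w=0$ against each $M_g$ forces every $c_g=0$. No single step is a genuine obstacle; the conceptual point linking the two parts is that the coefficient of a dominant $w$ in $m_g$ simultaneously governs the triangular construction of~(1) and the non-degenerate pairing matrix of~(2).
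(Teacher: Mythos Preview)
Your proof is correct and follows the standard Leclerc argument that the paper cites (the paper itself gives no proof, only the reference to~\cite[Proposition~12]{L}). The one step you leave implicit is that the collection $\{m_g\}_{g\in\cal{W}^+_\mu}$ actually \emph{spans} $\cal{U}_\mu$, which you need in order to invoke the dimension count at the start of part~(2); but this is the same downward leading-term induction you already set up (subtract a multiple of $m_{\max(u)}$ from any $u\in\cal{U}_\mu$ and repeat), so there is no real gap.
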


A word $w=[i_1 \dots i_d]$ is called \textbf{Lyndon} if it is smaller than any of its proper right factors:
\begin{equation*}
  w < [i_k \dots i_d] \qquad \forall\, 1<k\leq d.
\end{equation*}
We use $\cal{L}$ to denote the set of all Lyndon words. 
It is well~known that any word $w$ admits a unique factorization (called a \textbf{canonical factorization}) as a product of non-increasing Lyndon words:
\begin{equation}\label{eq:canonical_factorization}
  w = \ell_{1}\ell_{2}\ldots \ell_{k}, \quad \ell_1\geq \ell_2 \geq \dots \geq \ell_k, \quad \ell_1, \dots, \ell_k\in \cal{L}.
\end{equation}

\begin{comment}
Since any word in the shuffle product of two words $u$ and $v$ appears with coefficients in 
$\BZ_{\geq 0}[r^{\pm 1},s^{\pm 1}]$, the following Lemma is a direct consequence of~\cite[Lemma~15]{L} 
(cf.~\cite[Lemma 4.5]{CHW}):
\end{comment}

The following Lemma is an extension of~\cite[Lemma~15]{L} (which covers the case $k = 1$):

\begin{lemma}\label{lem:shuffle_lw} 
For any $\ell \in \cal{L}$ and $w \in \cal{W}$ with $\ell \ge w$, we have $\max(\ell^{k} * w) = \ell^{k} w$ for all $k \ge 1$.
\end{lemma}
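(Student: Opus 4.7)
The plan is to reduce the lemma to the purely combinatorial content of~\cite[Lemma~15]{L}, exploiting the explicit formula~\eqref{eq:shuffle_product_general}. Writing $m$ and $d$ for the lengths of $\ell$ and $w$, and expanding
\[
  \ell * w \ = \ \sum_{\sigma} e_{r,s}(\sigma)\, v_\sigma \qquad \text{with} \qquad
  v_\sigma = [i_{\sigma^{-1}(1)}\ldots i_{\sigma^{-1}(m+d)}],
\]
the first observation is that every coefficient $e_{r,s}(\sigma)$ is a monomial in $r^{\pm 1}, s^{\pm 1}$ and therefore nonzero. Consequently, $\max(\ell * w)$ coincides with the lexicographically largest word in the multiset $\{v_\sigma\}_{\sigma}$, so the two-parameter aspect drops out and the problem becomes one about shuffles of words.

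The identity shuffle yields $v_{\mathrm{id}}=\ell w$, so it remains to show $v_\sigma <_{\mathrm{lex}} \ell w$ for every $(m,d)$-shuffle $\sigma \ne \mathrm{id}$. For such a $\sigma$, I would let $p$ be the smallest position at which $v_\sigma$ has a letter coming from $w$. A short argument shows $p \le m$ (otherwise $\sigma=\mathrm{id}$), so $v_\sigma$ agrees with $\ell w$ on positions $1,\dots,p-1$ and has $i_{m+1}$ at position $p$, while $\ell w$ has $i_p$ there. The claim therefore reduces to
\[
  [i_p \ldots i_m\, i_{m+1} \ldots i_{m+d}] \ >_{\mathrm{lex}}\ [i_{m+1}]\cdot\big(\text{some shuffle of } [i_p\ldots i_m] \text{ with } [i_{m+2}\ldots i_{m+d}]\big).
\]

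The main obstacle is the boundary case $i_p = i_{m+1}$, where the first letters of the two tails agree and one must recurse. Here the Lyndon hypothesis, which gives $\ell <_{\mathrm{lex}} [i_p\ldots i_m]$ for every proper right factor, combined with $\ell \ge w$, first rules out $i_p < i_{m+1}$ outright (since $i_p < i_{m+1}$ would force $i_1 \le i_p < i_{m+1}$, contradicting $\ell \ge w$), and then supplies the inductive hypothesis needed to handle equality by induction on $m+d$, the base case $\min(m,d)=0$ being trivial. This is exactly the structure of the argument in~\cite[Lemma~15]{L}, and since our reduction has removed all dependence on the two-parameter coefficients, that argument transfers without modification.
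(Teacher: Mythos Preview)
Your proposal is correct and takes essentially the same approach as the paper: observe that every $e_{r,s}(\sigma)$ is a Laurent monomial in $r,s$ (hence the coefficient of any word in $\ell * w$ lies in $\BZ_{\geq 0}[r^{\pm 1},s^{\pm 1}]$ and is nonzero), so the question reduces to the purely combinatorial statement, which is exactly~\cite[Lemma~15]{L}. The paper states this reduction in one sentence and cites Leclerc; you additionally sketch the combinatorial argument, but the core idea is identical.
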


\begin{proof} 
We proceed by induction on $k$. The base case $k = 1$ is analogous to~\cite[Lemma~4.5]{CHW}. Suppose that $k > 1$, and the result holds for all smaller values of $k$. Since all terms in $\ell^{k} * w$ appear with coefficients in $\bb{Z}_{\ge 0}[r^{\pm 1},s^{\pm 1}]$, it follows that the coefficient of $\ell^{k}w$ in $\ell^{k} * w$ is nonzero. Thus, if $u$ is a word in $\ell^{k} * w$, it suffices to show that $u \le \ell^{k}w$. Given such a word $u$, there is some factorization $w = w_{1}w_{2}$ (with $w_{1}$ or $w_{2}$ possibly empty) such that $u$ occurs in $(\ell^{k - 1} * w_{1})(\ell * w_{2})$. Then since $\ell \ge w \ge w_{1}$, the induction hypothesis implies that $\max(\ell^{k - 1} * w_{1}) = \ell^{k - 1}w_{1}$, and therefore $u$ is less than or equal to all words appearing in $\ell^{k-1}w_{1}(\ell * w_{2})$. Then since every word that appears in $w_{1}(\ell * w_{2})$ also appears in $\ell * w$, and the case $k = 1$ implies that $\max(\ell * w) = \ell w$, all words that appear in $\ell^{k - 1}w_{1}(\ell * w_{2})$ are less than or equal to $\ell^{k}w$. This implies that $u \le \ell^{k}w$, which completes the proof.
\end{proof}

Let $\cal{L}^{+} = \cal{W}^{+} \cap \cal{L}$ be the set of all dominant Lyndon words. Then Lemma~\ref{lem:shuffle_lw} implies 
the following result, analogous to~\cite[Proposition 16]{L} (cf.~\cite[Proposition 4.7]{CHW}):

\begin{prop} 
If $\ell \in \cal{L}^{+}$ and $w \in \cal{W}^{+}$ satisfy $\ell \ge w$, then $\ell w \in \cal{W}^{+}$.
\end{prop}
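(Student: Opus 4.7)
The plan is to exhibit an element of $\cal{U}$ whose leading term is $\ell g$, from which $\ell g \in \cal{W}^{+}$ follows. Since $\ell \in \cal{L}^{+}$ and $g \in \cal{W}^{+}$, pick $x, y \in \cal{U}$ with $\max(x) = \ell$ and $\max(y) = g$, normalized so that both leading coefficients equal $1$; then
\[
  x = \ell + \sum_{\substack{u<\ell \\ |u|=|\ell|}} c_{u}\, u
  \qquad\text{and}\qquad
  y = g + \sum_{\substack{v<g \\ |v|=|g|}} d_{v}\, v.
\]
As $\cal{U}$ is a subalgebra of $(\cal{F},*)$ by Proposition~\ref{prop:shuffle_embedding}, the element $z := x * y$ lies in $\cal{U}$, and it suffices to prove $\max(z) = \ell g$.

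Expanding bilinearly gives
\[
  z \,=\, \ell * g \,+\, \sum_{v<g} d_{v}(\ell * v)
       \,+\, \sum_{u<\ell} c_{u}(u * g)
       \,+\, \sum_{u<\ell,\,v<g} c_{u}d_{v}(u * v).
\]
Since $\ell$ is Lyndon and $\ell \ge g$, Lemma~\ref{lem:shuffle_lw} yields $\max(\ell * g) = \ell g$ with nonzero leading coefficient. For $v < g$ we have $v < g \le \ell$, so Lemma~\ref{lem:shuffle_lw} also gives $\max(\ell * v) = \ell v$, and $\ell v < \ell g$ by lexicographic comparison. The proof therefore reduces to establishing: for every $u < \ell$ with $|u|=|\ell|$ and every $w \le g$ with $|w|=|g|$, every word in $\supp(u * w)$ is strictly less than $\ell g$.

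This bound is the main technical obstacle, because $u$ is generally not Lyndon so Lemma~\ref{lem:shuffle_lw} cannot be invoked for $u$ directly. My plan is to prove it by induction on $|\ell|+|g|$: any shuffle $\sigma$ of $u$ and $w$ has first letter drawn from $u_{1}$ or $w_{1}$, and both are bounded by the first letter $i_{1}$ of $\ell$ (since $u<\ell$ and $w\le g\le\ell$). If this first letter is $<i_{1}$, then $\sigma<\ell g$ at once; otherwise it equals $i_{1}$, and one peels it off and recurses on the remaining words, using the Lyndon property of $\ell$ (ensuring that $i_{2}\dots i_{|\ell|}$ strictly dominates its own right factors) together with the strict inequality $u<\ell$ to force, at some position, a letter strictly smaller than the corresponding letter of $\ell g$. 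Granting this bound, all cross-terms in $z$ contribute words strictly less than $\ell g$ and the coefficient of $\ell g$ in $z$ agrees with the nonzero coefficient coming from $\ell * g$; hence $\max(z) = \ell g$ and $\ell g \in \cal{W}^{+}$.
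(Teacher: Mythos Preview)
Your overall strategy matches the paper's (which merely cites Lemma~\ref{lem:shuffle_lw}): pick $x,y\in\cal{U}$ with leading terms $\ell,g$, and argue that $\max(x*y)=\ell g$. You also correctly isolate the only non-obvious step, namely bounding the cross terms $u*w$ with $u<\ell$. However, the inductive argument you sketch does not close. After peeling off the first letter, the remaining target begins with $i_{2}\cdots i_{|\ell|}$, which is \emph{not} Lyndon; your parenthetical claim that this tail ``strictly dominates its own right factors'' is false (for instance $\ell=112$ gives $\ell'=12$, whose right factor $2$ satisfies $2>12$). The Lyndon property of $\ell$ tells you $\ell$ is smaller than each proper suffix of $\ell$, but says nothing about the suffix structure of $\ell'$. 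Thus the inductive hypothesis, which you have stated only for a Lyndon word in the leading position, is unavailable after one step, and the recursion as written does not terminate in a proof.

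There is a short replacement avoiding induction. A shuffle of $u$ and $w$ is determined by the set $S\subset\{1,\dots,|u|+|w|\}$ of positions occupied by the letters of $u$; write $\sigma_{S}(u,w)$ for the resulting word. For fixed $S$ and $w$, the words $\sigma_{S}(u,w)$ and $\sigma_{S}(\ell,w)$ agree at every position outside $S$, while at the positions of $S$ they read off $u$ and $\ell$ respectively, in order. Hence $u<\ell$ (with $|u|=|\ell|$) forces $\sigma_{S}(u,w)<\sigma_{S}(\ell,w)$ strictly. Combined with Lemma~\ref{lem:shuffle_lw}, for any $w\le g\le\ell$ one gets
\[
  \sigma_{S}(u,w)\;<\;\sigma_{S}(\ell,w)\;\le\;\max(\ell*w)\;=\;\ell w\;\le\;\ell g
\]
for every $S$, so every word in $\supp(u*w)$ lies strictly below $\ell g$. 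This disposes of all cross terms at once and completes your argument.
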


Completely analogously to~\cite[Proposition 17]{L}, we also have:

\begin{prop}\label{prop:dominant-factorization} 
A word $w \in \cal{W}$ is dominant if and only if it has the form 
\[
  w = \ell_{1} \ell_{2} \dots \ell_{k}, \qquad \ell_{1} \ge \ell_{2} \ge \dots \ge \ell_{k},
\]
where $\ell_{1}, \dots, \ell_{k}$ are dominant Lyndon words 
(such a decomposition is unique and coincides with~\eqref{eq:canonical_factorization}).
\end{prop}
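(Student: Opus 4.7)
The plan is to prove the two directions separately, both leaning on the immediately preceding proposition together with the classical Chen--Fox--Lyndon theorem that every word in $\cal{W}$ admits a \emph{unique} factorization as a non-increasing product of Lyndon words. The uniqueness claim in our proposition is then automatic once the existence of the asserted factorization is established, and the fact that it coincides with~\eqref{eq:canonical_factorization} is just restatement of Chen--Fox--Lyndon.

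For the ``if'' direction, I would argue by induction on the number $k$ of Lyndon factors. The case $k = 1$ is immediate from $\ell_1 \in \cal{L}^+ \subseteq \cal{W}^+$. For the inductive step, set $g = \ell_2 \ell_3 \cdots \ell_k$, which is itself a non-increasing product of dominant Lyndon words, so $g \in \cal{W}^+$ by the inductive hypothesis. A standard fact about Lyndon factorizations (using that $\ell_1$ is Lyndon and $\ell_1 \ge \ell_2$) implies $\ell_1 \ge g$ in the lexicographic order. The preceding proposition then yields $\ell_1 g = w \in \cal{W}^+$.

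For the ``only if'' direction, suppose $w \in \cal{W}^+$ and let $w = \ell_1 \ell_2 \cdots \ell_k$ be its Chen--Fox--Lyndon factorization. By induction on $\hgt(|w|)$, it suffices to establish that $\ell_1 \in \cal{L}^+$ and that $g = \ell_2 \cdots \ell_k \in \cal{W}^+$, since the canonical factorization of $g$ is $\ell_2 \cdots \ell_k$ (of strictly smaller height), and applying the inductive hypothesis to $g$ will then give $\ell_2,\ldots,\ell_k \in \cal{L}^+$. Fix $x \in \cal{U}$ with $\max(x) = w$. Following the strategy of Leclerc~\cite{L} and~\cite[Proposition~4.9]{CHW}, the key is to extract from $x$, using the shuffle-derivations $\epsilon_i'$ of Lemma~\ref{lem:varepsilon_i} (applied along the letters of $g$, respectively of $\ell_1$), an element of $\cal{U}$ whose leading term is $\ell_1$ (respectively $g$). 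Here Lemma~\ref{lem:shuffle_lw} controls how lex-maximal terms propagate through shuffle products, and a triangularity argument in the basis $\{m_h\}_{h \in \cal{W}^+}$ of $\cal{U}$ allows one to peel off the lower-order contributions.

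The main obstacle is this extraction step in the ``only if'' direction: one must verify combinatorially that applying $\epsilon_{g}'$ to $x$ (and dually $\epsilon_{\ell_1}'$) isolates $\ell_1$ (respectively $g$) as the leading term, rather than producing some accidentally larger word. This is a purely combinatorial claim about the interplay between lex-order, shuffle products, and Lyndon factorizations, going back to~\cite[Proposition~17]{L} in the one-parameter case; it carries over to our two-parameter shuffle algebra essentially verbatim, since the lex-leading term of a shuffle product is determined by the words alone and is insensitive to the scalar coefficients $e_{r,s}(\sigma)$ in~\eqref{eq:e_ab}.
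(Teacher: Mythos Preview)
Your proposal is correct and matches the paper's approach (which is simply a citation to~\cite[Proposition~17]{L}); the ``if'' direction by induction via the preceding proposition, and the ``only if'' direction by derivation-extraction, are exactly the standard argument. One small imprecision: the operators $\epsilon_i'$ of Lemma~\ref{lem:varepsilon_i} remove the \emph{last} letter, so while $\epsilon_g'(x)$ correctly yields an element of $\cal{U}$ with leading term $\ell_1$ (all words in the homogeneous $x$ having equal length, so $ug \le \ell_1 g$ forces $u \le \ell_1$), extracting $g$ requires instead the left-hand analogue $\epsilon_i$ that strips the first letter---this operator also preserves $\cal{U}$, as one checks directly from Proposition~\ref{prop:image_description} by replacing $z$ with $iz$ in~\eqref{eq:image_description}.
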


Finally, due to our earlier dimension count from Proposition~\ref{prop:graded-dimension}, we also have 
the following analogue of \cite[Proposition~18]{L} (cf.~\cite[Theorem~4.8]{CHW}):

\begin{theorem}\label{thm:LR-bijection} 
The map $\ell \mapsto |\ell|$ defines a bijection from $\cal{L}^{+}$ to $\Phi^{+}$.
\end{theorem}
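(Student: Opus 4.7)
The plan is a classical generating-function argument. I will set up two computations of the $Q^+$-graded Poincar\'e series
$$P(z) \ =\  \sum_{w \in \cal{W}^+} z^{|w|}$$
of the set of dominant words (viewed in the completed group algebra of $Q$), and then extract the claimed bijection from their equality.

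First, Proposition~\ref{prop:dominant-factorization} shows that every dominant word factors uniquely as a non-increasing product of dominant Lyndon words. This combinatorial input immediately gives
$$P(z) \ =\  \prod_{\ell \in \cal{L}^+} \frac{1}{1 - z^{|\ell|}}.$$
On the other hand, since $\{e_w : w \in \cal{W}^+\}$ is a basis of $U_{r,s}^+$, the coefficient of $z^\mu$ in $P(z)$ equals $\dim(U_{r,s}^+)_\mu$. Proposition~\ref{prop:graded-dimension} identifies the latter with $\dim U(\fn^+)_\mu$, and the classical Poincar\'e-Birkhoff-Witt theorem for $U(\fn^+)$ then yields
$$P(z) \ =\  \prod_{\alpha \in \Phi^+} \frac{1}{1 - z^\alpha}.$$

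Equating the two expressions, I obtain
$$\prod_{\ell \in \cal{L}^+} \frac{1}{1 - z^{|\ell|}} \ =\ \prod_{\alpha \in \Phi^+} \frac{1}{1 - z^\alpha}.$$
To deduce the theorem, I set $m_\mu = \#\{\ell \in \cal{L}^+ : |\ell| = \mu\}$ and $n_\mu = \#\{\alpha \in \Phi^+ : \alpha = \mu\} \in \{0,1\}$, take logarithms of both sides, and prove $m_\mu = n_\mu$ for all $\mu \in Q^+$ by induction on $\hgt(\mu)$. Since $n_\mu = 1$ precisely when $\mu \in \Phi^+$, this will simultaneously show that $|\ell| \in \Phi^+$ for every $\ell \in \cal{L}^+$, that the map $\ell \mapsto |\ell|$ is injective, and that it surjects onto $\Phi^+$.

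There is no serious obstacle here: the only mild subtlety is the unique-factorization step for formal products indexed by $Q^+$, which is handled by the height induction once one notices that, in the logarithmic identity, the contributions at weight $\nu$ coming from powers $k \geq 2$ involve only weights $\mu = \nu/k$ of strictly smaller height and hence cancel on both sides by the inductive hypothesis. All of the substantial work has already been done in Proposition~\ref{prop:graded-dimension} (via finite-dimensional highest weight modules) and Proposition~\ref{prop:dominant-factorization} (via the combinatorics of Lyndon factorizations).
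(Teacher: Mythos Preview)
Your proof is correct and is exactly the generating-function argument used in \cite[Proposition~18]{L} and \cite[Theorem~4.8]{CHW}, which the paper cites rather than reproducing. The paper's only contribution here is to point out that the required dimension input, $\dim_{\BC(r,s)}(U_{r,s}^+)_\mu = \dim_{\BC} U(\fn^+)_\mu$, is supplied by Proposition~\ref{prop:graded-dimension}; once that is in hand, the rest of the argument (Proposition~\ref{prop:dominant-factorization} giving the product over $\cal{L}^+$, PBW giving the product over $\Phi^+$, and the height-induction comparison of multiplicities) is verbatim from those references, and your write-up reproduces it faithfully.
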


We shall denote the inverse of this bijection by $\ell \colon\Phi^{+} \to \cal{L}^{+}$.

   %%%%%%%%%%%%%%%%%%%%%%%%%%%%%%%%%%%%%%%%%%%%%%%%%%%%%%%%%%%%%%%%%%%%%%%%%%

\subsection{Bracketing}
\

For homogeneous elements $x,y \in \cal{F}$, define their \textbf{$(r,s)$-bracketing} 
\[
  [x,y]_{r,s} = xy - (\omega_{|y|}',\omega_{|x|})yx.
\]
For a Lyndon word $\ell\in \cal{L}$, a decomposition $\ell=\ell_1\ell_2$ is called a \textbf{costandard factorization} 
if $\ell_1,\ell_2$ are nonempty, $\ell_1\in \cal{L}$, and the length of $\ell_1$ is the maximal possible. In this case, 
it is known that $\ell_2$ is also a Lyndon word. Following~\cite[\S4.1]{L} and~\cite[\S4.3]{CHW}, given a Lyndon word 
$\ell$ we define its bracketing $[\ell]\in \cal{F}$ inductively via:
\begin{itemize}

\item 
$[\ell] = \ell$ if $\ell \in I$,

\item 
$[\ell] = [[\ell_{1}],[\ell_{2}]]_{r,s}$ if $\ell = \ell_{1}\ell_{2}$ is the costandard factorization of $\ell$.

\end{itemize}
Evoking the canonical factorization of~\eqref{eq:canonical_factorization}, we define the bracketing of 
any word $w\in \cal{W}$ via:
\begin{equation*}
  [w] = [\ell_{1}][\ell_{2}] \dots [\ell_{k}]. 
\end{equation*}
Finally, we also define $\Xi\colon (\cal{F},\cdot) \to (\cal{F},*)$ as the algebra homomorphism given by 
$\Xi([i_{1} \dots i_{d}]) = i_{1} * \dots *i_{d}$. Then we have the following three results, whose proofs 
are exactly the same as those of Proposition~19, Proposition~20, and Lemma~21 in~\cite{L}:

\begin{prop} 
For any $\ell \in \cal{L}$, we have $[\ell] = \ell + x$, where $x$ is a linear combination of words $v>\ell$.
\end{prop}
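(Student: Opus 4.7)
The plan is to induct on the length of $\ell \in \cal{L}$. The base case $\ell \in I$ is immediate since $[\ell] = \ell$ by definition. For the inductive step, let $\ell$ have length at least two and write $\ell = \ell_1 \ell_2$ for its costandard factorization, so that $\ell_1, \ell_2 \in \cal{L}$ are both strictly shorter than $\ell$ (using that $\ell_2$ is Lyndon, as noted in the paper). By induction, $[\ell_i] = \ell_i + z_i$ where $z_i \in \cal{F}$ is a $\BC(r,s)$-linear combination of words strictly greater than $\ell_i$ in lex order; moreover, linear independence of the simple roots in $Q$ combined with homogeneity forces every word appearing in $z_i$ to have the same length as $\ell_i$.

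Unwinding the definition, $[\ell] = [\ell_1][\ell_2] - (\omega'_{|\ell_2|},\omega_{|\ell_1|})\,[\ell_2][\ell_1]$, where the products are the concatenation in $\cal{F}$. I would first establish the following general claim: whenever $u, v \in \cal{W}$ and $x, y \in \cal{F}$ are supported on words of length $|u|$ (resp.\ $|v|$) strictly greater than $u$ (resp.\ $v$), one has $(u+x)(v+y) = uv + (\text{words strictly} > uv)$. This is a routine position-wise lex check: each cross term concatenates two length-matched blocks with at least one block strictly exceeding the corresponding $u$ or $v$, so the concatenation exceeds $uv$. Applied to $[\ell_1][\ell_2]$ this yields $\ell + (\text{words} > \ell)$, and applied to $[\ell_2][\ell_1]$ it yields $\ell_2\ell_1 + (\text{words} > \ell_2\ell_1)$.

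The main obstacle, and conceptual heart of the argument, is then the lex inequality $\ell_2\ell_1 > \ell$. I would derive it from two standard Lyndon-word facts under the paper's lex convention (the one compatible with its Lyndon definition: a proper prefix is strictly smaller than any extension). First, because $\ell$ is Lyndon with $\ell_2$ a proper right factor, $\ell = \ell_1\ell_2 < \ell_2$. Second, $\ell_2$ is a proper prefix of $\ell_2\ell_1$ (since $\ell_1 \neq \emptyset$), hence $\ell_2 < \ell_2\ell_1$. Chaining these two inequalities gives $\ell < \ell_2\ell_1$, so the subtracted term $-(\omega'_{|\ell_2|},\omega_{|\ell_1|})[\ell_2][\ell_1]$ is supported entirely on words strictly greater than $\ell$. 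Combining the two halves produces $[\ell] = \ell + (\text{words} > \ell)$, closing the induction. Everything beyond the Lyndon-prefix inequality is straightforward bookkeeping with the lex order.
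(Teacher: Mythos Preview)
Your proof is correct and follows essentially the same inductive argument as the one the paper cites from~\cite[Proposition~19]{L}: induct on length, use the costandard factorization, observe that concatenation of same-length words respects the lex order, and invoke the Lyndon inequality $\ell_1\ell_2<\ell_2<\ell_2\ell_1$ to dispose of the subtracted term. There is nothing to add.
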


\begin{prop} 
The set $\big\{[w] \,\big|\, w \in \cal{W}\big\}$ is a basis for $\cal{F}$.
\end{prop}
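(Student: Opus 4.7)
The plan is to establish a unitriangular expansion of $[w]$ with respect to the lexicographic order on $\cal{W}$ inside each $Q^+$-graded component, and then deduce the basis property by a finite-dimensional count. The immediately preceding proposition gives the base case for Lyndon words; I would upgrade this to all words and then conclude.

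First, I would observe that the bracket $[\cdot,\cdot]_{r,s}$ preserves the $Q^+$-grading (it is of the form $xy - c\cdot yx$ with $c$ a scalar depending only on the degrees). Hence $[\ell]$ is homogeneous of degree $|\ell|$, so all the ``higher'' words $v$ appearing in $[\ell] = \ell + \sum_{v>\ell} c_v v$ from the previous proposition satisfy $|v|=|\ell|$, and in particular have the same length as $\ell$ since each generator contributes exactly one letter. Given a general word $w$ with canonical factorization $w = \ell_1 \ell_2 \cdots \ell_k$ into non-increasing Lyndon words, I would expand $[w] = [\ell_1][\ell_2]\cdots[\ell_k]$ in the ordinary concatenation product. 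Every resulting summand has the form $c\cdot v_1 v_2 \cdots v_k$ where each $v_i \geq \ell_i$ and $|v_i|=|\ell_i|$.

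The key combinatorial step is the lexicographic inequality: if at least one $v_j\neq \ell_j$, then $v_1 v_2\cdots v_k > w$ lexicographically. Indeed, take $j$ minimal with $v_j\neq \ell_j$; the prefixes $v_1\cdots v_{j-1}$ and $\ell_1\cdots \ell_{j-1}$ coincide, and the next $|\ell_j|$ letters of $v_1\cdots v_k$ and of $w$ form $v_j$ and $\ell_j$ respectively. Since $v_j$ and $\ell_j$ have the same length and $v_j > \ell_j$, neither can be a prefix of the other, so they first disagree at some interior position where $v_j$'s letter is strictly larger. Hence $v_1\cdots v_k > w$ in lex order. Combining with the expansion, we obtain
\[
  [w] \;=\; w \;+\; \sum_{\substack{u > w \\ |u|=|w|}} d_u\, u.
\]

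This unitriangularity shows that $\{[w] \mid w \in \cal{W}_\mu\}$ is linearly independent in $\cal{F}_\mu$ for each $\mu \in Q^+$, and since it has the same (finite) cardinality as the word basis of $\cal{F}_\mu$, it is a basis of $\cal{F}_\mu$. Taking the direct sum over all $\mu \in Q^+$ yields the full claim. The only potential subtlety is the lexicographic comparison of concatenated tuples, which is clean precisely because fixing the weight fixes the word length and eliminates the usual prefix ambiguities; beyond that, the argument is routine triangularity in each weight space.
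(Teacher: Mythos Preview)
Your proof is correct and follows exactly the standard unitriangularity argument that the paper invokes by reference to \cite[Proposition~20]{L}: expand $[w]=[\ell_1]\cdots[\ell_k]$ using the Lyndon case, compare lexicographically within a fixed weight (hence fixed length), and conclude by a dimension count in each $\cal{F}_\mu$. This is precisely the intended approach.
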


\begin{lemma} 
A word $w \in \cal{W}$ is dominant if and only if it cannot be expressed modulo $\ker(\Xi)$ as 
a linear combination of words $v > w$.
\end{lemma}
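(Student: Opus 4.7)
The plan is to follow the strategy of~\cite[Lemma~21]{L} (cf.~\cite[Lemma~4.11]{CHW}), adapted to the two-parameter setup. The starting point is the identification $\ker(\Xi)=\cal{I}$, which follows from $\Xi=\Psi\circ\pi_+$ (with $\pi_+\colon \cal{F}\to U_{r,s}^+$ the canonical projection), the injectivity of $\Psi$ from Proposition~\ref{prop:shuffle_embedding}, and the identification $\ker(\pi_+)=\cal{I}$ implicit in~\eqref{eq:F-vs-U}. My first step would be to rewrite the membership condition using the basis $\{[u]:u\in\cal{W}\}$ of $\cal{F}$ from the previous Proposition. Since $[u]=u+\sum_{v>u}a_v(u)v$ is strictly upper-triangular with unit diagonal, inverting the change of basis and substituting yields the equivalence
\[
  w \in \sum_{v>w}\BC(r,s)\cdot v + \cal{I}
  \quad\Longleftrightarrow\quad
  \Xi([w]) \in \sum_{u>w}\BC(r,s)\cdot \Xi([u])
  \quad \mathrm{in}\ \cal{U}.
\]

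The main technical input is a two-part leading-term and vanishing analysis for $\Xi([u])$, proved by induction on $\hgt(|u|)$ via the canonical factorization $u=\ell_1\cdots\ell_k$: (i) for $u\in\cal{W}^+$, $\Xi([u])$ is nonzero with $\max(\Xi([u]))=u$; and (ii) for $u\notin\cal{W}^+$, $\Xi([u])=0$. The decomposition $\Xi([u])=\Xi([\ell_1])*\cdots*\Xi([\ell_k])$ together with Lemma~\ref{lem:shuffle_lw} reduces both statements to the Lyndon base case. For (i) with Lyndon $\ell$ having costandard factorization $\ell=\ell_1\ell_2$, the potential leading term $\ell_2\ell_1$ in $\Xi([\ell])=\Xi([\ell_1])*\Xi([\ell_2])-(\omega_{|\ell_2|}',\omega_{|\ell_1|})\,\Xi([\ell_2])*\Xi([\ell_1])$ cancels by a direct computation with the shuffle formula~\eqref{eq:shuffle_product_general}, leaving $\ell$ as the new leading term. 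For (ii), Theorem~\ref{thm:LR-bijection} identifies non-dominant Lyndons as precisely those whose weight is not a positive root, and the corresponding iterated bracket vanishes in $U_{r,s}^+$ by the Serre relations of Theorem~\ref{thm:serre}, mirroring the classical fact that $\fn^+$ has no root space at a non-root weight.

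With (i) and (ii) in hand, both directions of the lemma follow immediately. If $w\in\cal{W}^+$, then $\Xi([w])$ has leading term $w$, while any combination $\sum_{u>w}c_u\Xi([u])$ is either zero (when all $c_u$ with $u\in\cal{W}^+$ vanish) or has leading term strictly greater than $w$ (determined by the largest such $u\in\cal{W}^+$); in either case it cannot equal $\Xi([w])$, yielding $w\notin\sum_{v>w}\BC(r,s)\cdot v+\cal{I}$. If $w\notin\cal{W}^+$, then $\Xi([w])=0$ trivially lies in the span, yielding $w\in\sum_{v>w}\BC(r,s)\cdot v+\cal{I}$. The main obstacle in the argument is the vanishing statement (ii), which depends essentially on Theorem~\ref{thm:LR-bijection} together with the Serre-type relations established in Section~\ref{sec:bilinear_forms}.
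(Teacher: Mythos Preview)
Your argument has a genuine gap: claim (ii), that $\Xi([u])=0$ for all $u\notin\cal{W}^+$, is \emph{false}. First, your justification is incorrect: Theorem~\ref{thm:LR-bijection} only says that the map $\cal{L}^+\to\Phi^+$ is a bijection; it says nothing about non-dominant Lyndon words, and such words can certainly have weight in $\Phi^+$ (e.g.\ $[132]$ in type $A_3$ has weight $\alpha_1+\alpha_2+\alpha_3\in\Phi^+$). More seriously, the claim itself fails. In type $D_4$ the Lyndon word $[1234]$ is not dominant (the dominant Lyndon word of degree $\alpha_1+\alpha_2+\alpha_3+\alpha_4$ is $[1243]$), its costandard factorization is $[123]\cdot[4]$ with \emph{both} factors dominant, and a direct computation using $R_{[123]}=(r-s)^2[123]$ gives
\[
  \Xi([[1234]]) \,=\, R_{[123]}\circledast [4] \,=\, (r-s)^3\big(rs\,[1234]+[1243]\big) \,=\, rs\cdot R_{[1243]} \,\neq\, 0.
\]
So your inductive reduction for (ii) breaks down precisely when both pieces of the costandard factorization are dominant but their concatenation is not, and the Serre relations do not force vanishing in that case. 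Without (ii), claim (i) alone is not enough: in the expression $\Xi([w])=\sum_{u>w}c_u\,\Xi([u])$ you cannot control the leading terms of the non-dominant summands.

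The paper's approach (which is Leclerc's) avoids this entirely by a duality argument using the non-degenerate form $(\cdot,\cdot)$ on $U_{r,s}^+$. From the description $\gamma_w(u)=\partial'_w(u)=(e_{i_1}\cdots e_{i_d},u)$ in the proof of Proposition~\ref{prop:image_description}, the coefficient of a word $w$ in $\Psi(u)\in\cal{U}$ is exactly $(e_w,u)$. Hence $w$ is dominant if and only if there exists $u\in U_{r,s}^+$ with $(e_w,u)\neq 0$ and $(e_v,u)=0$ for all $v>w$. By non-degeneracy of $(\cdot,\cdot)$ (Corollary~\ref{cor:nondegeneracy-CHW}), this is equivalent to $e_w\notin\mathrm{span}\{e_v:v>w\}$ in $U_{r,s}^+\simeq\cal{F}/\ker(\Xi)$, which is the desired statement. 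No analysis of $\Xi([u])$ for non-dominant $u$ is needed.
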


For any dominant word $w \in \cal{W}^{+}$, we define 
\begin{equation*}%\label{eq:R}
  R_{w} = \Xi([w]).
\end{equation*}
For any $x,y \in \cal{F}$, let us introduce the following notation: 
\[
  x \circledast y = x *_{r,s} y - x *_{s,r} y.
\]
Recall from Proposition~\ref{prop:rs_to_sr} that $x *_{r,s} y = (\omega_{|x|}',\omega_{|y|})^{-1}y *_{s,r}x$, hence we have 
\begin{equation*}%\label{eq:circledast-2}
  x \circledast y = x *_{r,s} y - (\omega_{|y|}',\omega_{|x|})y *_{r,s}x.
\end{equation*} 
This formula immediately implies:

\begin{lemma} 
Let $\ell \in \cal{L}^{+}$, and let $\ell = \ell_{1}\ell_{2}$ be its costandard factorization. Then 
\begin{equation*}%\label{eq:Rl-inductive}
  R_{\ell} = R_{\ell_{1}} \circledast R_{\ell_{2}}.
\end{equation*}
\end{lemma}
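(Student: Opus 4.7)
The plan is to simply unfold the definitions and compare. By the definition of the bracketing of a Lyndon word with costandard factorization, we have $[\ell] = [[\ell_1],[\ell_2]]_{r,s}$ in the free algebra $(\cal{F},\cdot)$. Unpacking the $(r,s)$-bracket and applying $\Xi$, which by definition is an algebra homomorphism $(\cal{F},\cdot)\to(\cal{F},*_{r,s})$, will immediately yield
\[
  R_\ell = \Xi([\ell_1])*_{r,s}\Xi([\ell_2]) - (\omega_{|\ell_2|}',\omega_{|\ell_1|})\, \Xi([\ell_2])*_{r,s}\Xi([\ell_1])
        = R_{\ell_1}*_{r,s} R_{\ell_2} - (\omega_{|\ell_2|}',\omega_{|\ell_1|})\, R_{\ell_2}*_{r,s} R_{\ell_1}.
\]
This uses the obvious observation that $[\ell_i]$ is homogeneous of weight $|\ell_i|$ (so that the scalar factor inside $[\cdot,\cdot]_{r,s}$ reads $(\omega_{|\ell_2|}',\omega_{|\ell_1|})$, not in terms of the unknown weights of $[\ell_i]$).

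Next, I would invoke the formula
\[
  x \circledast y \;=\; x*_{r,s} y - (\omega_{|y|}',\omega_{|x|})\, y *_{r,s} x
\]
derived in the paragraph immediately preceding the lemma from Proposition~\ref{prop:rs_to_sr}. Applied to $x = R_{\ell_1}$ and $y = R_{\ell_2}$, and using that $\Xi$ preserves the $Q^+$-grading so that $|R_{\ell_i}| = |\ell_i|$, this is exactly the right-hand side of the displayed identity above. Hence $R_\ell = R_{\ell_1}\circledast R_{\ell_2}$.

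There is really no obstacle here: the only thing one must check is the compatibility of weights, which is immediate since $\Xi$ is $Q^+$-homogeneous (it sends the letter $[i]$ to the same letter, and the concatenation product on $\cal{F}$ and the shuffle product $*_{r,s}$ are both $Q^+$-graded). Consequently the proof reduces to a single line of bookkeeping after inserting the definitions.
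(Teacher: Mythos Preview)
Your proof is correct and is precisely the paper's approach: the paper introduces the formula $x \circledast y = x*_{r,s} y - (\omega_{|y|}',\omega_{|x|})\, y *_{r,s} x$ and then simply states ``This formula immediately implies'' the lemma, which is exactly the unfolding of definitions you carry out.
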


For any word $w =[i_{1} \dots i_{d}]$, set 
\begin{equation*}
  \epsilon_{w} = i_{1} * i_{2} * \dots * i_{d}.
\end{equation*}
Then completely analogously to~\cite[Proposition~4.13]{CHW}, we obtain:

\begin{prop}\label{prop:triangularity}
For $w \in \cal{W}^{+}$, we have 
\[
  R_{w} = \epsilon_{w} \, + \sum_{v \in \cal{W}^{+}}^{v > w} \chi_{wv}\epsilon_{v}
\]
for some $\chi_{wv} \in \BC(r,s)$. In particular, the set $\{R_{w} \,|\, w \in \cal{W}^{+}\}$ is a basis for $\cal{U}$.
\end{prop}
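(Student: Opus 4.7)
The second assertion is a formal consequence of the first: the expansion $R_w = \epsilon_w + \sum_{v>w,\, v\in \cal{W}^+}\chi_{wv}\epsilon_v$ shows that the transition matrix from $\{R_w\}_{w\in\cal{W}^+}$ to $\{\epsilon_w\}_{w\in\cal{W}^+}$, with both sides indexed lexicographically, is upper triangular with $1$'s on the diagonal, hence invertible. Since $\{\epsilon_w\}_{w\in\cal{W}^+}$ is already a basis of $\cal{U}$ (being the image under $\Psi$ of the PBW basis $\{e_w\}_{w\in\cal{W}^+}$ of $U^+_{r,s}$ via the identifications~\eqref{eq:F-vs-U}), this invertibility forces $\{R_w\}_{w\in\cal{W}^+}$ to be a basis as well. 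I will therefore focus on the triangular expansion.

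The strategy is to unwind $R_w = \Xi([w])$ through the word expansion of $[w] \in (\cal{F},\cdot)$. By Proposition~\ref{prop:dominant-factorization}, the canonical factorization $w = \ell_1\ell_2\cdots \ell_k$ into non-increasing dominant Lyndon words yields $[w] = [\ell_1]\cdot [\ell_2] \cdots [\ell_k]$. The established word expansion $[\ell_i] = \ell_i + \sum_{v > \ell_i} c_v^{(i)} v$ for Lyndon factors, combined with the fact that the $(r,s)$-bracketing preserves the $Q$-grading, forces every word occurring in $[\ell_i]$ to have the same length as $\ell_i$. Under this equal-length property, concatenation preserves lex-leading words: a concatenation $u_1 u_2 \cdots u_k$ with each $u_i$ coming from $[\ell_i]$ can equal $\ell_1\ell_2\cdots\ell_k$ only when every $u_i = \ell_i$ (otherwise the first differing position already yields something lex-larger). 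Hence $[w] = w + \sum_{v > w} c_v v$ in $\cal{F}$ with coefficient $1$ at $w$, and applying the algebra homomorphism $\Xi$ linearly produces $R_w = \epsilon_w + \sum_{v>w} c_v \epsilon_v$ in $\cal{U}$, where a priori the sum runs over all words $v > w$ of degree $|w|$.

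It remains to argue that every word $v$ appearing in the expansion of $[w]$ is itself dominant, which would immediately restrict the sum above to $v \in \cal{W}^+$. I plan to establish this by induction on $\hgt(|w|)$: for a Lyndon $\ell$ with costandard factorization $\ell = \ell_1 \ell_2$ (where necessarily $\ell_1 < \ell_2$ in lex), the words in $[\ell]$ arise as concatenations $u_1 u_2$ and $u_2 u_1$ of dominant words $u_i \in [\ell_i]$ of degree $|\ell_i|$, and a direct analysis of the Chen--Fox--Lyndon factorization of such concatenations --- together with Proposition~\ref{prop:dominant-factorization} --- shows each is again dominant; the general case $w = \ell_1 \cdots \ell_k$ then follows by iterating the Lyndon case. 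Once this dominance claim is secured, the sum $\sum_{v > w} c_v \epsilon_v$ runs automatically over $v \in \cal{W}^+$, yielding the stated expansion. The combinatorial dominance claim is the main obstacle: it is a purely word-combinatorial statement about how the costandard structure of Lyndon bracketings interacts with the dominant-Lyndon factorizations of arbitrary words, and this is where the bulk of the technical work lies.
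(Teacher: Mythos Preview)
Your triangular expansion $R_w=\epsilon_w+\sum_{v>w}c_v\,\epsilon_v$ over \emph{all} words $v>w$ of degree $|w|$ is correct, but the plan to restrict the sum to dominant $v$ by proving that every word occurring in $[w]$ is dominant does not work. A concrete counterexample: in type $A_3$ with the order $1<2<3$, take the dominant Lyndon word $\ell=[123]$ with costandard factorization $[12]\cdot[3]$. Then $[[12]]=[12]-s\,[21]$ and hence
\[
  [[123]] \;=\; [123]\;-\;s\,[213]\;-\;s\,[312]\;+\;s^2\,[321].
\]
The word $[213]$ has canonical Lyndon factorization $[2]\cdot[13]$, and $[13]$ is \emph{not} a dominant Lyndon word since $\alpha_1+\alpha_3\notin\Phi^+$; by Proposition~\ref{prop:dominant-factorization} this forces $[213]\notin\cal{W}^+$. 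So already for a dominant Lyndon $\ell$ the expansion of $[\ell]$ contains non-dominant words, and no analysis of the canonical factorizations of the concatenations $u_1u_2$, $u_2u_1$ can establish the dominance you want --- the claim is simply false.

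The fix, which is exactly the argument the paper is invoking from~\cite{L,CHW}, uses the lemma recorded immediately before the proposition: a word $v$ is non-dominant precisely when it can be written modulo $\ker\Xi$ as a $\BC(r,s)$-linear combination of strictly larger words. Applying $\Xi$, this says that for each non-dominant $v$ one has $\epsilon_v=\sum_{u>v}d_u\,\epsilon_u$ in $\cal{U}$. Starting from your expansion $R_w=\epsilon_w+\sum_{v>w}c_v\,\epsilon_v$ and repeatedly substituting this identity for every non-dominant $v$ (the process terminates, as there are only finitely many words of degree $|w|$) yields the desired expression $R_w=\epsilon_w+\sum_{v>w,\ v\in\cal{W}^+}\chi_{wv}\,\epsilon_v$. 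Your deduction of the basis statement from this triangularity is then fine.
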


We call $\{R_{w} \,|\, w \in \cal{W}^{+}\}$ the \textbf{Lyndon basis} of $\cal{U}$. 
Due to Proposition~\ref{prop:dominant-factorization}, we have (cf.~\cite[Theorem~23]{L}):

\begin{prop}\label{prop:lyndon_basis} 
The Lyndon basis has the form 
\[
  \big\{ R_{\ell_{1}} * \dots * R_{\ell_{k}} \,\big|\, 
         k \in \BZ_{\geq 0},\ \ell_{1}, \dots, \ell_{k} \in \cal{L}^{+},\ \ell_{1} \ge \dots \ge \ell_{k} \big\}.
\]
\end{prop}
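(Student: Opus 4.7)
The plan is to deduce this proposition almost immediately by chasing the definitions and combining three facts already established in the paper: Proposition~\ref{prop:triangularity}, which tells us that $\{R_w\}_{w\in \cal{W}^+}$ is a basis of $\cal{U}$; Proposition~\ref{prop:dominant-factorization}, which gives a bijection between $\cal{W}^+$ and non-increasing sequences $(\ell_1\ge \cdots \ge \ell_k)$ of dominant Lyndon words via the canonical factorization $w=\ell_1\cdots \ell_k$; and the fact that $\Xi \colon (\cal{F},\cdot) \to (\cal{F},*)$ is an algebra homomorphism.

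First, I would simply observe that the bracketing $[w]$ of a general word $w\in \cal{W}$ was defined via the canonical factorization $w=\ell_1\cdots \ell_k$ as the concatenation $[w]=[\ell_1][\ell_2]\cdots [\ell_k]$ in $(\cal{F},\cdot)$. Applying the algebra homomorphism $\Xi$ to this equality and using the definition $R_v = \Xi([v])$ for any dominant word $v$ yields
\[
  R_w \ =\ \Xi([\ell_1])*\Xi([\ell_2])*\cdots *\Xi([\ell_k]) \ =\ R_{\ell_1}*R_{\ell_2}*\cdots *R_{\ell_k}.
\]
Here each $\ell_j$ belongs to $\cal{L}^+$ (by Proposition~\ref{prop:dominant-factorization}), so the right-hand side already lies in the set appearing in the proposition.

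Next, I would invoke Proposition~\ref{prop:triangularity} to conclude that the map
\[
  w=\ell_1\ell_2\cdots \ell_k \ \longmapsto\ R_{\ell_1}*R_{\ell_2}*\cdots *R_{\ell_k}
\]
from $\cal{W}^+$ onto the indicated set of shuffle products ranges over a $\BC(r,s)$-basis of $\cal{U}$. Because this assignment is a bijection between $\cal{W}^+$ and the set of non-increasing tuples $(\ell_1\ge \cdots \ge \ell_k)$ of dominant Lyndon words (again by Proposition~\ref{prop:dominant-factorization}), the elements $R_{\ell_1}*\cdots *R_{\ell_k}$ corresponding to distinct such tuples are distinct basis elements, which gives the claim.

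There is no real obstacle; the only thing to check is that the canonical factorization of a dominant word produces only dominant Lyndon factors, which is precisely the content of Proposition~\ref{prop:dominant-factorization}. Once that is in hand, the statement reduces to the transparent identity $\Xi([\ell_1]\cdots [\ell_k]) = \Xi([\ell_1])*\cdots *\Xi([\ell_k])$.
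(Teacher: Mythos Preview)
Your proof is correct and is exactly the argument the paper has in mind: the paper merely cites Proposition~\ref{prop:dominant-factorization} (and \cite[Theorem~23]{L}) without spelling out the details, and your write-up supplies precisely those details via the identity $R_w=\Xi([\ell_1]\cdots[\ell_k])=R_{\ell_1}*\cdots*R_{\ell_k}$ together with Proposition~\ref{prop:triangularity}.
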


Let us also recall \textbf{Leclerc's algorithm} for computing the set $\cal{L}^{+}$ of dominant Lyndon words. 
For each $\beta \in \Phi^{+}$, let 
\[
  C(\beta) = 
  \big\{ (\beta_{1},\beta_{2}) \in \Phi^{+} \times \Phi^{+} \,|\, \beta_{1} + \beta_{2} = \beta, \ell(\beta_{1}) < \ell(\beta_{2}) \big\},
\]
where $\ell\colon \cal{L}^{+} \to \Phi^{+}$ denotes the inverse of the bijection in Theorem~\ref{thm:LR-bijection}. 
Then we have:

\begin{prop}\label{prop:leclerc_algorithm} 
For any $\beta \in \Phi^{+}$, we have $\ell(\beta) = \max\{\ell(\beta_{1})\ell(\beta_{2}) \,|\, (\beta_{1},\beta_{2}) \in C(\beta)\}$.
\end{prop}

\begin{proof} 
Because there is a unique dominant Lyndon word of weight $\beta$ for each $\beta \in \Phi^{+}$, it will suffice to show that the set $\cal{L}^{+}$ of dominant Lyndon words coincides with the set $\cal{GL}$ of good Lyndon words considered in~\cite{L}; then the Proposition is just a restatement of~\cite[Proposition 25]{L}.

To avoid confusion with our notation, we will denote the shuffle product $*$ on $\cal{F}$ defined in~\cite{L} by $*_{q}$, and we will write $\cal{U}^{q}$ for the image of the embedding of $U_{q}^{+}$ into $\cal{F}$ constructed in~\cite{L}.

Now, let $\cal{A}_{r,s} = \bb{C}[r^{\pm 1},s^{\pm 1}]$ and let $\cal{A}_{q} = \bb{C}[q,q^{-1}]$. Consider the free $\cal{A}_{r,s}$-module $\cal{F}_{\cal{A}_{r,s}} = \bigoplus_{w \in \cal{W}}\cal{A}_{r,s}w$ and the free $\cal{A}_{q}$-module $\cal{F}_{\cal{A}_{q}} = \bigoplus_{w \in \cal{W}}\cal{A}_{q}w$. Note that $\cal{F}_{\cal{A}_{r,s}}$ and $\cal{F}_{\cal{A}_{q}}$ are both rings under the products $*_{r,s}$ and $*_{q}$, respectively. Set $\cal{U}_{\cal{A}{r,s}} = \cal{U} \cap \cal{F}_{\cal{A}_{r,s}}$ and $\cal{U}^{q}_{\cal{A}_{q}} = \cal{U}^{q} \cap \cal{F}_{\cal{A}_{q}}$. 

Let $\psi_{0}\colon \cal{A}_{r,s} \to \cal{A}_{q}$ be the ring homomorphism defined by $\psi_{0}(r) = q$ and $\psi_{0}(s) = q^{-1}$. If we then define $\psi\colon \cal{F}_{\cal{A}_{r,s}} \to \cal{F}_{\cal{A}_{q}}$ by $\psi(\sum c_{w}w) = \sum \psi_{0}(c_{w})w$, then it is immediate from the definitions of $*_{r,s}$ and $*_{q}$ that $\psi$ is a ring homomorphism. Moreover, if $u = \sum_{w \in \cal{W}}\gamma(w)w \in \cal{U}_{\cal{A}_{r,s}}$, then we know from Proposition~\ref{prop:image_description} that the coefficients $\gamma(w)$ satisfy the linear equations~\eqref{eq:image_description}. If we apply $\psi_{0}$ to the equations in~\eqref{eq:image_description}, then we precisely recover the linear equations in~\cite[(12)]{L}, and therefore~\cite[Theorem 5]{L} implies that $\psi(u) = \sum_{w \in \cal{W}}\psi_{0}(\gamma(w))w \in \cal{U}^{q}_{\cal{A}_{q}}$. Now, for any $\ell \in \cal{L}^{+}$, consider the homogeneous element $m_{\ell}$ of the basis $\{m_{w} \,|\, w \in \cal{W}^{+}\}$ constructed in Proposition~\ref{prop:homogeneous_basis}. By the defining properties of $\{m_{w} \,|\, w \in \cal{W}^{+}\}$, the only element of $\cal{W}^{+}$ that occurs as a summand of $m_{\ell}$ is $\ell$, and therefore $\ell = \max(m_{\ell})$. Since $\ell$ also occurs with coefficient $1$, we have $\max(\psi(m_{\ell})) = \ell$, which means that $\ell \in \cal{GL}$. This proves that $\cal{L}^{+} \subseteq \cal{GL}$, and since both $\cal{L}^{+}$ and $\cal{GL}$ are in bijection with the finite set $\Phi^{+}$, the inclusion must actually be an equality.
\end{proof}

Combining Proposition~\ref{prop:leclerc_algorithm} with the convexity result~\cite[Proposition~26]{L}, we obtain (cf.~\cite[Corollary~27]{L}):

\begin{cor}\label{cor:smallest_word} 
For $\beta \in \Phi^{+}$, the dominant Lyndon word $\ell(\beta)$ is the smallest dominant word of weight $\beta$.
\end{cor}

The following result is analogous to~\cite[Lemma~4.18]{CHW} 
(we choose to present the proof below in order to highlight the only essential calculation):

\begin{lemma}\label{lem:first_letter}
Let $\ell = [i_{1} \dots i_{d}] \in \cal{L}^{+}$. Then each word appearing in the expansion of~$R_{\ell}$ starts with $i_1$.
\end{lemma}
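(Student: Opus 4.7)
The plan is to argue by induction on the length $d$ of $\ell$; the base case $d=1$ is immediate since $R_{[i_1]}=[i_1]$. For $d\ge 2$, I would use the costandard factorization $\ell=\ell_1\ell_2$, so that $R_\ell=R_{\ell_1}\circledast R_{\ell_2}$ with $\ell_1,\ell_2$ Lyndon of length strictly less than $d$. The induction is run for all Lyndon words rather than only those in $\cal{L}^+$, which is harmless since the conclusion is vacuous whenever the relevant $R_w=0$. By the inductive hypothesis, every word of $R_{\ell_1}$ starts with $i_1$ and every word of $R_{\ell_2}$ starts with $j$, the first letter of $\ell_2$; since $\ell$ is Lyndon we have $i_1\le j$, and when $j=i_1$ the conclusion follows at once because any shuffle of two words beginning with $i_1$ also begins with $i_1$.

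The nontrivial case is $j>i_1$, and the essential observation here is purely combinatorial: under this condition $\ell_2$ must be the single letter $[j]$. Indeed, if $\ell_2$ had length at least two, then $\ell_1' := \ell_1 j$ would be a Lyndon word, strictly longer than $\ell_1$, and a proper prefix of $\ell$. To check $\ell_1'$ is Lyndon I would compare it with each of its proper suffixes: for suffixes of the form $[i_k \dots i_m j]$ with $k\ge 2$, the fact that $\ell_1$ is Lyndon already forces the lex-discrepancy to occur strictly before the appended $j$; for the singleton suffix $[j]$ the inequality $i_1<j$ suffices. The existence of such an $\ell_1'$ contradicts the maximality of $\ell_1$ in the costandard factorization, so $\ell_2=[j]$.

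With $\ell_2=[j]$, the remaining step is a short shuffle computation. Writing $R_{\ell_1}=i_1 X$ (with $X\in\cal{F}$ homogeneous of degree $|\ell_1|-\alpha_{i_1}$, formed from the tails of the words in $R_{\ell_1}$), I would apply the first-letter formula~\eqref{eq:shuffle_opp} to expand both $R_{\ell_1}*_{r,s}[j]$ and $[j]*_{r,s}R_{\ell_1}$. Each expansion produces one $i_1$-initial and one $j$-initial contribution; the two $j$-initial pieces come out to $j R_{\ell_1}$ and $(\omega_j',\omega_{|\ell_1|})^{-1} j R_{\ell_1}$ respectively, and these cancel precisely in the definition $R_\ell = R_{\ell_1}*_{r,s}[j] - (\omega_j',\omega_{|\ell_1|})\,[j]*_{r,s}R_{\ell_1}$. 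What remains is $R_\ell = i_1\bigl[(\omega_{i_1}',\omega_j)^{-1}(X*[j]) - (\omega_j',\omega_{|\ell_1|})([j]*X)\bigr]$, in which every summand starts with $i_1$. The main obstacle is therefore not this shuffle identity, which is automatic, but the Lyndon-combinatorial reduction to $|\ell_2|=1$ carried out in the previous paragraph.
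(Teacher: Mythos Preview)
Your proof is correct and follows essentially the same strategy as the paper: induction on length, costandard factorization, the dichotomy that either $\ell_2$ begins with $i_1$ or $\ell_2$ is a single letter, and in the latter case the explicit cancellation of the $j$-initial terms in $R_{\ell_1}\circledast[j]$. The one substantive difference is how the dichotomy is established: the paper invokes \cite[Lemma~14]{L}, which describes $\ell_2$ as $\ell_1^k w i$ with $w$ a left factor of $\ell_1$, whereas you give a direct self-contained argument that if $j>i_1$ and $|\ell_2|\ge 2$ then $\ell_1 j$ would be a longer Lyndon prefix of $\ell$, contradicting maximality. Your argument is slightly more elementary in that it avoids the external citation; both lead to the same case split and the same final shuffle cancellation (your packaging via $R_{\ell_1}=i_1X$ and \eqref{eq:shuffle_opp} is equivalent to the paper's word-by-word computation). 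Your remark about running the induction over all of $\cal{L}$ rather than $\cal{L}^+$ is a reasonable precaution, since the costandard factors of a dominant Lyndon word need not themselves be dominant.
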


\begin{proof} 
We proceed by induction on $d$, with $d = 1$ being trivial. Let $\ell = \ell_{1}\ell_{2}$ be the costandard factorization. 
Then $R_{\ell} = R_{\ell_{1}} \circledast R_{\ell_{2}}$, and by the induction assumption every word appearing in the expansion 
of $R_{\ell_{1}}$ starts with $i_1$. Now, by~\cite[Lemma~14]{L}, we may write $\ell_{2} = \ell_{1}^{k}wi$, where $k \ge 0$, 
$w$ is a (possibly empty) left factor of $\ell_{1}$, and $i$ is a letter such that $wi > \ell_{1}$. If $k > 0$ or $w$ is nonempty, 
then $\ell_{2}$ starts with $i_{1}$, and thus every word appearing in the expansion of $R_{\ell_{2}}$ also starts with $i_{1}$. 
In this case, the definition of the shuffle product implies that every word in $R_{\ell_{1}} \circledast R_{\ell_{2}}$ starts 
with $i_1$. In the remaining case $\ell_{2} = i$, we have $R_{\ell_{2}} = i$, and for any word $\ell' = [i_{1} j_{2} \dots  j_{d}]$ 
in the expansion of $R_{\ell_{1}}$, we obtain:
\begin{equation*}
  \ell' \circledast i 
  = \ell' *_{r,s} i - (\omega_{i}',\omega_{|\ell'|}) i *_{r,s}\ell' 
  = i\ell' - (\omega_{i}',\omega_{|\ell'|})(\omega_{i}',\omega_{|\ell'|})^{-1} i\ell' + \sum c_{w}w 
  = \sum c_{w}w,
\end{equation*}
where in the last sum the words $w$ start with $i_{1}$ and $c_{w}\in \BC(r,s)$. This completes the proof. 
\end{proof}

Finally, the following important lemma is completely analogous to~\cite[Lemma~4.19]{CHW}:

\begin{lemma}\label{lem:root_vector_max} 
For $\ell \in \cal{L}^{+}$, we have $\max(R_{\ell}) = \ell$.
\end{lemma}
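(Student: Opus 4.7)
The plan is to induct on the length of $\ell \in \cal{L}^+$, with the base case $\ell \in I$ being immediate since $R_\ell = \ell$. For the inductive step, let $\ell = \ell_1 \ell_2$ be the costandard factorization, so $\ell_1, \ell_2 \in \cal{L}^+$ have smaller length and (by the standard Lyndon-word property that $\ell_1 \ell_2$ Lyndon implies $\ell_1 \ell_2 < \ell_2$, hence $\ell_1 < \ell_2$) satisfy $\ell_1 < \ell_2$ lexicographically. By the preceding lemma, $R_\ell = R_{\ell_1} \circledast R_{\ell_2} = R_{\ell_1} *_{r,s} R_{\ell_2} - (\omega'_{|\ell_2|}, \omega_{|\ell_1|})\, R_{\ell_2} *_{r,s} R_{\ell_1}$, and the inductive hypothesis gives $R_{\ell_i} = \ell_i + \sum_{u < \ell_i} c^{(i)}_u u$. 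I plan to establish two claims: (a) every word in the support of $R_\ell$ is $\leq \ell_1 \ell_2$, and (b) the coefficient of $\ell = \ell_1 \ell_2$ in $R_\ell$ is nonzero.

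For (b), a direct computation with the shuffle formula~\eqref{eq:shuffle_product_general} shows that $\ell_1 \ell_2$ appears in $\ell_1 *_{r,s} \ell_2$ with coefficient $(\omega'_{|\ell_1|}, \omega_{|\ell_2|})^{-1}$ (from the identity shuffle) and in $\ell_2 *_{r,s} \ell_1$ with coefficient $1$ (from the "reverse'' shuffle that places all of $\ell_1$'s letters after $\ell_2$'s in the output), contributing a net $(\omega'_{|\ell_1|}, \omega_{|\ell_2|})^{-1} - (\omega'_{|\ell_2|}, \omega_{|\ell_1|}) = r^{-\langle |\ell_1|, |\ell_2| \rangle} s^{\langle |\ell_2|, |\ell_1| \rangle} - r^{\langle |\ell_2|, |\ell_1| \rangle} s^{-\langle |\ell_1|, |\ell_2| \rangle}$ to the coefficient of $\ell_1 \ell_2$ in $R_\ell$, which is nonzero as a rational function of $r, s$. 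One must also argue that strictly smaller $u_i < \ell_i$ cannot contribute to $\ell_1 \ell_2$ via some other shuffle; this follows from the triangularity $R_{\ell_i} = \ell_i + (\text{lower})$ and the uniqueness of the factorization $\ell = \ell_1 \ell_2$ by Lyndon length considerations.

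For (a), any word $u$ appearing in $R_{\ell_1} *_{r,s} R_{\ell_2}$ (resp.\ $R_{\ell_2} *_{r,s} R_{\ell_1}$) arises as a shuffle of some $u_1 \in \supp(R_{\ell_1})$ and $u_2 \in \supp(R_{\ell_2})$ with $u_1 \leq \ell_1, u_2 \leq \ell_2$ by induction. The problematic case is when $u$ exceeds $\ell_1 \ell_2$: the model example is $u_i = \ell_i$ producing $\ell_2 \ell_1 > \ell_1 \ell_2$, which appears with coefficient $1$ in $\ell_1 *_{r,s} \ell_2$ and with coefficient $(\omega'_{|\ell_2|}, \omega_{|\ell_1|})^{-1}$ in $\ell_2 *_{r,s} \ell_1$, and so cancels exactly in $\circledast$. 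The main obstacle is to extend this cancellation to all "high'' shuffles: for every word $u > \ell_1 \ell_2$ appearing, the contributions from the two halves of $\circledast$ must match up to the factor $(\omega'_{|\ell_2|}, \omega_{|\ell_1|})$ and cancel. The preceding lemma (that every word in $R_{\ell_i}$ starts with the first letter of $\ell_i$) pins down the admissible starting letters of any shuffle term and is crucial for managing the combinatorics; the bookkeeping is most delicate when $\ell_1$ and $\ell_2$ share their initial letter, where a secondary induction on the length of their common prefix is likely required.
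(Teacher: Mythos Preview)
Your outline has the right shape, but part (b) contains a concrete error and part (a) is, as you yourself note, not actually carried out.

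\medskip

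\textbf{The error in (b).} Your coefficient formula counts only the identity shuffle in $\ell_1 *_{r,s} \ell_2$ and the fully reversed shuffle in $\ell_2 *_{r,s} \ell_1$. When $\ell_1$ and $\ell_2$ share letters there are \emph{other} shuffles that also produce the word $\ell_1\ell_2$, and your ``net coefficient'' $(\omega'_{|\ell_1|},\omega_{|\ell_2|})^{-1} - (\omega'_{|\ell_2|},\omega_{|\ell_1|})$ can vanish. Take $\fg$ of type $B_2$ and $\ell = [122]$, whose costandard factorization is $\ell_1 = [12]$, $\ell_2 = [2]$. Then $|\ell_1| = \varepsilon_1$, $|\ell_2| = \varepsilon_2$, so $(\,|\ell_1|,|\ell_2|\,)=0$ and your expression equals $rs - rs = 0$. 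Yet Proposition~\ref{prop:R_B} gives $R_\ell = (r^2-s^2)^2[122]$, so the coefficient of $[122]$ is certainly nonzero; the nonvanishing comes precisely from the extra shuffle of $[12]$ with $[2]$ that inserts the second-factor $2$ in the middle. Your sentence about ``strictly smaller $u_i<\ell_i$'' does not address this, since here $R_{\ell_1}$ is a scalar multiple of $\ell_1$ and there are no lower $u_i$ at all. So your nonvanishing argument fails exactly in the cases where the costandard pieces are orthogonal roots --- and such cases occur.

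\medskip

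\textbf{What to do instead.} The paper's route (via~\cite[Lemma~4.19]{CHW}) does not try to compute the leading coefficient directly. The two ingredients that replace your (b) are already in the paper: $R_\ell\neq 0$ because $\{R_w\mid w\in\cal{W}^+\}$ is a basis of $\cal{U}$ (Proposition~\ref{prop:triangularity}), and $\max(R_\ell)\in\cal{W}^+$ since $R_\ell\in\cal{U}$. Combined with Theorem~\ref{thm:LR-bijection}, the Lyndon word $\ell$ is the unique \emph{smallest} dominant word of its degree, so once you know every word of $R_\ell$ is $\le \ell$ you are done --- no coefficient computation needed. This reduces everything to your part (a), and that is where the preceding ``first letter'' lemma is used essentially in~\cite{CHW}: the cancellation of words exceeding $\ell$ is organized through the case split (i) $\ell_2$ a single letter versus (ii) $\ell_2$ starting with $i_1$, together with Lemma~\ref{lem:shuffle_lw}, rather than the ad~hoc ``secondary induction on common prefix'' you gesture at. Your sketch of (a) is not yet a proof; in particular, the model cancellation $\ell_2\ell_1$ you exhibit is only one of many words above $\ell$ that must be eliminated, and the mechanism you describe does not obviously scale.
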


Combining this with Lemma~\ref{lem:shuffle_lw}, we get the following result:

\begin{cor}\label{cor:max_Rw} 
For $w \in \cal{W}^{+}$, we have $\max(R_{w}) = w$.
\end{cor}

\begin{proof} 
By Proposition~\ref{prop:dominant-factorization}, we can write $w = \ell_{1}\ldots \ell_{k}$, where $\ell_{1} \ge \ell_{2} \ge \ldots \ge \ell_{k}$, and each $\ell_{i} \in \cal{L}^{+}$. Then by definition, we have $R_{w} = R_{\ell_{1}} * \ldots * R_{\ell_{k}}$. We proceed by induction on $k$. If $k = 1$, the statement reduces to Lemma~\ref{lem:root_vector_max}. Now suppose $k > 1$, and the result holds for smaller $k$. Let $d$ be the largest integer such that $\ell_{1} = \ell_{2} = \ldots = \ell_{d}$. By induction, $\max(R_{\ell_{1}} * \ldots * R_{\ell_{d-1}}) = \ell_{1}\ldots \ell_{d-1} = \ell_{1}^{d - 1}$, and we know from Lemma~\ref{lem:root_vector_max} that $\max(R_{\ell_{d}}) = \ell_{d} = \ell_{1}$. Thus, if $u$ is any word in $R_{\ell_{1}^{d - 1}}$ and $v$ is any word in $R_{\ell_{1}}$, then every word appearing in $u * v$ is less than or equal to the corresponding shuffle in $\ell_{1}^{d- 1} * \ell_{1}$. Hence, $\max(R_{\ell_{1}} * \ldots * R_{\ell_{d}}) = \max(\ell_{1}^{d-1} * \ell_{1}) = \ell_{1}^{d}$, by Lemma~\ref{lem:shuffle_lw}.

If $d = k$, then we are done. If $d < k$, then by the induction assumption, we have $\max(R_{\ell_{d+1}} * \ldots * R_{\ell_{k}}) = \ell_{d+1}\ldots \ell_{k}$, and by the choice of $d$, $\ell_{1} > \ell_{d+1}$. We shall see in the next paragraph that $\ell_{1} > \ell_{d + 1}\ldots \ell_{k}$. If this inequality holds, then by using Lemma~\ref{lem:shuffle_lw} and arguments similar to those above, we conclude that $\max(R_{\ell_{1}} * \ldots * R_{\ell_{k}}) = \max(\ell_{1}^{d} * (\ell_{d+1}\ldots \ell_{k})) = \ell_{1}\ldots \ell_{k} = w$, which completes the proof.

To prove the above inequality, suppose instead that we have $\ell_{1} \le \ell_{d+1}\ldots \ell_{k}$. Then we can write $\ell_{1} = \ell_{d+1}\ldots \ell_{t}v$, where $d \le t < k$ (if $t = d$, then $\ell_{1} = v$) and $v$ is a (possibly empty) word such that $v \le \ell_{t + 1}$. But then since $\ell_{1}$ is Lyndon and $v$ is a right factor of $\ell_{1}$, we have $\ell_{1} \le v \le \ell_{t + 1}$, which yields a contradiction with $\ell_1>\ell_{t+1}$. 
\begin{comment}
A different way to see the above inequality: if $\ell_{d+1}<\ell_1 \leq \ell_{d+1}\ldots \ell_{k}$, then $\ell_1$ has $\ell_{d+1}$ as a proper prefix, write $\ell_1=\ell_{d+1}\ell'$. But then $\ell'>\ell$ and applying similar argument to $\ell_{d+2}<\ell' \leq \ell_{d+2}\ldots \ell_{k}$, get $\ell'=\ell_{d+2}\ell''$ etc (all words $\ell',\ell'',...>\ell_1$ as $\ell_1$ is Lyndon), but eventually it stops giving us a contradiction. 
\end{comment}
\end{proof}

   %%%%%%%%%%%%%%%%%%%%%%%%%%%%%%%%%%%%%%%%%%%%%%%%%%%%%%%%%%%%%%%%%%%%%%%%%%

\subsection{Orthogonal PBW Bases}
\

The following result is an analogue of~\cite[Lemma~5.6]{CHW} (and the proof is very similar):

\begin{lemma}\label{lem:R_coprod} 
For any $\ell \in \cal{L}^{+}$, we have 
\[
  \Delta(R_{\ell}) = 
  \sum_{\ell_{1},\ell_{2} \in \cal{W}^{+}} \vartheta_{\ell_{1},\ell_{2}}^{\ell} R_{\ell_{2}} \otimes R_{\ell_{1}},
\] 
where $\vartheta_{\ell_{1},\ell_{2}}^{\ell} = 0$ unless $|\ell_{1}| + |\ell_{2}| = |\ell|$, $\ell_{1} \le \ell$, 
and $\ell \le \ell_{2}$ whenever $\ell_{2} \neq \emptyset$. 
\end{lemma}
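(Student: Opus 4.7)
The plan is to prove this by induction on the length of $\ell \in \cal{L}^+$, closely following the strategy of \cite[Lemma~5.6]{CHW}. The base case $\ell = [i]$ is immediate, since $R_{[i]} = [i]$ and $\Delta([i]) = [i]\otimes\emptyset + \emptyset\otimes[i]$ is of the required form with $(\ell_1, \ell_2) \in \{(\emptyset, [i]), ([i], \emptyset)\}$.

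For the inductive step, I would write the costandard factorization $\ell = \ell_1\ell_2$ (so that $\ell_1, \ell_2 \in \cal{L}^+$ with $\ell_1 < \ell < \ell_2$) and use the recursion $R_\ell = R_{\ell_1} \circledast R_{\ell_2}$. Proposition~\ref{prop:shuffle_coproduct} asserts that $\Delta$ is multiplicative with respect to $*_{r,s}$ using the twisted shuffle product~\eqref{eq:twisted_shuffle_square} on $\cal{F}\otimes\cal{F}$; combined with Proposition~\ref{prop:rs_to_sr} relating $*_{r,s}$ and $*_{s,r}$, this yields
\[
  \Delta(R_\ell) = \Delta(R_{\ell_1}) * \Delta(R_{\ell_2}) - (\omega_{|\ell_2|}',\omega_{|\ell_1|})\, \Delta(R_{\ell_2}) * \Delta(R_{\ell_1}).
\]
Substituting the inductive hypothesis $\Delta(R_{\ell_i}) = \sum \vartheta^{\ell_i}_{a_i,b_i} R_{b_i}\otimes R_{a_i}$, expanding via the twisted product, and re-expanding each resulting shuffle product $R_{b_1}*R_{b_2}$, $R_{a_1}*R_{a_2}$ in the Lyndon basis (Proposition~\ref{prop:lyndon_basis}), produces a decomposition $\Delta(R_\ell) = \sum \vartheta^\ell_{a,b} R_b\otimes R_a$, with $|a|+|b|=|\ell|$ automatic from homogeneity.

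The remaining task is to verify the order constraints $a \le \ell$ and ($b = \emptyset$ or $b \ge \ell$). The ``corner'' contributions $(a_i, b_i) \in \{(\emptyset, \ell_i), (\ell_i, \emptyset)\}$ combine across the two $*_{r,s}$-summands to produce precisely the extreme terms $R_\ell \otimes 1$ and $1 \otimes R_\ell$, using that $R_{\ell_1}*R_{\ell_2} - (\omega_{|\ell_2|}',\omega_{|\ell_1|})R_{\ell_2}*R_{\ell_1} = R_{\ell_1}\circledast R_{\ell_2} = R_\ell$. The ``mixed'' corner terms $R_{\ell_1}\otimes R_{\ell_2}$ and $R_{\ell_2}\otimes R_{\ell_1}$ directly satisfy the constraints since $\ell_1 < \ell < \ell_2$. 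For the fully interior contributions, with $a_i < \ell_i$ and $b_i > \ell_i$ (or $b_i=\emptyset$), one analyzes the Lyndon-basis expansion of $R_{a_1}*R_{a_2}$ using Lemma~\ref{lem:root_vector_max}, Proposition~\ref{prop:triangularity}, and Lemma~\ref{lem:shuffle_lw}: the leading word is bounded above by a shuffle of $a_1 \le \ell_1 < \ell$ with $a_2 \le \ell_2$, which via the canonical factorization (Proposition~\ref{prop:dominant-factorization}) reduces to $a \le \ell$, while a symmetric argument using $\ell_2 > \ell$ handles the first tensor factor.

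The main obstacle I anticipate is precisely this last combinatorial step: rigorously bounding the indices $a, b$ in the final Lyndon-basis expansion, while simultaneously tracking which terms cancel between the two $*_{r,s}$-summands. This is delicate because the leading word of a shuffle product $R_{a_1} * R_{a_2}$ is not in general simply $a_1 a_2$ unless $a_1 \ge a_2$, so one must carefully exploit the strict inequalities $\ell_1 < \ell < \ell_2$ afforded by the costandard factorization together with the triangularity of the Lyndon basis to close the bounds. The interplay between the twisted shuffle product, the lexicographic order, and the canonical factorization is where all of the earlier combinatorial machinery is brought to bear simultaneously.
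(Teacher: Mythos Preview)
Your inductive framework is the right one, but there is a concrete error and a genuine gap.

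\textbf{The error.} You assert that the mixed corner term $R_{\ell_1}\otimes R_{\ell_2}$ (where $\ell=\ell_1\ell_2$ is the costandard factorization) ``directly satisfies the constraints since $\ell_1<\ell<\ell_2$''. It does not. In the notation $\Delta(R_\ell)=\sum\vartheta^\ell_{a,b}\,R_b\otimes R_a$, this term has $b=\ell_1<\ell$ and $a=\ell_2>\ell$, violating \emph{both} constraints simultaneously. (The other mixed corner $R_{\ell_2}\otimes R_{\ell_1}$ is fine.) This bad term does in fact cancel between the two summands of $R_{\ell_1}\circledast R_{\ell_2}$, but you have not identified it as problematic, let alone tracked the cancellation.

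\textbf{The gap.} Your plan to bound the second tensor factor $a\le\ell$ by controlling the leading word of $R_{a_1}*R_{a_2}$ does not work. Triangularity gives $R_{a_1}*R_{a_2}=\sum_{k\ge a_1a_2}c_k R_k$, i.e.\ \emph{lower} bounds on the indices, not upper bounds. And the maximal word of $R_{a_1}*R_{a_2}$ is the decreasing merge of the Lyndon factors of $a_1,a_2$, which can exceed $\ell$: already for $\ell=[12]$, $\ell_1=[1]$, $\ell_2=[2]$, one has $\max([1]*[2])=[21]>\ell$. So bounding $a\le\ell$ purely through the recursion is not feasible.

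\textbf{How the paper closes this.} The paper decouples the two constraints. It first proves $a\le\ell$ \emph{directly}, with no induction: since $\max(R_\ell)=\ell$ one has $R_\ell=\sum_{w\le\ell}\phi_w w$, and $\Delta(w)=\sum_{w=w_1w_2}w_2\otimes w_1$ with every prefix $w_1\le w\le\ell$; combined with $\max(R_a)=a$ this forces $a\le\ell$. Only then does it run the induction for $b\ge\ell$, where the case analysis isolates a single obstruction, namely $(b,a)=(\ell_1,\ell_2)$ --- and this is now ruled out \emph{a posteriori} because it has $a=\ell_2>\ell$, contradicting the already-established bound. The paper also records a bar-involution identity relating the $R$-basis coefficients of $\Delta(R_{\ell_1}*_{r,s}R_{\ell_2})$ and $\Delta(R_{\ell_1}*_{s,r}R_{\ell_2})$, which streamlines the bookkeeping but whose essential role is precisely to reduce everything to that one bad pair.
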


\begin{proof} 
The condition $|\ell_{1}| + |\ell_{2}| = |\ell|$ holds because $\Delta$ preserves the grading. 
To prove that $\ell_{1} \le \ell$, note that Lemma~\ref{lem:root_vector_max} implies that we can write 
\[
  R_{\ell} = \sum_{w \le \ell}\phi_{\ell w}w
\]
for some $\phi_{\ell w} \in \BC(r,s)$. Then
\[
  \Delta(R_{\ell}) = \sum_{w_{1},w_{2},\, w_{1}w_{2} = w \le \ell}\phi_{\ell w}w_{2} \otimes w_{1},
\]
and since $w_{1} \le w \le \ell$, Lemma~\ref{lem:root_vector_max} implies that $\vartheta_{\ell_{1},\ell_{2}}^{\ell} = 0$ 
unless $\ell_{1} \le \ell$. To prove that $\ell \le \ell_{2}$ whenever $\ell_{2} \neq \emptyset$, we proceed by induction 
on the length of $\ell$. If $\ell$ is a letter, the claim is obvious. For the induction step, let $\ell = wv$ be 
the costandard factorization of $\ell$, so that $R_{\ell} = R_{w} \circledast R_{v} = R_{w} *_{r,s} R_{v} - R_{w} *_{s,r} R_{v}$. 
Since $\{R_{w} \,|\, w \in \cal{W}^{+}\}$ is a basis for $\cal{U}$, we can write 
$\Delta(R_{w} *_{r,s} R_{v}) = \sum_{\bh,\bk \in \cal{W}^{+}} z_{\bh,\bk} R_{\bh} \otimes R_{\bk}$ for some 
$z_{\bh,\bk} \in \BC(r,s)$. Moreover, interchanging $r$ and $s$ in Proposition~\ref{prop:shuffle_coproduct} shows that 
we have $\Delta(x *_{s,r} y) = \Delta(x) *_{s,r} \Delta(y)$, where
\[
  (w \otimes x) *_{s,r} (y \otimes z) = (\omega_{|y|}',\omega_{|x|})(w *_{s,r} y) \otimes (x *_{s,r} z).
\]
Thus, $\Delta(R_{w} *_{s,r} R_{v}) = \sum_{\bh,\bk \in \cal{W}^{+}} \bar{z}_{\bh,\bk} R_{\bh} \otimes R_{\bk}$, since 
$\bar{r} = s$ and $\bar{s} = r$. This implies that 
\[
  \Delta(R_{\ell}) = \Delta(R_{w} *_{r,s} R_{v} - R_{w} *_{s,r} R_{v}) = 
  \sum_{\bh,\bk \in \cal{W}^{+}} (z_{\bh,\bk} - \bar{z}_{\bh,\bk}) R_{\bh} \otimes R_{\bk}.
\]
On the other hand, we can also compute $\Delta(R_{\ell})$ using the formula 
$R_{\ell} = R_{w} *_{r,s} R_{v} - (\omega_{|v|}',\omega_{|w|})R_{v} *_{r,s} R_{w}$. 
By the induction hypothesis, we have 
\[
  \Delta(R_{w} *_{r,s} R_{v}) = 
  \sum \vartheta_{w_{1},w_{2}}^{w} \vartheta_{v_{1},v_{2}}^{v} 
       (R_{w_{2}} \otimes R_{w_{1}}) *_{r,s} (R_{v_{2}} \otimes R_{v_{1}}),
\]
where the sum is over all $w_{1},w_{2},v_{1},v_{2} \in \cal{W}^+$ satisfying $w_{1} \le w \le w_{2}$ unless 
$w_{2} = \emptyset$ and $v_{1} \le v \le v_{2}$ unless $v_{2} = \emptyset$. Now, by Proposition~\ref{prop:triangularity}, 
the transition matrix from the basis $\{R_{w} \,|\, w \in \cal{W}^{+}\}$ to the basis 
$\{\epsilon_{w} \,|\, w \in \cal{W}^{+}\}$ is triangular, and therefore we can rewrite the expression above as 
\[
  \Delta(R_{w} *_{r,s} R_{v}) = 
  \sum_{\substack{\bh \ge w_{2}v_{2} \\ \bk \ge w_{1}v_{1}}} \Theta_{\bh,\bk} R_{\bh} \otimes R_{\bk}
\]
for some $\Theta_{\bh,\bk} \in \BC(r,s)$. Similar arguments show that 
\[
  \Delta(R_{v} *_{r,s} R_{w}) = 
  \sum_{\substack{\bh \ge v_{2}w_{2} \\ \bk \ge v_{1}w_{1}}} \Theta_{\bh,\bk}' R_{\bh} \otimes R_{\bk},
\]
where $\Theta_{\bh,\bk}' \in \BC(r,s)$.

Since $z_{\bh,\bk} = \Theta_{\bh,\bk}$ by definition, it follows that $\Theta_{\bh,\bk} = 0$ if and only if 
$\Theta_{\bh,\bk}' = 0$. Indeed, if $\Theta_{\bh,\bk} = 0$, then $z_{\bh,\bk} = \ol{z}_{\bh,\bk} = 0$, so 
the coefficient of $R_{\bh} \otimes R_{\bk}$ in the expansion of $\Delta(R_{\ell})$ is zero. This implies that 
$0 = \Theta_{\bh,\bk} - (\omega_{|v|}',\omega_{|w|}) \Theta_{\bh,\bk}' = - (\omega_{|v|}',\omega_{|w|})\Theta_{\bh,\bk}'$, 
which forces $\Theta_{\bh,\bk}' = 0$. Conversely, if $\Theta_{\bh,\bk}' = 0$, then 
% we have $\Theta_{\bh,\bk} = z_{\bh,\bk} = z_{\bh,\bk} - \ol{z}_{\bh,\bk}$, and this implies that 
$\ol{z}_{\bh,\bk} = 0$, so we also have $\Theta_{\bh,\bk} = z_{\bh,\bk} = 0$.

Now, suppose that $z_{\bh,\bk} - \ol{z}_{\bh,\bk} \neq 0$, so there are dominant words $w_{2},v_{2}$, such that $\bh \ge w_{2}v_{2}$. 
If $w_{2} \neq \emptyset$, then unless $v_{2} = \emptyset$, we get $\bh \ge w_{2}v_{2} \ge wv \ge \ell$, as desired. 
If $v_{2} = \emptyset$ but $w_{2} > w$, then since $w_{2}$ has length at most that of $w$, we still have $\bh \ge w_{2} > wv = \ell$. 
Now suppose that $v_{2} = \emptyset$ and $w_{2} = w$. Then the term $R_{\bh} \otimes R_{\bk}$ must have come from 
$(R_{w} \otimes 1) *_{r,s} (1 \otimes R_{v})$, so that $\bh = w$ and $\bk = v$. But since $v$ is a right factor of 
the Lyndon word $\ell$, we have $\ell < v$, which contradicts the fact that $\vartheta_{\ell_{1},\ell_{2}}^{\ell} = 0$ 
unless $\ell_{1} \le \ell$. If $w_{2} = \emptyset$, then in the case that $v_{2} \neq \emptyset$, we have 
$\bh \ge v_{2} \ge v > \ell$ as $v$ is a right factor of $\ell$, while in the case that $v_{2} = \emptyset$, 
we have $\bh = \emptyset$. This completes the proof of the lemma.
\end{proof}

We shall also need the analogue of the above lemma for $\bar{R}_{l}$, with $\, \bar{} \,$ 
introduced in Proposition~\ref{prop:tau_and_bar}:

\begin{lemma}\label{lem:R_bar_coprod} 
For any $\ell \in \cal{L}^{+}$, we have 
\[
  \Delta(\bar{R}_{\ell}) = 
  \sum_{\ell_{1},\ell_{2} \in \cal{W}^{+}} 
    \hat{\vartheta}_{\ell_{1},\ell_{2}}^{\ell} \bar{R}_{\ell_{1}} \otimes \bar{R}_{\ell_{2}},
\] 
where $\hat{\vartheta}_{\ell_{1},\ell_{2}}^{\ell} = 0$ unless $|\ell_{1}| + |\ell_{2}| = |\ell|$, 
$\ell_{1} \le \ell$, and $\ell \le \ell_{2}$ whenever $\ell_{2} \neq \emptyset$.
\end{lemma}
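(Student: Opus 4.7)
The plan is to deduce this lemma from Lemma~\ref{lem:R_coprod} by analyzing how the bar involution interacts with the deconcatenation coproduct $\Delta$ on $\cal{F}$. I would introduce the $\BC$-linear map $\hat T \colon \cal{F} \otimes \cal{F} \to \cal{F} \otimes \cal{F}$ that bars scalars from $\BC(r,s)$ and acts on homogeneous simple tensors by
\[
  \hat T(x \otimes y) \,=\, (\omega_{|x|}',\omega_{|y|})^{-1}\, \bar y \otimes \bar x.
\]
The central claim is the identity $\Delta \circ \bar{\,\cdot\,} = \hat T \circ \Delta$ as $\BC$-linear maps on $\cal{F}$.

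To verify this identity, it suffices to check it on a single word $w = [i_1 \dots i_d]$. Writing $\bar w = c_w \tilde w$ with $\tilde w = [i_d \dots i_1]$ and $c_w = \prod_{1 \le p < q \le d}(\omega_{i_q}',\omega_{i_p})^{-1}$, a direct expansion using the deconcatenation formula of Proposition~\ref{prop:shuffle_coproduct} and a reindexing yields
\[
  \Delta(\bar w) \,=\, c_w \sum_{w = u_1 u_2} \tilde u_1 \otimes \tilde u_2,
\]
where $\tilde u_i$ denotes the reverse of $u_i$. Substituting $\tilde u_i = c_{u_i}^{-1}\bar u_i$ and using the bilinearity of the Hopf pairing to collapse
$c_w c_{u_1}^{-1} c_{u_2}^{-1} = \prod_{p \le |u_1| < q \le d}(\omega_{i_q}',\omega_{i_p})^{-1} = (\omega_{|u_2|}',\omega_{|u_1|})^{-1}$
produces $\Delta(\bar w) = \sum_{w = u_1 u_2}(\omega_{|u_2|}',\omega_{|u_1|})^{-1}\,\bar u_1 \otimes \bar u_2$, which matches $\hat T(\Delta(w))$ term-by-term.

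Applying this identity to $R_\ell$ and invoking Lemma~\ref{lem:R_coprod} then gives
\[
  \Delta(\bar R_\ell) \,=\, \hat T(\Delta(R_\ell)) \,=\,
  \sum_{\ell_1,\ell_2 \in \cal{W}^{+}} \overline{\vartheta_{\ell_1,\ell_2}^\ell}\,(\omega_{|\ell_2|}',\omega_{|\ell_1|})^{-1}\, \bar R_{\ell_1} \otimes \bar R_{\ell_2}.
\]
The second equality uses that each $R_{\ell_i}$ is homogeneous of weight $|\ell_i|$, so the prefactor $(\omega_{|\ell_2|}',\omega_{|\ell_1|})^{-1}$ pulls out uniformly from every simple-tensor summand in the expansion of $R_{\ell_2} \otimes R_{\ell_1}$ into words. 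Setting $\hat\vartheta_{\ell_1,\ell_2}^\ell := \overline{\vartheta_{\ell_1,\ell_2}^\ell}\,(\omega_{|\ell_2|}',\omega_{|\ell_1|})^{-1}$ yields the claimed expansion. Since both the bar involution and multiplication by the nonzero scalar $(\omega_{|\ell_2|}',\omega_{|\ell_1|})^{-1}$ preserve vanishing, $\hat\vartheta_{\ell_1,\ell_2}^\ell$ is zero exactly when $\vartheta_{\ell_1,\ell_2}^\ell$ is, so the support conditions $|\ell_1|+|\ell_2|=|\ell|$, $\ell_1 \le \ell$, and $\ell \le \ell_2$ whenever $\ell_2 \ne \emptyset$ transfer verbatim from Lemma~\ref{lem:R_coprod}.

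The main delicate point is the interplay between the $\BC$-linearity of $\hat T$ (which bars $\BC(r,s)$-scalars) and the $\BC(r,s)$-coefficients appearing in the expansion of $R_{\ell_2} \otimes R_{\ell_1}$ into simple tensors of words. Homogeneity of each $R_{\ell_i}$ is precisely what guarantees that the prefactor $(\omega_{|x|}',\omega_{|y|})^{-1}$ is constant across the bi-degree $(|\ell_2|,|\ell_1|)$-summand and pulls out of the sum cleanly, allowing one to treat $R_{\ell_2} \otimes R_{\ell_1}$ as a single bi-homogeneous element rather than work word by word.
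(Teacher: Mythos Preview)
Your proof is correct and follows essentially the same approach as the paper: both arguments reduce to the computation of how the bar involution interacts with the deconcatenation coproduct on a single word, yielding $\Delta(\bar w) = \sum_{w=u_1u_2}(\omega_{|u_2|}',\omega_{|u_1|})^{-1}\,\bar u_1 \otimes \bar u_2$. Your version is slightly more explicit in packaging this as a map $\hat T$ and in identifying the coefficients $\hat\vartheta_{\ell_1,\ell_2}^\ell = \overline{\vartheta_{\ell_1,\ell_2}^\ell}\,(\omega_{|\ell_2|}',\omega_{|\ell_1|})^{-1}$, whereas the paper simply notes that the terms in $\Delta(\bar w)$ are nonzero scalar multiples of the swapped-and-barred terms of $\Delta(w)$; one minor notational slip is writing $p \le |u_1|$ where you mean $p \le k$ (the length of $u_1$), since $|u_1|$ denotes the $Q^+$-degree.
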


\begin{proof} 
This follows from Lemma~\ref{lem:R_coprod} once we show that if $\Delta(x) = \sum_{i} y_{i} \otimes z_{i}$, for some $x,y_{i},z_{i} \in \cal{U}$,  
then $\Delta(\bar{x}) = \sum_{i} \varrho_{i} \bar{z}_{i} \otimes \bar{y}_{i}$ for some $\varrho_{i} \in \BC(r,s)$. 
First, for $x = [i_{1}\ldots i_{d}] \in \cal{W}$, we have 
\begin{align*}
  \Delta(\ol{x}) 
  &= \left( \prod_{1 \le l < m \le d} (\omega_{i_{m}}',\omega_{i_{l}})^{-1} \right) 
     \sum_{0 \le k \le d} [i_{k}\ldots i_{1}] \otimes [i_{d}\ldots i_{k+1}] \\
  &= \sum_{0 \le k \le d} \left( \prod_{1 \le l \le k < m \le d} (\omega_{i_{m}}',\omega_{i_{l}})^{-1} \right) 
     \ol{[i_{1}\ldots i_{k}]} \otimes \ol{[i_{k+1}\ldots i_{d}]}.
\end{align*}
Because the above coefficient depends only on $\alpha_{i_1}+\ldots+\alpha_{i_k}$ and $\alpha_{i_{k+1}}+\ldots+\alpha_{i_d}$, the general case follows.
\end{proof}

For any $w \in \cal{W}^{+}$, consider its canonical factorization into dominant Lyndon words  
(Proposition~\ref{prop:dominant-factorization})
\begin{equation}\label{eq:canonical_dominant_fact}
  w = w_{1}w_{2} \dots w_{d}, \quad w_{1} \ge w_{2} \ge \dots \ge w_{d}, \quad w_{1}, w_{2}, \dots, w_{d} \in \cal{L}^+.
\end{equation}  
Then, we define
\begin{equation}\label{eq:R-basis}
\begin{split}
  & \wtd{R}_{w} = R_{w_{d}} * R_{w_{d-1}} * \dots * R_{w_{1}} ,\\
  & \bar{R}_{w} = \bar{R}_{w_{1}} * \bar{R}_{w_{2}} * \dots * \bar{R}_{w_{d}}.
\end{split}
\end{equation}

The following is the key result of this Section (see~\eqref{eq:rs_gamma} for the notation $r_{|\ell|},s_{|\ell|}$):

\begin{theorem}\label{thm:dual_bases} 
Let $\ell,w \in \cal{W}^{+}$. Then $(\wtd{R}_{\ell},\bar{R}_{w}) = 0$ unless $\ell = w$. Moreover, if 
$\ell = \ell_{1}^{m_{1}}\ell_{2}^{m_{2}} \dots \ell_{h}^{m_{h}}$ with $\ell_{1} > \ell_{2} > \dots > \ell_{h}$ 
is the canonical factorization of $\ell$ into dominant Lyndon words, then we have 
\begin{equation}\label{eq:Rw-pairing}
  (\wtd{R}_{\ell},\bar{R}_{\ell}) = 
  \prod_{i = 1}^{h} \left( [m_{i}]_{r_{|\ell_{i}|},s_{|\ell_{i}|}}! \, r_{|\ell_{i}|}^{-\frac{1}{2}m_{i}(m_{i}-1)} 
    (R_{\ell_{i}},\bar{R}_{\ell_{i}})^{m_{i}} \right).
\end{equation}
\end{theorem}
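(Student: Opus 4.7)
The plan is to establish both the orthogonality and the evaluation~\eqref{eq:Rw-pairing} simultaneously by induction on the height $\hgt(|\ell|)$ of the common $Q$-degree. The base case $\hgt(|\ell|)=0$ is immediate from $\wtd{R}_\emptyset=\bar{R}_\emptyset=1$; by Lemma~\ref{lem:symm_pairing_properties}(3) we may assume $|\ell|=|w|$ throughout. Fix canonical factorizations $\ell=\ell_1^{m_1}\cdots\ell_h^{m_h}$ and $w=w_1^{n_1}\cdots w_{h'}^{n_{h'}}$ with $\ell_1>\cdots>\ell_h$ and $w_1>\cdots>w_{h'}$.

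The key step is to peel off one copy of the largest Lyndon factor. Writing $\wtd{R}_\ell=\wtd{R}_{\ell''}*R_{\ell_1}$ with $\ell''=\ell_1^{m_1-1}\ell_2^{m_2}\cdots\ell_h^{m_h}$, Lemma~\ref{lem:symm_pairing_properties}(5) gives
\begin{equation*}
  (\wtd{R}_\ell,\bar{R}_w)=\bigl(R_{\ell_1}\otimes\wtd{R}_{\ell''},\,\Delta(\bar{R}_w)\bigr).
\end{equation*}
Proposition~\ref{prop:shuffle_coproduct} turns $\Delta$ into an algebra homomorphism for the twisted shuffle~\eqref{eq:twisted_shuffle_square}, so that $\Delta(\bar{R}_w)=\Delta(\bar{R}_{w_1})^{*n_1}*\cdots *\Delta(\bar{R}_{w_{h'}})^{*n_{h'}}$. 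Lemma~\ref{lem:R_bar_coprod} constrains each $\Delta(\bar{R}_{w_i})$ to summands $\bar{R}_\alpha\otimes\bar{R}_\beta$ with $\alpha\le w_i\le\beta$; consequently every summand of $\Delta(\bar{R}_w)$ has its first tensor slot given by a twisted shuffle product of $\bar{R}_\alpha$'s with $\alpha\le w_1$.

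Now the inductive hypothesis applied at the strictly smaller height $|\ell_1|$ asserts that $(\wtd{R}_{\ell_1},X)=(R_{\ell_1},X)$ vanishes unless the expansion of $X$ in the basis $\{\bar{R}_v\}$ contains $\bar{R}_{\ell_1}$, in which case it equals the $\bar{R}_{\ell_1}$-coefficient times $(R_{\ell_1},\bar{R}_{\ell_1})$. Combined with Lemma~\ref{lem:root_vector_max} and the constraint $\alpha\le w_1$, this forces $\ell_1\le w_1$; a parallel argument that instead strips $\bar{R}_w=\bar{R}_{w_1}*\bar{R}_{w'}$ and applies Lemma~\ref{lem:symm_pairing_properties}(4) together with Lemma~\ref{lem:R_coprod} to $\Delta(\wtd{R}_\ell)$ yields $w_1\le\ell_1$. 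Hence $\ell_1=w_1$ for a nonvanishing pairing, establishing the orthogonality whenever the largest Lyndon factors disagree. When $\ell_1=w_1$, isolating the term in $\Delta(\bar{R}_w)$ in which exactly one copy of $\bar{R}_{\ell_1}$ is split off from $\Delta(\bar{R}_{\ell_1})^{*n_1}$ into the first slot yields the reduction
\begin{equation*}
  (\wtd{R}_\ell,\bar{R}_w)\;=\;c_{n_1}\,(R_{\ell_1},\bar{R}_{\ell_1})\cdot(\wtd{R}_{\ell''},\bar{R}_{w''}),
\end{equation*}
where $w''=\ell_1^{n_1-1}w_2^{n_2}\cdots w_{h'}^{n_{h'}}$ and $c_{n_1}$ collects the $(\omega'_\mu,\omega_\nu)$-twists from the twisted shuffle. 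Iterating this reduction $m_1$ times forces $m_1=n_1$, and repeating the procedure on the smaller blocks $\ell_2,\ldots,\ell_h$ forces equality of the entire canonical factorizations and yields the product formula.

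The main technical obstacle is verifying that the accumulated scalars $c_{n_1}c_{n_1-1}\cdots c_1$ collapse precisely to $[m_1]_{r_{|\ell_1|},s_{|\ell_1|}}!\,r_{|\ell_1|}^{-\frac{1}{2}m_1(m_1-1)}$. This is a two-parameter analogue of the standard identity for the iterated coproduct of the $m$-th power of a primitive element in a quantum shuffle algebra, and it requires a careful bookkeeping of the $(\omega'_\mu,\omega_\nu)$-twists from~\eqref{eq:twisted_shuffle_square} across the successive shuffle multiplications; the corresponding computation in the one-parameter super setting is carried out in~\cite[\S5]{CHW}, and the argument here is essentially the adaptation of that combinatorics to the $(r,s)$-deformed twisting.
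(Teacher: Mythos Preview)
Your overall strategy --- induction on height, peeling off the largest Lyndon factor via Lemma~\ref{lem:symm_pairing_properties}, and controlling the coproduct via Lemmas~\ref{lem:R_coprod}--\ref{lem:R_bar_coprod} --- is exactly the paper's approach (which in turn follows~\cite{CHW}). However, your key inference has a genuine gap.

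You assert that since the first tensor slot of $\Delta(\bar{R}_w)$ is a twisted shuffle product of $\bar{R}_\alpha$'s with $\alpha\le w_1$, and since $(R_{\ell_1},X)\ne 0$ forces $\bar{R}_{\ell_1}$ to appear in the $\{\bar{R}_v\}$-expansion of $X$, one concludes $\ell_1\le w_1$. This does not follow: a shuffle product $\bar{R}_{\alpha_1}*\cdots*\bar{R}_{\alpha_N}$ with all $\alpha_j\le w_1$ need \emph{not} lie in $\mathrm{span}\{\bar{R}_v:v\le w_1\}$. Indeed, if the $\alpha_j$'s happen to be Lyndon and appear in non-increasing order, the product equals $\bar{R}_{\alpha_1\cdots\alpha_N}$ by definition, and the dominant word $\alpha_1\cdots\alpha_N$ is typically larger than each $\alpha_j$. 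Lemma~\ref{lem:root_vector_max} does not salvage this.

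The paper closes this gap by a different, asymmetric organization. Rather than trying to prove $\ell_1\le w_1$ and $w_1\le\ell_1$ separately, it splits into the two cases $\ell_1\le w_1$ and $\ell_1>w_1$ from the outset. In the first case it peels $\bar{R}_{w_1}$ (not $R_{\ell_1}$): the second tensor slot then becomes a shuffle product $R_{\ell_{d,1}}*\cdots*R_{\ell_{1,1}}$ with each $\ell_{t,1}\le\ell_t\le\ell_1$, paired against the single $\bar{R}_{w_1}$. One more peel, now expanding $\Delta(\bar{R}_{w_1})$ via Lemma~\ref{lem:R_bar_coprod}, reduces to pairings $(R_{\ell_{k,1}},\bar{R}_{w_{1,2}})$ with $w_{1,2}\ge w_1$; the induction then gives the closed chain $w_{1,2}\le\ell_{k,1}\le\ell_k\le\ell_1\le w_1\le w_{1,2}$, forcing all equalities. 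This is precisely where the explicit constant $\sum_{k=1}^{m_1}(\omega'_{|\ell_1|},\omega_{|\ell_1|}^{k-1})^{-1}=r_{|\ell_1|}^{1-m_1}[m_1]_{r_{|\ell_1|},s_{|\ell_1|}}$ emerges, so no separate ``collapsing of scalars'' argument is needed. The case $\ell_1>w_1$ is handled by the mirror peeling, and there the chain yields a contradiction, so the pairing vanishes. Your ``parallel argument'' is essentially the paper's first case, but it alone already gives both the vanishing for $\ell_1\ne w_1$ and the recursion --- the other peeling direction you propose is neither needed nor, as stated, sufficient.
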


The proof of this theorem is similar to that of~\cite[Theorem~5.7]{CHW}. However, we prefer to present full 
details because our PBW bases ($\{\wtd{R}_{\ell}\}_{\ell\in \cal{W}^+}$ and $\{\bar{R}_{\ell}\}_{\ell\in \cal{W}^+}$) 
are different from those used in~\cite{CHW} (see a single basis $\{\mathsf{E_i}\}_{\mathsf{i}\in \mathsf{W}^+}$ 
of \emph{loc.cit.}).

\begin{proof} 
We may assume that $|\ell| = |w|$, because otherwise the claim follows from the basic properties of $(\cdot,\cdot)$. 
We then proceed by induction on the length of $\ell$ (which is equal to the length of $w$ since $|\ell| = |w|$). 
The case when $\ell$ and $w$ have length $1$ is trivial. Suppose first that $\ell \in \cal{L}^{+}$, and 
$w \in \cal{W}^{+}$ with $|w| = |\ell|$ but $w \neq \ell$. If $w = w_{1} \dots w_{d}$ is the canonical factorization 
of~\eqref{eq:canonical_dominant_fact}, then $d > 1$, for otherwise $w$ would be a dominant Lyndon word of the same 
degree as $\ell\in \cal{L}^{+}$. Using Lemmas~\ref{lem:symm_pairing_properties},~\ref{lem:R_coprod} and 
Proposition~\ref{prop:shuffle_coproduct}, we get:
\begin{align*}
  (\wtd{R}_{\ell},\bar{R}_{w}) 
  &= (R_{\ell},\bar{R}_{w_{1}} * \dots * \bar{R}_{w_{d}}) 
   = (\Delta(R_{\ell}),\bar{R}_{w_{d}} \otimes (\bar{R}_{w_{1}} * \dots * \bar{R}_{w_{d-1}})) \\
  &= \sum_{\ell_{1},\ell_{2} \in \cal{W}^{+}} \vartheta_{\ell_{1},\ell_{2}}^{\ell} 
     (R_{\ell_{2}},\bar{R}_{w_{d}}) (R_{\ell_{1}},\bar{R}_{w_{1}} * \dots * \bar{R}_{w_{d-1}}).
\end{align*}
Since $d > 1$, we must have $|w_{d}| \neq |\ell|$, so the only nonzero terms in the sum above satisfy 
$\ell_{1} < \ell < \ell_{2}$, $|w_{d}| = |\ell_{2}|$, and $|\ell_{1}| = |w_{1}\ldots w_{d-1}|$. As the 
length of $w_{d}$ is smaller than the length of $w$, we may apply the induction assumption to conclude 
that all nonzero terms in the above sum satisfy $\ell_{2} = w_{d}$ and $\ell_{1} = w_{1}\ldots w_{d-1}$. 
Then 
\[
  w_{1} \le w_{1}\ldots w_{d-1} = \ell_{1} < \ell_{2} = w_{d} \le w_{1},
\]
which never holds. This finally implies that $(\wtd{R}_{\ell},\bar{R}_{w}) = 0$ for $\ell \in \cal{L}^{+}$ 
and $w \neq \ell$.

Now suppose that $w \in \cal{L}^{+}$, and $\ell \in \cal{W}^{+}$ satisfies $|\ell| = |w|$ but $\ell \neq w$. Let 
$\ell = \ell_{1} \dots \ell_{d}$ be the canonical factorization of $\ell$, where $\ell_{1} \ge \dots \ge \ell_{d}$ 
are dominant Lyndon words, cf.~\eqref{eq:canonical_dominant_fact}. Using Lemmas~\ref{lem:symm_pairing_properties} 
and~\ref{lem:R_coprod}, we likewise obtain:
\[
  (\wtd{R}_{\ell},\bar{R}_{w}) = (R_{\ell_{d}} * \dots * R_{\ell_{1}},\bar{R}_{w}) 
  \, = \sum_{w_{1},w_{2} \in \cal{W}^{+}} \hat{\vartheta}_{w_{1},w_{2}}^{w}
  (R_{\ell_{d-1}} * \dots * R_{\ell_{1}},\bar{R}_{w_{1}})(R_{\ell_{d}},\bar{R}_{w_{2}}).
\]
Repeating the above argument, we conclude that the only nonzero terms in the sum above satisfy 
\[
  \ell_{1} \le \ell_{1} \dots \ell_{d-1} = w_{1} < w_{2} = \ell_{d} \le \ell_{1},
\]
which is never possible. This again implies that $(\wtd{R}_{\ell},\bar{R}_{w}) = 0$ in the present setup.

Now, for arbitrary $\ell, w \in \cal{W}^{+}_{\nu}$, we proceed by induction on the height of $\nu$. The base case 
$\nu \in \Pi$ is trivial, so suppose that $\nu \notin \Pi$, and for any $\mu \in Q^{+}$ with $\hgt(\mu) < \hgt(\nu)$ 
and any $u,v \in \cal{W}^{+}_{\mu}$, we have $(\wtd{R}_{u},\bar{R}_{v}) = 0$ unless $u = v$. 
Let $\ell = \ell_{1}\ell_{2} \dots \ell_{d}$ and $w = w_{1}w_{2} \dots w_{e}$ be the canonical factorizations of 
$\ell$ and $w$ into products of non-increasing dominant Lyndon words. We may assume that $d,e > 1$, for otherwise 
$\ell \in \cal{L}^{+}$ or $w \in \cal{L}^{+}$, which are the two cases already treated above.

We shall first consider the case when $\ell_{1} \le w_{1}$. Then, we have: 
\begin{equation}\label{eq:PBW_pairing_expansion}
\begin{split}
  (\wtd{R}_{\ell},\bar{R}_{w}) 
  &= (R_{\ell_{d}} * \dots * R_{\ell_{1}},\bar{R}_{w_{1}} * \dots * \bar{R}_{w_{e}}) 
   = (\Delta(R_{\ell_{d}}) * \dots * \Delta(R_{\ell_{1}}), (\bar{R}_{w_{2}} * \dots * \bar{R}_{w_{e}}) \otimes \bar{R}_{w_{1}}) \\
  &= \sum \vartheta_{\ell_{1,1},\ell_{1,2},\ldots,\ell_{d,1},\ell_{d,2}}
     (R_{\ell_{d,2}} * \dots * R_{\ell_{1,2}}, \bar{R}_{w_{2}} * \dots * \bar{R}_{w_{e}})
     (R_{\ell_{d,1}} * \dots * R_{\ell_{1,1}}, \bar{R}_{w_{1}}),
\end{split}
\end{equation}
where every term in the sum above satisfies the following three properties (for all $1 \le t \le d$): 
\begin{equation*}
 \ell_{t,1},\ell_{t,2} \in \cal{W}^{+} ,\qquad \ell_{t,1} \le \ell_{t} ,\qquad 
 \ell_{t} \le \ell_{t,2} \quad  \mathrm{unless} \quad \ell_{t,2} = \emptyset,    
\end{equation*}
and $\vartheta_{\ell_{1,1},\ldots,\ell_{d,2}}\in \BC(r,s)$. 
For a particular term in~\eqref{eq:PBW_pairing_expansion}, we shall first show that 
$(R_{\ell_{d,1}} * \dots * R_{\ell_{1,1}},\bar{R}_{w_{1}}) = 0$ unless there is a unique $k$ such that 
$\ell_{k,1} = w_{1}$ and $\ell_{t,1} = \emptyset$ for all $t \neq k$, cf.~\cite[Claim(**)]{CHW}.

To this end, let $k$ be the maximal integer such that $\ell_{k,1} \neq \emptyset$. Then 
\begin{align*}
  (R_{\ell_{d,1}} * \dots * R_{\ell_{1,1}},\bar{R}_{w_{1}}) 
  &= (R_{\ell_{k,1}} * \dots * R_{\ell_{1,1}},\bar{R}_{w_{1}}) \\ 
  &= \sum_{w_{1,1},w_{1,2} \in \cal{W}^{+}} \hat{\vartheta}_{w_{1,1},w_{1,2}}^{w_{1}} 
     (R_{\ell_{k-1,1}} * \dots * R_{\ell_{1,1}},\bar{R}_{w_{1,1}}) (R_{\ell_{k,1}},\bar{R}_{w_{1,2}}).
\end{align*}
Suppose that a particular term
\[
  \hat{\vartheta}_{w_{1,1},w_{1,2}}^{w_{1}}
  (R_{\ell_{k-1,1}} * \dots * R_{\ell_{1,1}}, \bar{R}_{w_{1,1}}) (R_{\ell_{k,1}},\bar{R}_{w_{1,2}})
\]
is nonzero. Then $|\ell_{k,1}| \le |\ell_{k}| < |\ell|$ and $|w_{1,2}| \le |w_{1}| < |w|$, so by induction on the height, 
we have $(R_{\ell_{k,1}},\bar{R}_{w_{1,2}}) = 0$ unless $\ell_{k,1} = w_{1,2}$. In this case, $w_{1,2} \neq \emptyset$, 
so that $w_{1,2} \ge w_{1}$. Thus, we obtain: 
\[
  w_{1,2} = \ell_{k,1} \le \ell_{k} \le \ell_{1} \le w_{1} \le w_{1,2},
\]  
which shows that $\ell_{k,1} = w_{1,2} = w_{1}$, and so $w_{1,1} = \emptyset$. This also implies that 
$\ell_{t,1} = \emptyset$ for all $1 \le t < k$.

Thus, if $(R_{\ell_{d,1}} * \dots * R_{\ell_{1,1}},\bar{R}_{w_{1}}) \neq 0$, then there is a unique $k$ such that 
$\ell_{k,1} = w_{1}$ and $\ell_{t,1} = \emptyset$ for $t \neq k$, and since 
$w_{1} = \ell_{k,1} \le \ell_{k} \le \ell_{1} \le w_{1}$, we also have $\ell_{k,1} = \ell_{k} = \ell_{1} = w_{1}$. 
This means that for the corresponding words $\ell_{t,2}$, we have $\ell_{t,2} = \ell_{t}$ if $t \neq k$, and 
$\ell_{k,2} = \emptyset$.

Now, let $m_{1}$ be the largest integer such that $\ell_{1} = \ell_{2} = \dots = \ell_{m_{1}}$. Then combining 
what we proved above and using the obvious equalities 
$\vartheta_{\ell,\emptyset}^{\ell} = \vartheta_{\emptyset,\ell}^{\ell} = 1$,~\eqref{eq:PBW_pairing_expansion} 
reduces to 
\begin{align*}
  (\wtd{R}_{\ell},\bar{R}_{w}) 
  &= (R_{\ell_{d}} * \dots * R_{\ell_{1}},\bar{R}_{w_{1}} * \dots * \bar{R}_{w_{e}}) \\
  &= \sum_{k = 1}^{m_{1}} \left( (R_{\ell_{d}} \otimes 1) * \dots * (R_{\ell_{k+1}} \otimes 1) * 
     (1 \otimes R_{\ell_{1}}) * (R_{\ell_{1}}^{*(k-1)} \otimes 1), 
     (\bar{R}_{w_{2}} * \dots * \bar{R}_{w_{e}}) \otimes \bar{R}_{\ell_{1}} \right) \\
  &= \left( \sum_{k = 1}^{m_{1}}(\omega_{|\ell_{1}|}',\omega_{|\ell_{1}|}^{k-1})^{-1} \right)
     \left( R_{\ell_{d}} * \dots * R_{\ell_{2}}, \bar{R}_{w_{2}} * \dots * \bar{R}_{w_{e}} \right) 
     (R_{\ell_{1}},\bar{R}_{\ell_{1}}), 
\end{align*}
where we used~\eqref{eq:twisted_shuffle_square} in the last equality. So by induction hypothesis, $(\wtd{R}_{\ell},\bar{R}_{w}) = 0$ 
unless $\ell = w$, and if $\ell = w$, then using $(\omega_{|\ell_{1}|}',\omega_{|\ell_{1}|})^{-1} = r_{|\ell_{1}|}^{-1}s_{|\ell_{1}|}$ 
(cf.~\eqref{eq:rs_gamma}) and the induction assumption, we obtain 
\begin{align*}
  &(\wtd{R}_{\ell},\bar{R}_{\ell}) \\
  &\ \ = \frac{[m_{1}]_{r_{|\ell_{1}|},s_{|\ell_{1}|}}}{r_{|\ell_{1}|}^{m_{1} - 1}}  
     \prod_{i = 2}^{h} \left([m_{i}]_{r_{|\ell_{i}|},s_{|\ell_{i}|}}! r_{|\ell_{i}|}^{-\frac{1}{2}m_{i}(m_{i} - 1)} 
            (R_{\ell_{i}},\bar{R}_{\ell_{i}})^{m_{i}} \right)  
    [m_{1} - 1]_{r_{|\ell_{1}|},s_{|\ell_{1}|}}! r_{|\ell_{1}|}^{-\frac{1}{2}(m_{1}-1)(m_{1}-2)} 
    (R_{\ell_{1}},\bar{R}_{\ell_{1}})^{m_{1}} \\
  &\ \ = \prod_{i = 1}^{h} \left( [m_{i}]_{r_{|\ell_{i}|},s_{|\ell_{i}|}}! r_{|\ell_{i}|}^{-\frac{1}{2}m_{i}(m_{i}-1)} 
     (R_{\ell_{i}},\bar{R}_{\ell_{i}})^{m_{i}} \right),
\end{align*}
where $\ell = \ell_{1}^{m_{1}}\ell_{2}^{m_{2}} \dots \ell_{h}^{m_{h}}$ is the canonical factorization of $\ell$ into 
dominant Lyndon words with $\ell_{1} > \dots > \ell_{h}$.

We now consider the case when $\ell_{1} > w_{1}$. Then 
\begin{align*}
  (\wtd{R}_{\ell},\bar{R}_{w}) 
  &= (R_{\ell_{d}} * \dots * R_{\ell_{1}}, \bar{R}_{w_{1}} * \dots * \bar{R}_{w_{e}}) 
   = (R_{\ell_{1}} \otimes (R_{\ell_{d}} * \dots * R_{\ell_{2}}), \Delta(\bar{R}_{w_{1}}) * \dots * \Delta(\bar{R}_{w_{e}})) \\
  &= \sum \hat{\vartheta}_{w_{1,1},w_{1,2},\ldots ,w_{e,1},w_{e,2}} 
     (R_{\ell_{1}}, \bar{R}_{w_{1,1}} * \dots * \bar{R}_{w_{e,1}}) 
     (R_{\ell_{d}} * \dots * R_{\ell_{2}},\bar{R}_{w_{1,2}} * \dots * \bar{R}_{w_{e,2}}),
\end{align*}
and as in~\eqref{eq:PBW_pairing_expansion} every term in the sum above satisfies (for all $1 \le t \le e$):
\[
  w_{t,1},w_{t,2} \in \cal{W}^{+}, \qquad w_{t,1} \le w_{t}, \qquad  
  w_{t} \le w_{t,2} \quad \mathrm{unless} \quad w_{t,2} = \emptyset.
\]  
Suppose that $(R_{\ell_{1}},\bar{R}_{w_{1,1}} * \dots * \bar{R}_{w_{e,1}}) \neq 0$ for some term in the above sum. 
Since $\ell \neq \emptyset$, $\ell_{1} \neq \emptyset$, we have $w_{t,1} \neq \emptyset$ for at least one integer $t$; 
let $k$ be the maximal such integer. Then, we have:  
\begin{align*}
  (R_{\ell_{1}},\bar{R}_{w_{1,1}} * \dots * \bar{R}_{w_{e,1}}) 
  &= (R_{\ell_{1}},\bar{R}_{w_{1,1}} * \dots * \bar{R}_{w_{k,1}}) \\ 
  &= \sum_{\ell_{1,1},\ell_{1,2} \in \cal{W}^{+}} \vartheta_{\ell_{1,1},\ell_{1,2}}^{\ell_{1}} 
     (R_{\ell_{1,2}}, \bar{R}_{w_{k,1}})(R_{\ell_{1,1}},\bar{R}_{w_{1,1}} * \dots * \bar{R}_{w_{k-1,1}}).
\end{align*}
Suppose that a particular term 
\[
  \vartheta_{\ell_{1,1},\ell_{1,2}}^{\ell_{1}} 
  (R_{\ell_{1,2}}, \bar{R}_{w_{k,1}})(R_{\ell_{1,1}},\bar{R}_{w_{1,1}} * \dots * \bar{R}_{w_{k-1,1}})
\]
is nonzero. We have $|w_{k,1}| \le |w_{k}| < |w|$ and $|\ell_{1,2}| \le |\ell_{1}| < |\ell|$, so by induction on 
the height, it follows that $\ell_{1,2} = w_{k,1}$ for this term. In particular, this shows that $\ell_{1,2} \neq 0$, 
so that $\ell_{1,2} \ge \ell_{1}$. But then we obtain
\[
  w_{1} < \ell_{1} \le \ell_{1,2} = w_{k,1} \le w_{k} \le w_{1},
\]
a contradiction. Hence, $(\wtd{R}_{\ell},\bar{R}_{w}) = 0$ when $\ell_{1} > w_{1}$, 
which exhausts all cases and proves the theorem.
\end{proof}

In the next Section, we shall explicitly evaluate the constants $\{(R_{\ell},\bar{R}_{\ell})\}_{\ell\in \cal{L}^+}$ 
for a specific ordering on $I$. Combining this with the theorem above, we shall derive our main Theorems~\ref{thm:main-1} 
and~\ref{thm:main-2} in the last Section.

   %%%%%%%%%%%%%%%%%%%%%%%%%%%%%%%%%%%%%%%%%%%%%%%%%%%%%%%%%%%%%%%%%%%%%%%%%%
   %%%%%%%%%%%%%%%%%%%%%%%%%%%%%%%%%%%%%%%%%%%%%%%%%%%%%%%%%%%%%%%%%%%%%%%%%%
   %%%%%%%%%%%%%%%%%%%%%%%%%%%%%%%%%%%%%%%%%%%%%%%%%%%%%%%%%%%%%%%%%%%%%%%%%%

\section{Root Vectors and their Pairings}\label{sec:root_vectors}

In this Section, we explicitly compute $R_\ell$ and the pairing $(R_{\ell},\bar{R}_{\ell})$ for each dominant 
Lyndon word $\ell\in \cal{L}^+$, with the ordering $1 < \dots < n$ on $I=\{1,\dots,n\}$. Similarly to~\cite[\S6]{CHW}, 
we treat classical types case~by~case. We shall use the notation for positive roots that was introduced 
in~\eqref{eq:root_notation_A}--\eqref{eq:root_notation_D}, and we also follow~\eqref{eq:parity-convention}.

   %%%%%%%%%%%%%%%%%%%%%%%%%%%%%%%%%%%%%%%%%%%%%%%%%%%%%%%%%%%%%%%%%%%%%%%%%%

\subsection{Type $A_{n}$}
\

For the order $1 < 2 < \dots < n$, the dominant Lyndon words in type $A_n$ are given by (cf.~\cite[\S8.1]{L}):

\begin{lemma}\label{lem:Lyndon_A} 
The set of dominant Lyndon words is 
\begin{equation*}
  \cal{L}^{+} = \big\{ [i\dots j] \,\big|\, 1 \le i \le j \le n \big\}.
\end{equation*}
\end{lemma}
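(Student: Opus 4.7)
The plan is to combine the Lalonde--Ram bijection of Theorem~\ref{thm:LR-bijection} with a direct shuffle-algebra computation. In type $A_n$ the positive roots are $\Phi^+ = \{\gamma_{ij} : 1 \le i \le j \le n\}$, of cardinality $\binom{n+1}{2}$, so Theorem~\ref{thm:LR-bijection} gives $|\cal{L}^+| = \binom{n+1}{2}$. Each candidate $\ell_{ij} := [i(i+1)\ldots j]$ is strictly increasing in its letters, hence Lyndon, and has degree $\gamma_{ij}$; thus the $\binom{n+1}{2}$ Lyndon words $\{\ell_{ij}\}$ have pairwise distinct degrees exhausting $\Phi^+$. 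By this cardinality equality, it suffices to verify that each $\ell_{ij}$ is dominant, for then Theorem~\ref{thm:LR-bijection} forces $\cal{L}^+ = \{\ell_{ij} : 1 \le i \le j \le n\}$.

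To establish dominance of $\ell_{ij}$, I would construct an element of $\cal{U}$ that is a nonzero scalar multiple of $\ell_{ij}$ itself. The costandard factorization of $\ell_{ij}$ is $\ell_{ij} = \ell_{i,j-1}\cdot [j]$, since removing the last letter of a strictly increasing word yields its longest proper Lyndon prefix. Consequently, the bracketing recursion gives $F_{ij} := \Xi([\ell_{ij}]) = F_{i,j-1} * [j] - c_{ij}\, [j] * F_{i,j-1} \in \cal{U}$, where $c_{ij} = (\omega'_j,\omega_{\gamma_{i,j-1}})$. Applying the type-$A$ formula $(\omega'_\lambda,\omega_\mu) = r^{\langle\lambda,\mu\rangle}s^{-\langle\mu,\lambda\rangle}$ together with the facts that $\langle\alpha_j,\alpha_k\rangle = 0$ for $k<j$, $\langle\alpha_{j-1},\alpha_j\rangle = -1$, and $\langle\alpha_k,\alpha_j\rangle = 0$ for $k<j-1$, one computes $c_{ij} = s$.

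The main claim is that $F_{ij} = (r-s)^{j-i}\,\ell_{ij}$, which I would prove by induction on $j-i$. The base case $j=i$ is trivial, and the inductive step reduces to the identity
\[
  [i(i+1)\ldots(j-1)] * [j] \;-\; s\cdot [j] * [i(i+1)\ldots(j-1)] \;=\; (r-s)\,[i(i+1)\ldots j],
\]
which one verifies by directly expanding both shuffle products via~\eqref{eq:shuffle_product_general}--\eqref{eq:e_ab}. The key structural feature making this collapse is that the pairings $(\omega'_k,\omega_j)$ appearing as shuffle coefficients are trivial unless $k = j-1$. Consequently, for every insertion of $j$ into $[i(i+1)\ldots(j-1)]$ at a position other than the end, the corresponding terms in the two shuffle products appear with identical coefficients and cancel pairwise; only the insertion of $j$ at the very end survives, contributing exactly $(r-s)\,\ell_{ij}$.

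Since $r-s$ is invertible in $\BC(r,s)$, this exhibits $\ell_{ij}$ as an element of $\cal{U}$, proving its dominance and completing the argument. The main obstacle is simply the bookkeeping of shuffle coefficients, which is manageable thanks to the sparseness of $\langle\cdot,\cdot\rangle$ in type $A_n$; no deeper input beyond Theorem~\ref{thm:LR-bijection} and the explicit shuffle formula is required, and the same strategy is expected to guide the analogous classifications of $\cal{L}^+$ in types $B$, $C$, $D$ treated in the subsequent subsections.
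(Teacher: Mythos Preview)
Your argument is correct. The paper itself does not prove this lemma but simply cites \cite[\S8.1]{L}; however, the shuffle computation you carry out is precisely the one the paper performs in the immediately following Proposition~\ref{prop:R_A} (where it shows $R_{[i\ldots j]} = (r-s)^{j-i}[i\ldots j]$). You have correctly observed that this computation, combined with Theorem~\ref{thm:LR-bijection}, already yields Lemma~\ref{lem:Lyndon_A} as a byproduct: since $\Xi([\ell_{ij}])\in\cal U$ is a nonzero scalar multiple of the single word $\ell_{ij}$, that word is dominant, and the cardinality match via the Lalonde--Ram bijection forces $\cal L^+=\{\ell_{ij}\}$. Your coefficient bookkeeping is accurate: the only nontrivial pairings are $(\omega'_{j-1},\omega_j)^{-1}=r$ and $(\omega'_j,\omega_{j-1})^{-1}=s^{-1}$, so in $[i\ldots(j-1)]*[j]$ every non-terminal insertion of $j$ carries coefficient $1$, while in $s\cdot[j]*[i\ldots(j-1)]$ each carries $s\cdot s^{-1}=1$, and these cancel exactly as you describe.

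The only difference from the paper is organizational: the paper takes Lemma~\ref{lem:Lyndon_A} as input (from \cite{L}) and then computes $R_\ell$, whereas you run the same computation first and deduce the lemma from it. Your route is slightly more self-contained.
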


We shall now explicitly evaluate the corresponding elements $R_\ell$:

\begin{prop}\label{prop:R_A} 
For $\ell = [i\ldots j]$ with $1 \le i \le j \le n$, we have 
\begin{equation*}
  R_{\ell} = (r-s)^{j-i}[i\ldots j].
\end{equation*}
\end{prop}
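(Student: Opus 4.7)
The plan is to argue by induction on $j - i \geq 0$. The base case $j = i$ is immediate: the bracketing $[[i]] = i$, so $R_{[i]} = \Xi(i) = [i] = (r-s)^{0}[i]$.

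For the inductive step, the first task is to identify the costandard factorization of $\ell = [i\ldots j]$. Every strictly increasing word is Lyndon, so the longest proper Lyndon left factor of $\ell$ is $\ell_{1} = [i\ldots j-1]$ and the complementary right factor is $\ell_{2} = [j]$. By the Lemma relating $R_{\ell}$ to the $\circledast$-bracket (stated right after the definition of $\circledast$), one has $R_{\ell} = R_{\ell_{1}} \circledast R_{\ell_{2}}$. Using the inductive hypothesis $R_{\ell_{1}} = (r-s)^{j-1-i}[i\ldots j-1]$ together with $R_{\ell_{2}} = [j]$, the proposition reduces to the single identity
\begin{equation*}
  [i\ldots j-1] \circledast [j] \,=\, (r-s)\,[i\ldots j].
\end{equation*}

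To establish this identity, I would expand each of $[i\ldots j-1] *_{r,s} [j]$ and $[i\ldots j-1] *_{s,r} [j]$ via formula~\eqref{eq:shuffle_product_general}. Since the second factor has length $1$, each expansion is a sum over the $j - i + 1$ possible positions into which the letter $j$ can be inserted within $[i\ldots j-1]$. The key simplification is that in type $A_{n}$ one has $\langle \alpha_{a},\alpha_{j}\rangle = -\delta_{a,j-1}$ and $\langle \alpha_{j},\alpha_{a}\rangle = 0$ for all $a < j$, which is immediate from the definition of the Ringel form. Consequently, the coefficient $e_{r,s}(\sigma)$ of~\eqref{eq:e_ab} for the insertion of $j$ at position $p$ equals $r$ precisely when $j$ is placed at the far right of $[i\ldots j-1]$ (so that $j-1$ immediately precedes $j$) and equals $1$ otherwise; analogously $e_{s,r}(\sigma)$ equals $s$ respectively $1$. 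All intermediate terms thus cancel upon subtraction, leaving exactly $(r-s)\,[i\ldots j]$.

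The argument involves no genuine obstacle --- the entire calculation rests on the sparsity of the Ringel form in type $A$ (most $\langle \alpha_{a},\alpha_{j}\rangle$ vanish), which collapses nearly every shuffle coefficient to $1$ and makes the cancellation transparent. The only points deserving care are the consistent bookkeeping of which factors of the form $r^{\pm \langle\cdot,\cdot\rangle}$ and $s^{\pm\langle\cdot,\cdot\rangle}$ enter $e_{r,s}$ versus $e_{s,r}$ per~\eqref{eq:e_ab}, and the verification that the costandard factorization of $[i\ldots j]$ is indeed $[i\ldots j-1]\cdot[j]$, which is immediate since every strictly increasing word is Lyndon.
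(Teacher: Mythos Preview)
Your proof is correct and follows essentially the same inductive strategy as the paper: identify the costandard factorization $[i\ldots j-1]\cdot[j]$, invoke $R_\ell = R_{\ell_1}\circledast R_{\ell_2}$, and reduce to the identity $[i\ldots j-1]\circledast[j]=(r-s)[i\ldots j]$. The only cosmetic difference is that the paper verifies this identity using the form $x\circledast y = x*_{r,s}y - (\omega'_{|y|},\omega_{|x|})\,y*_{r,s}x$ together with the recursive shuffle rule (and the vanishing $[i\ldots(j-2)]\circledast[j]=0$), whereas you use the equivalent form $x\circledast y = x*_{r,s}y - x*_{s,r}y$ and the closed-form expansion~\eqref{eq:shuffle_product_general}; both routes exploit the same sparsity of the Ringel form in type $A$.
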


\begin{proof} 
We proceed by induction on $j - i$, with the case $j - i = 0$ being obvious. For $j - i > 0$, the costandard factorization 
$\ell = \ell_{1}\ell_{2}$ is explicitly given by $\ell_{1} = [i \dots (j-1)]$ and $\ell_{2} = [j]$. Thus, by the induction 
assumption, we have:
\begin{align*}
  R_{\ell} 
  &= R_{\ell_{1}} \circledast R_{\ell_{2}} = (r-s)^{j - 1 - i} [i\ldots j-1] \circledast [j] \\
  &= (r - s)^{j - 1 - i} 
     \left( [i\ldots j-1] *[j] - (\omega_{j}',\omega_{\gamma_{i,j-1}}) [j] * [i\ldots j-1] \right) \\
  &= (r-s)^{j - 1 - i} \left( (\omega_{\gamma_{i,j-1}}',\omega_{j})^{-1} - (\omega_{j}',\omega_{\gamma_{i,j-1}}) \right) 
     [i\ldots j] + (r - s)^{j - 1 -i} ([i\ldots (j-2)] \circledast [j])[j-1],
\end{align*}
which yields the result because $(\omega_{\gamma_{i,j-1}}',\omega_{j})^{-1} = r$, $(\omega_{j}',\omega_{\gamma_{i,j-1}}) = s$,
and $[i \dots (j-2)] \circledast [j] = 0$.
\end{proof}

Finally, let us derive the formula for the pairing of the above elements:

\begin{cor}\label{cor:pairing_A} 
For $\ell = [i\ldots j]$ with $1 \le i \le j \le n$, we have 
\begin{equation*}
  (R_{\ell},\bar{R}_{\ell}) = (r - s)^{j - i}.
\end{equation*}
\end{cor}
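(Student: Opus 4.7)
The plan is to prove the corollary by induction on $j - i$, paralleling the induction used in the proof of Proposition~\ref{prop:R_A}. The base case $j = i$ is immediate, since $R_{[i]} = \bar{R}_{[i]} = [i]$ forces $([i], [i]) = 1 = (r-s)^{0}$. For the inductive step, I will use the costandard factorization $\ell = \ell_{1} \ell_{2}$ with $\ell_{1} = [i \ldots j-1]$ and $\ell_{2} = [j]$, which gives
\[
R_{\ell} \,=\, R_{\ell_{1}} * [j] \,-\, (\omega_{j}', \omega_{\gamma_{i,j-1}})\,[j] * R_{\ell_{1}}
\]
as in the proof of Proposition~\ref{prop:R_A}.

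First I will compute $\bar{R}_{\ell}$ explicitly. Applying the bar involution formula from Proposition~\ref{prop:tau_and_bar}(2) to $R_{\ell} = (r-s)^{j-i} [i \ldots j]$, and noting that in type $A$ only the adjacent pairs $l = k+1$ contribute a nontrivial factor to the product of Cartan pairings, one obtains
\[
\bar{R}_{\ell} \,=\, (-1)^{j-i} (r-s)^{j-i} s^{-(j-i)}\, [j, j-1, \ldots, i].
\]

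The heart of the argument is to evaluate the two pieces $(R_{\ell_{1}} * [j], \bar{R}_{\ell})$ and $([j] * R_{\ell_{1}}, \bar{R}_{\ell})$ using Lemma~\ref{lem:symm_pairing_properties}(5), converting each into a pairing against $\Delta(\bar{R}_{\ell})$. By Proposition~\ref{prop:shuffle_coproduct}, $\Delta$ applied to the single word $[j, j-1, \ldots, i]$ cuts it at every position. Since no tail of $[j, j-1, \ldots, i]$ is the single letter $[j]$ (its last letter is $i \ne j$), a degree count forces $(R_{\ell_{1}} * [j], \bar{R}_{\ell}) = 0$; for the second pairing, only the cut producing $[j-1, \ldots, i] \otimes [j]$ contributes, and recognizing $[j-1, \ldots, i]$ as an explicit scalar multiple of $\bar{R}_{\ell_{1}}$ reduces this term to a constant times $(R_{\ell_{1}}, \bar{R}_{\ell_{1}})$.

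The main obstacle, though purely computational, will be careful bookkeeping: the ratio of the scalars defining $\bar{R}_{\ell}$ and $\bar{R}_{\ell_{1}}$ equals $-(r-s)/s$, and this combines with the value $(\omega_{j}', \omega_{\gamma_{i,j-1}}) = s$ computed from the type-$A$ Ringel form to yield the clean recursion $(R_{\ell}, \bar{R}_{\ell}) = (r-s)\cdot(R_{\ell_{1}}, \bar{R}_{\ell_{1}})$. The induction hypothesis then delivers the desired equality $(R_{\ell}, \bar{R}_{\ell}) = (r-s)^{j-i}$.
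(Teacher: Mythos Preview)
Your proof is correct and follows essentially the same inductive strategy as the paper, with a harmless mirror-image variation. The paper keeps $R_{\ell}=(r-s)^{j-i}[i\ldots j]$ explicit and expands $\bar{R}_{\ell}$ via the barred costandard relation $\bar{R}_{\ell}=\bar{R}_{\ell_{1}}*[j]-r\,[j]*\bar{R}_{\ell_{1}}$, then applies Lemma~\ref{lem:symm_pairing_properties}(4) to pair against $\Delta([i\ldots j])$; you instead keep $\bar{R}_{\ell}$ explicit as a multiple of $[j\ldots i]$, expand $R_{\ell}$ via the costandard factorization, and apply Lemma~\ref{lem:symm_pairing_properties}(5) to pair against $\Delta([j\ldots i])$. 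Both routes produce the identical recursion $(R_{\ell},\bar{R}_{\ell})=(r-s)\,(R_{\ell_{1}},\bar{R}_{\ell_{1}})$. The paper's version is marginally cleaner in that it never needs the explicit scalar in $\bar{R}_{\ell}$ (it uses only $\overline{s}=r$), whereas your route requires the product of Cartan pairings from Proposition~\ref{prop:tau_and_bar}(2); but this is a matter of taste, not substance.
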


\begin{proof} 
We proceed by induction on $j - i$, with the case $j = i$ being clear. Evoking the costandard factorization of $\ell$ 
(with $\ell_{1} = [i \dots (j-1)]$, $\ell_{2} = [j]$) and Lemma~\ref{lem:symm_pairing_properties}, we obtain:
\begin{align*}
  (R_{\ell},\bar{R}_{\ell}) 
  &= (r - s)^{j-i} ([i \dots j], \bar{R}_{\ell_{1}} * [j] - r [j] * \bar{R}_{\ell_{1}}) 
   = (r - s)^{j-i} (\Delta([i \dots j]) , [j] \otimes \bar{R}_{\ell_{1}} - r \bar{R}_{\ell_{1}} \otimes [j]) \\
  &= (r - s)^{j-i} ([j]\otimes [i\dots (j-1)], [j]\otimes \bar{R}_{\ell_{1}} ) 
   = (r - s)(R_{\ell_{1}},\bar{R}_{\ell_{1}}) = (r - s)^{j - i},
\end{align*}
where the last two equalities follow from $R_{\ell_1}=(r-s)^{j-i-1}[i\dots (j-1)]$ and the induction hypothesis. 
\end{proof}

   %%%%%%%%%%%%%%%%%%%%%%%%%%%%%%%%%%%%%%%%%%%%%%%%%%%%%%%%%%%%%%%%%%%%%%%%%%

\subsection{Type $B_{n}$}
\

For the order $1 < 2 < \dots < n$, the dominant Lyndon words in type $B_n$ are given by (cf.~\cite[\S6.2]{CHW}):

\begin{lemma}\label{lem:Lyndon_B} 
The set of dominant Lyndon words is 
\[
  \cal{L}^{+} = 
  \big\{ [i\ldots j] \,\big|\, 1 \le i \le j \le n \big\} \cup 
  \big\{ [i\ldots n\, n\ldots j] \,\big|\, 1 \le i < j  \le n \big\}.
\]
\end{lemma}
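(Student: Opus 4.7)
The plan is to use Theorem~\ref{thm:LR-bijection}, which provides a degree-preserving bijection $\cal{L}^{+}\iso\Phi^{+}$ of cardinality $n^{2}$, and reduce the problem to showing the claimed set consists of $n^{2}$ Lyndon words whose degrees exhaust $\Phi^{+}_{B_{n}}$ with each candidate being dominant.

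First I would verify that each candidate is Lyndon: the strictly increasing words $[i\ldots j]$ are Lyndon by definition; for $w=[i,i+1,\ldots,n,n,n-1,\ldots,j]$ with $i<j\le n$, the letter $i$ appears only at position one (since $i<j$), so every proper right factor begins with a letter strictly greater than $i$, which forces $w<$ each such factor lexicographically. A direct count of letter multiplicities gives $|[i\ldots j]|=\gamma_{ij}$ and $|[i\ldots n\,n\ldots j]|=\beta_{ij}$, so the $n(n+1)/2+n(n-1)/2=n^{2}$ candidates have pairwise distinct degrees exhausting the positive roots in~\eqref{eq:root_notation_B}.

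The remaining task is to show each candidate $\ell$ is dominant, proceeding by induction on the length of $\ell$ via the costandard factorization $\ell=\ell_{1}\ell_{2}$. For $[i\ldots j]$ this is $[i]\cdot[i+1\ldots j]$; for $[i\ldots n\,n\ldots j]$ it is $[i]\cdot[i+1\ldots n\,n\ldots j]$ when $i+1<j$, and $[i\ldots n\,n\ldots j{+}1]\cdot[j]$ when $i+1=j$ (with the boundary convention $[i\ldots n\,n\ldots n{+}1]:=[i\ldots n]$). In every case $\ell_{1}$ and $\ell_{2}$ are either simple roots or strictly shorter candidates, hence dominant by the inductive hypothesis; then $R_{\ell_{1}},R_{\ell_{2}}\in\cal{U}$ are nonzero with leading terms $\ell_{1}$ and $\ell_{2}$ respectively, by Lemma~\ref{lem:root_vector_max}. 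Consequently $R_{\ell}=R_{\ell_{1}}\circledast R_{\ell_{2}}\in\cal{U}$, and an explicit shuffle expansion yields $\max(R_{\ell})=\ell_{1}\ell_{2}=\ell$ with nonzero coefficient, establishing that $\ell\in\cal{W}^{+}$.

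The main obstacle is this final shuffle computation: writing $R_{\ell}=R_{\ell_{1}}*_{r,s}R_{\ell_{2}}-(\omega_{|\ell_{2}|}',\omega_{|\ell_{1}|})R_{\ell_{2}}*_{r,s}R_{\ell_{1}}$, one must confirm the cancellation of the $\ell_{2}\ell_{1}$-terms between the two summands and extract a nonzero coefficient of $\ell=\ell_{1}\ell_{2}$. This reduces to elementary identities of the form $(\omega_{\mu}',\omega_{\nu})^{-1}\ne(\omega_{\nu}',\omega_{\mu})$ for the specific weights arising, ultimately tracing back to $r\ne s$. In analogy with Proposition~\ref{prop:R_A} in type $A$, the full calculation yields a closed-form expression for $R_{\ell}$ with a $(r-s)$-power leading coefficient that is visibly nonzero; these explicit formulas are precisely the content of the subsequent propositions of Section~\ref{sec:root_vectors}.
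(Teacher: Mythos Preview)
The paper does not supply its own proof of this lemma, citing \cite[\S6.2]{CHW} instead, so there is no in-paper argument to compare against directly. Your overall strategy---verify that each candidate is Lyndon, check that the degrees exhaust $\Phi^{+}$, and then invoke the bijection of Theorem~\ref{thm:LR-bijection} once each candidate is shown dominant---is correct and is essentially how such statements are established.

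There is, however, a concrete error: your claimed costandard factorizations are wrong. By the paper's definition (Subsection~5.2), the costandard factorization takes $\ell_{1}$ to be the \emph{longest} proper Lyndon prefix. For $[i\ldots j]$ every prefix $[i\ldots k]$ with $k<j$ is Lyndon, so the costandard factorization is $[i\ldots(j{-}1)]\cdot[j]$, not $[i]\cdot[(i{+}1)\ldots j]$. Likewise for $[i\ldots n\,n\ldots j]$ with $j<n$ the costandard factorization is $[i\ldots n\,n\ldots(j{+}1)]\cdot[j]$, and for $j=n$ it is $[i\ldots n]\cdot[n]$; these are exactly the factorizations used in the proof of Proposition~\ref{prop:R_B}. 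What you have written are the \emph{standard} factorizations (longest proper Lyndon suffix). Since $R_{\ell}$ is by definition built from the costandard factorization, the element $R_{\ell_{1}}\circledast R_{\ell_{2}}$ you form with your factorizations is in general not $R_{\ell}$, and your appeal to ``the subsequent propositions of Section~\ref{sec:root_vectors}'' does not match the elements you are actually constructing.

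Once you replace your factorizations by the correct costandard ones, your induction still runs (each $\ell_{1}$ is a strictly shorter candidate and each $\ell_{2}$ is a single letter), and the explicit computation of Proposition~\ref{prop:R_B} shows directly that $R_{\ell}$ is a nonzero scalar multiple of the single word $\ell$; since $R_{\ell}\in\cal{U}$, this exhibits $\ell$ as a leading term and hence $\ell\in\cal{W}^{+}$. Note that the proof of Proposition~\ref{prop:R_B} does not rely on Lemma~\ref{lem:Lyndon_B}, so there is no circularity in using it here.
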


We shall now explicitly evaluate the corresponding elements $R_\ell$:

\begin{prop}\label{prop:R_B} 
(1) For $\ell = [i\ldots j]$ with $1 \le i \le j \le n$, we have 
\begin{equation*}
  R_{\ell} = (r^{2} - s^{2})^{j - i}[i\ldots j].
\end{equation*}

\noindent
(2) For $\ell = [i\ldots n\, n\ldots j]$ with $1 \le i < j \le n$, we have 
\begin{equation*}
  R_{\ell} = (rs)^{2(j - n)} (r^{2} - s^{2})^{2n - i - j + 1} [i\ldots n\, n\ldots j].
\end{equation*}
\end{prop}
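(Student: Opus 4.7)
The plan is to prove parts~(1) and~(2) simultaneously by induction on the length of $\ell$, using at each step the recursion $R_{\ell}=R_{\ell_{1}}\circledast R_{\ell_{2}}$ afforded by the costandard factorization $\ell=\ell_{1}\ell_{2}$. The base case $\ell=[i]$ is immediate. For part~(1), the costandard factorization $\ell_{1}=[i\dots j-1]$, $\ell_{2}=[j]$ is the same as in the proof of Proposition~\ref{prop:R_A}; the only change in type~$B_{n}$ is that $\langle\alpha_{j-1},\alpha_{j}\rangle=-d_{j-1}=-2$ and $\langle\alpha_{j},\alpha_{j-1}\rangle=0$ for every $j\leq n$ (including the case $j=n$, where $d_{n-1}a_{n-1,n}=-2$). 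Consequently $(\omega'_{\gamma_{i,j-1}},\omega_{j})^{-1}=r^{2}$ and $(\omega'_{j},\omega_{\gamma_{i,j-1}})=s^{2}$, while the auxiliary term $[i\dots j-2]\circledast[j]$ still vanishes because $\alpha_{j}$ is orthogonal to $\alpha_{k}$ for all $k\leq j-2$. This gives the factor $r^{2}-s^{2}$ at each step, yielding $R_{\ell}=(r^{2}-s^{2})^{j-i}[i\dots j]$.

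For part~(2), I first identify the costandard factorization of $\ell=[i\dots n\,n\dots j]$. The key observation is that the smallest letter~$i$ appears in $\ell$ exactly once, at position~$1$, so every proper left factor of $\ell$ begins with~$i$ while every proper right factor begins with a letter strictly greater than~$i$. Hence every proper left factor is automatically Lyndon, and the costandard factorization simply peels off the last letter. This yields two sub-cases: Case~A, when $j=n$, where $\ell_{1}=[i\dots n]$ is covered by part~(1) and $\ell_{2}=[n]$; and Case~B, when $j<n$, where $\ell_{1}=[i\dots n\,n\dots(j+1)]$ is covered by the inductive instance of part~(2) at strictly shorter length and $\ell_{2}=[j]$. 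In either case the inductive hypothesis expresses $R_{\ell_{1}}$ as an explicit scalar multiple of the single word $\ell_{1}\in\cal{F}$, so the whole problem reduces to evaluating the one-letter shuffle bracket $[\ell_{1}]\circledast[k]$ for $k\in\{n,j\}$ and showing that it equals the prescribed scalar multiple of the single word~$\ell$.

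I would handle the bracket by combining two tools. First, an auxiliary lemma (immediate from formula~\eqref{eq:shuffle_product_general}): if the letter $k$ does not appear in a word $w$ and $(\alpha_{k},\alpha_{l})=0$ for every letter $l$ of $w$, then $[w]\circledast[k]=0$, because under these hypotheses every coefficient $e_{r,s}(\sigma)$ in $[w]*[k]$ and in $[k]*[w]$ equals~$1$, the two shuffles coincide word by word, and $(\omega'_{k},\omega_{|w|})=1$. Second, the iterative formula~\eqref{eq:shuffle_product_iterative}, which lets one peel letters off $\ell_{1}$ one at a time. Together these reduce $[\ell_{1}]\circledast[k]$ to a finite rank-$2$ calculation involving only the letters of $\ell_{1}$ belonging to $\{k-1,k,k+1\}$. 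The main obstacle is precisely this rank-$2$ bookkeeping: in Case~A one must track the interactions of the inserted~$n$ with the trailing subword $[n-1,n]$ of $\ell_{1}$ (using $\langle\alpha_{n-1},\alpha_{n}\rangle=-2$ and $\langle\alpha_{n},\alpha_{n}\rangle=1$), while in Case~B one tracks the insertion of~$j$ past the unique copy of $j-1$ in the ascending segment of $\ell_{1}$ and the two copies of $j+1$ straddling the central $n$. Verifying that all non-$\ell$ terms cancel and that the surviving coefficient of $\ell$ collapses to $r^{2}-s^{2}$ in Case~A and to $(rs)^{-2}(r^{2}-s^{2})$ in Case~B (matching the ratios predicted by the claimed formulas) then closes the induction.
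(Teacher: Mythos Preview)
Your approach is correct and essentially the same as the paper's: the same induction on length, the same costandard factorizations (the paper treats Case~A as the base $j=n$ and Case~B as the inductive step $j<n$ of an induction on $n-j$), and the same reduction of the residual terms via orthogonality. The paper carries out the ``rank-$2$'' bookkeeping explicitly rather than packaging it as a separate lemma, but the content is identical; your stated target coefficients $r^{2}-s^{2}$ in Case~A and $(rs)^{-2}(r^{2}-s^{2})$ in Case~B match exactly. One small imprecision: in Case~B the word $\ell_{1}=[i\ldots n\,n\ldots(j+1)]$ also contains a single copy of $j$ itself (in the ascending segment), so your auxiliary lemma does not apply to $\ell_{1}\circledast[j]$ directly---you must first use the iterative formula~\eqref{eq:shuffle_product_iterative} to peel letters, as you indicate, but your list of interacting letters $\{j-1,j+1\}$ should also include that copy of~$j$.
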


\begin{proof} 
Because $(\omega_{j}',\omega_{\gamma_{i,j-1}}) = s^{2}$ and $(\omega_{\gamma_{i,j-1}}',\omega_{j})^{-1} = r^{2}$ 
for all $1 \le i < j \le n$, the proof of part (1) is exactly the same as the proof of Proposition~\ref{prop:R_A}. 
For (2), we proceed by induction on $n - j$. If $n - j = 0$, then $\ell = [i\ldots n\, n]$, and its costandard 
factorization is $\ell = \ell_{1}\ell_{2}$ where $\ell_{1} = [i\ldots n]$ and $\ell_{2} = [n]$. Thus:
\begin{align*}
  R_{\ell} 
  &= R_{\ell_{1}} \circledast R_{\ell_{2}} 
   = (r^{2} - s^{2})^{n-i} \left( [i\ldots n] * [n] - (\omega_{n}',\omega_{\gamma_{in}}) [n] * [i\ldots n]\right ) \\
  &= (r^{2} - s^{2})^{n-i} \left( (\omega_{\gamma_{in}}',\omega_{n})^{-1} [i\ldots n\, n] + ([i\ldots (n-1)] * [n])[n] \right) \\
  &\quad - (r^{2} - s^{2})^{n-i} \left( (\omega_{n}',\omega_{\gamma_{in}}) [i\ldots n\, n] 
    + (\omega_{n}',\omega_{\gamma_{in}}) (\omega_{n}',\omega_{n})^{-1} ([n] * [i\ldots (n-1)])[n]\right ) \\
  &= (r^{2} - s^{2})^{n-i} \left( ([i\ldots (n-1)] * [n])[n] - s^{2}([n] * [i\ldots (n-1)])[n]\right ) \\
  &= (r^{2} - s^{2}) R_{\ell_{1}}[n] = (r^{2} - s^{2})^{n + 1 - i} [i\ldots n\, n],
\end{align*}
where we use the equalities $(\omega_{\gamma_{in}}',\omega_{n})^{-1} = (\omega_{n}',\omega_{\gamma_{in}}) = rs$ 
and $(\omega_{n}',\omega_{n})^{-1} = r^{-1}s$ in the fourth line.

Now suppose that $n - j > 0$ and the result holds for all larger $j$. Then, the costandard factorization of 
$\ell = [i\ldots n\, n\ldots j]$ is $\ell = \ell_{1}\ell_{2}$ with $\ell_{1} = [i\ldots n\, n \ldots (j+1)]$ 
and $\ell_{2} = [j]$. Thus, by the induction assumption:
\begin{align*}
  R_{\ell} 
  &= R_{\ell_{1}} \circledast R_{\ell_{2}} \\
  &= (rs)^{2(j + 1 - n)} (r^{2} - s^{2})^{2n - i - j} 
     ([i\ldots n\, n \ldots (j+1)] * [j] - (\omega_{j}',\omega_{\beta_{i,j+1}})[j] * [i\ldots n\, n \ldots (j + 1)]) \\
  &= (rs)^{2(j + 1 - n)} (r^{2} - s^{2})^{2n - i - j}
     \left( (\omega_{\beta_{i,j+1}}',\omega_{j})^{-1} [i\ldots n\, n \dots j] + ([i\ldots n\, n\ldots (j + 2)] * [j])[j+1] \right) \\ 
  & \ \ - (rs)^{2(j + 1 - n)} (r^{2} - s^{2})^{2n - i - j}
     \left( (\omega_{j}',\omega_{\beta_{i,j+1}}) [i\ldots n\, n\ldots j] - 
           (\omega_{j}',\omega_{\beta_{i,j+2}}) ([j] * [i\ldots n\, n\ldots (j+2)])[j + 1] \right) \\
  &= (rs)^{2(j - n)} (r^{2} - s^{2})^{2n - i -j + 1} [i\ldots n\, n\ldots j] \\
  &\quad + (rs)^{2(j + 1 - n)} (r^{2} - s^{2})^{2n - i - j}
   \left( [i\ldots n\, n\ldots (j + 2)] * [j] - [j] * [i\ldots n\, n\ldots (j + 2)] \right)[j + 1],
\end{align*}
where we used the equalities $(\omega_{\beta_{i,j+1}}',\omega_{j})^{-1} = s^{-2}$, $(\omega_{j}',\omega_{\beta_{i,j+1}}) = r^{-2}$, 
and $(\omega_{j}',\omega_{\beta_{i,j+2}}) = 1$ in the last line. Thus, it suffices to prove that 
\[
  [i\ldots n\, n\ldots (j + 2)] * [j] - [j] * [i\ldots n\, n \ldots (j+2)] = 0
\]
(for $j = n -1$, the above equality should be rather interpreted as $[i\ldots n] * [n-1] - [n-1] * [i\ldots n]=0$). 
Since $(\omega'_{\beta_{ik}},\omega_{j})^{-1} = 1$, $(\omega_{\gamma_{ik}}',\omega_{j})^{-1} = 1$, and 
$(\omega_{j}',\omega_{k})^{-1} = 1$ for $j + 2 \le k \le n$, we have 
\begin{multline*}
  [i\ldots n\, n\ldots (j+2)] * [j] - [j] *[i\ldots n\, n \ldots (j + 2)] = \\ 
  \left( [i\ldots (j + 1)] * [j] - [j] * [i\ldots (j + 1)] \right) [(j + 2) \ldots n\, n \ldots (j + 2)]
\end{multline*}
and thus it remains to verify $[i\ldots (j + 1)] *[j] - [j] *[i\ldots (j + 1)] = 0$ for $i < j \le n-1$. 
To this end, we have:
\begin{align*}
  & [i\ldots (j + 1)] * [j] - [j] * [i\ldots (j + 1)] \\
  &= (\omega_{\gamma_{i,j + 1}}',\omega_{j})^{-1} [i\ldots (j + 1) j] - [i\ldots (j+1) j] 
     + ([i\ldots j] * [j])[j+1] - (\omega_{j}',\omega_{j+1})^{-1} ([j] * [i\ldots j])[j + 1] \\ 
  &= ([i\ldots j] * [j] - r^{2} [j] * [i\ldots j])[j+1] \\
  &= ((\omega_{\gamma_{ij}}',\omega_{j})^{-1} [i\ldots j\, j] + ([i\ldots (j-1)] * [j])[j] 
      - r^{2} [i\ldots j\, j] - s^{2}([j] * [i\ldots (j-1)])[j])[j+1] \\
  &= (s^{2} - r^{2})[i\ldots j\, j (j + 1)] + \frac{1}{(r^{2} - s^{2})^{j - 1 - i}}R_{[i\ldots j]} [j(j+1)] \\
  &= (s^{2} - r^{2})[i\ldots j\, j (j + 1)] + (r^{2} - s^{2}) [i\ldots j\, j (j+1)] = 0.
\end{align*}
This completes the proof of (2).
\end{proof}

Finally, let us derive the formula for the pairing of the above elements:

\begin{cor}\label{cor:pairing_B} 
(1) For $\ell = [i\ldots j]$ with $1 \le i \le j \le n$, we have 
\begin{equation*}
  (R_{\ell},\bar{R}_{\ell}) = (r^{2} - s^{2})^{j - i}.
\end{equation*}

\noindent
(2) For $\ell = [i\ldots n\, n\ldots j]$ with $1 \le i < j \le n$, we have
\begin{equation*}
  (R_{\ell},\bar{R}_{\ell}) = (rs)^{2(j - n)} (r^{2} - s^{2})^{2n - i - j + 1}.
\end{equation*}
\end{cor}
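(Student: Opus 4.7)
The plan is to mirror the strategy of Corollary~\ref{cor:pairing_A} (type $A_n$), combining induction on the length of the dominant Lyndon word $\ell$ with the explicit formulas of Proposition~\ref{prop:R_B} and the coproduct--product adjointness of Lemma~\ref{lem:symm_pairing_properties}(4). The key preliminary observation is that whenever $\ell=\ell_1\ell_2$ is the costandard factorization of $\ell\in\cal{L}^+$, Proposition~\ref{prop:tau_and_bar}(2) together with the identity $\overline{(\omega'_\mu,\omega_\nu)}=(\omega'_\nu,\omega_\mu)^{-1}$ yields
\[
  \bar{R}_\ell = \bar{R}_{\ell_1}*\bar{R}_{\ell_2} - (\omega'_{|\ell_1|},\omega_{|\ell_2|})^{-1}\,\bar{R}_{\ell_2}*\bar{R}_{\ell_1},
\]
so that $(R_\ell,\bar{R}_\ell)$ can be transferred to a pairing of $\Delta(\ell)$ against an explicit element of $\cal{F}\otimes\cal{F}$ via Lemma~\ref{lem:symm_pairing_properties}(4).

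For part (1), I induct on $j-i$, with $j=i$ trivial. For the step take $\ell_1=[i\ldots(j-1)]$, $\ell_2=[j]$; a direct computation of the pairing of Cartan elements (as in the proof of Proposition~\ref{prop:R_B}) gives $(\omega'_{\gamma_{i,j-1}},\omega_j)^{-1}=r^2$, hence $\bar{R}_\ell=\bar{R}_{\ell_1}*[j]-r^2[j]*\bar{R}_{\ell_1}$. Substituting $R_\ell=(r^2-s^2)^{j-i}[i\ldots j]$ from Proposition~\ref{prop:R_B}(1), expanding $\Delta([i\ldots j])$ by Proposition~\ref{prop:shuffle_coproduct}, and matching weights of the tensor factors, the only surviving contribution comes from the decomposition $[j]\otimes[i\ldots(j-1)]$; the crossed term $\bar{R}_{\ell_1}\otimes[j]$ vanishes because no prefix of $[i\ldots j]$ has weight $\alpha_j$ (the leading letter is $i$). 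The inductive hypothesis $(R_{\ell_1},\bar{R}_{\ell_1})=(r^2-s^2)^{j-1-i}$ together with the formula for $R_{\ell_1}$ collapses the answer to $(r^2-s^2)^{j-i}$.

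For part (2), I induct on $n-j$. In the base case $j=n$, with $\ell_1=[i\ldots n]$ and $\ell_2=[n]$, I have $(\omega'_{\gamma_{in}},\omega_n)^{-1}=rs$ and $\bar{R}_\ell=\bar{R}_{\ell_1}*[n]-rs\,[n]*\bar{R}_{\ell_1}$. The same adjointness plus Proposition~\ref{prop:shuffle_coproduct} expresses the pairing as a sum over the summands of $\Delta([i\ldots n\,n])$; only $[n]\otimes[i\ldots n]$ contributes, since a prefix of $[i\ldots n\,n]$ of weight $\alpha_n$ would have to be the single letter $[n]$, whereas the word begins with $i$. Invoking part~(1) for $\ell_1$ closes the base case with $(R_\ell,\bar{R}_\ell)=(r^2-s^2)^{n-i+1}$, which is precisely the $j=n$ specialization of the claimed formula. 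For the step $j<n$, take $\ell_1=[i\ldots n\,n\ldots(j+1)]$, $\ell_2=[j]$; then $(\omega'_{\beta_{i,j+1}},\omega_j)^{-1}=s^{-2}$, and exactly the same reasoning applied to $\Delta([i\ldots n\,n\ldots j])$ singles out the decomposition $[j]\otimes[i\ldots n\,n\ldots(j+1)]$ and combines with the inductive hypothesis to yield $(rs)^{2(j-n)}(r^2-s^2)^{2n-i-j+1}$.

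The main technical point in every case is the vanishing of the crossed tensor-pairing against $\bar{R}_{\ell_1}\otimes\ell_2$: it reduces to the observation that the letter $i$ appears only at the leading position of $\ell$, so no prefix of $\ell$ has weight $|\ell_2|$ (which is a single simple root distinct from $\alpha_i$). Once this vanishing is verified, the explicit scalars $(\omega'_{|\ell_1|},\omega_{|\ell_2|})^{-1}$ from the proof of Proposition~\ref{prop:R_B} combine cleanly with the $(r^2-s^2)$ and $rs$ factors coming from the inductive hypothesis, and the induction closes without further complication.
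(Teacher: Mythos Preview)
Your proposal is correct and follows essentially the same approach as the paper: the same costandard factorizations, the same induction schemes, the same scalar values $(\omega'_{|\ell_1|},\omega_{|\ell_2|})^{-1}$, and the same vanishing of the crossed term via the observation that no proper prefix of $\ell$ has weight $|\ell_2|$. The only cosmetic difference is that the paper records the argument slightly more tersely (deferring part~(1) entirely to Corollary~\ref{cor:pairing_A}), whereas you spell out the mechanism of the vanishing a bit more explicitly.
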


\begin{proof} 
The proof of (1) is the same as the proof of Corollary~\ref{cor:pairing_A}. For (2), we proceed by induction on $n - j$. 
If $n - j = 0$, then $\ell = [i\ldots n\, n]$ and its costandard factorization is $\ell = \ell_{1}\ell_{2}$ with 
$\ell_{1} = [i\ldots n]$ and $\ell_{2} = [n]$. Therefore, we get:
\begin{align*}
  (R_{\ell},\bar{R}_{\ell}) 
  &= (r^{2} - s^{2})^{n+1-i} ([i\ldots n\, n], \bar{R}_{\ell_{1}} * [n] - rs [n] * \bar{R}_{\ell_{1}}) \\
  &= (r^{2} - s^{2})^{n+1-i} (\Delta([i\ldots n\, n]), [n] \otimes \bar{R}_{\ell_{1}} - rs \bar{R}_{\ell_{1}} \otimes [n]) \\
  &= (r^{2} - s^{2})^{n+1-i} ([n] \otimes [i\ldots n], [n] \otimes \bar{R}_{\ell_{1}}) \\
  &= (r^{2} - s^{2})([n],[n])(R_{\ell_{1}},\bar{R}_{\ell_{1}}) = (r^{2} - s^{2})^{n+1-i},
\end{align*}
where the last equality is a consequence of part (1).

If $n - j> 0$, then the costandard factorization is $\ell = \ell_{1}\ell_{2}$ with $\ell_{1} = [i\ldots n\, n \ldots (j+1)]$, 
$\ell_{2} = [j]$. Thus: 
\begin{align*}
  (R_{\ell},\bar{R}_{\ell}) 
  &= (rs)^{2(j - n)} (r^{2} - s^{2})^{2n - i - j + 1} 
     ([i\ldots n\, n\ldots j], \bar{R}_{\ell_{1}} *[j] - s^{-2} [j] * \bar{R}_{\ell_{1}}) \\
  &= (rs)^{2(j - n)} (r^{2} - s^{2})^{2n - i - j + 1} ([j] \otimes [i\ldots n\, n\ldots (j + 1)], [j] \otimes \bar{R}_{\ell_{1}}) \\
  &= (rs)^{-2} (r^{2} - s^{2}) ([j],[j]) (R_{\ell_{1}},\bar{R}_{\ell_{1}}) \\
  &= (rs)^{2(j - n)}(r^{2} - s^{2})^{2n - i - j + 1},
\end{align*}
where the last equality follows from the induction hypothesis.
\end{proof}

   %%%%%%%%%%%%%%%%%%%%%%%%%%%%%%%%%%%%%%%%%%%%%%%%%%%%%%%%%%%%%%%%%%%%%%%%%%

\subsection{Type $C_{n}$}
\

For the order $1 < 2 < \dots < n$, the dominant Lyndon words in type $C_n$ are given by (cf.~\cite[\S6.3]{CHW}):

\begin{lemma}\label{lem:Lyndon_C} 
The set of dominant Lyndon words is 
\[
  \cal{L}^{+} = 
  \big\{ [i\ldots j] \,\big|\, 1 \le i \le j \le n \big\} \cup 
  \big\{ [i\ldots n\ldots j] \,\big|\, 1 \le i < j < n \big\} \cup 
  \big\{ [i\ldots (n-1)\, i\ldots n] \,\big|\, 1 \le i < n \big\}.
\]
\end{lemma}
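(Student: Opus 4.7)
The plan is to identify $\cal{L}^{+}$ via the bijection $\ell \mapsto |\ell|$ from $\cal{L}^{+}$ to $\Phi^{+}$ established in Theorem~\ref{thm:LR-bijection}, which gives $|\cal{L}^{+}| = |\Phi^{+}| = n^2$ in type $C_n$. Since the proposed set contains $\binom{n+1}{2} + \binom{n-1}{2} + (n-1) = n^2$ elements, it suffices to show that each candidate is (a)~a Lyndon word, (b)~dominant, and (c)~of a weight in $\Phi^{+}$, with the collection of weights exhausting $\Phi^{+}$.

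First, I would compute the weights directly and check the Lyndon property. One has $|[i\ldots j]| = \gamma_{ij}$ for $1 \le i \le j \le n$, $|[i\ldots n\ldots j]| = \beta_{ij}$ for $1 \le i < j < n$, and $|[i\ldots (n-1)\, i\ldots n]| = 2(\alpha_i + \dots + \alpha_{n-1}) + \alpha_n = \beta_{ii}$, which together exhaust $\Phi^{+}$ according to~\eqref{eq:root_notation_C}. The Lyndon property is a direct lex comparison: each word begins with its smallest letter $i$, so any right factor beginning with a letter $>i$ is automatically larger; the only delicate case is the third family with the right factor $[i\ldots n]$ starting at position $n-i+1$, and here the first disagreement occurs at that very position, where the original has $i$ while the right factor has $n$, confirming the original is lexicographically smaller.

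The heart of the argument is (b). For each candidate $\ell$, I would produce a nonzero element of $\cal{U} = \Psi(U_{r,s}^{+})$ with leading term $\ell$, namely $R_\ell = \Xi([\ell])$; by Lemma~\ref{lem:root_vector_max}, dominance of $\ell$ is equivalent to $R_\ell \ne 0$. For the first family this parallels Proposition~\ref{prop:R_A}, and for the second family it parallels Proposition~\ref{prop:R_B}(2); the only changes are the substitutions of the $C_n$-specific Cartan pairings $(\omega'_\mu,\omega_\nu)$ tabulated in Section~\ref{sec:notation} (which in the relevant ranges still yield nonzero differences such as $r-s$ and $rs\ne 0$). Both calculations, together with explicit closed-form expressions for $R_\ell$ and $(R_\ell, \bar{R}_\ell)$, will be recorded in Section~\ref{sec:root_vectors}.

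The main obstacle is the third family $[i \ldots (n-1)\, i \ldots n]$, whose costandard factorization is $\ell = \ell_1 \ell_2$ with $\ell_1 = [i \ldots (n-1)]$ and $\ell_2 = [i \ldots n]$. Here both factors are multi-letter root vectors whose supports substantially overlap, so the shuffle bracket $R_\ell = R_{\ell_1} \circledast R_{\ell_2}$ expands into a sizable number of monomials. One must track cancellations carefully using the $C_n$ Cartan pairings, isolate the surviving coefficient of $[i \ldots (n-1)\, i \ldots n]$, and verify it is nonzero. This explicit but delicate computation, analogous in spirit to the corresponding step in \cite[\S6.3]{CHW}, is to be carried out in the root-vector evaluations of the following section, and once it is done, the bijection of Theorem~\ref{thm:LR-bijection} combined with the cardinality count above forces the claimed set to be exactly $\cal{L}^{+}$.
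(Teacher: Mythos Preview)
The paper does not actually prove this lemma; it simply records the statement and cites \cite[\S6.3]{CHW}. The standard argument there (following \cite[Proposition~25]{L}) is purely combinatorial: one uses the inductive characterization that for $\alpha\in\Phi^+$ of height $>1$, the dominant Lyndon word $\ell(\alpha)$ equals the lexicographically largest concatenation $\ell(\beta)\ell(\gamma)$ over all decompositions $\alpha=\beta+\gamma$ in $\Phi^+$ with $\ell(\beta)<\ell(\gamma)$. This recursion, applied root by root, produces the list without any shuffle-algebra computation.

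Your route---exhibiting a nonzero element of $\cal U$ with leading term $\ell$---is a legitimate alternative, but two points deserve care. First, your appeal to Lemma~\ref{lem:root_vector_max} is circular as stated: that lemma \emph{assumes} $\ell\in\cal L^+$ and concludes $\max(R_\ell)=\ell$; it does not give you ``dominance $\Leftrightarrow R_\ell\neq 0$''. What you actually need (and what your explicit computations would deliver) is that the formula for $R_\ell$ visibly has $\ell$ as its maximal word with nonzero coefficient; dominance then follows from the definition, with no appeal to Lemma~\ref{lem:root_vector_max}. Second, for the family $[i\ldots(n-1)\,i\ldots n]$ you must \emph{not} reuse the shortcut in the paper's proof of Proposition~\ref{prop:R_C}(4): that argument writes $f=([i\ldots(n-1)]*[i\ldots(n-1)])[n]$, observes $\max(f)=\ell$, and then invokes the fact that $\ell$ is the smallest \emph{dominant} word of its weight---which is exactly what you are trying to prove. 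A non-circular version is to note that $f\in\cal U$ (via Proposition~\ref{prop:image_description}) and $\max(f)=\ell$ (via Lemma~\ref{lem:shuffle_lw}), which already shows $\ell$ is dominant without ever computing $R_\ell$ or comparing to other dominant words; the remaining identification $R_\ell=\vartheta_\ell^{-1}f$ is then a separate step.
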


We shall now explicitly evaluate the corresponding elements $R_\ell$:

\begin{prop}\label{prop:R_C} 
(1) For $\ell = [i\ldots j]$ with $1 \le i \le j < n$ and $\ell = [n]$, we have 
\begin{equation*}
  R_{\ell} = (r - s)^{j - i} [i\ldots j].
\end{equation*}

\noindent
(2) For $\ell = [i\ldots n]$ with $1 \le i < n$, we have 
\begin{equation*}
  R_{\ell} = (r - s)^{n - 1 - i} (r^{2} - s^{2}) [i\ldots n].
\end{equation*}

\noindent 
(3) For $\ell = [i\ldots n \ldots j]$ with $1 \le i < j < n$, we have 
\begin{equation*}
  R_{\ell} = (rs)^{j-n} (r - s)^{2n - i - j - 1} (r^{2} - s^{2}) [i\ldots n \ldots j].
\end{equation*}

\noindent
(4) For $\ell = [i\ldots (n-1)\, i\ldots n]$ with $1 \le i < n$, we have 
\begin{equation*}
  R_{\ell} = (r - s)^{2n - 2i - 1} (r^{2} - s^{2}) r\, ([i\ldots (n-1)] * [i\ldots (n-1)])[n].
\end{equation*}
\end{prop}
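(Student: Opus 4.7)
The plan is to treat the four parts in turn, in each case using the costandard factorization $\ell=\ell_{1}\ell_{2}$ and the identity $R_{\ell}=R_{\ell_{1}}\circledast R_{\ell_{2}}$, in close analogy with the arguments already used in the proofs of Propositions~\ref{prop:R_A} and~\ref{prop:R_B}. The only substantive differences from those earlier proofs come from the interplay between the short indices $\{1,\dots,n-1\}$ and the long index $n$, which forces a careful tracking of the relevant pairing values $(\omega'_{\mu},\omega_{\nu})$ via the explicit type-$C_{n}$ formulas of Section~\ref{sec:notation}.

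Parts (1), (2), and (3) are handled very much as for types $A_{n}$ and $B_{n}$. For part~(1) all letters that appear lie in $\{1,\dots,n-1\}$ (or $\ell=[n]$ is a single long letter), and the relevant Ringel form values $(\omega'_{\gamma_{i,j-1}},\omega_{j})^{-1}=r$, $(\omega'_{j},\omega_{\gamma_{i,j-1}})=s$ agree with those in type $A_{n}$, so the inductive argument of Proposition~\ref{prop:R_A} goes through verbatim. For part~(2), the costandard factorization is $[i\ldots(n-1)]\cdot[n]$; the long root $\alpha_{n}$ now upgrades the pairings to $(\omega'_{\gamma_{i,n-1}},\omega_{n})^{-1}=r^{2}$ and $(\omega'_{n},\omega_{\gamma_{i,n-1}})=s^{2}$, producing the factor $(r^{2}-s^{2})$, and $[i\ldots(n-2)]\circledast[n]=0$ ensures no residual term. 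Part~(3) is proved by induction on $n-j$: the base case $j=n-1$ uses part~(2) together with the pairing values $(\omega'_{\gamma_{in}},\omega_{n-1})^{-1}=s^{-1}$, $(\omega'_{n-1},\omega_{\gamma_{in}})=r^{-1}$; the inductive step mirrors Proposition~\ref{prop:R_B}(2) and reduces to an auxiliary identity of the form $[i\ldots(j+1)]*[j]-[j]*[i\ldots(j+1)]=0$ (times a trailing shuffle of short letters), verified by expanding via~\eqref{eq:shuffle_product_iterative} and invoking part~(1) together with the quantum Serre relation of Theorem~\ref{thm:serre}.

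Part~(4) is the main obstacle. The costandard factorization of $[i\ldots(n-1)\,i\ldots n]$ is $\ell_{1}=[i\ldots(n-1)]$, $\ell_{2}=[i\ldots n]$, so parts~(1) and~(2) reduce the claim to the shuffle identity
\begin{equation*}
[i\ldots(n-1)]\circledast[i\ldots n]\ =\ r(r-s)\,\bigl([i\ldots(n-1)]*[i\ldots(n-1)]\bigr)[n].
\end{equation*}
To prove this, the plan is to expand both $[i\ldots(n-1)]*_{r,s}[i\ldots n]$ and $[i\ldots n]*_{r,s}[i\ldots(n-1)]$ via~\eqref{eq:shuffle_product_iterative} and split each expansion by the position of the single occurrence of the letter $n$. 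In the summands where $n$ sits at the very end, the terminal $[n]$ factors out and the residual shuffle collapses to $[i\ldots(n-1)]*[i\ldots(n-1)]$, giving the right-hand side once the identities $(\omega'_{\gamma_{i,n-1}},\omega_{n})^{-1}=r^{2}$ and $(\omega'_{n},\omega_{\gamma_{i,n-1}})=s^{2}$ are used to assemble the prefactor $r(r-s)$. In the summands where $n$ is interior, the two contributions that enter the definition of $\circledast$ must cancel, which I plan to establish by grouping shuffles according to the position of $n$ and invoking Proposition~\ref{prop:rs_to_sr} together with the scalar $(\omega'_{\gamma_{in}},\omega_{\gamma_{i,n-1}})$ that distinguishes them; the $n=2,\,i=1$ case already exhibits this cancellation in miniature (a single interior word $[1\,2\,1]$ appears on both sides and drops out). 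Tracking this cancellation uniformly for all $i$ is the hardest part of the proof.
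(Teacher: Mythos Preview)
Your treatment of parts (1)--(3) matches the paper's: those are indeed handled exactly as in Propositions~\ref{prop:R_A} and~\ref{prop:R_B}(2), with only the pairing values changed.

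For part~(4), however, the paper takes a different and considerably shorter route than your direct cancellation argument. Rather than expanding $[i\ldots(n-1)]\circledast[i\ldots n]$ and proving that the interior-$n$ terms cancel, the paper first observes \emph{a priori} that $R_{\ell}$ must be a scalar multiple of $f=([i\ldots(n-1)]*[i\ldots(n-1)])[n]$. The reasoning is: $[i\ldots(n-1)]\in\cal{U}$ by part~(1), hence $[i\ldots(n-1)]*[i\ldots(n-1)]\in\cal{U}$, and then Proposition~\ref{prop:image_description} shows that right-concatenating $[n]$ keeps this in $\cal{U}$; Lemma~\ref{lem:shuffle_lw} gives $\max(f)=\ell$; and since $\ell$ is the \emph{smallest} dominant word of its degree, the triangular expansion $f=\sum_{w\le\ell}\vartheta_{w}R_{w}$ collapses to $f=\vartheta_{\ell}R_{\ell}$. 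Once this is known, the paper expands $R_{\ell_{1}}\circledast R_{\ell_{2}}$ by the last letter, obtaining one block of terms ending in $[n-1]$ and one block ending in $[n]$ (namely $(r^{2}-rs)f$). Since every word in $f$ ends in $n$, the $[n-1]$-block is forced to vanish, and the coefficient $r(r-s)$ can be read off directly. Your plan would also succeed, but it requires carrying out the combinatorial cancellation you flag as ``the hardest part''; the paper's structural argument sidesteps that computation entirely at the cost of invoking Proposition~\ref{prop:image_description} and the minimality of $\ell$.
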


\begin{proof} 
The proofs of parts (1) and (2) are similar to the proof of Proposition~\ref{prop:R_A}, while the proof of 
part (3) is similar to the proof of part (2) of Proposition~\ref{prop:R_B}. For part (4), we first note that 
$[i\ldots (n-1)] \in \cal{U}$ by part (1), and therefore $[i\ldots (n-1)] * [i\ldots (n-1)] \in \cal{U}$ as well. Moreover, 
it follows from Proposition~\ref{prop:image_description} that also $f = ([i\ldots (n-1)] * [i\ldots (n-1)])[n] \in \cal{U}$. 
Now, by Lemma~\ref{lem:shuffle_lw}, we have $\max(f) = \ell$. Then, since 
%Lemmas~\ref{lem:shuffle_lw} and~\ref{lem:root_vector_max} together imply that 
$\max(R_{w}) = w$ for all $w \in \cal{W}^{+}$ by Corollary~\ref{cor:max_Rw}, we may write 
\[
  f = \sum_{w \le \ell,\ w \in \cal{W}^{+}} \vartheta_{w} R_{w}
\]
for some $\vartheta_w\in \BC(r,s)$ with $\vartheta_\ell\ne 0$.
But $\ell$ is the smallest dominant word of its degree by Corollary~\ref{cor:smallest_word}, so we must have 
  $$R_{\ell} = \vartheta_{\ell}^{-1}f = \vartheta_{\ell}^{-1} ([i\ldots (n-1)] * [i\ldots (n-1)])[n].$$

Using this, we can now compute $R_{\ell}$. Since the costandard factorization is $\ell = \ell_{1}\ell_{2}$ 
with $\ell_{1} = [i\ldots (n-1)]$ and $\ell_{2} = [i\ldots n]$, we have: 
\begin{align*}
  R_{\ell} 
  &= R_{\ell_{1}} \circledast R_{\ell_{2}} 
   = (r - s)^{2n - 2i - 2} (r^{2} - s^{2}) \left( [i\ldots (n-1)] * [i\ldots n] - rs [i\ldots n] * [i\ldots (n-1)] \right) \\
  &= (r - s)^{2n - 2i - 2} (r^{2} - s^{2}) \left( ([i\ldots (n-2)] *[i\ldots n])[n-1] - r([i\ldots n] *[i\ldots (n-2)])[n-1] \right) \\
  &\quad + (r-s)^{2n - 2i - 2} (r^{2} - s^{2}) (r^{2} - rs) ([i\ldots (n-1)] * [i\ldots (n-1)])[n].
\end{align*}
Since we know that $R_{\ell}$ is a multiple of $([i\ldots (n-1)] * [i\ldots (n-1)])[n]$, it follows that 
\[
  ([i\ldots (n-2)] * [i\ldots n])[n-1] - r([i\ldots n] * [i\ldots (n-2)])[n-1] = 0,
\]  
and therefore 
\[
  R_{\ell} = (r-s)^{2n - 2i - 1} (r^{2} - s^{2}) r\, ([i\ldots (n-1)] * [i\ldots (n-1)])[n],
\]
which completes the proof.
\end{proof}

Finally, let us derive the formula for the pairing of the above elements:

\begin{cor}\label{cor:pairing_C} 
(1) For $\ell = [i\ldots j]$ with $1 \le i \le j < n$ and $\ell = [n]$, we have 
\begin{equation*}
  (R_{\ell},\bar{R}_{\ell}) = (r - s)^{j - i}.
\end{equation*}

\noindent
(2) For $\ell = [i\ldots n]$, we have 
\begin{equation*}
  (R_{\ell},\bar{R}_{\ell}) = (r - s)^{n - 1 - i} (r^{2} - s^{2}).
\end{equation*}

\noindent
(3) For $\ell = [i\ldots n\ldots j]$ with $1 \le i < j < n$, we have 
\begin{equation*}
  (R_{\ell},\bar{R}_{\ell}) = (r - s)^{2n - i - j - 1} (r^{2} - s^{2}) (rs)^{j - n}.
\end{equation*}

\noindent
(4) For $\ell = [i\ldots (n-1)\, i\ldots n]$ with $1 \le i < n$, we have 
\begin{equation*}
  (R_{\ell},\bar{R}_{\ell}) = (r - s)^{2n - 2i - 1}(r^{2} - s^{2})(r + s).
\end{equation*}
\end{cor}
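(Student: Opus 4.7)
My plan is to split the argument into the four families of dominant Lyndon words described in Lemma~\ref{lem:Lyndon_C} and treat each case separately.

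For parts (1)--(3), the costandard factorization of $\ell$ always has the shape $\ell_1 \cdot [j]$ with $\ell_2 = [j]$ a single letter, so I would replicate the inductive scheme used in Corollaries~\ref{cor:pairing_A} and~\ref{cor:pairing_B}. Concretely, I would substitute the explicit form of $R_\ell$ from Proposition~\ref{prop:R_C} together with $\bar R_\ell = \bar R_{\ell_1}\mathbin{*}[j] - \overline{(\omega'_{\alpha_j},\omega_{|\ell_1|})}\cdot [j]\mathbin{*}\bar R_{\ell_1}$, apply Lemma~\ref{lem:symm_pairing_properties}(4) to transfer the pairing onto $\Delta$ of the underlying word, note that degree-homogeneity collapses that coproduct sum to a single surviving summand when paired with $[j]\otimes \bar R_{\ell_1}$, and then close the induction on the length of $\ell$. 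The prefactor $(r-s)$ (resp.\ $(r^2-s^2)$) in the final answer will come from the type-$C$ Cartan pairings $(\omega'_{\alpha_j},\omega_{\gamma_{i,j-1}}) = s$ when $j<n$ (resp.\ $s^2$ when $j=n$).

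Part (4) is the main new ingredient. The costandard factorization is $\ell_1 = [i\dots(n-1)]$, $\ell_2 = [i\dots n]$, and a direct computation of Cartan pairings in type $C_n$ gives $(\omega'_{|\ell_2|},\omega_{|\ell_1|}) = rs$, so
\[
  R_\ell = R_{\ell_1}\mathbin{*}R_{\ell_2} - rs\, R_{\ell_2}\mathbin{*}R_{\ell_1},
  \qquad
  \bar R_\ell = \bar R_{\ell_1}\mathbin{*}\bar R_{\ell_2} - rs\, \bar R_{\ell_2}\mathbin{*}\bar R_{\ell_1},
\]
since $\overline{rs} = rs$. I would expand $(R_\ell,\bar R_\ell)$ bilinearly into four terms, rewrite each using Lemma~\ref{lem:symm_pairing_properties}(5) together with Proposition~\ref{prop:shuffle_coproduct} as a pairing on $\bar{\cal F}\otimes\bar{\cal F}$ against $\Delta(\bar R_{\ell_1})\mathbin{*}\Delta(\bar R_{\ell_2})$, and then invoke Lemma~\ref{lem:R_bar_coprod} together with Theorem~\ref{thm:dual_bases} (applied to the shorter Lyndon words $\ell_1,\ell_2$, whose pairings are already known from parts (1) and (2)) to force most middle terms to vanish. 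Assembling what remains and substituting $(R_{\ell_1},\bar R_{\ell_1}) = (r-s)^{n-1-i}$ and $(R_{\ell_2},\bar R_{\ell_2}) = (r-s)^{n-1-i}(r^2-s^2)$ should produce the stated formula.

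The principal obstacle is the bookkeeping inside case (4): the twisted shuffle product~\eqref{eq:twisted_shuffle_square} on $\cal F\otimes \cal F$ inserts a weight factor at every merge, so checking that the surviving terms combine to give the distinctive factor $r+s$ requires care. If this direct approach becomes unwieldy, an alternative route is to substitute the explicit form $R_\ell = (r-s)^{2n-2i-1}(r^2-s^2)r\cdot ([i\dots(n-1)]\mathbin{*}[i\dots(n-1)])[n]$ from Proposition~\ref{prop:R_C}(4), apply Lemma~\ref{lem:symm-der-adjoint} to peel off the trailing letter $[n]$, and reduce to evaluating $\bigl(\partial'_n(\bar R_\ell),\, [i\dots(n-1)]\mathbin{*}[i\dots(n-1)]\bigr)$; in this picture, the factor $r+s$ should appear transparently as a quantum integer $[2]_{r,s}$ produced by the symmetry of the shuffle square of $[i\dots(n-1)]$.
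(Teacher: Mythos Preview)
Your treatment of parts (1)--(3) is correct and matches the paper's approach, which likewise defers to the inductive scheme of Corollaries~\ref{cor:pairing_A} and~\ref{cor:pairing_B}.

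For part (4), however, your primary strategy is \emph{circular}. Writing $A=R_{\ell_1}*R_{\ell_2}=\wtd{R}_{\ell_2\ell_1}$, $D=\bar R_{\ell_2}*\bar R_{\ell_1}=\bar R_{\ell_2\ell_1}$ and using $R_\ell=A-rs\,R_{\ell_2}*R_{\ell_1}$, $\bar R_\ell=\bar R_{\ell_1}*\bar R_{\ell_2}-rs\,D$, the four cross terms can indeed be evaluated with Theorem~\ref{thm:dual_bases} (since $(\wtd R_{\ell_2\ell_1},\bar R_\ell)=(\wtd R_\ell,\bar R_{\ell_2\ell_1})=0$), but after substituting $R_{\ell_2}*R_{\ell_1}=\tfrac{1}{rs}(A-R_\ell)$ and $\bar R_{\ell_1}*\bar R_{\ell_2}=\bar R_\ell+rs\,D$ one finds that all occurrences of $(R_{\ell_1},\bar R_{\ell_1})(R_{\ell_2},\bar R_{\ell_2})$ cancel, leaving only the tautology $(R_\ell,\bar R_\ell)=(R_\ell,\bar R_\ell)$. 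So no information is gained from the bilinear expansion alone; you genuinely need the explicit description of $R_\ell$ from Proposition~\ref{prop:R_C}(4).

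Your alternative route is the one the paper actually takes, but the mechanism you propose has a subtle flaw: the trailing $[n]$ in $R_\ell=c\cdot\big([i\ldots(n-1)]*[i\ldots(n-1)]\big)[n]$ is \emph{concatenation} in $\cal F$, not the shuffle product, so $\Psi^{-1}(R_\ell)$ is not of the form $(\cdot)\,e_n$ and Lemma~\ref{lem:symm-der-adjoint} does not directly strip it. The paper instead applies Lemma~\ref{lem:symm_pairing_properties}(4) to the factorization of $\bar R_\ell$, reducing to $(\Delta(R_\ell),\,\bar R_{\ell_2}\otimes\bar R_{\ell_1}-rs\,\bar R_{\ell_1}\otimes\bar R_{\ell_2})$, and computes $\Delta(R_\ell)$ via the concatenation rule $\Delta(x[n])=\Delta(x)\cdot([n]\otimes 1)+1\otimes x[n]$ together with $\Delta(g*g)=\Delta(g)*\Delta(g)$ for $g=[i\ldots(n-1)]$. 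Degree reasons then isolate a single surviving term $(1+r^{-1}s)\,[i\ldots n]\otimes[i\ldots(n-1)]$, and combining $1+r^{-1}s$ with the prefactor $r$ in $R_\ell$ produces the $(r+s)$ you anticipated.
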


\begin{proof} 
The proofs of parts (1)--(3) are similar to the arguments given in the preceding two Subsections, 
so we shall only present the details for part (4). First, we note that (cf.~\eqref{eq:dot-multiplication}) 
\begin{multline*}
  \Delta(([i\ldots (n-1)] *[i\ldots (n-1)])[n]) \\
  = (\Delta([i\ldots (n-1)]) * \Delta([i\ldots (n-1)])) \cdot ([n] \otimes 1) 
    + 1 \otimes ([i\ldots (n-1)] * [i\ldots (n-1)])[n].
\end{multline*}
Then, since the costandard factorization is $\ell = \ell_{1}\ell_{2}$ with $\ell_{1} = [i\ldots (n-1)]$ 
and $\ell_{2} = [i\ldots n]$, we have 
\begin{align*}
  (R_{\ell},\bar{R}_{\ell}) 
  &= (r-s)^{2n - 2i - 1} (r^{2} - s^{2}) r 
     \left( \Delta(([i\ldots (n-1)] * [i\ldots (n-1)])[n]), 
            \bar{R}_{\ell_{2}} \otimes \bar{R}_{\ell_{1}} - rs \bar{R}_{\ell_{1}} \otimes \bar{R}_{\ell_{2}} \right) \\
  &= (r-s)^{2n - 2i - 1} (r^{2} - s^{2}) r 
     \left( (\Delta([i\ldots (n-1)]) * \Delta([i\ldots (n-1)]))\cdot([n] \otimes 1), 
            \bar{R}_{\ell_{2}} \otimes \bar{R}_{\ell_{1}} \right) \\
  &= (r - s)^{2n - 2i - 1} (r^{2} - s^{2}) r 
     \left( (1 + r^{-1}s) ([i\ldots n] \otimes [i\ldots (n-1)]), \bar{R}_{\ell_{2}} \otimes \bar{R}_{\ell_{1}} \right) \\
  &= (r - s)^{2n - 2i - 1} (r^{2} - s^{2}) (r + s) \cdot 
     \frac{1}{(r - s)^{2n - 2i - 2} (r^{2} - s^{2})} (R_{\ell_{2}},\bar{R}_{\ell_{2}}) (R_{\ell_{1}},\bar{R}_{\ell_{1}}) \\
  &= (r - s)^{2n - 2i - 1} (r^{2} - s^{2}) (r + s),
\end{align*}
where the last equality follows from parts (1)--(2).
\end{proof}

   %%%%%%%%%%%%%%%%%%%%%%%%%%%%%%%%%%%%%%%%%%%%%%%%%%%%%%%%%%%%%%%%%%%%%%%%%%

\subsection{Type $D_{n}$}
\

For the order $1 < 2 < \dots < n$, the dominant Lyndon words in type $D_n$ are given by (cf.~\cite[\S6.4]{CHW}):

\begin{lemma}\label{lem:Lyndon_D} 
The set of dominant Lyndon words is 
\begin{equation*}
\begin{split}
  \cal{L}^{+} = 
  & \big\{ [i\ldots j] \,\big|\, 1 \le i \le j < n \big\} \cup 
    \big\{ [i\ldots (n-2)\, n] \,\big|\, 1 \le i \le n-2 \big\} \cup \\
  & \big\{ [i\ldots (n-2)\, n\, (n-1) \ldots j] \,\big|\, 1 \le i < j \le n-1 \big\}.
\end{split}
\end{equation*}
\end{lemma}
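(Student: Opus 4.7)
The plan is to argue combinatorially in the style of Lemmas~\ref{lem:Lyndon_A}--\ref{lem:Lyndon_C}, following~\cite[\S6.4]{CHW}. A direct inspection shows that every word in the three listed families is Lyndon, since in each case the first letter is the strict minimum of the word and hence lexicographically smaller than every proper right factor. Using the parametrization~\eqref{eq:root_notation_D}, a short computation then verifies that the $Q^+$-degrees of the listed words realize each positive root of $\Phi^+$ exactly once---Type~1 matching the $\gamma_{ij}$'s, Type~2 matching the $\beta_{in}$'s (supplemented by the letter $[n]$ itself for the remaining simple root $\alpha_n$), and Type~3 matching the $\beta_{ij}$'s with $j\le n-1$---and a direct count yields $n(n-1)=|\Phi^+|$ words in total.

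Having exhibited $n(n-1)$ Lyndon candidates of pairwise distinct degrees exhausting $\Phi^+$, the bijection $\cal{L}^+\iso\Phi^+$ from Theorem~\ref{thm:LR-bijection} reduces the lemma to showing that each listed word is dominant; by Lemma~\ref{lem:root_vector_max} it suffices to exhibit, for each listed $\ell$, a shuffle-algebra element $R_\ell\in\cal{U}$ with $\max(R_\ell)=\ell$. For Type~1 this follows from the $A$-type calculation of Proposition~\ref{prop:R_A} transported to the obvious $A_{n-1}$-Levi subalgebra. For Types~2 and~3 this will be established in the propositions immediately following the lemma via the recursive bracketing construction of Section~\ref{sec:orthogonal_bases}, and the lemma then follows by forward reference to those propositions together with Theorem~\ref{thm:LR-bijection} (which forces completeness by the count above).

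The main obstacle is the Type~3 family. Because $\alpha_n$ attaches to $\alpha_{n-2}$ rather than $\alpha_{n-1}$ in the $D_n$ Dynkin diagram, the letter $n$ sits in the middle of the word $[i\ldots(n-2)\,n\,(n-1)\ldots j]$, so the costandard factorization produces shuffle expressions that do not telescope as cleanly as in the $B$- and $C$-type computations of Propositions~\ref{prop:R_B} and~\ref{prop:R_C}. I expect the dominance verification to hinge on an auxiliary vanishing identity in $(\cal{F},*)$---analogous to the identity $[i\ldots n\, n\ldots(j+2)]*[j]=[j]*[i\ldots n\, n\ldots(j+2)]$ that implicitly appeared in the proof of Proposition~\ref{prop:R_B}(2)---but now sensitive to the asymmetric Ringel form values $\langle\alpha_{n-1},\alpha_n\rangle=-1$, $\langle\alpha_n,\alpha_{n-1}\rangle=1$ specific to~$D_n$, which break the symmetry exploited in the simply-laced arguments and force a more delicate case analysis according to whether $j<n-1$ or $j=n-1$.
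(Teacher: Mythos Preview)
The paper does not give a self-contained proof of this lemma; it simply records the statement with a reference to~\cite[\S6.4]{CHW}. Your outline---verify that each listed word is Lyndon, check that the degrees biject onto $\Phi^+$, then invoke Theorem~\ref{thm:LR-bijection} together with the explicit root-vector computations of Proposition~\ref{prop:R_D} to certify dominance---is sound and is essentially the argument implicit in the paper's organization of Section~\ref{sec:root_vectors}. No circularity arises from the forward reference, since the proof of Proposition~\ref{prop:R_D} uses only the Lyndon property of $\ell$ (via the costandard factorization), not its dominance.

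Two small corrections. First, your appeal to Lemma~\ref{lem:root_vector_max} is backwards: that lemma already presupposes $\ell\in\cal{L}^+$. The logic you want is simpler: for any Lyndon word $\ell$ the element $R_\ell=\Xi([\ell])$ lies in $\cal{U}$ automatically (being a $*$-polynomial in the letters), and once Proposition~\ref{prop:R_D} exhibits $R_\ell$ explicitly one reads off $\max(R_\ell)=\ell$ from the formula---for part~(3), by comparing the $(n{-}1{-}i)$-th letters $n-1<n$ of the two words---so $\ell$ is dominant by the very definition of $\cal{W}^+$. Second, you are right that the single letter $[n]$, corresponding to the simple root $\alpha_n$, is absent from the displayed families; this appears to be an omission in the statement as written, and your count of $n(n-1)$ words is correct only once $[n]$ is adjoined, as you note parenthetically.
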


We shall now explicitly evaluate the corresponding elements $R_\ell$:

\begin{prop}\label{prop:R_D} 
(1) For $\ell = [i\ldots j]$ with $1 \le i \le j < n$, we have 
\begin{equation*}
  R_{\ell} = (r - s)^{j - i} [i\ldots j].
\end{equation*}

\noindent
(2) For $\ell = [i\ldots (n-2)\, n]$ with $1 \le i \le n-2$, we have 
\begin{equation*}
  R_{\ell} = (r - s)^{n - 1 - i} [i\ldots (n - 2)\, n].
\end{equation*}

\noindent
(3) For $\ell = [i\ldots (n-2)\, n\, (n-1) \ldots j]$ with $1 \le i < j \le n-1$, we have 
\begin{equation*}
    R_{\ell} = (rs)^{j + 1 - n} (r - s)^{2n - i - j - 1} 
    \Big( [i\ldots (n-1)\, n\, (n-2) \ldots j] + (rs)^{-1} [i\ldots (n-2)\, n\, (n-1)\ldots j] \Big).
\end{equation*}
\end{prop}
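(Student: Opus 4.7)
The plan is to prove each of the three parts by induction on length, using $R_\ell = R_{\ell_1} \circledast R_{\ell_2}$ for the costandard factorization $\ell = \ell_1 \ell_2$, together with the explicit $D_n$-type Cartan pairings recorded in Subsection~2.2.

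For parts~(1) and~(2), the arguments are direct adaptations of Proposition~\ref{prop:R_A}. In part~(1), with $j < n$, the costandard factorization of $[i\ldots j]$ is $[i\ldots (j-1)]\cdot [j]$, and in type $D_n$ one still has $(\omega_{\gamma_{i,j-1}}', \omega_j)^{-1} = r$ and $(\omega_j', \omega_{\gamma_{i,j-1}}) = s$. In part~(2), the costandard factorization of $[i\ldots (n-2)\, n]$ is $[i\ldots (n-2)]\cdot [n]$, and the $D_n$-type formulas likewise yield $(\omega_{\gamma_{i,n-2}}', \omega_n)^{-1} = r$ and $(\omega_n', \omega_{\gamma_{i,n-2}}) = s$. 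In both cases the residual cross term $([i\ldots (j-2)] \circledast [j])[j-1]$ (respectively $([i\ldots (n-3)] \circledast [n])[n-2]$) vanishes in $\cal{F}$: the relevant simple root ($\alpha_j$ or $\alpha_n$) is orthogonal to all the simple roots in the left factor, which trivializes the Cartan factors and makes $*_{r,s}$ symmetric on those two arguments.

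For part~(3), I would use downward induction on $j$, starting from the base case $j = n-1$. Here $\ell = [i\ldots (n-2)\, n\, (n-1)]$ has costandard factorization $\ell_1 = [i\ldots (n-2)\, n]$ (whose $R_{\ell_1}$ is known from part~(2)) and $\ell_2 = [n-1]$. Expanding $R_{\ell_1} \circledast [n-1]$ via~\eqref{eq:shuffle_product_iterative}, all long-range insertions of $(n-1)$ into positions to the left of $n-2$ cancel inside $\circledast$, since $\alpha_{n-1}$ is orthogonal to $\alpha_i, \ldots, \alpha_{n-3}$. Only the short-range insertions, placing $(n-1)$ immediately before or after the final $n$, yield nonzero contributions: they produce the two monomials $[i\ldots (n-1)\, n]$ and $[i\ldots (n-2)\, n\, (n-1)]$, with coefficient ratio determined by the pairings $(\omega_{n-1}', \omega_{\beta_{in}})$ and $(\omega_{\beta_{in}}', \omega_{n-1})$, where $\beta_{in} = \varepsilon_i + \varepsilon_n$. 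For $j < n-1$ the costandard factorization is $\ell_1 = [i\ldots (n-2)\, n\, (n-1)\ldots (j+1)]$ and $\ell_2 = [j]$; combining the induction hypothesis for $R_{\ell_1}$ with the $\circledast$-operation by $[j]$ propagates the two-term structure through the new letter, contributing one factor of $(rs)(r-s)$ per step, in direct analogy with the inductive step of Proposition~\ref{prop:R_B}(2).

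The main obstacle is the base case of part~(3): unlike parts~(1) and~(2), $R_\ell$ is no longer a monomial, so one must verify that among all terms produced by $R_{\ell_1} \circledast [n-1]$ precisely the two words in the statement survive, with coefficient ratio exactly $(rs)^{-1}$. This rests on a careful matching of $D_n$-specific Cartan pairings near the branching node $\alpha_{n-2}$ and on a cancellation of longer cross-term contributions that has no analogue in types $A$, $B$, or $C$.
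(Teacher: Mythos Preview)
Your approach is essentially the same as the paper's: parts~(1)--(2) are handled exactly as in Proposition~\ref{prop:R_A}, and part~(3) proceeds by downward induction on $j$ with the same costandard factorizations $[i\ldots(n-2)\,n]\cdot[n-1]$ in the base case and $[i\ldots(n-2)\,n\,(n-1)\ldots(j+1)]\cdot[j]$ in the inductive step.

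Two small corrections. First, the per-step factor in the induction is $(rs)^{-1}(r-s)$, not $(rs)(r-s)$: passing from $j+1$ to $j$ multiplies the prefactor by $(rs)^{-1}$ and by $(r-s)$, and this is exactly the coefficient $(s^{-1}-r^{-1})$ that appears in the paper's computation of $[\cdots(j+1)]\circledast[j]$. Second, your description of the base-case mechanism is slightly off. It is not that the ``long-range'' insertions of $(n-1)$ to the left of $n-2$ cancel pairwise by orthogonality; rather, after peeling off the rightmost term $[i\ldots(n-2)\,n\,(n-1)]$, the remaining combination becomes
\[
  \big([i\ldots(n-2)]*[n-1] - s\,[n-1]*[i\ldots(n-2)]\big)[n],
\]
which is precisely $(r-s)^{-(n-2-i)}R_{[i\ldots(n-1)]}\,[n]$ by part~(1), hence a single monomial. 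So the second surviving word $[i\ldots(n-1)\,n]$ arises from a \emph{collapse} of many insertion terms via the already-computed $R_{[i\ldots(n-1)]}$, not from a direct orthogonality cancellation. This is the only substantive point you would need to sharpen.
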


\begin{proof} 
The computations for parts (1)--(2) are completely analogous to those in the proof of Proposition~\ref{prop:R_A}, 
so we shall only provide the details for part (3). We proceed by induction on $n - j$. If $n - j = 1$, then the 
costandard factorization is $\ell = \ell_{1}\ell_{2}$ with $\ell_{1} = [i\ldots (n-2)\, n]$ and $\ell_{2} = [n-1]$. 
Therefore, we obtain: 
\begin{align*}
  R_{\ell} 
  &= (r - s)^{n - 1 - i} 
   \left( [i\ldots (n-2)\, n] * [n-1] - (\omega_{n-1}',\omega_{\beta_{in}}) [n-1] * [i\ldots (n-2)\, n] \right) \\
  &= (r - s)^{n - 1 - i} 
   \left( (\omega_{\beta_{in}}',\omega_{n-1})^{-1} - (\omega_{n-1}',\omega_{\beta_{in}}) \right) [i\ldots (n-2)\, n\, (n-1)] \\
  &\quad + (r - s)^{n - 1 - i}
   \left( ([i\ldots (n-2)] * [n-1])[n] - r^{-1}(\omega_{n-1}',\omega_{n})^{-1} ([n- 1] * [i\ldots (n-2)])[n] \right) \\
  &= (r - s)^{n - 1 - i} (s^{-1} - r^{-1}) [i\ldots (n-2)\, n\, (n-1)] \\
  &\quad + (r- s)^{n-1-i} \left( [i\ldots (n-2)] * [n-1] - s[n-1] * [i\ldots (n-2)] \right)[n] \\
  &= (r - s)^{n-i} (rs)^{-1} [i\ldots (n-2)\, n\, (n-1)] + (r - s) R_{[i\ldots (n-1)]}[n] \\
  &= (r - s)^{n-i} \big( [i\ldots (n-1)\, n] + (rs)^{-1} [i\ldots (n-2)\, n\, (n-1)] \big).
\end{align*}

Now, if $n - j > 1$, the costandard factorization is $\ell = \ell_{1}\ell_{2}$ with 
$\ell_{1} = [i\ldots (n-2)\, n\, \ldots (j+1)]$ and $\ell_{2} = [j]$. Thus, by induction hypothesis, we have:
\begin{align*}
  R_{\ell} 
  &= (rs)^{j + 2 - n} (r - s)^{2n - i - j - 2} [i\ldots (n-2)(n-1)\, n\, (n-2)\ldots (j + 1)] \circledast [j]  \\
  &\quad + (rs)^{j + 1 - n} (r - s)^{2n - i - j - 2} [i\ldots (n-2)\, n\, (n-1)(n-2)\ldots (j + 1)] \circledast [j] . 
\end{align*}
Note first that
\begin{align*}
  & [i\ldots (n-1)\, n\, (n-2)\ldots (j + 1)] \circledast [j] \\
  &= [i\ldots (n-1)\, n\, (n-2)\ldots (j + 1)] * [j] - r^{-1}[j] * [i\ldots (n-1)\, n\, (n-2)\, \ldots (j + 1)] \\
  &= (s^{-1} - r^{-1})[i\ldots (n-1)\, n\, (n-2)\ldots j] + ([i\ldots (n-1)\, n\, (n-2)\ldots (j + 2)] \circledast [j])[j+1].
\end{align*}
Moreover, as $(\omega_{\beta_{ik}}',\omega_{j})^{-1} = 1$, $(\omega_{k}',\omega_{j})^{-1} = 1$, 
and $(\omega_{j}',\omega_{k})^{-1} = 1$ for all $k \ge j + 2$, we have: 
\[
  [i\ldots (n-1)\, n\, (n-2)\ldots (j + 2)] \circledast [j] = 
  ([i\ldots (j + 1)] *[j] - [j] * [i\ldots (j + 1)])[(j + 2)\ldots (n-1)\, n\, (n-2)\ldots (j + 2)].
\]
Finally, computations analogous to those in the proof of Proposition~\ref{prop:R_B}(2) show that 
\[ 
  [i\ldots (j + 1)] *[j] - [j] * [i\ldots (j + 1)] = 0 \qquad \mathrm{for\ all} \quad i < j \le n-2,
\]  
so that 
\[
  [i\ldots (n-1)\, n\, (n-2)\ldots (j + 1)] \circledast [j] = (s^{-1} - r^{-1}) [i\ldots (n-1)\, n\, (n-2)\ldots j].
\]
Similarly,
\begin{align*}
  & [i\ldots (n-2)\, n\, (n-1)\ldots (j + 1)] \circledast [j] \\
  &= (s^{-1} - r^{-1}) [i\ldots (n-2)\, n\, (n-1)\ldots j] + ([i\ldots (n-2)\, n\, (n-1)\ldots (j + 2)] \circledast [j])[j+1],
\end{align*}
and as above, we have
\begin{multline*}
  [i\ldots (n-2)\, n\, (n-1)\ldots (j + 2)] \circledast [j] = \\
  ([i\ldots (j + 1)] *[j] - [j] * [i\ldots (j + 1)]) [(j + 2)\ldots (n-2)\, n\, (n-1)\ldots (j + 2)] = 0.
\end{multline*}
Thus, we obtain:
\begin{align*}
  R_{\ell} 
  &= (rs)^{j + 2 - n} (r - s)^{2n - i - j - 2} (s^{-1} - r^{-1}) [i\ldots (n-2)(n-1)\, n\, (n-2)\ldots j] \\
  &\quad + (rs)^{j + 1 - n} (r - s)^{2n - i - j - 2} (s^{-1} - r^{-1}) [i\ldots (n-2)\, n\, (n-1)(n-2)\ldots j] \\ 
  &= (rs)^{j + 1 - n} (r - s)^{2n - i - j - 1} 
   \big( [i\ldots (n-1)\, n\, (n-2)\ldots j] + (rs)^{-1} [i\ldots (n-2)\, n\, (n-1)\ldots j] \big),
\end{align*}
which completes the proof.
\end{proof}

Finally, let us derive the formula for the pairing of the above elements:

\begin{cor}\label{cor:pairing_D} 
(1) For $\ell = [i\ldots j]$ with $1 \le i \le j < n$, we have 
\begin{equation*}
  (R_{\ell},\bar{R}_{\ell}) = (r - s)^{j - i}.
\end{equation*}

\noindent
(2) For $\ell = [i\ldots (n-2)\, n]$ with $1 \le i \le n- 2$, we have 
\begin{equation*}
  (R_{\ell},\bar{R}_{\ell}) = (r - s)^{n - 1 - i}.
\end{equation*}

\noindent
(3) For $\ell = [i\ldots (n-2)\, n\, (n-1)\ldots j]$ with $1 \le i < j \le n-1$, we have 
\begin{equation*}
  (R_{\ell},\bar{R}_{\ell}) = (r - s)^{2n - i - j -1} (rs)^{j - n}.
\end{equation*}
\end{cor}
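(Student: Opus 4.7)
The plan is to prove the three cases of Corollary~\ref{cor:pairing_D} following the template established in Corollaries~\ref{cor:pairing_A},~\ref{cor:pairing_B},~\ref{cor:pairing_C}. In each case we combine Proposition~\ref{prop:R_D} (the explicit formula for $R_\ell$) with Lemma~\ref{lem:symm_pairing_properties}(4) (which swaps a shuffle in the second argument for $\Delta$ applied to the first) and the definition $\bar R_\ell = \bar R_{\ell_1} *\bar R_{\ell_2}$ coming from the costandard factorization $\ell = \ell_1 \ell_2$. The inductions are on the same parameters that controlled the inductive proofs of Proposition~\ref{prop:R_D}.

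For parts~(1) and~(2), the argument is essentially identical to the proof of Corollary~\ref{cor:pairing_A}. I would induct on $j-i$ (resp.\ $n-1-i$), take the costandard factorization $\ell_1 = [i\ldots(j-1)]$, $\ell_2 = [j]$ in~(1) and $\ell_1 = [i\ldots(n-2)]$, $\ell_2 = [n]$ in~(2), expand $(R_\ell, \bar R_\ell) = (\Delta(R_\ell),\bar R_{\ell_2}\otimes \bar R_{\ell_1})$, and use that $\Delta$ of a single word $[i\ldots j]$ or $[i\ldots(n-2)\, n]$ is the sum of tensor products of its prefixes and suffixes, so that only the term $\bar R_{\ell_2}\otimes$(prefix) survives the pairing. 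A direct check of the Cartan constants (all of which are $1$ or trivial in $D$-type for these letters) then yields $(R_\ell,\bar R_\ell)=(r-s)(R_{\ell_1},\bar R_{\ell_1})$, and the inductive hypothesis closes the case.

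The main obstacle is part~(3), because Proposition~\ref{prop:R_D}(3) expresses $R_\ell$ as a sum of \emph{two} words rather than a scalar multiple of one. I would induct on $n-j$. In the base case $n-j=1$, the costandard factorization is $\ell_1=[i\ldots(n-2)\,n]$, $\ell_2=[n-1]$, and one must carefully compute $\Delta$ applied to each of the two summands in Proposition~\ref{prop:R_D}(3), then pair against $\bar R_{\ell_2}\otimes \bar R_{\ell_1} - (\omega'_{n-1},\omega_{\beta_{in}})\bar R_{\ell_1}\otimes \bar R_{\ell_2}$; the key point is that the only surviving tensor is $[n-1]\otimes [i\ldots(n-2)\,n]$ and $[n-1]\otimes[i\ldots(n-1)]$, so after extracting the Cartan scalars the expression collapses to $(r-s)(R_{\ell_1},\bar R_{\ell_1})$ times an explicit factor matching the claim. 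For the inductive step $n-j>1$, the costandard factorization is $\ell_1=[i\ldots(n-2)\,n\,(n-1)\ldots(j+1)]$, $\ell_2=[j]$; both summands in $R_\ell$ end in $j$, and applying $\Delta$ produces a term of the form $[j]\otimes(\cdot)$ whose second tensor factor is precisely the two-term expression defining $R_{\ell_1}$ up to the overall scalar. All other pieces of $\Delta(R_\ell)$ either pair trivially against $\bar R_{\ell_2}\otimes\bar R_{\ell_1}$ (by weight grading and the fact that $\ell_2=[j]$ is a letter) or cancel against the second summand $-(\omega'_j,\omega_{|\ell_1|})\,\bar R_{\ell_1}\otimes\bar R_{\ell_2}$.

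The verification of the Cartan coefficients is the delicate bookkeeping step: one must check that $(\omega'_{\beta_{in}},\omega_{n-1})^{-1}$, $(\omega'_{n-1},\omega_{\beta_{in}})$, and the relevant $(\omega'_\mu,\omega_\nu)$-factors produced by $\Delta$ combine with the scalars in Proposition~\ref{prop:R_D}(3) to give the clean formula $(r-s)^{2n-i-j-1}(rs)^{j-n}$. These computations use the type-$D$ pairing formulas from Section~2 and are direct but tedious; once they are in hand, the inductive step reduces to $(R_\ell,\bar R_\ell) = (rs)^{-1}(r-s)(R_{\ell_1},\bar R_{\ell_1})$, which together with the inductive hypothesis finishes the argument.
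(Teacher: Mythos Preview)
Your approach is essentially identical to the paper's: the same inductions on $j-i$, $n-1-i$, and $n-j$, the same costandard factorizations, and the same reduction via Lemma~\ref{lem:symm_pairing_properties}(4) to $(rs)^{-1}(r-s)(R_{\ell_1},\bar R_{\ell_1})$ in part~(3). One small slip: in the base case $n-j=1$ the only surviving tensor is $[n-1]\otimes[i\ldots(n-2)\,n]$ (coming from the summand $(rs)^{-1}[i\ldots(n-2)\,n\,(n-1)]$); the term $[n-1]\otimes[i\ldots(n-1)]$ you mention does not arise from $\Delta$ of either word in $R_\ell$ and would in any case vanish against $\bar R_{\ell_1}$ by weight.
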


\begin{proof} 
The proofs of parts (1)--(2) are similar to the previous computations, so we shall omit the details. For part (3), 
we proceed by induction on $n - j$. If $n - j = 1$, then the costandard factorization is $\ell = \ell_{1}\ell_{2}$ 
with $\ell_{1} = [i\ldots (n-2)\, n]$ and $\ell_{2} = [n-1]$, so that
\begin{align*}
  (R_{\ell},\bar{R}_{\ell}) 
  &= (\Delta(R_{\ell}),[n-1] \otimes \bar{R}_{\ell_{1}} - s^{-1} \bar{R}_{\ell_{1}} \otimes [n-1]) \\
  &= (r - s)^{n-i} (\Delta([i\ldots (n-1)\, n] + (rs)^{-1} [i\ldots (n-2)\, n\, (n-1)]), 
     [n-1] \otimes \bar{R}_{\ell_{1}} - s^{-1} \bar{R}_{\ell_{1}} \otimes [n-1]) \\
  &= (r - s)^{n - i} (rs)^{-1} ([n-1] \otimes [i\ldots (n-2)\, n],[n-1] \otimes \bar{R}_{\ell_{1}}) \\
  &= (r - s) (rs)^{-1} (R_{\ell_{1}},\bar{R}_{\ell_{1}}) = (r-s)^{n-i} (rs)^{-1}.
\end{align*}
If $n - j > 1$, then the costandard factorization is $\ell = \ell_{1}\ell_{2}$ with 
$\ell_{1} = [i\ldots (n-2)\, n\, \ldots (j + 1)]$ and $\ell_{2} = [j]$, so that we have 
\begin{align*}
  (R_{\ell},\bar{R}_{\ell}) 
  &= (\Delta(R_{\ell}),[j] \otimes \bar{R}_{\ell_{1}} - s^{-1} \bar{R}_{\ell_{1}} \otimes [j]) \\
  &= (rs)^{j + 1 - n} (r - s)^{2n - i - j - 1} 
     (\Delta([i\ldots (n-1)\, n\, (n-2)\ldots j]),[j] \otimes \bar{R}_{\ell_{1}} - s^{-1} \bar{R}_{\ell_{1}} \otimes [j]) \\
  &\quad + (rs)^{j + 1 - n} (r - s)^{2n - i - j - 1}
   ((rs)^{-1} \Delta([i\ldots (n-2)\, n\, (n-1)\ldots j]),[j] \otimes \bar{R}_{\ell_{1}} - s^{-1} \bar{R}_{\ell_{1}} \otimes [j]) \\
  &=(rs)^{j + 1 - n} (r - s)^{2n - i - j - 1} ([i\ldots (n-1)\, n\, (n-2)\ldots (j+1)],\bar{R}_{\ell_{1}}) \\
  &\quad + (rs)^{j + 1 - n} (r - s)^{2n - i - j - 1}
   ((rs)^{-1}[i\ldots (n-2)\, n\, (n-1) \ldots (j+1)],\bar{R}_{\ell_{1}}) \\
  &= (rs)^{-1} (r - s) (R_{\ell_{1}},\bar{R}_{\ell_{1}}) 
   = (rs)^{j - n} (r - s)^{2n - i - j - 1},
\end{align*}  
where the last equality follows from the induction hypothesis.
\end{proof}

   %%%%%%%%%%%%%%%%%%%%%%%%%%%%%%%%%%%%%%%%%%%%%%%%%%%%%%%%%%%%%%%%%%%%%%%%%%
   %%%%%%%%%%%%%%%%%%%%%%%%%%%%%%%%%%%%%%%%%%%%%%%%%%%%%%%%%%%%%%%%%%%%%%%%%%
   %%%%%%%%%%%%%%%%%%%%%%%%%%%%%%%%%%%%%%%%%%%%%%%%%%%%%%%%%%%%%%%%%%%%%%%%%%

\section{Orthogonal PBW Bases for $U_{r,s}(\fg)$}\label{sec:main}

In this Section, we transfer (using Theorem~\ref{thm:pairing_comparison}) the orthogonal bases constructed 
in Section~\ref{sec:orthogonal_bases} to $U_{r,s}(\fg)$, which, along with the computations from 
Section~\ref{sec:root_vectors}, proves our main Theorems~\ref{thm:main-1} and~\ref{thm:main-2}.

To state our results, we first need to introduce the corresponding notion of \textbf{quantum root vectors} of $U_{r,s}(\fg)$. 
To this end, we recall the notation $\ell \colon \Phi^{+} \to \cal{L}^{+}$ for the inverse of the bijection from Theorem~\ref{thm:LR-bijection}. 
Then, for $\gamma \in \Phi^{+}$, we define 
\[
  e_{\gamma} = \ol{\Psi^{-1}\left( \bar{R}_{\ell(\gamma)} \right)}=\Psi^{-1}\left( R_{\ell(\gamma)} \right) \qquad \text{and} \qquad 
  f_{\gamma} = \varphi \left( \ol{\Psi^{-1}\left( R_{\ell(\gamma)} \right)} \right),
\]
where the $\BC$-algebra bar-involution $\, \bar{} \,$ and the $\BC(r,s)$-algebra anti-automorphism $\varphi$ were 
introduced in Proposition~\ref{prop:extra_structures}. Explicitly, if $\alpha,\beta \in \Phi^{+}$ are such that 
$\ell(\gamma) = \ell(\alpha)\ell(\beta)$ is the costandard factorization of $\ell(\gamma)$,~then: 
\begin{equation}\label{eq:q-root-vectors}
  e_{\gamma} = e_{\alpha}e_{\beta} - (\omega_{\beta}',\omega_{\alpha}) e_{\beta}e_{\alpha} \qquad \text{and} \qquad 
  f_{\gamma} = f_{\beta}f_{\alpha} - (\omega_{\alpha}',\omega_{\beta})^{-1} f_{\alpha}f_{\beta}.
\end{equation}

We also note that the lexicographical ordering on $\cal{L}^{+}$ induces, via the bijection $\ell$, 
a total ordering on $\Phi^{+}$:
\begin{equation}\label{eq:lyndon_order}
  \alpha < \beta \quad \Longleftrightarrow \quad \ell(\alpha) < \ell(\beta)  \ \ \mathrm{ lexicographically}.
\end{equation}
For the ordering $1 < 2 < \dots < n$, one can easily verify 
(using Lemmas~\ref{lem:Lyndon_A},~\ref{lem:Lyndon_B},~\ref{lem:Lyndon_C},~\ref{lem:Lyndon_D}) that 
the corresponding orderings on $\Phi^{+}$ are as follows:
\begin{itemize}

\item
\emph{Type $A_{n}$}
\begin{equation*}%\label{eq:A_order}
\begin{split}
  \alpha_{1} < \alpha_{1} + \alpha_{2} < \dots < \alpha_{1} + \dots + \alpha_{n} < \alpha_{2} <
  \dots < \alpha_{n-1} < \alpha_{n-1} + \alpha_{n} < \alpha_{n}.
\end{split}
\end{equation*}

\item
\emph{Type $B_{n}$}
\begin{equation*}%\label{eq:B_order}
\begin{split}
  & \alpha_{1} < \alpha_{1} + \alpha_{2} < \dots < \alpha_{1} + \dots + \alpha_{n} \\
  & \qquad < \alpha_{1} + \dots + \alpha_{n-1} + 2\alpha_{n} < \dots <
    \alpha_{1} + 2\alpha_{2} + \dots + 2\alpha_{n} \\
  &\quad < \alpha_{2} < \dots < \alpha_{n-1} < \alpha_{n-1} + \alpha_{n} < \alpha_{n-1} + 2\alpha_{n} < \alpha_{n}.
\end{split}
\end{equation*}

\item
\emph{Type $C_{n}$}
\begin{equation*}%\label{eq:C_order}
\begin{split}
  & \alpha_{1} < \alpha_{1} + \alpha_{2} < \dots < \alpha_{1} + \dots + \alpha_{n-1} <
    2\alpha_{1} + \dots + 2\alpha_{n-1} + \alpha_{n} < \alpha_{1} + \dots + \alpha_{n} \\
  & \qquad < \alpha_{1} + \dots + \alpha_{n-2} + 2\alpha_{n-1} + \alpha_{n} < \dots <
    \alpha_{1} + 2\alpha_{2} + \dots + 2\alpha_{n-1} + \alpha_{n} \\
  &\quad < \alpha_{2} < \dots < \alpha_{n-1} < 2\alpha_{n-1} + \alpha_{n} < \alpha_{n-1} + \alpha_{n} < \alpha_{n}.
\end{split}
\end{equation*}

\item
\emph{Type $D_{n}$}
\begin{equation*}%\label{eq:D_order}
\begin{split}
  & \alpha_{1} < \alpha_{1} + \alpha_{2} < \dots < \alpha_{1} + \dots + \alpha_{n-2}+\alpha_{n-1} <
    \alpha_{1} +\dots +  \alpha_{n-2} + \alpha_{n} < \alpha_{1} + \dots + \alpha_{n} \\
  & \qquad < \alpha_{1} + \dots + \alpha_{n-3} + 2\alpha_{n-2} + \alpha_{n-1} + \alpha_{n} < \dots <
    \alpha_{1} + 2\alpha_{2} + \dots + 2\alpha_{n-2} + \alpha_{n-1} + \alpha_{n} \\
  &\quad < \alpha_{2} < \dots < \alpha_{n-2}<\alpha_{n-2}+\alpha_{n-1}<\alpha_{n-2}+\alpha_{n} < 
   \alpha_{n-2} + \alpha_{n-1} + \alpha_{n} <\alpha_{n-1}<\alpha_{n}.
\end{split}
\end{equation*}
\end{itemize}

We may now state our first main result, which corresponds to parts (a) and (b) of~\cite[Theorem 5.12]{MT}:

\begin{theorem}\label{thm:main-1}
(1) The ordered products
\begin{equation}\label{eq:ordered}
  \left\{ \overset{\longleftarrow}{\underset{\gamma \in \Phi^{+}}{\prod}} e_{\gamma}^{m_{\gamma}}\, \Big|\, m_{\gamma} \ge 0 \right\} 
  \quad \text{and}\quad 
  \left\{ \overset{\longleftarrow}{\underset{\gamma \in \Phi^{+}}{\prod}} f_{\gamma}^{m_{\gamma}}\, \Big|\, m_{\gamma} \ge 0 \right\}
\end{equation}
are bases for $U_{r,s}^{+}(\fg)$ and $U_{r,s}^{-}(\fg)$, respectively. Here and below, the arrow $\leftarrow$ 
over the product signs refers to the total order~\eqref{eq:lyndon_order} on $\Phi^+$, thus ordering the positive roots 
in decreasing order.

\medskip
\noindent
(2) The Hopf pairing~\eqref{eq:Hopf-parity} is orthogonal with respect to these bases. More explicitly, we have:
\begin{equation}\label{eq:orthogonal}
    \left( \overset{\longleftarrow}{\underset{\gamma \in \Phi^{+}}{\prod}} f_{\gamma}^{n_{\gamma}}, 
           \overset{\longleftarrow}{\underset{\gamma \in \Phi^{+}}{\prod}} e_{\gamma}^{m_{\gamma}} \right)_{H} = \,
    \prod_{\gamma\in \Phi^+} \Big( \delta_{n_\gamma,m_\gamma} [m_{\gamma}]_{r_{\gamma},s_{\gamma}}! 
        s_{\gamma}^{-\frac{1}{2}m_{\gamma}(m_{\gamma} - 1)} (f_{\gamma},e_\gamma)^{m_{\gamma}}_{H} \Big).
\end{equation}
\end{theorem}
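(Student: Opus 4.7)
I will deduce both parts from the orthogonal bases $\{\wtd{R}_w\}, \{\bar{R}_w\}$ of the shuffle algebra $\cal{U}$ established in Theorem~\ref{thm:dual_bases}, pulling them back to $U^{\pm}_{r,s}(\fg)$ via the algebra isomorphism $\Psi$ (Proposition~\ref{prop:shuffle_embedding}), the bar involution, and the Cartan anti-involution $\varphi$ (Proposition~\ref{prop:extra_structures}). Using Corollary~\ref{cor:auto_matching}(2) together with the fact that bar is an involution, one first records the key identities
\[
  \Psi(e_\gamma) \,=\, \overline{\bar{R}_{\ell(\gamma)}} \,=\, R_{\ell(\gamma)} \qquad \text{and} \qquad
  \Psi(\varphi(f_\gamma)) \,=\, \overline{R_{\ell(\gamma)}} \,=\, \bar{R}_{\ell(\gamma)}
  \qquad \forall\, \gamma \in \Phi^+.
\]

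For part~(1) with the $e$'s, if $x_0 = \overset{\longleftarrow}{\underset{\gamma \in \Phi^+}{\prod}} e_\gamma^{m_\gamma}$, then $\Psi(x_0)$ is the $*$-product of $R_{\ell(\gamma)}^{*m_\gamma}$ in decreasing $\gamma$-order, which equals the Lyndon basis element $R_w$ for the unique $w \in \cal{W}^+$ whose canonical factorization (Proposition~\ref{prop:dominant-factorization}) records the multiplicities $\{m_\gamma\}$. Since $\{R_w\}$ is a basis of $\cal{U}$ by Proposition~\ref{prop:lyndon_basis}, the set $\{x_0\}$ is a basis of $U^+_{r,s}(\fg)$. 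For the $f$'s, set $y_0 = \overset{\longleftarrow}{\underset{\gamma\in\Phi^+}{\prod}} f_\gamma^{m_\gamma}$; applying the anti-automorphism $\varphi$ reverses the order, so $\Psi(\varphi(y_0))$ is the $*$-product of $\bar{R}_{\ell(\gamma)}^{*m_\gamma}$ in \emph{increasing} $\gamma$-order, which by Proposition~\ref{prop:tau_and_bar}(2) equals $\overline{\wtd{R}_w}$. Theorem~\ref{thm:dual_bases} together with the non-degeneracy in Corollary~\ref{cor:nondegeneracy-CHW} shows that $\{\wtd{R}_w\}$ is a basis of $\cal{U}$ (as the dual of $\{\bar{R}_w\}$); its bar-image is then a basis, and thus $\{y_0\}$ is a basis of $U^-_{r,s}(\fg)$.

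For part~(2), applying Theorem~\ref{thm:pairing_comparison} to $(\bar{y}_0, \bar{x}_0)$ and taking bar of both sides yields
\[
  (y_0, x_0)_H \,=\, \overline{\, C_\mu \cdot \big(\psi^+(\varphi(\bar{y}_0)),\, \psi^+(\bar{x}_0)\big)\,},
  \qquad C_\mu \,=\, \prod_{i=1}^n (r_i - s_i)^{-c_i}.
\]
The compatibility $\overline{\varphi(u)} = \varphi(\bar{u})$ (verified on generators via Proposition~\ref{prop:extra_structures}), combined with the correspondences above, gives $\Psi(\varphi(\bar{y}_0)) = \wtd{R}_\ell$ and $\Psi(\bar{x}_0) = \bar{R}_w$, where $\ell, w \in \cal{W}^+$ encode the multiplicities $\{n_\gamma\}, \{m_\gamma\}$ in decreasing order. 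Theorem~\ref{thm:dual_bases} then yields vanishing unless $\ell = w$ (equivalently, $n_\gamma = m_\gamma$ for all $\gamma$), and in the diagonal case supplies the formula~\eqref{eq:Rw-pairing}.

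It remains to match the outer-bar version of~\eqref{eq:Rw-pairing} with~\eqref{eq:orthogonal}. Specializing the preceding to a single root vector produces $(f_\gamma, e_\gamma)_H = \overline{C_\gamma \cdot (R_{\ell(\gamma)}, \bar{R}_{\ell(\gamma)})}$; combined with $C_\mu = \prod_\gamma C_\gamma^{m_\gamma}$, the bar-invariance $\overline{[m]_{r_\gamma,s_\gamma}!} = [m]_{r_\gamma,s_\gamma}!$, and $\overline{r_\gamma^k} = s_\gamma^k$, term-by-term application of the outer bar recovers the right-hand side of~\eqref{eq:orthogonal} exactly. The main technical obstacle is the careful bookkeeping of how $\varphi$ and the bar involution interact with the orderings in~\eqref{eq:ordered}, ensuring that the decreasing-order $f$-product transforms, after $\varphi$ and bar, into the increasing-order $R$-shuffle $\wtd{R}_\ell$ to which Theorem~\ref{thm:dual_bases} applies directly.
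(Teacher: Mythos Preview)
Your proposal is correct and follows essentially the same route as the paper's proof: both arguments pull back the dual shuffle bases $\{\wtd{R}_w\}$ and $\{\bar{R}_w\}$ through $\Psi^{-1}$, the bar involution, and $\varphi$, then invoke Theorem~\ref{thm:dual_bases} for orthogonality and Theorem~\ref{thm:pairing_comparison} to translate to $(\cdot,\cdot)_H$. The only cosmetic difference is that for the $e$-basis the paper applies bar to $\{\bar{R}_w\}$, whereas you identify $\Psi(e_\gamma)=R_{\ell(\gamma)}$ directly and use $\{R_w\}$; these are equivalent since bar is an involution on $\cal{U}$ (which follows from Corollary~\ref{cor:auto_matching}(2) and the fact that bar on $U_{r,s}^+$ is an involution).
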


\begin{proof} 
For part (1), we first recall from Proposition~\ref{prop:lyndon_basis} that 
\begin{equation*}
  \big\{ R_{w} \,\big|\, w \in \cal{W}^{+} \big\} = 
  \big\{ R_{\ell_{1}} * \dots * R_{\ell_{k}} \,\big|\, 
         k \in \BZ_{\geq 0},\, \ell_{1}, \dots, \ell_{k} \in \cal{L}^{+},\, \ell_{1} \ge \dots \ge \ell_{k} \big\}
\end{equation*}
is a basis for $\cal{U}$. Since $x \mapsto \bar{x}$ is a $\BC$-algebra automorphism, we find that 
\begin{equation}\label{eq:bar_R_basis}
  \big \{\bar{R}_{w} \,|\, w \in \cal{W}^{+} \big\} = 
  \big\{ \bar{R}_{\ell_{1}} * \dots * \bar{R}_{\ell_{k}} \,\big|\, 
         k \in \BZ_{\geq 0},\, \ell_{1}, \dots, \ell_{k} \in \cal{L}^{+},\, \ell_{1} \ge \dots \ge \ell_{k} \big\}.
\end{equation}
is also a basis for $\cal{U}$. Then, evoking~\eqref{eq:R-basis}, the set
\begin{equation}\label{eq:wtd_R_basis}
  \big\{ \wtd{R}_{w} \,\big|\, w \in \cal{W}^{+} \big\} = 
  \big\{ R_{\ell_{k}} * \dots * R_{\ell_{1}} \,\big|\, 
         k \in \BZ_{\geq 0},\, \ell_{1}, \dots, \ell_{k} \in \cal{L}^{+},\, \ell_{1} \ge \dots \ge \ell_{k} \big\}
\end{equation}
is yet another basis for $\cal{U}$ because, by Theorem~\ref{thm:dual_bases}, it is orthogonal to 
$\{\bar{R}_{w} \,|\, w \in \cal{W}^{+}\}$ with respect to the non-degenerate pairing $(\cdot,\cdot)$ on $\cal{U}$.

Thus, applying $\Psi^{-1}$ followed by the bar involution $x \mapsto \bar{x}$ on $U_{r,s}^{+}(\fg)$ 
to~\eqref{eq:bar_R_basis}, we obtain the basis 
\begin{multline*}
  \big\{ e_{|\ell_{1}|} \dots e_{|\ell_{k}|} \,\big|\, 
         k \in \BZ_{\geq 0},\, \ell_{1}, \dots, \ell_{k} \in \cal{L}^{+},\, \ell_{1} \ge \dots \ge \ell_{k} \big\} = \\
  \big\{ e_{\gamma_{1}}\ldots e_{\gamma_{k}} \,\big|\, 
         k \in \BZ_{\geq 0},\, \gamma_{1},\ldots ,\gamma_{k} \in \Phi^{+},\, \gamma_{1} \ge \dots \ge \gamma_{k} \big\}
\end{multline*}
for $U_{r,s}^{+}(\fg)$, where we use the bijection $\ell$ and the ordering it induces on $\Phi^{+}$ via~\eqref{eq:lyndon_order}.
Similarly, as $\varphi \colon U_{r,s}^{+} \to U_{r,s}^{-}$ is an anti-isomorphism, applying 
$\varphi \circ (x \mapsto \bar{x}) \circ \Psi^{-1}$ to~\eqref{eq:wtd_R_basis} yields a basis of $U_{r,s}^{-}(\fg)$:
\begin{multline*}
  \big\{ f_{|\ell_{1}|} \dots f_{|\ell_{k}|} \,\big|\, 
         k \in \BZ_{\geq 0},\, \ell_{1}, \dots, \ell_{k} \in \cal{L}^{+},\, \ell_{1} \ge \dots \ge \ell_{k} \big\} = \\
  \big\{ f_{\gamma_{1}}\ldots f_{\gamma_{k}} \,\big|\, 
         k \in \BZ_{\geq 0},\, \gamma_{1},\ldots ,\gamma_{k} \in \Phi^{+},\, \gamma_{1} \ge \dots \ge \gamma_{k} \big\}.
\end{multline*}
This completes the proof of part (1).

To prove part (2), we shall use Theorem~\ref{thm:pairing_comparison}. Given sequences 
$(n_{\gamma})_{\gamma \in \Phi^{+}}, (m_{\gamma})_{\gamma \in \Phi^{+}}\in (\BZ_{\geq 0})^{\Phi^+}$, consider 
dominant words $\ell = \overset{\longleftarrow}{\underset{\gamma \in \Phi^{+}}{\prod}} \ell(\gamma)^{n_{\gamma}}$ and   
$w = \overset{\longleftarrow}{\underset{\gamma \in \Phi^{+}}{\prod}} \ell(\gamma)^{m_{\gamma}}$. 
Furthermore, for any $\mu = \sum_{i = 1}^{n}c_{i} \alpha_{i} \in \Phi^{+}$, we set 
\[
  C_{\mu} = \prod_{i = 1}^{n} \frac{1}{(s_{i} - r_{i})^{c_{i}}}.
\]
We may assume that $|\ell| = |w|$, because otherwise the claim is obvious. Then, since 
$\ol{\varphi(f_{\gamma})} = \Psi^{-1}(R_{\ell(\gamma)})$ and $\bar{e}_{\gamma} = \Psi^{-1}(\bar{R}_{\ell(\gamma)})$, 
Theorem~\ref{thm:pairing_comparison} implies that
\begin{equation}\label{eq:pairing_reduction}
  \left( \overset{\longleftarrow}{\underset{\gamma \in \Phi^{+}}{\prod}} f_{\gamma}^{n_{\gamma}}, 
         \overset{\longleftarrow}{\underset{\gamma \in \Phi^{+}}{\prod}} e_{\gamma}^{m_{\gamma}} \right)_{H} = 
  C_{|\ell|} \ol{ \left( 
    \overset{\longrightarrow}{\underset{\gamma \in \Phi^{+}}{\prod}} \Psi^{-1}\left (R_{\ell(\gamma)}^{n_{\gamma}}\right),
    \overset{\longleftarrow}{\underset{\gamma \in \Phi^{+}}{\prod}} \Psi^{-1}\left (\bar{R}_{\ell(\gamma)}^{m_{\gamma}} \right) 
    \right)} = 
  C_{|\ell|} \ol{\left( \wtd{R}_{\ell},\bar{R}_{w} \right)}.
\end{equation}
The rightmost term in~\eqref{eq:pairing_reduction} is zero unless $\ell = w$, i.e.\ $n_{\gamma} = m_{\gamma}$ for 
all $\gamma \in \Phi^{+}$, due to the first part of Theorem~\ref{thm:dual_bases}. Moreover, if $\ell=w$, then 
according to the second part of Theorem~\ref{thm:dual_bases}, we have
\[
  (\wtd{R}_{\ell},\bar{R}_{w}) = 
  \prod_{\gamma \in \Phi^{+}} \left( [m_{\gamma}]_{r_{\gamma},s_{\gamma}}! r_{\gamma}^{-\frac{1}{2}m_{\gamma}(m_{\gamma} - 1)}
    (R_{\ell(\gamma)},\bar{R}_{\ell(\gamma)})^{m_{\gamma}} \right),
\]
and therefore 
\begin{align*}
  C_{|\ell|} \ol{\left( \wtd{R}_{\ell},\bar{R}_{w} \right)} 
  &= \prod_{\gamma \in \Phi^{+}} \left( [m_{\gamma}]_{r_{\gamma},s_{\gamma}}! s_{\gamma}^{-\frac{1}{2}m_{\gamma}(m_{\gamma} - 1)} 
   C_{\gamma}^{m_{\gamma}} \ol{(R_{\ell(\gamma)},\bar{R}_{\ell(\gamma)})}^{m_{\gamma}} \right) \\  
  &= \prod_{\gamma \in \Phi^{+}} \left( [m_{\gamma}]_{r_{\gamma},s_{\gamma}}! s_{\gamma}^{-\frac{1}{2}m_{\gamma}(m_{\gamma} - 1)}
   (f_{\gamma},e_{\gamma})^{m_{\gamma}}_{H} \right),
\end{align*}
where the last equality follows from 
\begin{equation}\label{eq:aux-pairing-matching}
  (f_{\gamma},e_{\gamma})_H=C_{\gamma} \ol{(R_{\ell(\gamma)},\bar{R}_{\ell(\gamma)})}, 
\end{equation}
due to~\eqref{eq:pairing_reduction}, a corollary of Theorem~\ref{thm:pairing_comparison}. 
This completes the proof.
\end{proof}

By above theorem, the Hopf pairing $(\cdot,\cdot)_H$ is completely determined by nonzero constants 
$\{(f_\gamma,e_\gamma)_H\}_{\gamma\in \Phi^+}$. Our second main result is the explicit evaluation of these constants, 
recovering~\cite[(5.21)--(5.24)]{MT}. 
We shall use the notation for the positive roots that was introduced in~\eqref{eq:root_notation_A}--\eqref{eq:root_notation_D}.

\begin{theorem}\label{thm:main-2} 
(1) In type $A_n$ (that is, $\fg = \frak{sl}_{n+1}$), we have 
\[
  (f_{\gamma_{ij}},e_{\gamma_{ij}})_{H} = \frac{1}{s - r} \qquad \text{for}\quad 1 \le i \le j \le n.
\]

\noindent
(2) In type $B_n$ (that is, $\fg = \frak{so}_{2n+1}$), we have
\begin{align*}
  (f_{\gamma_{ij}},e_{\gamma_{ij}})_{H} &= \frac{1}{s^{2} - r^{2}} \qquad \text{for}\quad 1 \le i \le j < n, \\
  (f_{\gamma_{in}},e_{\gamma_{in}})_{H} &= \frac{1}{s - r} \qquad \text{for}\quad 1 \le i \le n, \\
  (f_{\beta_{ij}},e_{\beta_{ij}})_{H} &= \frac{[2]_{r,s}^{2}(rs)^{2(j - n)}}{s^{2} - r^{2}} \qquad \text{for}\quad 1 \le i < j \le n.
\end{align*}

\noindent
(3) In type $C_n$ (that is, $\fg = \frak{sp}_{2n}$), we have 
\begin{align*}
  (f_{\gamma_{ij}},e_{\gamma_{ij}})_{H} &= \frac{1}{s - r} 
    \qquad \text{for}\quad 1 \le i \le j \le n\quad \text{with}\quad (i,j) \neq (n,n), \\
  (f_{\gamma_{nn}},e_{\gamma_{nn}})_{H} &= \frac{1}{s^{2} - r^{2}}, \\
  (f_{\beta_{ii}},e_{\beta_{ii}})_{H} &= \frac{[2]_{r,s}^{2}}{s^{2} - r^{2}} \qquad \text{for}\quad 1 \le i < n, \\
  (f_{\beta_{ij}},e_{\beta_{ij}})_{H} &= \frac{(rs)^{j - n}}{s - r} \qquad \text{for}\quad 1 \le i < j < n.
\end{align*}

\noindent
(4) In type $D_n$ (that is, $\fg = \frak{so}_{2n}$), we have 
\begin{align*}
  (f_{\gamma_{ij}},e_{\gamma_{ij}})_{H} &= \frac{1}{s -r} \qquad \text{for} \qquad 1 \le i \le j < n, \\
  (f_{\beta_{ij}},e_{\beta_{ij}})_{H} &= \frac{(rs)^{j - n}}{s - r} \qquad \text{for}\quad 1 \le i < j \le n.
\end{align*}
\end{theorem}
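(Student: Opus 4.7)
The plan is to specialize the calculation already carried out in the proof of Theorem~\ref{thm:main-1}. Indeed, taking a single $\gamma\in \Phi^+$ and setting $n_\gamma=m_\gamma=1$ with all other multiplicities zero in~\eqref{eq:pairing_reduction} yields the identity
\[
  (f_\gamma, e_\gamma)_H \,=\, C_\gamma \cdot \overline{(R_{\ell(\gamma)}, \bar{R}_{\ell(\gamma)})},
  \qquad C_\mu = \prod_{i=1}^n \frac{1}{(s_i - r_i)^{c_i}}\ \ \mathrm{for}\ \ \mu = \sum_{i=1}^n c_i\alpha_i.
\]
Thus the theorem reduces purely to two explicit inputs: the factor $C_\gamma$, which depends on the decomposition of $\gamma$ into simple roots (read off from~\eqref{eq:root_notation_A}--\eqref{eq:root_notation_D}) and on the values $d_i$ (equivalently, whether $\alpha_i$ is long or short), and the constants $(R_{\ell(\gamma)}, \bar{R}_{\ell(\gamma)})$, which were tabulated case-by-case in Corollaries~\ref{cor:pairing_A},~\ref{cor:pairing_B},~\ref{cor:pairing_C}, and~\ref{cor:pairing_D} of Section~\ref{sec:root_vectors}.

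Accordingly, I would proceed type by type. For each positive root $\gamma$ (of the form $\gamma_{ij}$ or $\beta_{ij}$), I would: (i) locate the corresponding dominant Lyndon word $\ell(\gamma)$ via Lemmas~\ref{lem:Lyndon_A}--\ref{lem:Lyndon_D}, (ii) extract $(R_{\ell(\gamma)},\bar{R}_{\ell(\gamma)})$ from the relevant corollary of Section~\ref{sec:root_vectors}, (iii) apply the bar involution, which simply interchanges $r\leftrightarrow s$ and leaves $rs$ invariant, and (iv) multiply by $C_\gamma$, computed from the coefficient expansion of $\gamma$ (keeping track that $s_i-r_i$ equals $s-r$, $s^2-r^2$, or $s^2-r^2$/$s-r$ depending on the length of $\alpha_i$ in the given type). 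As a sample check: in type $B_n$ for $\gamma=\beta_{ij}$ one gets $C_{\beta_{ij}}=(s^2-r^2)^{-(2n-i-j)}(s-r)^{-2}$ and $\overline{(R_{\ell(\beta_{ij})},\bar{R}_{\ell(\beta_{ij})})}=(rs)^{2(j-n)}(s^2-r^2)^{2n-i-j+1}$, whose product simplifies via $(s^2-r^2)=(s-r)(s+r)$ and $[2]_{r,s}=r+s$ to the stated formula $[2]_{r,s}^2(rs)^{2(j-n)}/(s^2-r^2)$.

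There is no genuine mathematical obstacle here; the work is all bookkeeping, and the only mild subtlety is systematic attention to the distinction between long and short simple roots when computing $C_\gamma$ in types $B_n$, $C_n$ (the root $\alpha_n$ being short in $B_n$ and long in $C_n$) and to the two ``extra'' families of Lyndon words in types $C_n$ and $D_n$ that mix the branches. Once this tabulation is written out in each of the four classical types, the claimed identities in (1)--(4) follow by direct algebraic simplification.
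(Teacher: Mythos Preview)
Your proposal is correct and follows essentially the same approach as the paper: the paper's proof also invokes the identity $(f_\gamma,e_\gamma)_H = C_\gamma\,\overline{(R_{\ell(\gamma)},\bar{R}_{\ell(\gamma)})}$ (derived from Theorem~\ref{thm:pairing_comparison}, exactly your~\eqref{eq:pairing_reduction} specialized to a single root) and then substitutes the values from Corollaries~\ref{cor:pairing_A}--\ref{cor:pairing_D}, carrying out the same type-by-type bookkeeping you describe. Your sample computation for $\beta_{ij}$ in type $B_n$ matches the paper's verbatim.
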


\begin{proof}
(1) As $\gamma_{ij} = \alpha_i + \dots +\alpha_j$ for $1\le i\le j \le n$, combining Corollary~\ref{cor:pairing_A} 
and~\eqref{eq:aux-pairing-matching}, we obtain:  
\[
  (f_{\gamma_{ij}},e_{\gamma_{ij}})_{H} = 
  \frac{1}{(s - r)^{j - i +1}}\ol{\left( R_{\ell(\gamma_{ij})},\bar{R}_{\ell(\gamma_{ij})} \right)} = 
  \frac{1}{(s - r)^{j-i+1}} (s - r)^{j - i} = \frac{1}{s - r}.
\]

\noindent
(2) The computation for the roots $\gamma_{ij}$ with $1 \le i \le j < n$ is the same as the one above. 
For the roots $\gamma_{in}=\alpha_i + \dots + \alpha_n$, combining Corollary~\ref{cor:pairing_B}(1) and~\eqref{eq:aux-pairing-matching}, 
we get:
\[
  (f_{\gamma_{in}},e_{\gamma_{in}})_{H} = 
  \frac{1}{(s^{2} - r^{2})^{n -i}}\cdot \frac{1}{s - r}\cdot 
    \ol{\left( R_{\ell(\gamma_{in})},\bar{R}_{\ell(\gamma_{in})} \right)} = 
  \frac{1}{s - r}.
\]
For the roots $\beta_{ij}=\alpha_i + \dots + \alpha_{j-1} + 2\alpha_j + \dots + 2\alpha_n$, combining 
Corollary~\ref{cor:pairing_B}(2) and~\eqref{eq:aux-pairing-matching}, we obtain:
\[
  (f_{\beta_{ij}},e_{\beta_{ij}})_{H} = 
  \frac{1}{(s^{2} - r^{2})^{2n - i -j}}\cdot \frac{1}{(s - r)^{2}} \cdot 
    \ol{\left( R_{\ell(\beta_{ij})},\bar{R}_{\ell(\beta_{ij})} \right)} = 
  \frac{s^{2} - r^{2}}{(s - r)^{2}}(rs)^{2(j - n)} = \frac{[2]_{r,s}^{2}(rs)^{2(j - n)}}{s^{2} - r^{2}}.
\]

\noindent
(3) We shall only carry out the verification for the roots $\beta_{ii}$, since the other formulas are proved as the ones above. 
For the roots $\beta_{ii} = 2\alpha_i + \dots + 2\alpha_{n-1} + \alpha_n$, combining Corollary~\ref{cor:pairing_C}(4) 
and~\eqref{eq:aux-pairing-matching}, we get:
\[
  (f_{\beta_{ii}},e_{\beta_{ii}})_{H} = 
  \frac{1}{(s - r)^{2n - 2i}} \cdot \frac{1}{s^{2} - r^{2}} \cdot 
    \ol{\left( R_{\ell(\beta_{ii})},\bar{R}_{\ell(\beta_{ii})} \right)} = 
  \frac{1}{s - r}(r + s) = \frac{(r + s)^{2}}{s^{2} - r^{2}} = \frac{[2]_{r,s}^{2}}{s^{2} - r^{2}}.
\]

\noindent
(4) The computation for the roots $\gamma_{ij}$ with $1 \le i \le j < n$ is the same as the one in (1). 
For the roots $\beta_{in} = \alpha_{i} + \dots + \alpha_{n-2} + \alpha_{n}$, combining Corollary~\ref{cor:pairing_D}(2) 
and~\eqref{eq:aux-pairing-matching} gives us
\[
  (f_{\beta_{in}},e_{\beta_{in}})_{H} = 
  \frac{1}{(s - r)^{n - i}} \cdot \ol{\left (R_{\ell(\beta_{in})},\bar{R}_{\ell(\beta_{in})}\right )} = 
  \frac{1}{s - r}.
\]
For the roots $\beta_{i,n-1} = \alpha_{i} + \dots + \alpha_{n}$ with $1 \le i < n-1$, as well as the roots 
$\beta_{ij} = \alpha_{i} + \dots + \alpha_{j-1} + 2\alpha_{j} + \dots + 2\alpha_{n-2} + \alpha_{n-1} + \alpha_{n}$ 
with $1 \le i < j < n-1$, combining Corollary~\ref{cor:pairing_D}(3) and~\eqref{eq:aux-pairing-matching} yields:
\[
  (f_{\beta_{ij}},e_{\beta_{ij}})_{H} = 
  \frac{1}{(s - r)^{2n - i - j}} \cdot \ol{\left (R_{\ell(\beta_{ij})},\bar{R}_{\ell(\beta_{ij})}\right )} = 
  \frac{(rs)^{j - n}}{s - r}.
\]

This completes the proof.
\end{proof}

   %%%%%%%%%%%%%%%%%%%%%%%%%%%%%%%%%%%%%%%%%%%%%%%%%%%%%%%%%%%%%%%%%%%%%%%%%%
   %%%%%%%%%%%%%%%%%%%%%%%%%%%%%%%%%%%%%%%%%%%%%%%%%%%%%%%%%%%%%%%%%%%%%%%%%%
   %%%%%%%%%%%%%%%%%%%%%%%%%%%%%%%%%%%%%%%%%%%%%%%%%%%%%%%%%%%%%%%%%%%%%%%%%%

\appendix

\section{General Pairing Formulas for Root Vectors}\label{sec:app_pairing_formulas}

In this Appendix, we derive a formula for the pairing $(R_{\ell},\bar{R}_{\ell})$ for certain types of dominant Lyndon words $\ell \in \cal{L}^{+}$ that is valid for any ordering of $I = \{1,\ldots ,n\}$. The types of dominant Lyndon words $\ell$ that we shall mainly concern ourselves with in this Appendix are those whose \underline{first letter occurs exactly once}. We note that this is equivalent to saying that every left factor of $\ell$ is also Lyndon, so in particular, the costandard factorization of $\ell$ has the form $\ell = \ell_{1}i$, where $i \in I$ is a single letter.

To prove the main result of this Appendix, we shall need a few lemmas on root systems. As in the rest of the paper, $\Phi$ is an irreducible reduced root system with an ordered set of simple roots $\Pi = \{\alpha_{1},\ldots ,\alpha_{n}\}$. 

First, we remind the reader of some basic facts about root systems that we shall use frequently.

\begin{lemma}\label{lem:basic_facts} 
(1) For all $\alpha,\beta \in \Pi$, $\alpha \neq \beta$, we have $(\alpha,\beta) \le 0$.

\medskip
\noindent
(2) If $\alpha,\beta \in \Phi$ and $\alpha \neq \pm \beta$, then $\alpha + \beta \in \Phi$ whenever $(\alpha,\beta) < 0$, and $\alpha - \beta \in \Phi$ whenever $(\alpha,\beta) > 0$.
\end{lemma}

\begin{lemma}\label{lem:diamond} 
If $\gamma \in \Phi$, and for distinct $\alpha,\beta \in \Pi$, we have $\gamma + \alpha,\gamma + \beta \in \Phi$, then $\gamma + \alpha + \beta \in \Phi \cup \{0\}$. Likewise, if $\gamma - \alpha,\gamma - \beta \in \Phi$, then $\gamma - \alpha - \beta \in \Phi \cup \{0\}$.
\end{lemma}

\begin{proof} 
If $\gamma + \alpha$ is proportional to $\beta$, then we must have $\gamma + \alpha = -\beta$, so that $\gamma + \alpha + \beta = 0$. Similarly, if $\gamma + \beta$ is proportional to $\alpha$, we have $\gamma + \beta + \alpha = 0$. Thus, we may assume that neither of those two cases hold. Then if $\gamma + \alpha + \beta \notin \Phi$, we must have $(\gamma + \alpha,\beta) \ge 0$ and $(\gamma + \beta,\alpha) \ge 0$. This implies that $(\gamma,\beta) + (\gamma,\alpha) \ge -2(\alpha,\beta)$. But since $\gamma + \beta - (\gamma + \alpha) = \beta - \alpha \notin \Phi$, we must have $(\gamma + \beta,\gamma + \alpha) \le 0$. Combining this with the previous inequality yields
\[
  0 \ge (\gamma,\gamma) + (\gamma,\beta) + (\gamma,\alpha) + (\alpha,\beta) \ge (\gamma,\gamma) - (\alpha,\beta).
\]
But $(\alpha,\beta) \le 0$ and $(\gamma,\gamma) > 0$. Thus, we have a contradiction. 

The second claim follows by applying the first one to $-\gamma$.
\end{proof}

\begin{lemma}\label{lem:cube_completion} 
Let $\gamma \in \Phi^{+}$, and suppose that for some $i \neq j$, $\beta = \gamma - \alpha_{i} \in \Phi^{+}$ and $\beta' = \gamma - \alpha_{j} \in \Phi^{+}$. Suppose further that for some $m \le \hgt(\gamma) - 3$, there is a sequence $\alpha_{k_{1}},\ldots ,\alpha_{k_{m}} \in \Pi$ such that $\beta - \alpha_{k_{1}} - \ldots - \alpha_{k_{t}} \in \Phi^{+}$ and $\beta' - \alpha_{k_{1}} - \ldots - \alpha_{k_{t}} \in \Phi^{+}$ whenever $0 \le t \le m$. Then $\gamma - \alpha_{k_{1}} - \ldots - \alpha_{k_{t}} \in \Phi^{+}$ whenever $0 \le t \le m$.
\end{lemma}

\begin{proof} 
We proceed by induction on $t$. If $t = 0$, then the assertion is obvious. For the induction step, suppose that $\gamma_{p} = \gamma - \alpha_{k_{1}} - \ldots - \alpha_{k_{p}} \in \Phi^{+}$ for all $p < t$. Let $\beta_{p} = \beta - \alpha_{k_{1}} - \ldots - \alpha_{k_{p}}$ and $\beta_{p}' = \beta - \alpha_{k_{1}} - \ldots - \alpha_{k_{p}}$; by assumption, both of these are positive roots. Moreover, $\beta_{t-1} = \gamma_{t-1} - \alpha_{i}$ and $\beta_{t-1}' = \gamma_{t - 1} - \alpha_{j}$. Thus, Lemma~\ref{lem:diamond} implies that $\gamma_{t - 1}' = \gamma_{t - 1} - \alpha_{i} - \alpha_{j} \in \Phi^{+}$ as well. Now, because $i \neq j$, we have $k_{t} \neq i$ or $k_{t} \neq j$. If $k_{t} \neq i$, then since $\beta_{t - 1}' - \alpha_{i} = \gamma_{t - 1}' \in \Phi^{+}$ and $\beta_{t - 1}' - \alpha_{k_{t}} = \beta_{t}' \in \Phi^{+}$, we conclude from Lemma~\ref{lem:diamond} (and the assumption that $m \le \hgt(\gamma) - 3$) that $\gamma_{t}' = \gamma_{t - 1} - \alpha_{i} - \alpha_{j} - \alpha_{k_{t}} \in \Phi^{+}$. Likewise, if $k_{t} \neq j$, then since $\beta_{t- 1} - \alpha_{j} = \gamma_{t-1}' \in \Phi^{+}$ and $\beta_{t - 1} - \alpha_{k_{t}} = \beta_{t} \in \Phi^{+}$, we again conclude that $\gamma_{t}' = \gamma_{t - 1} - \alpha_{i} - \alpha_{j} - \alpha_{k_{t}} \in \Phi^{+}$. Finally, since $\gamma_{t}' + \alpha_{i} = \beta_{t}' \in \Phi^{+}$ and $\gamma_{t}' + \alpha_{j} = \beta_{t} \in \Phi^{+}$, we have $\gamma_{t}' + \alpha_{i} + \alpha_{j} = \gamma_{t} \in \Phi^{+}$ by Lemma~\ref{lem:diamond}, which completes the proof.
\end{proof}

\begin{lemma}\label{lem:cover_number_bound} 
Let $\gamma \in \Phi^{+}$ be a positive root. Then there are at most three distinct simple roots $\alpha \in \Pi$ such that $\gamma - \alpha \in \Phi^{+}$.
\end{lemma}

\begin{proof} 
Suppose otherwise, and let $\alpha_{i_{1}},\alpha_{i_{2}},\alpha_{i_{3}},\alpha_{i_{4}} \in \Pi$ be distinct simple roots such that $\gamma - \alpha_{i_{k}} \in \Phi^{+}$ for each $k = 1,2,3,4$. Then by Lemma~\ref{lem:diamond}, $\gamma - \alpha_{i_{k}} - \alpha_{i_{l}} \in \Phi^{+}$ whenever $k \neq l$. But the element $\gamma' = 2\gamma - \alpha_{i_{1}} - \alpha_{i_{2}} - \alpha_{i_{3}} - \alpha_{i_{4}}$ is nonzero, so we have
\[
  0 < (\gamma',\gamma') = ((\gamma - \alpha_{i_{1}} - \alpha_{i_{2}}) + (\gamma - \alpha_{i_{3}} - \alpha_{i_{4}}), (\gamma - \alpha_{i_{1}} - \alpha_{i_{3}}) + (\gamma - \alpha_{i_{2}} - \alpha_{i_{4}})).
\]
Then at least one of the values
\begin{align*}
  &(\gamma - \alpha_{i_{1}} - \alpha_{i_{2}},\gamma - \alpha_{i_{1}} - \alpha_{i_{3}}), & &(\gamma - \alpha_{i_{1}} - \alpha_{i_{2}},\gamma - \alpha_{i_{2}} - \alpha_{i_{4}}), \\
  &(\gamma - \alpha_{i_{3}} - \alpha_{i_{4}},\gamma - \alpha_{i_{1}} - \alpha_{i_{3}}), & &(\gamma - \alpha_{i_{3}} - \alpha_{i_{4}},\gamma - \alpha_{i_{2}} - \alpha_{i_{4}}), 
\end{align*}
must be positive; we can assume without loss of generality that $(\gamma - \alpha_{i_{1}} - \alpha_{i_{2}},\gamma - \alpha_{i_{1}} - \alpha_{i_{3}}) > 0$. But $(\gamma - \alpha_{i_{1}} - \alpha_{i_{2}}) - (\gamma - \alpha_{i_{1}} - \alpha_{i_{3}}) = \alpha_{i_{3}} - \alpha_{i_{2}} \notin \Phi$, which contradicts Lemma~\ref{lem:basic_facts}.
\end{proof}

We are now ready to prove technical lemmas that we shall need for the proofs of Theorems~\ref{thm:root_vector_support} and~\ref{thm:pairing_formula}. In the proofs below, we will make frequent use of Leclerc's algorithm; the reader should refer to Proposition~\ref{prop:leclerc_algorithm} for a reminder of its statement.

\begin{lemma}\label{lem:word_description} 
Suppose that $\ell = [i_{1}\ldots i_{d}]$ is a dominant Lyndon word such that $i_{1}$ occurs exactly once. Suppose that $j \neq i_{1},i_{d}$ and $|\ell| - \alpha_{j} \in \Phi^{+}$. Then if $k$ is the smallest positive integer such that $i_{d - k} = j$, we have $\ell(|\ell| - \alpha_{j}) = [i_{1}\ldots i_{d - k - 1}i_{d - k + 1}\ldots i_{d}]$.
\end{lemma}

\begin{proof} 
We first consider the case that $k = 1$. Let $\ell' = \ell(|\ell| - \alpha_{j})$, and suppose that the Lemma does not hold. Since $j \neq i_{1}$, the first letter of $\ell'$ occurs exactly once, and hence the costandard factorization of $\ell'$ is $\ell' = \ell_{1}'h$ for some $h \in I$. Because every left factor of $\ell$ is a dominant Lyndon word, we have $\ell(|\ell| - \alpha_{i_{d}} - \alpha_{i_{d-1}}) = [i_{1}\ldots i_{d - 2}]$, and therefore by Leclerc's algorithm, $[i_{1}\ldots i_{d - 2}i_{d}] < \ell_{1}'h$, which forces $[i_{1}\ldots i_{d - 2}] < \ell_{1}'$. But using Leclerc's algorithm again, we find that $\ell_{1}'hi_{d-1} < [i_{1}\ldots i_{d}]$, which combined with the previous inequality yields 
\[
  \ell_{1}'hi_{d-1} < [i_{1}\ldots i_{d}] < \ell_{1}',
\]
a contradiction.

Now, we proceed by induction on the length of $\ell$. If $\ell$ has length $3$ (every word satisfying the above assumptions has length at least $3$), then we have $\ell = [i_{1}i_{2}i_{3}]$ with $j = i_{2}$, so this case follows from the first part of the proof.

For the induction step, suppose that the length of $\ell$ is $d > 3$, and that the Lemma holds for all $\ell$ of smaller length. We can also assume that $k \ge 2$. For $1 \le t \le d$, let $\ell_{t} = [i_{1}\ldots i_{t}]$. Note that by the choice of $k$, applying Lemma~\ref{lem:diamond} to $|\ell_{t+1}|$ shows that $|\ell_{t}| - \alpha_{j} \in \Phi^{+}$ whenever $d - k < t < d$ (by a descending induction in $t$). For each $d - k < t \leq d$, set $\ell_{t}' = \ell(|\ell_{t}| - \alpha_{j})$. Then the word $\ell_{d - 1}$ satisfies the assumptions of the Lemma with $i_{d - 1}$ in place of $i_{d}$, so by induction, we have $\ell_{d - 1}' = [i_{1}\ldots i_{d - k -1}i_{d - k + 1}\ldots i_{d - 1}]$.

Now, let $\ell_{d}' = [j_{1}\ldots j_{d - 1}]$. Because $i_{1}$ still occurs exactly once in $\ell_{d}'$, and it is the smallest letter of $\ell_{d}'$, it follows that every left factor of $\ell_{d}'$ is also dominant Lyndon. Thus, if $j_{d - 1} = i_{d}$, then we must have $[j_{1}\ldots j_{d - 2}] = \ell_{d - 1}'$, so in this case we are done. Therefore we can assume that $j_{d - 1} \neq i_{d}$. But then we can apply the induction hypothesis to $\ell_{d}'$ with $i_{d}$ in place of $j$ and $j_{d - 1}$ in place of $i_{d}$. If $e$ is the smallest integer such that $j_{d - e} = i_{d}$, then since $\ell_{d-1}' = \ell(|\ell_{d}'| - \alpha_{i_{d}})$, the induction hypothesis yields
\[
  [i_{1}\ldots i_{d - k - 1}i_{d - k + 1}\ldots i_{d - 1}] = \ell_{d-1}' = [j_{1}\ldots j_{d - e - 1}j_{d - e + 1}\ldots j_{d - 1}].
\]
We now have three cases to consider: $k < e$, $k > e$, and $k = e$. Before proceeding, we introduce the notation $w_{t} = [j_{1}\ldots j_{t}]$ for $1 \le t \le d-1$ (so in particular, $w_{d - 1} = \ell_{d}'$), which we shall use in each of the three cases below.

Suppose first that $k < e$. Then $j_{p} = i_{p}$ for $1 \le p \le d - e - 1$ and $d - k + 1 \le p \le d - 1$, and $j_{p + 1} = i_{p}$ for $d - e \le p \le d - k - 1$. This implies that
\[
  \ell_{d}' = w_{d - 1} = [i_{1}\ldots i_{d - e -1}i_{d}i_{d - e}i_{d - e + 1}\ldots i_{d - k - 1}i_{d - k  +1}\ldots i_{d-1}].
\]
Now, every left factor of $\ell_{d}'$ is dominant Lyndon, so in particular, $[i_{1}\ldots i_{d - e - 1}i_{d}i_{d - e}]$ is a dominant Lyndon word. On the other hand, we also know that $[i_{1}\ldots i_{d-e}]$ is dominant Lyndon, and therefore Leclerc's algorithm implies that $[i_{1}\ldots i_{d -e}i_{d}] < [i_{1}\ldots i_{d - e - 1}i_{d}i_{d - e}]$, i.e. $i_{d - e} < i_{d}$. However, this implies that
\[
  w_{d-1}j > [i_{1}\ldots i_{d}] = \ell,
\]
contradicting Leclerc's algorithm.

Next, suppose that $k > e$. Then $j_{p} = i_{p}$ for $1 \le p \le d - k - 1$ and $d - e + 1
 \le p \le d- 1$, and $j_{p} = i_{p + 1}$ for $d - k \le p \le d - e - 1$. Thus, it follows that in this case 
\[
  \ell_{d}' = w_{d-1} = [i_{1}\ldots i_{d - k -1}i_{d - k + 1}\ldots i_{d - e}i_{d}i_{d - e + 1}\ldots i_{d - 1}].
\]
Because every left factor of $\ell_{d}'$ is Lyndon, $[i_{1}\ldots i_{d - k - 1}i_{d - k + 1}\ldots i_{d - e}i_{d}i_{d - e + 1}]$ is a dominant Lyndon word. But so is $[i_{1}\ldots i_{d - k - 1}i_{d - k + 1}\ldots i_{d - e}i_{d - e + 1}]$ (it is a left factor of $\ell_{d - 1}'$), so it follows from Leclerc's algorithm that 
\[
  [i_{1}\ldots i_{d - k - 1}i_{d - k + 1} \ldots i_{d - e}i_{d- e + 1}i_{d}] < [i_{1}\ldots i_{d- k - 1}i_{d - k + 1}\ldots  i_{d - e}i_{d}i_{d - e + 1}],
\] 
and hence $i_{d} > i_{d - e + 1}$.

Now, note that $\alpha_{i_{d - 1}},\alpha_{i_{d - 2}},\ldots ,\alpha_{i_{d -e + 1}}$ is a sequence of simple roots such that $|\ell_{d}'| - \alpha_{i_{d - 1}} -\ldots - \alpha_{i_{d - t}} \in \Phi^{+}$ and $|\ell_{d-1}| - \alpha_{i_{d - 1}} - \ldots - \alpha_{i_{d - t}} \in \Phi^{+}$ whenever $1 \le t \le e - 1$. Because $d - 1 > e$, we also have $e - 1 \le \hgt(|\ell|) - 3 = d-3$, and therefore we are in a position to apply Lemma~\ref{lem:cube_completion}, which tells us that for $1 \le t \le e - 1$, $|\ell| - \alpha_{i_{d-1}} - \ldots - \alpha_{i_{d - t}} \in \Phi^{+}$. Let $v_{d - t} = \ell(|\ell| - \alpha_{i_{d - 1}} - \ldots - \alpha_{i_{d- t}})$ for $1 \le t \le e-1$. Now, since $i_{1}$ still occurs exactly once in each $v_{d - t}$, we know that right factor of its costandard factorization is a single letter. Suppose that the last letter of $v_{d - e + 1}$ is $h \neq j$. Then since the dominant Lyndon word $[i_{1}\ldots i_{d - k - 1}i_{d - k + 1}\ldots i_{d - e}i_{d}]$ has degree $|v_{d - e + 1}| - \alpha_{j}$, the induction hypothesis implies $h = i_{d}$. Thus, the last letter of $v_{d - e + 1}$ must be either $i_{d}$ or $j$, so it follows that $v_{d - e + 1}$ is either $[i_{1}\ldots i_{d - k - 1}i_{d - k + 1}\ldots i_{d - e}i_{d}j]$ or $[i_{1}\ldots i_{d - e}i_{d}]$. Now, note that by Leclerc's algorithm, we have $\ell_{d}'j < \ell$, and therefore $i_{d - k + 1} < j$. This implies that $[i_{1}\ldots i_{d - k - 1}i_{d - k + 1}\ldots i_{d - e}i_{d}j] < [i_{1}\ldots i_{d - e}i_{d}]$, so by Leclerc's algorithm, $v_{d - e +1} = [i_{1}\ldots i_{d - e}i_{d}]$. Let us now determine $v_{d - e + 2}$. It follows from Lemma~\ref{lem:cover_number_bound} that $v_{d - e + 2}$ must be one of the following three words: $v_{d - e + 1}i_{d- e + 1} = [i_{1}\ldots i_{d - k - 1}ji_{d - k + 1}\ldots i_{d-e}i_{d}i_{d - e + 1}]$, $w_{d - e + 1}j = [i_{1}\ldots i_{d - k - 1}i_{d -k + 1}\ldots i_{d - e}i_{d}i_{d - e+ 1}j]$, or $\ell_{d - e + 1}i_{d} = [i_{1}\ldots i_{d - k - 1}ji_{d - k + 1}\ldots  i_{d-e}i_{d - e + 1}i_{d}]$. But then the inequalities $i_{d - k + 1} < j$ and $i_{d - e + 1} < i_{d}$ imply that
\[
  w_{d - e + 1}j < \ell_{d - e + 1}i_{d} < v_{d-e + 1}i_{d - e + 1},
\] 
so by Leclerc's algorithm, $v_{d - e + 2} = v_{d-e + 1}i_{d - e + 1} = [i_{1}\ldots i_{d - e}i_{d}i_{d - e + 1}]$. Continuing alike, we conclude that $v_{d - t} = [i_{1}\ldots i_{d - e}i_{d}i_{d - e + 1}\ldots i_{d - t - 1}]$ for $1 \le t \le e - 2$, so in particular, $v_{d - 1} = [i_{1}\ldots i_{d - e}i_{d}i_{d - e+ 1}\ldots i_{d - 2}]$. But then since $i_{d} > i_{d - e + 1}$, we have 
\[
  v_{d - 1}i_{d- 1} = [i_{1}\ldots i_{d - e}i_{d}i_{d - e + 1}\ldots i_{d - 1}] > [i_{1}\ldots i_{d}] = \ell,
\]
which violates Leclerc's algorithm.

Finally, we consider the case $k = e$. Then we have $j_{p} = i_{p}$ for all $p \neq d - e$, and therefore 
\[
  \ell_{d}' = w_{d-1} = [i_{1}\ldots i_{d - k-1}i_{d}i_{d - k + 1}\ldots i_{d-1}].
\]
Then $[i_{1}\ldots i_{d - k - 1}i_{d}i_{d - k + 1}]$ is Lyndon. But so is $[i_{1}\ldots i_{d - k - 1}i_{d - k + 1}]$, so as in the previous cases we conclude that $i_{d} > i_{d - k + 1}$.

Now, as in the $k > e$ case, we can use Lemma~\ref{lem:cube_completion} to conclude that $|\ell| - \alpha_{i_{d - 1}} - \ldots - \alpha_{i_{d- t}} \in \Phi^{+}$ for $1 \le t \le k - 1$, and as before we set $v_{d - t} = \ell(|\ell| - \alpha_{i_{d - 1}} - \ldots - \alpha_{i_{d - t}})$. Again, we know that the right factor in the costandard factorization of each $v_{d-t}$ is a single letter, and repeating the argument used in the previous case shows that the last letter of $v_{d - k + 1}$ is either $i_{d}$ or $j$. Therefore $v_{d-k+1}$ is equal to either $[i_{1}\ldots i_{d - k - 1}i_{d}j]$ or $[i_{1}\ldots i_{d - k - 1}ji_{d}]$. Since we must have $\ell_{d}'j < \ell$ by Leclerc's algorithm, it follows that $i_{d} < j$, and hence $v_{d - k + 1} = [i_{1}\ldots i_{d - k - 1}ji_{d}]$. Now we determine $v_{d -k + 2}$. Using Lemma~\ref{lem:cover_number_bound}, we see that there are three possibilities: $v_{d - k + 1}i_{d - k + 1} = [i_{1}\ldots i_{d - k - 1}ji_{d}i_{d - k + 1}]$, $w_{d - k + 1}j = [i_{1}\ldots i_{d -k - 1}i_{d}i_{d - k + 1}j]$, or $\ell_{d - k + 1}i_{d} = [i_{1}\ldots i_{d - k - 1}ji_{d - k + 1}i_{d}]$. Then using the inequalities $i_{d} < j$ and $i_{d} > i_{d - k + 1}$, we get 
\[
  w_{d - k + 1}j < \ell_{d - k + 1}i_{d} < v_{d - k + 1}i_{d - k +1},
\] 
so by Leclerc's algorithm, $v_{d - k + 2} = [i_{1}\ldots i_{d - k - 1}ji_{d}i_{d - k + 1}]$. As in the previous case, we can continue in this manner to obtain $v_{d - t} = [i_{1}\ldots i_{d - k - 1}ji_{d}i_{d - k + 1}\ldots i_{d - t -1}]$ for $1 \le t \le k - 2$. In particular, $v_{d - 1} = [i_{1}\ldots i_{d - k - 1}ji_{d}i_{d- k + 1}\ldots i_{d - 2}]$. But then since $i_{d} > i_{d - k + 1}$, we have 
\[
  v_{d - 1}i_{d-1} > [i_{1}\ldots i_{d}] = \ell,
\]
which violates Leclerc's algorithm.

We got a contradiction in all three cases, so the last letter of $\ell_{d}'$ must be $i_{d}$ and the proof is complete.
\end{proof}

For the rest of this Appendix, we shall only need the following slightly weaker corollary to Lemma~\ref{lem:word_description}:

\begin{cor}\label{cor:costandard_factorizations} 
Suppose that $\ell = [i_{1}\ldots i_{d}]$ is a dominant Lyndon word such that $i_{1}$ occurs exactly once. Suppose that $j \neq i_{1},i_{d}$, and $|\ell| - \alpha_{j} \in \Phi^{+}$. Let $\ell' = \ell(|\ell| - \alpha_{j})$ and $\ell'' = \ell(|\ell| - \alpha_{j} - \alpha_{i_{d}})$. Then the costandard factorization of $\ell'$ is $\ell' = \ell''i_{d}$. 
\end{cor}

\begin{proof} 
Since $i_{1}$ occurs once in $\ell'$, the above statement is equivalent to saying that the last letter of $\ell'$ is $i_{d}$. This is a direct consequence of Lemma~\ref{lem:word_description}.
\end{proof}

For the remainder of this Appendix, we will occasionally need to use the notion of the support of an element of $\cal{F}$. Given $x \in \cal{F}$ and its unique expression $x = \sum_{w \in \cal{W}}c_{w}w$ in terms of the basis $\cal{W}$ for $\cal{F}$, we define the \textbf{support} of $x$ to be the set
\begin{equation*}
  \supp(x) = \{w \in \cal{W} \,|\, c_{w} \neq 0\}.
\end{equation*}
Below, we will also make frequent use of the notation $\bb{C}(r,s)^{*} = \bb{C}(r,s) \setminus \{0\}$.

\begin{lemma}\label{lem:last_letter} 
Let $\ell = [i_{1}\ldots i_{d}]$ be a dominant Lyndon word such that $i_{1}$ occurs exactly once. Then for any $j \in I \setminus \{i_{1}\}$ such that $|\ell| - \alpha_{j} \in \Phi$, we have $\epsilon_{j}'(R_{\ell}) \in \bb{C}(r,s)^{*}R_{\ell'}$, where $\ell' = \ell(|\ell| - \alpha_{j})$. Furthermore, if $|\ell| - \alpha_{j} \notin \Phi$, then $\epsilon_{j}'(R_{\ell}) = 0$.
\end{lemma}

\begin{proof} 
We proceed by induction on the length of $\ell$. If $\ell = i_{1}$ has length $1$, then for all $j \neq i_{1}$, $\epsilon_{j}'(R_{\ell}) = 0$. If $\ell$ has length $2$, then $|\ell| - \alpha_{j} \notin \Phi$ if and only if $j \neq i_{1},i_{2}$, in which case we clearly have $\epsilon_{j}'(R_{\ell}) = 0$. If $j = i_{2}$, it is easy to verify that $\epsilon_{i_{2}}'(R_{\ell}) \in \bb{C}(r,s)^{*}i_{1} = \bb{C}(r,s)^{*}R_{i_{1}}$. 

Now suppose that $\ell$ has length at least $3$, and the Lemma holds for any dominant Lyndon word of smaller length that satisfies the assumptions. We know that the costandard factorization of $\ell$ is $\ell = \ell_{1}i_{d}$, where $\ell_{1} = [i_{1}\ldots i_{d - 1}]$. Suppose first that $j \neq i_{d},i_{1}$ and that $|\ell| - \alpha_{j} \in \Phi$. By Lemma~\ref{lem:diamond}, we also have $|\ell_{1}| - \alpha_{j} \in \Phi$, and therefore by Corollary~\ref{cor:costandard_factorizations}, the costandard factorization of $\ell' = \ell(|\ell| - \alpha_{j})$ is $\ell' = \ell''i_{d}$, where $\ell'' = \ell(|\ell_{1}| - \alpha_{j})$. This means that $R_{\ell'} = R_{\ell''} * i_{d} - (\omega_{i_{d}}',\omega_{|\ell''|})i_{d} * R_{\ell''}$. On the other hand, the induction hypothesis implies that $\epsilon_{j}'(R_{\ell_{1}}) = cR_{\ell''}$ for some $c \in \bb{C}(r,s)^{*}$, so we have 
\begin{align*}
  \epsilon_{j}'(R_{\ell}) &= \epsilon_{j}'(R_{\ell_{1}} * i_{d} - (\omega_{i_{d}}',\omega_{|\ell_{1}|}) i_{d} * R_{\ell_{1}}) = 
  \epsilon_{j}'(R_{\ell_{1}}) * i_{d} - (\omega_{i_{d}}',\omega_{|\ell_{1}|})(\omega_{i_{d}}',\omega_{j})^{-1}i_{d} * \epsilon_{j}'(R_{\ell_{1}})\\
  &= c (R_{\ell''} * i_{d} - (\omega_{i_{d}}',\omega_{|\ell''|})i_{d} * R_{\ell''}) = cR_{\ell'}.
\end{align*}
Now suppose that $j = i_{d}$. Since $\max(R_{\ell}) = \ell$ by Lemma~\ref{lem:root_vector_max}, we know that $\epsilon_{i_{d}}'(R_{\ell})$ is nonzero and contains the dominant Lyndon word $\ell_{1} = [i_{1}\ldots i_{d - 1}]$ in its support. Furthermore, if $w$ is any other word in the support of $\epsilon_{i_{d}}'(R_{\ell})$, then we know that $wi_{d} < \ell$, and hence we must have $w < \ell_{1}$. This shows that $\max(\epsilon_{i_{d}}'(R_{\ell})) = \ell_{1}$. However, if $\epsilon_{i_{d}}'(R_{\ell}) \notin \bb{C}(r,s)^{*}R_{\ell_{1}}$, then we can write 
\[
  \epsilon_{i_{d}}'(R_{\ell}) = \sum_{w \in \cal{W}^{+}}c_{w}R_{w},
\]
where each $w$ in the sum above has degree $|\ell_{1}|$, and $c_{w} \neq 0$ for some $w \in \cal{W}^{+} \setminus \{\ell_{1}\}$. But by Corollary~\ref{cor:max_Rw}, we have $\max(R_{w}) = w$, and therefore $\ell_{1} = \max(\epsilon_{i_{d}}'(R_{\ell})) \ge w$. This is a contradiction, because by Corollary~\ref{cor:smallest_word}, $\ell_{1}$ is the smallest dominant word of its degree. Therefore we must have $\epsilon_{i_{d}}'(R_{\ell}) \in \bb{C}(r,s)^{*}R_{\ell_{1}}$.

Finally, suppose that $|\ell| - \alpha_{j} \notin \Phi$. If $\epsilon_{j}'(R_{\ell}) \neq 0$, then we can write 
\[
  \epsilon_{j}'(R_{\ell}) = \sum_{w \in \cal{W}^{+}}c_{w}\wtd{R}_{w},
\]
where $|w| = |\ell| - \alpha_{j}$ whenever $c_{w} \neq 0$. Then by Theorem~\ref{thm:dual_bases},  
\[
  (\epsilon_{j}'(R_{\ell}),\bar{R}_{w}) = c_{w}(\wtd{R}_{w},\bar{R}_{w})
\]
for all $w \in \cal{W}^{+}$. On the other hand, combining Lemma~\ref{lem:symm-der-adjoint} and $\Psi \circ \partial_{i}' = \epsilon_{i}' \circ \Psi$ 
from the proof of Proposition~\ref{prop:psi_equals_upsilon}, we obtain $(x, y * j) = (\epsilon_{j}'(x),y)$ for all $x,y \in \cal{U}$. Hence, we have  
\[
  (\epsilon_{j}'(R_{\ell}),\bar{R}_{w}) =(R_{\ell}, \bar{R}_{w} * j).
\]
Now, upon transitioning to the basis $\{\epsilon_v\}$ via Proposition~\ref{prop:triangularity}, we get (as $\bar{\epsilon}_v=\epsilon_v$ for any $v$)
\[
  \bar{R}_{w} * j = \left (\epsilon_{w} + \sum_{v \in \cal{W}^{+}}^{v > w}\bar{\chi}_{v,w}\epsilon_{v}\right ) * j = 
  \epsilon_{wj} + \sum_{v \in \cal{W}^{+}}^{v > w}\bar{\chi}_{v,w}\epsilon_{vj},
\]
for some $\chi_{v,w} \in \bb{C}(r,s)$. Since $v > w$ clearly implies that $vj > wj$, it follows from Proposition~\ref{prop:triangularity} again that transitioning back to the Lyndon basis yields
\[
  \bar{R}_{w} * j = \sum_{u \in \cal{W}^{+}}^{u \ge wj}c_{u,wj}\bar{R}_{u},
\]  
for some $c_{u,wj} \in \bb{C}(r,s)$. However, if $c_{u,wj} \neq 0$, then we must have $|u| = |\ell|$, and since $\ell$ is dominant Lyndon, Corollary~\ref{cor:smallest_word} implies that $\ell \le u$. Suppose that $\ell = u$ for some $u \in \cal{W}^{+}$ such that $u \ge wj$. Then $\ell \ge wj$. But we know that $i_{1}$ occurs first in $\ell$, and it is also the smallest letter of both $\ell$ and $wj$, so this inequality implies that $w$ starts with $i_{1}$. But $w$ is not Lyndon, so we can write $w = w_{1}w_{2}\ldots w_{t}$ for some $t \ge 2$, where $w_{1} \ge w_{2} \ge \ldots \ge w_{t}$ and each $w_{k}$ is dominant Lyndon (see~\eqref{eq:canonical_dominant_fact}). Then each $w_{k}$ must start with $i_{1}$, which contradicts the fact that $i_{1}$ occurs only once in $w$. Therefore we have $\ell < u$ for all $u$ in the sum above with $c_{u,wj} \neq 0$. But this implies that 
\[
  c_{w}(\wtd{R}_{w},\bar{R}_{w}) = (\epsilon_{j}'(R_{\ell}),\bar{R}_{w}) = (R_{\ell},\bar{R}_{w} * j) = 0,
\]
for all $w \in \cal{W}^{+}$, which is a contradiction. Therefore $\epsilon_{j}'(R_{\ell}) = 0$. 
\end{proof}

\begin{theorem}\label{thm:root_vector_support} 
Let $\ell$ be a dominant Lyndon word such that the first letter occurs exactly once. Then 
\begin{equation}\label{eq:root_vector_support}
  \supp(R_{\ell}) = \Big \{w = [j_{1}\ldots j_{d}]\, \Big |\, |w| = |\ell|,\ \text{and}\  j_{1} \le j_{k}\ \text{and}\   
  \alpha_{j_{1}} + \ldots + \alpha_{j_{k}} \in \Phi^{+}\ \text{for all}\  1 \le k \le d \Big\}.
\end{equation}
\end{theorem}

\begin{proof}  
Denote the set on the right-hand side of~\eqref{eq:root_vector_support} by $\cal{A}_{\ell}$. Let us first show that $\supp(R_{\ell}) \subseteq \cal{A}_{\ell}$, which we shall do by induction on the length of $\ell$. If $\ell$ has length $1$, then the claim is obvious. Now suppose that $\ell = [i_{1}\ldots i_{d}]$ has length $d > 1$, and let $\ell_{1} = [i_{1}\ldots i_{d - 1}]$, so that $R_{\ell} = R_{\ell_{1}} * i_{d} - (\omega_{i_{d}}',\omega_{|\ell_{1}|})i_{d} * R_{\ell_{1}}$. We know from Lemma~\ref{lem:first_letter} that each $w \in \supp(R_{\ell})$ begins with $i_{1}$, and therefore $j_{1} \le j_{k}$ for all $k$ if $w = [j_{1}\ldots j_{d}] \in \supp(R_{\ell})$. Now suppose that $c_{w}w$ is a term in $R_{\ell}$, where $c_{w} \neq 0$ and $w = [j_{1}\ldots j_{d}]$. Then $c_{w}[j_{1}\ldots j_{d-1}]$ is a nonzero term in $\epsilon_{j_{d}}'(R_{\ell})$, so we must have $\alpha_{j_{1}} + \ldots + \alpha_{j_{d - 1}} \in \Phi^{+}$, because otherwise we would have a contradiction to the second part of Lemma~\ref{lem:last_letter}. Then by the first part of Lemma~\ref{lem:last_letter}, we have $\epsilon_{j_{d}}'(R_{\ell}) = cR_{\ell'}$ where $\ell' = \ell(|\ell| - \alpha_{j_{d}})$ and $c \in \bb{C}(r,s)^{*}$. Then $w' = [j_{1}\ldots j_{d - 1}] \in \supp(R_{\ell'})$, so by induction we have $\alpha_{j_{1}} + \ldots + \alpha_{j_{k}} \in \Phi^{+}$ whenever $1 \le k \le d - 1$. This completes the proof that $\supp(R_{\ell}) \subseteq \cal{A}_{\ell}$. 

To prove the other inclusion, we again proceed by induction on the length of $\ell$, with the base case being obvious. Let $w = [j_{1}\ldots j_{d}] \in \cal{A}_{\ell}$. Then $|\ell| - \alpha_{j_{d}} \in \Phi^{+}$, so by Lemma~\ref{lem:last_letter}, $\epsilon_{j_{d}}'(R_{\ell}) = cR_{\ell'}$ for some $c \in \bb{C}(r,s)^{*}$ where $\ell' = \ell(|\ell| - \alpha_{j_{d}})$. By induction, $\supp(R_{\ell'}) = \cal{A}_{\ell'}$, so in particular, $[j_{1}\ldots j_{d-1}] \in \supp(R_{\ell'}) = \supp(\epsilon_{j_{d}}'(R_{\ell}))$. Hence $[j_{1}\ldots j_{d}] \in \supp(R_{\ell})$.
\end{proof}

For the next theorem, we define the number
\begin{equation*}
  p_{\alpha,\beta} = \max \big\{k \ge 0 \,|\, \alpha - k\beta \in \Phi\big\}
\end{equation*}
associated to any pair $\alpha,\beta \in \Phi^{+}$. Note that if $\Phi$ is simply-laced and $\alpha+\beta \in \Phi^{+}$, then $p_{\alpha,\beta} = 0$.

\begin{theorem}\label{thm:pairing_formula} 
Let $\ell$ be a dominant Lyndon word such that the first letter of $\ell$ occurs exactly once, and let $\ell = \ell_{1}i$ be its costandard factorization. Then if $p_{|\ell_{1}|,\alpha_{i}} = 0$, we have
\[
  (R_{\ell},\bar{R}_{\ell}) = 
  \left( (\omega_{|\ell_{1}|}',\omega_{i})^{-1} - (\omega_{i}',\omega_{|\ell_{1}|}) \right) (R_{\ell_{1}},\bar{R}_{\ell_{1}}).
\]
\end{theorem}

\begin{proof} 
By the definition of $\bar{R}_{\ell}$, we have
\[
  \left (R_{\ell},\bar{R}_{\ell}\right ) = \left (R_{\ell},\bar{R}_{\ell_{1}} * i\right ) - (\omega_{|\ell_{1}|}',\omega_{i})^{-1} \left (R_{\ell},i * \bar{R}_{\ell_{1}}\right ).
\]
For the second term, note that, if we write $R_{\ell} = \sum c_{w}w$, the definition of $\Delta$ implies that 
\[
  \Delta(R_{\ell}) = \sum_{\substack{w_{1},w_{2} \in \cal{W},\\ w = w_{1}w_{2}}} c_{w}w_{2} \otimes w_{1}.
\]
By Lemma~\ref{lem:first_letter}, the first letter of $\ell$ must also be the first letter of $w_{1}$ whenever $w = w_{1}w_{2}$, $c_{w} \neq 0$, $w_1\ne \emptyset$. Since $i$ cannot be equal to the first letter of $\ell$, we find that 
\[
  (R_{\ell},i * \bar{R}_{\ell_{1}}) = (\Delta(R_{\ell}), \bar{R}_{\ell_{1}} \otimes i) = 0.
\]
Thus, 
\begin{equation}\label{eq:pairing_reduction-2}
  (R_{\ell},\bar{R}_{\ell}) = (R_{\ell},\bar{R}_{\ell_{1}} * i) = (\epsilon_{i}'(R_{\ell}),\bar{R}_{\ell_{1}}).
\end{equation}
Since $p_{|\ell_{1}|,\alpha_{i}} = 0$, Lemma~\ref{lem:last_letter} implies that $\epsilon_{i}'(R_{\ell_{1}}) = 0$. Therefore 
\[
  \epsilon_{i}'(R_{\ell}) = \epsilon_{i}'(R_{\ell_{1}} * i - (\omega_{i}',\omega_{|\ell_{1}|}) i * R_{\ell_{1}}) = 
  (\omega_{|\ell_{1}|}',\omega_{i})^{-1}R_{\ell_{1}} - (\omega_{i}',\omega_{|\ell_{1}|})R_{\ell_{1}}.
\]
Combining this with~\eqref{eq:pairing_reduction-2} completes the proof.
\end{proof}

Finally, let us describe how Theorem~\ref{thm:pairing_formula} translates to the Hopf pairing $(\cdot,\cdot)_{H}$ on $U_{r,s}(\fg)$.

\begin{cor}\label{cor:hopf_pairing_formula} 
Let $\gamma \in \Phi^{+}$ be a positive root such that the first letter of the dominant Lyndon word $\ell(\gamma)$ occurs exactly once. Let $\alpha,\beta \in \Phi^{+}$ be such that $\ell(\gamma) = \ell(\alpha)\ell(\beta)$ is the costandard factorization of $\ell(\gamma)$. Then if $p_{\alpha,\beta} = 0$, we have
\[
  (f_{\gamma},e_{\gamma})_{H} = \left( (\omega_{\beta}',\omega_{\alpha}) - (\omega_{\alpha}',\omega_{\beta})^{-1} \right) 
  (f_{\alpha},e_{\alpha})_{H}(f_{\beta},e_{\beta})_{H}.
\]
\end{cor}

\begin{proof} 
Note first that the conditions on $\ell(\gamma)$ imply that $\beta = \alpha_{i} \in \Pi$ for some $i$, and therefore $(f_{\beta},e_{\beta})_{H} = \frac{1}{s_{i} - r_{i}}$. Suppose that $\alpha = \sum_{j = 1}^{n}c_{j}\alpha_{j}$. Then, as in the proof of Theorem~\ref{thm:main-2}, combining~\eqref{eq:aux-pairing-matching} with Theorem~\ref{thm:pairing_formula} yields
\begin{align*}
  (f_{\gamma},e_{\gamma})_{H} &= \left( \prod_{j = 1}^{n}\frac{1}{(s_{j} - r_{j})^{c_{j}}} \right)\cdot 
    \frac{1}{s_{i} - r_{i}}\ol{\left (R_{\ell(\gamma)},\bar{R}_{\ell(\gamma)}\right )} \\
  &= \left( \prod_{j = 1}^{n}\frac{1}{(s_{j} - r_{j})^{c_{j}}} \right)\cdot 
    \frac{1}{s_{i} - r_{i}}\ol{\left( (\omega_{\alpha}',\omega_{\beta})^{-1} - (\omega_{\beta}',\omega_{\alpha}) \right)} 
    \ol{(R_{\ell(\alpha)},\bar{R}_{\ell(\alpha)})} \\
  &= \left( (\omega_{\beta}',\omega_{\alpha}) - (\omega_{\alpha}',\omega_{\beta})^{-1} \right) 
    (f_{\alpha},e_{\alpha})_{H}(f_{\beta},e_{\beta})_{H},
\end{align*}
as desired.
\end{proof}

   %%%%%%%%%%%%%%%%%%%%%%%%%%%%%%%%%%%%%%%%%%%%%%%%%%%%%%%%%%%%%%%%%%%%%%%%%%%%%%%%%%%%%
   %%%%%%%%%%%%%%%%%%%%%%%%%%%%%%%%%% Bibliography %%%%%%%%%%%%%%%%%%%%%%%%%%%%%%%%%%%%%
   %%%%%%%%%%%%%%%%%%%%%%%%%%%%%%%%%%%%%%%%%%%%%%%%%%%%%%%%%%%%%%%%%%%%%%%%%%%%%%%%%%%%%

\end{document}